\def\newaliasedtheorem#1[#2]#3{
	\newaliascnt{#1@alt}{#2}
	\newtheorem{#1}[#1@alt]{#3}
	\expandafter\newcommand\csname #1@altname\endcsname{#3}
}
\numberwithin{equation}{section}
\newtheoremstyle{slanted}{\topsep}{\topsep}{\slshape}{}{\bfseries}{.}{.5em}{}
\theoremstyle{plain}
\newtheorem{theorem}{Theorem}[section]
\theoremstyle{definition}
\theoremstyle{remark}
\newcommand{\setN}{\mathbb{N}}
\newcommand{\setR}{\mathbb{R}}
\newcommand{\eps}{\varepsilon}
\let\altphi\phi
\let\phi\varphi
\let\varphi\altphi
\let\altphi\undefined
\newcommand{\abs}[1]{\left\lvert#1\right\rvert}
\newcommand{\norm}[1]{\left\lVert#1\right\rVert}
\newcommand{\weakto}{\rightharpoonup}
\let\div\undefined
\DeclareMathOperator{\div}{div}
\DeclareMathOperator{\Hess}{Hess}
\DeclareMathOperator{\sign}{sign}
\newcommand{\di}{\mathop{}\!\mathrm{d}}
\newcommand{\bs}{{\rm bs}}
\newcommand{\loc}{{\rm loc}}
\newcommand{\sym}{\rm sym}
\newcommand{\diam}{{\rm diam}}
\newcommand{\res}{\mathop{\hbox{\vrule height 7pt width .5pt depth 0pt
			\vrule height .5pt width 6pt depth 0pt}}\nolimits}
\DeclareMathOperator{\supp}{supp}
\newcommand{\Ch}{{\sf Ch}}
\DeclareMathOperator{\Lip}{Lip}
\DeclareMathOperator{\Lipb}{Lip_b}
\DeclareMathOperator{\Lipbs}{Lip_\bs}
\DeclareMathOperator{\Cb}{C_b}
\DeclareMathOperator{\Cbs}{C_\bs}
\DeclareMathOperator{\lip}{lip} 
\DeclareMathOperator{\BV}{BV}
\DeclareMathOperator{\Per}{Per}
\DeclareMathOperator{\Ric}{Ric}
\newcommand{\haus}{\mathscr{H}}
\newcommand{\Leb}{\mathscr{L}}
\newcommand{\Meas}{\mathscr{M}}
\newcommand{\Prob}{\mathscr{P}}
\newcommand{\Borel}{\mathscr{B}}
\newcommand{\XX}{{\boldsymbol{X}}}
\newcommand{\dist}{\mathsf{d}}
\newcommand{\meas}{\mathfrak{m}}
\newcommand{\Test}{{\rm Test}}
\DeclareMathOperator{\CD}{CD}
\DeclareMathOperator{\RCD}{RCD}
\DeclareMathOperator{\ncRCD}{ncRCD}
\newfont{\tmpf}{cmsy10 scaled 2500}
\DeclareMathOperator{\Tan}{Tan}
\def\Xint#1{\mathchoice
	{\XXint\displaystyle\textstyle{#1}}%
	{\XXint\textstyle\scriptstyle{#1}}%
	{\XXint\scriptstyle\scriptscriptstyle{#1}}%
	{\XXint\scriptscriptstyle\scriptscriptstyle{#1}}%
	\!\int}
\def\XXint#1#2#3{{\setbox0=\hbox{$#1{#2#3}{\int}$ }
		\vcenter{\hbox{$#2#3$ }}\kern-.6\wd0}}
\def\dashint{\Xint-}
\begin{document}
	
\title{Rigidity of the $1$-Bakry-\'Emery inequality and \\sets of finite perimeter in $\RCD$ spaces}
\author{Luigi Ambrosio
\thanks{Scuola Normale Superiore, \url{luigi.ambrosio@sns.it}.}\and
Elia Bru\'e 
		\thanks{Scuola Normale Superiore, \url{elia.brue@sns.it}.}\and Daniele Semola
		\thanks{Scuola Normale Superiore,
			\url{daniele.semola@sns.it}.}}

\maketitle

\begin{abstract}
This note is dedicated to the study of the asymptotic behaviour of sets of finite perimeter over $\RCD(K,N)$ metric measure spaces. Our main result asserts existence of a Euclidean tangent half-space almost everywhere with respect to the perimeter measure and it can be improved to an existence and uniqueness statement when the ambient is \textit{non collapsed}.\\ 
As an intermediate tool, we provide a complete characterization of the class of $\RCD(0,N)$ spaces for which there exists a nontrivial
function satisfying the equality in the $1$-Bakry-\'Emery inequality. This result is of independent interest and it is new, up to our knowledge, 
even in the smooth framework. 
\end{abstract}

\tableofcontents

\section*{Introduction}

In recent years the theory of metric measure spaces $(X,\dist,\meas)$ (in short, m.m.s.) satisfying, in a weak sense, upper bounds on dimension and lower bounds
on the Ricci tensor has undergone fast and remarkable developments, see \cite{Ambrosio18} for a recent survey on this topic.
In this paper we focus on the $\CD(K,N)$ theory pioneered by Lott-Villani and Sturm. More specifically, we are concerned  with
the ``Riemannian'' side of the theory, the class of $\RCD(K,N)$ (with $K\in\setR$,
$1\leq N<+\infty$) m.m.s. proposed in \cite{Gigli15} after the study of their infinite-dimensional counterparts $\RCD(K,\infty)$ in
\cite{AmbrosioGigliSavare14}. The first results providing the equivalence between $\RCD(K,\infty)$ and the
validity of Bochner's inequality were obtained in \cite{AmbrosioGigliSavare15}, then \cite{ErbarKuwadaSturm15} estabilished the equivalence with the
dimensional Bochner inequality for the so-called class $\RCD^*(K,N)$. Eventually, full equivalence between $\RCD^*(K,N)$ and $\RCD(K,N)$ has been achieved, among other things, in \cite{Cavalletti-Milman}), closing the circle. By now the class of $\RCD(K,N)$ spaces is known to
include the so-called Ricci limit spaces of the Cheeger-Colding theory, Alexandrov spaces \cite{Petrunin} and other singular spaces.

Many structural properties of $\RCD(K,N)$ spaces have been established in the recent years, in most cases after their
proof in the setting of Ricci limit spaces, and sometimes with essentially new strategies of proof, given the absence of a smooth
approximation. Among these structural results it is worth mentioning the splitting theorem \cite{Gigli13}, the second-order calculus \cite{Gigli18},
the existence of an ``essential dimension'' \cite{BrueSemola18}, the rectifiability as metric measure spaces \cite{MondinoNaber14,KellMondino18,DePhlippisMarcheseRindler17,GigliPasqualetto16a}, 
the validity of sharp heat kernel bounds \cite{JangLiZhang}, the well-posedness of ODE's associated to Sobolev vector fields 
\cite{AmbrosioTrevisan14}.

At this stage of the development of the $\RCD$ theory, it is natural to investigate the typical themes of Geometric Measure Theory, 
since GMT provides techniques for dealing with nonsmooth objects already when the ambient space is smooth.
One of the most fundamental results of GMT, that eventually led to the Federer-Fleming theory of currents \cite{FedererFleming60}, is De Giorgi's structure
theorem for sets $E$ of finite perimeter. De Giorgi's theorem, established in \cite{DeGiorgi54,DeGiorgi55}, provides the representation 
$$
|D\chi_E|=\haus^{n-1}\res {\mathcal F}E
$$
of the perimeter measure $|D\chi_E|$ as the restriction of $\haus^{n-1}$ to a suitable measure-theoretic boundary ${\mathcal F}E$ 
of $E$. In addition, it provides a description of $E$ on small scales, showing that for all $x\in{\mathcal F}E$ the rescaled
set $r^{-1}(E-x)$ is close, for $r>0$ sufficiently small, to an halfspace orthogonal to $\nu_E(x)$. 

Our goal in this paper is to provide an extension of this result to the setting of $\RCD(K,N)$ m.m.s. In the study of
this problem, we realized the importance of the study of the rigidity case in the Bakry-\'Emery inequality, namely the analysis
of the implications of the condition
\begin{equation}\label{eq:intro1}
|\nabla P_t f|=P_t|\nabla f|\qquad\text{$\meas$-a.e. in $X$, for every $t\geq 0$}
\end{equation}
for some nontrivial function $f$, if the ambient space is $\RCD(0,N)$.

Since the study of \eqref{eq:intro1} has an independent interest, we first illustrate our result in this direction, then we move
back to sets of finite perimeter. Our rigidity result, stated in \autoref{maintheorem}, shows that \eqref{eq:intro1} is sufficiently
strong to imply the splitting of the m.m.s. as $Z\times\setR$, in addition with a monotonic dependence on 
$f$ on the split real variable. This result could be considered as ``dual'' to the classical splitting theorem, since the basic
assumption is not the existence of a curve with a special property (namely an entire geodesic), but rather the existence
of a function satisfying \eqref{eq:intro1}. However, our proof builds on Gigli's splitting theorem and is achieved in these
steps:
\begin{itemize}
\item[1)] by first variations in \eqref{eq:intro1} we prove that the unit vector fields $b_t=\nabla P_t f/|\nabla P_t f|$
are independent of $t$, divergence-free and with symmetric part of derivative in $L^2$, in a suitable weak sense;

\item[2)] because of this, the theory of flows developed in \cite{AmbrosioTrevisan14} applies, and provides a measure-preserving semigroup
of isometries $\XX_t$;

\item[3)] we use $\XX_t$ to show in \autoref{prop: HJ equation} that $(P_sf)\circ\XX_{-t}$ is a value function, more
precisely
$$
P_s f(\XX_{-t}(x))=\min_{\overline{B}_t(x)} P_s f\qquad\forall x\in X,\,s>0,\,t\geq 0.
$$
In the proof of this fact we have been inspired by the analysis of isotropic Hamilton-Jacobi equations made in
\cite{Nakayasu14} (see also \cite{AmbrosioFeng}), even though our proof is self-contained. Using this representation
of $(P_sf)\circ\XX_{-t}$ it is not hard to prove that all flow curves $t\mapsto\XX_t(x)$ are lines and, in particular,
Gigli's theorem \cite{Gigli13,Gigli14} applies. Even though this refinement does not play a role in the second part of the
paper, we also prove that the validity of $|\nabla P_t f|=P_t|\nabla f|$ for some $t>0$ implies 
the validity for all $t\geq 0$, namely \eqref{eq:intro1}.
\end{itemize}

Now, what is the relation between \eqref{eq:intro1} and the fine structure of sets of finite perimeter? In De Giorgi's
proof and its many extensions to currents and other complex objects, the normal direction $\nu_E$ coming out of the blow-up analysis is identified by
looking at the polar decomposition $D\chi_E=\nu_E|D\chi_E|$ of the distributional derivative (choosing approximate
continuity points of $\nu_E$, relative to $|D\chi_E|$). In turn, the polar decomposition essentially depends on the particular structure of the tangent bundle of the Euclidean space. In the $\RCD$ theory, as in Cheeger's theory of PI spaces 
(see \cite{GigliPasqualetto16b} for a deeper discussion of the relation between the two notions of tangent bundle), the tangent bundle is defined only
up to $\meas$-negligible sets, not in a pointwise sense. So, it could in principle be used to write a polar decomposition
analogous to the Euclidean theory only for vector-valued (in a suitable sense) measures absolutely continuous w.r.t. $\meas$. 
We bypass this difficulty by establishing this new principle: at $|D\chi_E|$-a.e. point $x$,
any tangent set $F$ to $E$ at $x$ in any tangent, pointed, metric measure structure $(Y,\varrho,\mu,y)$ has to satisfy the condition
\begin{equation}\label{eq:intro2}
|\nabla P_t\chi_F|\mu=P_t^*|D\chi_F|\qquad\forall t\geq 0.
\end{equation}
Notice that $|D\chi_F|$, the semigroup $P_t$ and its dual $P_t^*$ in \eqref{eq:intro2} have, of course, to be understood in the 
tangent metric measure structure. The proof of this principle, given in \autoref{thm:rigidtangent}, ultimately relies on the lower semicontinuity 
of the perimeter measure $|D\chi_E|$ (as it happens for the powerful principle that lower semicontinuity and locality imply asymptotic local minimality, 
see \cite{Fleming,White}, and \cite{Cheeger99}) and gradient contractivity.
From \eqref{eq:intro2}, gradient contractivity easily yields that all functions $f=P_s\chi_F$ satisfy \eqref{eq:intro1}; this leads
to a splitting \textit{both} of $(Y,\varrho,\mu)$ and $F$, and to the identification of a ``tangent halfspace'' $F$ to $E$ at $x$.

Using these ideas we can prove the following structure results for sets of finite perimeter $E$ in 
$\RCD(K,N)$ m.m.s.: first, in \autoref{thm:tangenthalfspace}, we prove that 
$E$ admits a Euclidean half-space as tangent at $x$ for $\abs{D\chi_E}$-a.e. $x\in X$. This result
becomes more precise in the setting of noncollapsed $\RCD(K,N)$ m.m.s. of \cite{DePhilippisGigli18}: in this
case we prove in \autoref{cor:imprnoncollapsed} and \autoref{cor:identificationdensity} that $|D\chi_E|$ is concentrated on the $N$-dimensional
regular set of the ambient metric measure structure and we provide the representation formula $|D\chi_E|=\mathcal{S}^{N-1}\res\mathcal{F}E$ (where $\mathcal S^k$ denotes the $k$-dimensional
spherical Hausdorff measure, see also \eqref{eq:bellafrontiera} 
for the precise definition of ${\mathcal F}E$), for an appropriate notion of measure-theoretic boundary ${\mathcal F}E$.  

The paper is organized as follows. After the preliminary \autoref{sec:1}, where we collect basic notation, calculus rules and basic facts about
$\RCD(K,N)$ spaces, Sobolev and $\BV$ functions and flows associated to vector fields, in \autoref{sec:rigidityBE} we prove our rigidity results,
allong the lines described above. We dedicate \autoref{sec:compstab} to the study of the behaviour of sequences of sets $E_i$ in m.m.s. 
$(X_i,\dist_i,\meas_i)$ convergent in the measured Gromov-Hausdorff sense to $(X,\dist,\meas)$. In particular, using appropriate
notions of compactness for sequences of functions and measures in varying metric measure structures, we focus on 
compactness and lower semicontinuity of the perimeter measure. We apply these results in \autoref{sec:tangentsRCD}, where we
specialize our analysis to the case when $(X_i,\dist_i,\meas_i)$ arise from the rescaling of a pointed m.m.s. This theme is also
investigated in \cite{Shanmugalingam18}, but in our paper we take advantage of the curvature/dimension bounds to establish the
stronger rigidity property \eqref{eq:intro2} satisfied by tangent sets $F$ in the tangent metric measure structure. Then, using the splitting
property and the principle that ``tangents to a tangent are tangent'', we are able to recover the above-mentioned structure results
of sets of finite perimeter. 
Finally, \autoref{sec:Appendix} is devoted to a self-contained proof of the iterated tangents principle, adapting 
the argument of Preiss' seminal paper \cite{Preiss87} (see also \cite{AmbrosioKleinerLeDonne08,GigliMondinoRajala15}).

A natural question arising from the results and methods in this paper is the rectifiability of the measure-theoretic boundary $\mathcal{F}E$. 
Our blow up-analysis shows that the measure $|D\chi_E|=\mathcal{S}^{N-1}\res\mathcal{F}E$ is 
asymptotically flat $|D\chi_E|$-a.e., but flatness should be understood only in the measured-Gromov Hausdorff sense (for instance, a measure concentrated
on a spiral would be asymptotically flat at the origin of the spiral). So, trying to get rectifiability using only this information is related
to the search of rectifiability criteria where Jones' $\beta$ numbers are replaced by smaller numbers built with the scaled and localized 
Wasserstein distance. As far as we know, despite important progress on rectifiability criteria for measures (see for 
instance \cite{Tolsa1,Tolsa2}) this question is open even for measures in the Euclidean space. 

Finally, for sets $E$ of finite perimeter in general $\RCD(K,N)$ m.m.s. $(X,\dist,\meas)$, it would be important to get additional 
information on the structure of $|D\chi_E|$, besides the existence of flat tangents $|D\chi_E|$-a.e. in $X$. 
 
{\bf Acknowledgements.}
The authors wish to thank Andrea Mondino for fruitful discussions around the topic of this note and the anonymous referee for several valuable comments that greatly improved the present note.

\section{Preliminaries}\label{sec:1}
In a metric space $(Z,\dist)$, we will denote by $B_r(x)=\{\dist(\cdot,x)<r\}$ and $\overline{B}_r(x)=\{\dist(\cdot,x)\leq r\}$ the open
and closed balls respectively, by $\Cbs(Z)$ the space of bounded continuous functions with
bounded support, by $\Lipbs(Z)\subset\Cbs(Z)$ the subspace of Lipschitz functions. We shall adopt the notation $\Cb(Z)$ and $\Lipb(Z)$ for bounded continuous and bounded Lipschitz functions respectively. For any $f\in\Lip(Z)$ we shall denote by $\Lip f$ its global Lipschitz constant. The characteristic function with values in $\set{0,1}$ of a set $E\subset Z$ will be denoted by $\chi_E$.

The Borel $\sigma$-algebra of a metric space $(Z,\dist)$ is denoted by $\Borel(Z)$. We shall denote by $\Meas(Z)$ the space of signed Borel measures with finite total variation on $Z$ and by $\Meas^+(Z),\Meas^+_{\loc}(Z),\Prob(Z)$ the spaces of nonnegative finite Borel measures, nonnegative measures finite on bounded subsets of $Z$ and probability measures, respectively. We will denote by $\supp\meas$ the support of any $\meas\in\Meas^+_{\loc}(Z)$.\\
We will use the standard notation $L^p(Z,\meas)$, $L^p_{\loc}(Z,\meas)$ for the $L^p$ spaces, whenever $\meas$ is nonnegative, and $\Leb^n,\mathcal{H}^n$ for the $n$-dimensional Lebesgue measure on $\setR^n$ and the $n$-dimensional Hausdorff measure on a metric space, respectively. We shall denote by $\omega_n$ the Lebesgue measure of the unit ball in $\setR^n$. 

Below we list two useful lemmas. The proof of the first one, based on Cavalieri's formula, can be found for instance in \cite[Lemma 3.3]{AmbrosioGigliSavare15} (notice that since we are assuming that $\mu$ and all $\mu_n$ are probability measures, weak convergence in duality
w.r.t $\Cbs(Z)$ and w.r.t. $\Cb(Z)$ are equivalent).

\begin{lemma}\label{lemma:uppersc}
	Let $(Z,\dist_Z)$ be a complete and separable metric space. Let $(\mu_n)\subset\Prob(Z)$ be weakly converging in duality with $\Cbs(Z)$
	to $\mu\in\Prob(Z)$ and let $f_n$ be uniformly bounded Borel functions such that
	\begin{equation}\label{eq:Gammalimsup}
	\limsup_{n\to\infty}f_n(z_n)\le f(z)\quad\text{whenever $\supp\mu_n\ni z_n\to z\in\supp\mu$},
	\end{equation}
	for some Borel function $f$. Then
	\begin{equation*}
	\limsup_{n\to\infty}\int_Zf_n\di\mu_n\le\int_Zf\di\mu.
	\end{equation*}
\end{lemma}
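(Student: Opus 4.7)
The plan is to use Cavalieri's formula to reduce the integral statement to a Portmanteau-type inequality for superlevel sets, which I then establish by combining the $\Gamma$-limsup hypothesis with the Kuratowski upper limit and the tightness coming from weak convergence of probability measures. I first normalize: for every $z\in\supp\mu$ there exists $z_n\in\supp\mu_n$ with $z_n\to z$ (otherwise a ball around $z$ would be eventually empty in $\supp\mu_n$, contradicting $\liminf_n\mu_n(B)\ge\mu(B)$ for open $B$), and plugging such a sequence into the hypothesis together with $\abs{f_n}\le M$ forces $f\ge -M$ on $\supp\mu$. Replacing $f$ by $f\wedge M$ (still satisfying the hypothesis since $f_n\le M$) and adding $M$, I may assume $0\le f_n,f\le 2M$. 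The layer-cake identities
\[
\int_Zf_n\di\mu_n=\int_0^{2M}\mu_n(\{f_n>t\})\di t,\qquad\int_Zf\di\mu=\int_0^{2M}\mu(\{f\ge t\})\di t
\]
(the events $\{f>t\}$ and $\{f\ge t\}$ having the same $\mu$-measure for all but countably many $t$) and the reverse Fatou lemma on $[0,2M]$ with $1$-bounded integrands reduce the problem to the pointwise inequality $\limsup_n\mu_n(\{f_n>t\})\le\mu(\{f\ge t\})$ for every $t\in[0,2M]$.

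Fix such a $t$ and set $A_n:=\{f_n>t\}\cap\supp\mu_n$, so $\mu_n(A_n)=\mu_n(\{f_n>t\})$. Let $C$ be the Kuratowski upper limit of $(A_n)$, namely the (automatically closed) set of points $z$ attained as $\lim_k z_{n_k}$ for some subsequence and some $z_{n_k}\in A_{n_k}$. Then $C\cap\supp\mu\subset\{f\ge t\}$: given $z\in C\cap\supp\mu$ realized by $z_{n_k}\to z$, I extend $(z_{n_k})$ to a full sequence $z_n\in\supp\mu_n$ converging to $z$ by filling each index $n\notin\{n_k\}$ with an approximant provided by the normalization step, and the hypothesis yields $t\le\limsup_k f_{n_k}(z_{n_k})\le\limsup_n f_n(z_n)\le f(z)$. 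Since $\mu$ is concentrated on $\supp\mu$, this gives $\mu(C)\le\mu(\{f\ge t\})$, and it suffices to prove $\limsup_n\mu_n(A_n)\le\mu(C)$.

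Given $\eps>0$, Prokhorov's theorem (applicable because $(\mu_n)\subset\Prob(Z)$ is weakly convergent on a Polish space) produces a compact $K_\eps$ with $\mu_n(K_\eps)\ge 1-\eps$ for all $n$, and the continuity of $\mu$ along $N_\delta:=\{\dist(\cdot,C)\le\delta\}\downarrow C$ as $\delta\downarrow 0$ provides $\delta>0$ with $\mu(N_\delta)\le\mu(C)+\eps$. A compactness argument shows $A_n\cap K_\eps\subset U_\delta:=\{\dist(\cdot,C)<\delta\}$ eventually: otherwise a subsequence $z_{n_k}\in A_{n_k}\cap K_\eps\setminus U_\delta$ would converge, by sequential compactness of $K_\eps$, to some $z^*\in K_\eps\setminus U_\delta$, whence $z^*\notin C$; but $z^*\in C$ by definition of $C$, contradiction. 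Consequently $\mu_n(A_n)\le\mu_n(\overline{U_\delta})+\eps$, and since $\overline{U_\delta}\subset N_\delta$ is closed, the Portmanteau inequality for closed sets yields $\limsup_n\mu_n(\overline{U_\delta})\le\mu(N_\delta)\le\mu(C)+\eps$. Letting $\eps\to 0$ concludes. The main obstacle is that the Portmanteau inequality only gives $\limsup_n\mu_n(V)\le\mu(\bar V)$ for an open set $V$, not $\le\mu(V)$; this is overcome by replacing the (generally non-open and non-closed) set $\{f\ge t\}$ with the automatically closed Kuratowski upper limit $C$ and absorbing the boundary slack through the closed $\delta$-tube $N_\delta$.
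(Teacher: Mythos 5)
Your proof is correct and takes essentially the same route as the proof the paper attributes to the cited reference (Ambrosio--Gigli--Savar\'e, Lemma 3.3): normalize so that $f_n$ and $f$ are nonnegative and bounded, reduce via Cavalieri's formula and reverse Fatou to the level-set inequality $\limsup_n\mu_n(\{f_n>t\})\le\mu(\{f\ge t\})$, and prove the latter by comparing with the (closed) Kuratowski upper limit of the sets $\{f_n>t\}\cap\supp\mu_n$, using Prokhorov tightness to confine the mass to a compact set and the closed-set Portmanteau inequality to pass to the limit. The delicate points --- the lower bound $f\ge -M$ on $\supp\mu$, the extension of a convergent subsequence in $\supp\mu_{n_k}$ to a full sequence in $\supp\mu_n$ before invoking \eqref{eq:Gammalimsup}, and the compactness step ruling out mass escape --- are all handled correctly.
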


\begin{remark}\label{rm:droppingbounds}
	If  $(Z,\dist_Z)$ is proper, $f_n$ and $f$ are continuous, and $\mu_n$ have uniformly bounded supports, then the uniform bound 
	from above for $f_n$ is a direct consequence of \eqref{eq:Gammalimsup}.
\end{remark}

The proof of \autoref{lemma:uppersc} can be easily adapted to the case when we need to estimate the liminf of $\int_Z f_n\di\mu_n$.

\begin{lemma}\label{lemma:lowersc}
	Let $(Z,\dist_Z)$ be a complete and separable metric space. Let $(\mu_n)$ be a sequence of nonnegative Borel measures on $Z$ finite on bounded sets and assume that $\mu_n$ weakly converge to $\mu$ in duality w.r.t. $\Cbs(Z)$. Let $(f_n)$ and $f$ be nonnegative Borel functions on $Z$ such that
	\begin{equation}\label{eq:Gammaliminf}
	f(z)\le\liminf_{n\to\infty}f_n(z_n)\quad\text{whenever $\supp\mu_n\ni z_n\to z\in\supp\mu$}.
	\end{equation}
	Then 
	\begin{equation*}
	\int f\di\mu\le\liminf_{n\to\infty}\int f_n\di\mu_n.
	\end{equation*}
\end{lemma}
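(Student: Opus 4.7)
The plan mirrors \autoref{lemma:uppersc}: combine Cavalieri's layer-cake formula with Fatou and a Portmanteau-type lower bound on level-set masses. Cavalieri gives $\int f_n\di\mu_n=\int_0^\infty\mu_n(\{f_n>s\})\di s$, and since the integrands are nonnegative, Fatou yields
$$
\liminf_{n\to\infty}\int_Z f_n\di\mu_n \geq \int_0^\infty\liminf_{n\to\infty}\mu_n(\{f_n>s\})\di s.
$$
Thus the proof reduces to the pointwise level-set estimate $\liminf_n\mu_n(\{f_n>s\})\geq\mu(\{f>s\})$ for every $s>0$, after which another application of Cavalieri on the right-hand side closes the argument.

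First I would reduce to bounded functions with bounded support by a routine truncation. Pick $z_0\in Z$ and a cutoff $\eta_R\in\Cbs(Z)$ with $\chi_{B_R(z_0)}\leq\eta_R\leq\chi_{B_{R+1}(z_0)}$, and replace $f_n,f$ by $\min(f_n,M)\eta_R$ and $\min(f,M)\eta_R$; hypothesis \eqref{eq:Gammaliminf} is preserved because $\eta_R$ is continuous and all factors are nonnegative. Since $\int f_n\di\mu_n\geq\int\min(f_n,M)\eta_R\di\mu_n$ for every $R,M$, while $\int f\di\mu=\sup_{R,M}\int\min(f,M)\eta_R\di\mu$ by monotone convergence, the general statement follows from the reduced one, in which all integrands are bounded and supported in a common ball (and all integrals are finite, since $\mu_n(B_{R+1}(z_0))$ is uniformly bounded by applying weak-$\Cbs$ convergence to a larger cutoff).

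For the level-set estimate, for each $s>0$ and $N\in\setN$ I would introduce the set
$$
V_s^N:=\{z\in Z : \exists\,\delta>0\ \text{with}\ B_\delta(z)\cap\supp\mu_n\subset\{f_n>s\}\ \text{for all}\ n\geq N\}.
$$
A triangle-inequality check shows $V_s^N$ is open, and by construction $V_s^N\cap\supp\mu_n\subset\{f_n>s\}$ for $n\geq N$, so $\mu_n(V_s^N)\leq\mu_n(\{f_n>s\})$. The open-set direction of Portmanteau under weak-$\Cbs$ convergence (obtained by approximating $\chi_{V_s^N}$ from below by $\Cbs$ functions) gives $\liminf_n\mu_n(V_s^N)\geq\mu(V_s^N)$. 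Setting $V_s:=\bigcup_N V_s^N$, $\mu$-continuity from below yields $\liminf_n\mu_n(\{f_n>s\})\geq\mu(V_s)$.

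The final and crucial step is to verify $\{f>t\}\cap\supp\mu\subset V_s$ for every $t>s$, from which $\mu$-continuity as $t\downarrow s$ gives $\mu(V_s)\geq\mu(\{f>s\})$, finishing the argument. I would argue by contradiction: if $z\in\supp\mu$ satisfies $f(z)>t$ but $z\notin V_s^N$ for any $N$, then for each $k$ there exist $n_k\geq k$ and $z_{n_k}\in B_{1/k}(z)\cap\supp\mu_{n_k}$ with $f_{n_k}(z_{n_k})\leq s$. This partial sequence can be completed to a full sequence $z_n\in\supp\mu_n$, $z_n\to z$, using the standard observation that $\mu_n\weakto\mu$ and $\mu(B_r(z))>0$ force $\supp\mu_n\cap B_r(z)\neq\emptyset$ eventually (a consequence of Portmanteau applied to $B_r(z)$). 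Then $\liminf_n f_n(z_n)\leq s<t<f(z)$ contradicts \eqref{eq:Gammaliminf}. This passage from the hypothesis, phrased along arbitrary sequences meeting the supports, to a geometric inclusion of sets is the main technical subtlety; the family $V_s^N$ is designed precisely to bridge Portmanteau (which needs open sets in the limit space) with level sets of $f_n$ in the pre-limit spaces, and everything else is routine Cavalieri/Fatou bookkeeping, parallel to \autoref{lemma:uppersc}.
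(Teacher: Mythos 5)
Your proof is correct and follows the same Cavalieri-based route that the paper has in mind when it says the argument of \autoref{lemma:uppersc} (drawn from [Lemma 3.3] of \cite{AmbrosioGigliSavare15}) "can be easily adapted": layer-cake plus Fatou reduces everything to the one-sided level-set estimate $\liminf_n\mu_n(\{f_n>s\})\ge\mu(\{f>s\})$, and your $V_s^N$ construction is precisely the right device to pass from the sequential hypothesis \eqref{eq:Gammaliminf} to an open set on which the $\Cbs$-Portmanteau inequality applies. The truncation step and the detour through $t>s$ are both harmless but unnecessary (the inclusion $\{f>s\}\cap\supp\mu\subset V_s$ already holds directly by the same contradiction argument).
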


\subsection{Calculus tools}

Throughout this paper a \textit{metric measure space} is a triple $(X,\dist,\meas)$, where $(X,\dist)$ is a locally compact and separable metric space (even though some intermediate results do not need the local compactness assumption) and $\meas$ is a nonnegative Borel measure on $X$ finite on bounded sets. We shall adopt the notation $(X,\dist,\meas,x)$ for \textit{pointed metric measure spaces}, that is metric measure spaces $(X,\dist,\meas)$ with a fixed reference point $x\in X$.  

The Cheeger energy $\Ch:L^2(X,\meas)\to[0,+\infty]$ associated to a m.m.s. $(X,\dist,\meas)$ is the convex and lower semicontinuous functional defined through
\begin{equation}\label{eq:cheeger}
\Ch(f):=\inf\left\lbrace\liminf_{n\to\infty}\int\lip^2f_n\di\meas:\quad f_n\in\Lipb(X)\cap L^2(X,\meas),\ \norm{f_n-f}_2\to 0 \right\rbrace, 
\end{equation}
where $\lip f$ is the so called slope
\begin{equation*}
\lip f(x):=\limsup_{y\to x}\frac{\abs{f(x)-f(y)}}{\dist(x,y)}.
\end{equation*}
The finiteness domain of the Cheeger energy will be denoted by $H^{1,2}(X,\dist,\meas)$. We shall denote by $H^{1,2}_{\loc}(X,\dist,\meas)$ the space of those functions $f$ such that $\eta f\in H^{1,2}(X,\dist,\meas)$ for any $\eta\in\Lipbs(X,\dist)$.\\
Looking at the optimal approximating sequence in \eqref{eq:cheeger}, it is possible to identify a canonical object $\abs{\nabla f}$, called minimal relaxed slope, providing the integral representation
\begin{equation*}
\Ch(f):=\int_X\abs{\nabla f}^2\di\meas\qquad\forall f\in H^{1,2}(X,\dist,\meas).
\end{equation*}
Any metric measure space such that $\Ch$ is a quadratic form is said to be infinitesimally Hilbertian and from now on we shall always make this assumption, unless otherwise stated. Let us recall from \cite{AmbrosioGigliSavare14,Gigli15} that, under this assumption, the function 
\begin{equation*}
\nabla f_1\cdot\nabla f_2:=\lim_{\eps\to 0}\frac{\abs{\nabla(f_1+\eps f_2)}^2-\abs{\nabla f_1}^2}{2\eps}
\end{equation*}
defines a symmetric bilinear form on $H^{1,2}(X,\dist,\meas)\times H^{1,2}(X,\dist,\meas)$ with values into $L^1(X,\meas)$.\\

It is possible to define a Laplacian operator $\Delta:\mathcal{D}(\Delta)\subset L^{2}(X,\meas)\to L^2(X,\meas)$ in the following way. We let $\mathcal{D}(\Delta)$ be the set of those $f\in H^{1,2}(X,\dist,\meas)$ such that, for some $h\in L^2(X,\meas)$, one has
\begin{equation}\label{eq:amb1}
\int_X\nabla f\cdot\nabla g\di\meas=-\int_X hg\di\meas\qquad\forall g\in H^{1,2}(X,\dist,\meas) 
\end{equation} 
and in that case we put $\Delta f=h$ since $h$ is uniquely determined by \eqref{eq:amb1}. 
It is easy to check that the definition is well-posed and that the Laplacian is linear (because $\Ch$ is a quadratic form).\\
The heat flow $P_t$ is defined as the $L^2(X,\meas)$-gradient flow of $\frac{1}{2}\Ch$. Its existence and uniqueness
follow from the Komura-Brezis theory. It can equivalently be characterized by saying that for any
$u\in L^2(X,\meas)$ the curve $t\mapsto P_tu\in L^2(X,\meas)$ is locally absolutely continuous in $(0,+\infty)$ and satisfies
\begin{equation}\label{heat equation}
\frac{\di}{\di t}P_tu=\Delta P_tu \quad\text{for }\Leb^1\text{-a.e. }t\in(0,+\infty),\qquad
\lim_{t\downarrow 0}P_tu=u\quad\text{in $L^2(X,\meas)$.}
\end{equation}
Under our assumptions the heat flow provides a linear, continuous and self-adjoint contraction
semigroup in $L^2(X,\meas)$. Moreover $P_t$ extends to a linear, continuous and mass preserving operator,
still denoted by $P_t$ , in all the $L^p$ spaces for $1\le p < +\infty$. 

We recall the following regularization properties of $P_t$, ensured by the theory of gradient flows
and maximal monotone operators (even without the infinitesimal Hilbertian assumption):
\begin{equation}\label{eq:regularizingheat}
\norm{P_tf}_{L^{2}(X,\meas)}\le \norm{f}_{L^2(X,\meas)},\qquad\Ch(P_tf)\le\frac{\norm{f}^2_{L^2(X,\meas)}}{2t}\quad\text{and}\quad\norm{\Delta P_t f}_{L^2(X,\meas)}\le\frac{\norm{f}_{L^2(X,\meas)}}{t},
\end{equation}
for any $t>0$ and for any $f\in L^2(X,\meas)$.

Let us introduce now vector fields over $(X,\dist,\meas)$ as derivations over an algebra of test functions, following the approach introduced in \cite{Weaver} and adopted in \cite{AmbrosioTrevisan14} (see also \cite{Gigli13}).
	
	\begin{definition}\label{def:derivation}
		We say that a linear functional $b:\Lip(X)\to L^0(X,\meas)$ is a derivation if it satisfies the Leibniz rule, that is
		\begin{equation}\label{eq:Leibnizrule}
		b(fg)=b(f)g+fb(g),
		\end{equation}
		for any $f,\,g\in\Lip(X)$.
		
		Given a derivation $b$ and $p\in [1,+\infty]$, we write $b \in L^p(TX)$ (resp. $L^p_{\loc}(TX)$) if there exists $g\in L^p(X,\meas)$ (resp. $L^p_{\loc}(X,\meas)$) such that
		\begin{equation}\label{eq:continuityofderivation}
		|b(f)|\le g\lip(f)\quad\text{$\meas$-a.e. on $X$,}
		\end{equation}
		for any $f\in \Lip(X)$ and we denote by $\abs{b}$ the minimal (in the $\meas$-a.e. sense) $g$ with such property. 
		We also say that $b$ has compact support if $|b|$ has compact support.
	\end{definition}

	Let us remark that any $f\in H^{1,2}(X,\dist,\meas)$ defines in a canonical way a derivation $b_f\in L^2(TX)$ through the formula $b_f(g)=\nabla f\cdot\nabla g$, usually called the \textit{gradient derivation} associated to $f$. We will use the notation $b\cdot\nabla f$ in place of $b(f)$ in the rest of the paper. 
	
	A notion of divergence can be introduced via integration by parts.
	
	\begin{definition}\label{def:divergence}
		Let $b$ be a derivation in $L^1_{\loc}(TX)$ and $p\in [1,+\infty]$. 
		We say that $\div b\in L^p(X,\meas)$ if there exists $g\in L^p(X,\meas)$ such that
		\begin{equation}\label{eq:divergence}
		\int_X b\cdot\nabla f\di\meas=-\int_Xgf\di\meas\qquad\text{for any $f\in \Lipbs(X)$.}
		\end{equation}
		 By a density argument it is easy to check that such a $g$ is unique (when it exists) and we will denote it by $\div b$.
	\end{definition}
	We refer to \cite{Gigli18} for the introduction of the so-called tangent and cotangent moduli over an arbitrary metric measure space and for the identification results between derivations in $L^2$ and elements of the tangent modulus $L^2(TX)$ which justify the use of this notation.

We conclude this brief subsection introducing the basic notions and results about functions of bounded variation and sets of finite perimeter over metric measure spaces. Let us remark that, for the sake of this discussion, the assumption that $\Ch$ is quadratic is unnecessary.

\begin{definition}[Function of bounded variation]\label{def:bvfunction}
A function $f\in L^1(X,\meas)$ is said to belong to the space $\BV(X,\dist,\meas)$ if there exist
locally Lipschitz functions $f_i$ converging to $f$ in $L^1(X,\meas)$ such that
\begin{equation*}
\limsup_{i\to\infty}\int_X\abs{\nabla f_i}\di \meas<+\infty.
\end{equation*}  
By localizing this construction one can define 
\begin{equation*}
\abs{Df}(A):=\inf\left\lbrace \liminf_{i\to\infty}\int_A\abs{\nabla f_i}\di \meas: f_i\in\Lip_{\loc}(A),\quad f_i\to f \text{ in } L^1(A,\meas)\right\rbrace  
\end{equation*}
for any open $A\subset X$. In \cite{AmbDiM14} (see also \cite{MirandaJr} for the case of locally compact spaces) it is proven that this set function 
is the restriction to open sets of a finite Borel measure that we call \emph{total variation of $f$} and still denote $\abs{Df}$.
\end{definition}

Dropping the global integrability condition on $f=\chi_E$, let us recall now the analogus definition of set of finite perimeter 
in a metric measure space (see again \cite{Am02,MirandaJr,AmbDiM14}).

\begin{definition}[Perimeter and sets of finite perimeter]\label{def:setoffiniteperimeter}
Given a Borel set $E\subset X$ and an open set $A$ the perimeter $\Per(E,A)$ is defined in the following way:
\begin{equation*}
\Per(E,A):=\inf\left\lbrace \liminf_{n\to\infty}\int_A\abs{\nabla u_n}\di\meas: u_n\in\Lip_{\loc}(A),\quad u_n\to\chi_E\quad \text{in } L^1_{\loc}(A,\meas)\right\rbrace .
\end{equation*}
We say that $E$ has finite perimeter if $\Per(E,X)<+\infty$. In that case it can be proved that the set function $A\mapsto\Per(E,A)$ is the restriction to open sets of a finite Borel measure $\Per(E,\cdot)$ defined by
\begin{equation*}
\Per(E,B):=\inf\left\lbrace \Per(E,A): B\subset A,\text{ } A \text{ open}\right\rbrace.
\end{equation*}
\end{definition}

Let us remark for the sake of clarity that $E\subset X$ with finite $\meas$-measure is a set of finite perimeter if and only if $\chi_E\in\BV(X,\dist,\meas)$ and that $\Per(E,\cdot)=\abs{D\chi_E}(\cdot)$. In the following we will say that $E\subset X$ is a set of locally finite perimeter if $\chi_E$ is a function of locally bounded variation, that is to say $\eta\chi_E\in\BV(X,\dist,\meas)$ for any $\eta\in \Lipbs(X,\dist)$. 

The following coarea formula for functions of bounded variation on metric measure spaces is taken from 
\cite[Proposition 4.2]{MirandaJr}, dealing with locally compact spaces and its proof works in the more general setting of metric measure spaces.
It will play a key role in the rest of the paper.

\begin{theorem}[Coarea formula]\label{thm:coarea}
Let $v\in\BV(X,\dist,\meas)$.
Then, $\{v>r\}$ has finite perimeter for $\Leb^1$-a.e. $r\in\setR$ and, for any Borel function $f:X\to[0,+\infty]$, it holds 
\begin{equation}\label{eq:coarea}
\int_X f\di\abs{Dv}=\int_{-\infty}^{+\infty}\left(\int_X f\di\Per(\{v>r\},\cdot)\right)\di r.
\end{equation}
\end{theorem}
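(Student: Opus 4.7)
My plan is to build the Borel measure
$$\mu(B) := \int_{-\infty}^{+\infty}\Per(\{v>r\},B)\,dr,$$
prove $\mu=|Dv|$ as measures on $X$, and then derive \eqref{eq:coarea} for general nonnegative Borel $f$ by linearity on characteristic functions and monotone convergence. To make sense of $\mu$ one first needs that $\{v>r\}$ is of finite perimeter for $\Leb^1$-a.e.\ $r$ and that the integrand is Borel in $r$; the former will drop out of the first inequality below, and the latter follows from the countable-infimum representation of $\Per$ in \autoref{def:setoffiniteperimeter} applied to open sets.

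For the inequality $\mu(A)\leq |Dv|(A)$ on open sets $A$, I pick locally Lipschitz $v_i\to v$ in $L^1(A,\meas)$ with $\int_A|\nabla v_i|\,d\meas\to |Dv|(A)$, provided by \autoref{def:bvfunction}. The Lipschitz coarea inequality
$$\int_A|\nabla v_i|\,d\meas\geq\int_{-\infty}^{+\infty}\Per(\{v_i>r\},A)\,dr$$
is then proved by testing $\chi_{\{v_i>r\}}$ in \autoref{def:setoffiniteperimeter} against the explicit Lipschitz competitors $u_n^r:=\min(1,n(v_i-r)_+)$ and integrating in $r$ via Cavalieri. Along a diagonal subsequence, $\chi_{\{v_i>r\}}\to\chi_{\{v>r\}}$ in $L^1_{\loc}(A,\meas)$ for $\Leb^1$-a.e.\ $r$; combined with the lower semicontinuity of $\Per(\,\cdot\,,A)$ under $L^1_{\loc}$-convergence (which is immediate from the relaxation in \autoref{def:setoffiniteperimeter}), Fatou's lemma in $r$ yields $\mu(A)\leq |Dv|(A)$, and in particular gives that $\{v>r\}$ is of finite perimeter for a.e.\ $r$.

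The reverse inequality $|Dv|(A)\leq\mu(A)$ is the harder direction, and I expect it to be the main obstacle. After reducing to $v\geq 0$ by splitting $v=v^+-v^-$, I would approximate $v$ by the dyadic layer-cake sums $v_n:=\frac{1}{n}\sum_{k\geq 1}\chi_{\{v>k/n\}}$, which converge to $v$ in $L^1(X,\meas)$ since $v=\int_0^\infty\chi_{\{v>r\}}\,dr$. For each superlevel pick locally Lipschitz approximations realizing $\Per(\{v>k/n\},A)$ and sum them up absolutely (legitimate since $\sum_k\Per(\{v>k/n\},A)\leq n\,\mu(A)<+\infty$) to produce a competitor for $v_n$ in the relaxation defining $|Dv_n|(A)$, giving
$$|Dv_n|(A)\leq\frac{1}{n}\sum_{k\geq 1}\Per(\{v>k/n\},A).$$
The right-hand side is an upper Riemann sum of the monotone-rearranged $r\mapsto\Per(\{v>r\},A)$ and converges to $\mu(A)$; the lower semicontinuity of $|D\,\cdot\,|(A)$ under $L^1$-convergence then closes the estimate. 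The delicate point is the joint Lipschitz approximation of the sum $v_n$: one must control cross-terms between different superlevels, which is why I pass to an absolutely convergent series and rely on the locality and sub-additivity of the slope.

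Since $\mu$ and $|Dv|$ agree on all open sets and both are finite Borel measures on a locally compact separable metric space, they agree on $\Borel(X)$ by outer regularity. The coarea identity \eqref{eq:coarea} then holds for $f=\chi_B$, extends by linearity to nonnegative simple functions, and by monotone convergence to any nonnegative Borel $f$.
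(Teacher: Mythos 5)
Your overall architecture is the standard one (prove $\mu\le |Dv|$ and $|Dv|\le\mu$ on open sets, then pass to Borel sets and to Borel $f$ by monotone class), and the first inequality is handled correctly: the Lipschitz coarea inequality via the ramp competitors $\min(1,n(v_i-r)_+)$ and Tonelli, the diagonal extraction giving $\chi_{\{v_i>r\}}\to\chi_{\{v>r\}}$ in $L^1_{\loc}(A)$ for a.e.\ $r$, and Fatou plus lower semicontinuity of $\Per(\cdot,A)$ indeed yield $\mu(A)\le|Dv|(A)$ and the a.e.\ finiteness of $\Per(\{v>r\},\cdot)$. (For the record, the paper does not prove this theorem; it cites \cite[Proposition 4.2]{MirandaJr}.)

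The reverse inequality as you wrote it has a genuine gap. After forming $v_n=\tfrac1n\sum_{k\ge1}\chi_{\{v>k/n\}}$, you assert $\sum_k\Per(\{v>k/n\},A)\le n\,\mu(A)$ and that $\tfrac1n\sum_k\Per(\{v>k/n\},A)$ is an ``upper Riemann sum of the monotone-rearranged $r\mapsto\Per(\{v>r\},A)$'' converging to $\mu(A)$. Neither claim holds. The function $r\mapsto\Per(\{v>r\},A)$ is merely an $L^1(\setR)$ function: it is \emph{not} monotone (e.g.\ $v=3\chi_{(0,1)}+\chi_{(1,4)}+\chi_{(2,3)}$ on $\setR$ gives perimeter values $2,4,2$ on successive $r$-intervals), a ``monotone rearrangement'' would permute the levels and so decouples from the grid $\{k/n\}$, and the grid points form a Lebesgue-null set on which $\Per(\{v>r\},A)$ is completely uncontrolled --- it can be $+\infty$ there, and even when finite, Riemann sums of an $L^1$ function need not converge to the Lebesgue integral. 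So the bound $\sum_k\Per(\{v>k/n\},A)\le n\mu(A)$ is unjustified and the layer-cake estimate collapses.

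The standard fix is a phase average. For each $n$, by Fubini
\begin{equation*}
\int_0^{1/n}\sum_{k\ge0}\Per(\{v>s+k/n\},A)\,\di s=\int_0^{+\infty}\Per(\{v>r\},A)\,\di r=\mu(A),
\end{equation*}
so for $\Leb^1$-a.e.\ phase $s\in(0,1/n)$ the shifted grid $\{s+k/n\}_k$ meets only levels of finite perimeter, and one can select $s_n\in(0,1/n)$ with $\tfrac1n\sum_k\Per(\{v>s_n+k/n\},A)\le\mu(A)$ (and, by the same Markov argument applied to $\meas(\{v>\cdot\})$, with the superlevel measures summable so that $v_n^{s_n}:=\tfrac1n\sum_k\chi_{\{v>s_n+k/n\}}\in L^1$ and $v_n^{s_n}\to v$ in $L^1$). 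Countable subadditivity of the total variation (finite subadditivity of the slope plus lower semicontinuity of $|D\cdot|(A)$) then gives $|Dv_n^{s_n}|(A)\le\mu(A)$, and $L^1$-lower semicontinuity of $|D\cdot|(A)$ yields $|Dv|(A)\le\mu(A)$. Without this averaging step, your argument does not establish the reverse inequality.
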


By applying the coarea formula to the distance function we obtain immediately that, given $x\in X$, the ball $B_r(x)$ has finite perimeter for $\Leb^1$-a.e. $r>0$, and in the sequel this fact will also be used in the quantitative form provided by \eqref{eq:coarea}. We
also recall (see for instance \cite{Am01,Am02}) that sets of locally finite perimeter are an algebra, more precisely $\Per(E,B)=\Per(X\setminus E,B)$
and
$$
\Per(E\cap F,B)+\Per(E\cup F,B)=\Per(E,B)+\Per(F,B).
$$

We will need also the following localized version of the coarea formula, which is an easy consequence of \cite[Remark 4.3]{MirandaJr}.

\begin{corollary}\label{cor:loccoarea}
Let $v\in \BV(X,\dist,\meas)$ be continuous and nonnegative.
Then, for any Borel function $f:X\to[0,+\infty]$, it holds 
\begin{equation}\label{eq:loccoarea}
\int_{\{s\le v<t\}} f\di\abs{Dv}=\int_s^{t}\left(\int_X f\di\Per(\{v>r\},\cdot)\right)\di r, \qquad  0\leq s<t<+\infty.
\end{equation}
\end{corollary}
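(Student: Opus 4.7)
My plan is to derive \eqref{eq:loccoarea} directly from the global coarea formula of \autoref{thm:coarea} by a careful choice of test function, exploiting the continuity of $v$. Concretely, I would apply \autoref{thm:coarea} to the nonnegative Borel function $g:=f\chi_{\{s\le v<t\}}$, obtaining
\begin{equation*}
\int_{\{s\le v<t\}} f\di\abs{Dv}=\int_{-\infty}^{+\infty}\left(\int_X f\chi_{\{s\le v<t\}}\di\Per(\{v>r\},\cdot)\right)\di r.
\end{equation*}
The task then reduces to identifying the inner integral with $\chi_{[s,t)}(r)\int_X f\di\Per(\{v>r\},\cdot)$ for $\Leb^1$-a.e.\ $r\in\setR$.

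The crucial observation is a concentration property: since $v$ is continuous, the superlevel set $E_r:=\{v>r\}$ is open and $\partial E_r\subseteq\{v=r\}$, so I would argue that $\Per(E_r,\cdot)$ is concentrated on $\{v=r\}$ by invoking the locality of the perimeter recorded in \cite[Remark 4.3]{MirandaJr}. Explicitly, at any interior point $x$ of $E_r$ the continuity of $v$ produces an open neighborhood $U$ entirely contained in $E_r$, on which $\chi_{E_r}\equiv 1$; the constant sequence $u_n\equiv 1$ is then admissible in the infimum defining $\Per(E_r,U)$ and gives $\Per(E_r,U)=0$. A symmetric argument, with $u_n\equiv 0$, shows that $\Per(E_r,\cdot)$ vanishes on $X\setminus\overline{E_r}$. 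Second countability of $X$ turns these pointwise vanishings into $\Per(E_r,X\setminus\{v=r\})=0$.

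Once the concentration is in hand, for every $r\in\setR$ the function $\chi_{\{s\le v<t\}}$ equals the constant $\chi_{[s,t)}(r)$ at $\Per(E_r,\cdot)$-a.e.\ point, so the inner integral factors as $\chi_{[s,t)}(r)\int_X f\di\Per(E_r,\cdot)$ and integration in $r$ produces exactly \eqref{eq:loccoarea}.

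The only technical point that deserves care is the concentration of $\Per(E_r,\cdot)$ on $\{v=r\}$: while transparent in the Euclidean case, in a general metric measure space it rests on the locality of the perimeter measure, which is precisely what Miranda's remark supplies. Everything else is bookkeeping on top of \autoref{thm:coarea}.
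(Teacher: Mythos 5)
Your proof is correct, and it is essentially the argument the paper has in mind when it refers to Miranda's Remark 4.3: the paper omits the details, while you correctly identify that the decisive fact is the concentration of $\Per(\{v>r\},\cdot)$ on the level set $\{v=r\}$, which you derive directly from the definition of perimeter and the topological description $\partial\{v>r\}\subseteq\{v=r\}$ available by continuity. Plugging $g=f\chi_{\{s\le v<t\}}$ into \autoref{thm:coarea} and using the concentration to replace $\chi_{\{s\le v<t\}}$ by $\chi_{[s,t)}(r)$ inside the inner integral then gives \eqref{eq:loccoarea} immediately.
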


\subsection{$\RCD(K,N)$ metric measure spaces}

The main object of our investigation in this note are $\RCD(K,N)$ metric measure spaces, that is infinitesimally Hilbertian spaces satisfying a lower Ricci curvature bound and an upper dimension bound in synthetic sense according to \cite{Sturm06a,Sturm06b,LottVillani}. Before than passing to the description of the main properties of $\RCD(K,N)$ spaces that will be relevant for the sake of this note, let us briefly focus on the adimensional case.\\
The class of $\RCD(K,\infty)$  spaces was introduced in \cite{AmbrosioGigliSavare14} (see also \cite{AmbrosioGigliMondinoRajala15} for the extension to the case of $\sigma$-finite reference measures) adding to the $\CD(K,\infty)$ condition, formulated in terms of $K$-convexity properties of the logarithmic entropy over the Wasserstein space $(\Prob_{2},W_2)$, the infinitesimally Hilbertianity assumption. 
Under the $\RCD(K,\infty)$ condition it was proved that the dual heat semigroup $P_t^{*}:\Prob_2(X)\to\Prob_2(X)$, defined by
\begin{equation*}
\int_Xf\di P_t^*\mu=\int_XP_tf\di\mu\qquad\forall\mu\in\Prob_2(X),\quad\forall f \in \Lipbs(X),
\end{equation*}
is $K$-contractive w.r.t. the $W_2$-distance and, for $t>0$, maps probability measures into probability measures absolutely continuous w.r.t. $\meas$. Then, for any $t>0$, it is possible to define the \textit{heat kernel} $p_t:X\times X\to[0,+\infty)$ by 
\begin{equation}\label{eq:heat kernel}
p_t(x,\cdot)\meas=P_t^*\delta_x.
\end{equation}
We go on stating a few regularization properties of $\RCD(K,\infty)$ spaces, referring again to \cite{AmbrosioGigliSavare14,AmbrosioGigliMondinoRajala15} for a more detailed discussion and for the proofs of these results.\\
First we have the \textit{Bakry-\'Emery} contraction estimate:
\begin{equation}\label{eq:BE2}
\abs{\nabla P_tf}^2\le e^{-2Kt}P_t\abs{\nabla f}^2\quad \meas\text{-a.e.,}
\end{equation}
for any $t>0$ and for any $f\in H^{1,2}(X,\dist,\meas)$. This contraction estimate can be generalized to the whole range of exponents $1<p<+\infty$, but in this note we will mainly be concerned with the case $p=1$. In \cite{GigliHan16} it has been proved that on any proper $\RCD(K,\infty)$ m.m.s. it holds
\begin{equation}\label{eq:BE1}
\abs{DP_t f}\le e^{-Kt}P_t^*\abs{Df},
\end{equation}
for any $t>0$ and for any $f\in\BV(X,\dist,\meas)$.\\
Next we have the so called \textit{Sobolev to Lipschitz} property, stating that any $f\in H^{1,2}(X,\dist,\meas)$ such that $\abs{\nabla f}\in L^{\infty}(X,\meas)$ admits a representative $\tilde{f}$ such that $\Lip(\tilde{f})\le \norm{\abs{\nabla f}}_{L^{\infty}}$, and the $L^{\infty}-\Lip$ regularization: for any 
$f\in L^{\infty}(X,\meas)$ and $t>0$ one has $P_tf\in \Lip(X)$ with the quantitative estimate
\begin{equation}\label{eq:infinitolip}
\sqrt{2I_{2K}(t)}\Lip(P_tf)\le\norm{f}_{L^{\infty}},
\end{equation}
where $I_{2K}(t):=\int_0^{t}e^{2Kr}\di r$.\\
Eventually let us introduce the space of test functions $\Test(X,\dist,\meas)$ following \cite{Gigli18}:
\begin{equation}\label{eq:test}
\Test(X,\dist,\meas):=\{f\in D(\Delta)\cap L^{\infty}(X,\meas): \abs{\nabla f}\in L^{\infty}(X,\meas)\quad\text{and}\quad\Delta f\in H^{1,2}(X,\dist,\meas) \}.
\end{equation}
We shall denote in the sequel by $\Test_c(X,\dist,\meas)$ the space of test functions with compact support.

In the context of $\RCD(K,\infty)$ spaces it is possible to introduce a notion of flow associated to a vector field which reads as follows (see \cite{AmbrosioTrevisan14}). 			
	\begin{definition}\label{def:Regularlagrangianflow}
		Let us fix a vector field $b$. We say that a Borel map $\XX:[0,\infty)\times X\rightarrow X$ is a Regular Lagrangian flow (RLF for short) associated to $b$ if the following conditions hold true:
		\begin{itemize}
			\item [1)] $\XX(0,x)=x$ and $X(\cdot,x)\in C([0,\infty);X)$ for every $x\in X$;
			\item [2)] there exists $L\ge0$, called compressibility constant, such that
			\begin{equation*}
			\XX(t,\cdot)_{\sharp} \meas\leq L\meas,\qquad\text{for every $t\geq 0$};
			\end{equation*}
			\item [3)] for every $f\in \Test(X,\dist,\meas)$ the map $t\mapsto f(\XX(t,x))$ is locally absolutely continuous
			in $[0,\infty)$ for $\meas$-a.e. $x\in X$ and
			\begin{equation}\label{eq: RLF condition 3}
			\frac{\di}{\di t} f(\XX(t,x))= b\cdot \nabla f(\XX(t,x)) \quad \quad \text{for a.e.}\ t\in (0,\infty).	
			\end{equation}
		\end{itemize}
	\end{definition}
	The selection of ``good'' trajectories is encoded in condition 2), which is added to ensure that the RLF does not concentrate too much the reference measure $\meas$. We remark that the notion of RLF is stable under modification in a negligible set of initial conditions, but we prefer to work with a pointwise defined map in order to avoid technical issues.

It is well known that to obtain an existence and uniqueness theory for regular Lagrangian flows it is necessary to restrict to a class of sufficiently regular vector fields, even in the case of a smooth ambient space.\\
Below we introduce our working definition of Sobolev vector field with symmetric covariant derivative in $L^2$, following \cite{AmbrosioTrevisan14}.
This definition is sufficient for our purposes, and weaker than the notion introduced in \cite{Gigli18}, which corresponds to a sort
of localized version of \eqref{eq:symmetric covariant derivative} (actually, we have been unable to prove differentiability in Gigli's stronger
sense of our vector field $b_s=\nabla P_sf/|\nabla P_s f|$).

\begin{definition}\label{def:symmetric covariant derivative}
	Let $b\in L^{\infty}(TX)$ with $\div b\in L^{\infty}(X,\meas)$. We write $|\nabla_{\sym} b|\in L^2(X,\meas)$ if there exists a constant $c>0$ such that
	\begin{equation}\label{eq:symmetric covariant derivative}
		\abs{\int_X \nabla_{\sym}b(\nabla \phi,\nabla\psi)\di \meas}
		\le c \norm{\nabla \phi}_{L^4}\norm{\nabla \psi}_{L^4}
		\qquad
		\forall \phi,\,\psi \in \Test(X,\dist,\meas),
	\end{equation}
	where
	\begin{equation*}
		\int_X \nabla_{\sym}b(\nabla \phi,\nabla\psi)\di \meas:=-\frac{1}{2}\int_X \left\lbrace b\cdot \nabla \phi\ \Delta \psi+b\cdot \nabla \psi\ \Delta \phi-\div b\ \nabla \phi\cdot \nabla \psi
		\right\rbrace \di \meas.
	\end{equation*}
	We let $\norm{\nabla_{\sym}b}_{L^2}$ be the smallest $c$ in \eqref{eq:symmetric covariant derivative}. In particular we write $\nabla_{\sym} b=0$ if $\norm{\nabla_{\sym}b}_{L^2}=0$.
\end{definition}

In the next theorem we resume some general result concerning Regular Lagrangian flows that will be used in the sequel.
\begin{theorem}\label{Th: Lagrangianflows}
	Let $(X,\dist,\meas)$ be an $\RCD(K,\infty)$ space for some $K\in\setR$. Fix 
	$b\in L^{\infty}(TX)$ with $\div b\in L^{\infty}(X,\meas)$ and $\nabla_{\sym} b\in L^2(X,\meas)$. Then
	\begin{itemize}
		\item [(i)] there exists a unique regular Lagrangian flow $\XX:\setR\times X\to X$ associated to $b$\footnote{To be more precise, there exist unique Regular Lagrangian flows $\XX^+,\XX^-:[0,+\infty)\times X\to X$ associated to $b$ and $-b$ respectively and we let $\XX_t=\XX^+_t$ for $t\ge0$ and $\XX_t=\XX_{-t}^-$ for $t\le0$.} (uniqueness is understood in the following sense: if $\XX$ and $\bar\XX$ are Regular Lagrangian flows associated to $b$, then for $\meas$-a.e. $x\in X$ one has $\XX_t(x)=\bar \XX_t(x)$ for any $t\in \setR$);
		\item [(ii)] $\XX$ satisfies the semigroup property: for any $s\in\setR$ it holds that, for $\meas$-a.e. $x\in X$,
		\begin{equation}\label{eq:semigroup property}
			\XX(t,\XX(s,x))=\XX(t+s,x)
			\qquad
			\forall t\in \setR,
		\end{equation}
		and the bound
		\begin{equation}\label{w}
		 e^{-t\norm{\div b}_{L^{\infty}}}\meas \le \left(\XX_t\right)_{\sharp}\meas \le e^{t\norm{\div b}_{L^{\infty}}}\meas;
		\end{equation}
		\item[(iii)] For any $\bar u\in L^1(X,\meas)\cap L^{\infty}(X,\meas)$ there exists $u\in L^{\infty}_{\loc}(\setR;L^1(X,\meas)\cap L^{\infty}(X,\meas))$ such that $\left(\XX_t\right)_{\sharp}(u\meas)=u_t\meas$ and it solves the \textit{continuity equation}, i.e. for any $\phi\in \Test(X,\dist,\meas)$ the map $t\mapsto \int_X \phi u_t \di \meas$ is locally absolutely continuous with distributional derivative
		\begin{equation*}
		\frac{\di}{\di t}\int_X \phi u_t\di \meas =\int_X (b\cdot\nabla\phi) u_t \di \meas;
		\end{equation*}
		\item[(iv)] if $\div b=0$ and $\nabla_{\sym}b=0$ then $\XX_t$ admits a representative which is a measure-preserving isometry, i.e.
		\begin{equation*}
		\dist(\XX_t(x),\XX_t(y))=\dist(x,y)
		\qquad
		\forall x,y\in X
		\quad\text{and}\quad
		\left(\XX_t\right)_{\sharp} \meas =\meas,
		\end{equation*}
		for any $t\in \setR$. Furthermore in this case the semigroup property \eqref{eq:semigroup property} is satisfied pointwise.
		\end{itemize}
\end{theorem}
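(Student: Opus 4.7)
The plan is to adapt the DiPerna-Lions-Ambrosio well-posedness scheme for the continuity equation to the $\RCD(K,\infty)$ setting, following the strategy of \cite{AmbrosioTrevisan14}. The essential input beyond the $L^\infty$ bounds on $b$ and $\div b$ is the symmetric derivative control $\nabla_{\sym} b \in L^2(X,\meas)$, which substitutes for classical Sobolev regularity in the commutator estimate; the $\RCD(K,\infty)$ hypothesis is used only through the good calculus (in particular the existence of the heat semigroup with Bakry-\'Emery estimates) that allows one to regularize solutions of the continuity equation.

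For items (i)-(iii), the route has two stages. First, I would establish uniqueness for bounded nonnegative solutions of the continuity equation $\partial_t u_t + \div(u_t b) = 0$. The strategy is to regularize via the heat semigroup, setting $u_t^\eps := P_\eps u_t$, and control the commutator $\partial_t u_t^\eps + b \cdot \nabla u_t^\eps$ by using the adjoint identity from \autoref{def:symmetric covariant derivative}: the symmetric derivative bound yields vanishing of this commutator in $L^2_{\loc}$ as $\eps \downarrow 0$, and a renormalization argument (testing against $2 u_t^\eps$ and invoking the chain rule) forces $u_t \equiv 0$. Second, pass from the continuity equation to RLF uniqueness via the superposition principle: two distinct RLFs associated to $b$ would induce laws on $C([0,T];X)$ whose time marginals solve the same continuity equation, contradicting uniqueness. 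Existence comes from approximating $b$ by smoother vector fields (for instance through the heat-regularized objects $P_\delta b$, which inherit improved estimates), constructing RLFs for the approximants by a fixed point procedure and extracting a limit using compactness of the induced laws on path space. The semigroup property in (ii) is then immediate from uniqueness applied to both $t \mapsto \XX(t+s,x)$ and $t \mapsto \XX(t,\XX(s,x))$, which solve the same RLF equation starting at $\XX(s,x)$. The density bound \eqref{w} follows from Gr\"onwall applied to the density $u_t$ of $(\XX_t)_\sharp \meas$, which satisfies the continuity equation and hence a pointwise estimate $|\partial_t \log u_t| \le \|\div b\|_\infty$; item (iii) simply records the transported densities constructed along the way.

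Item (iv) is where I expect the main obstacle. The mass preservation statement is immediate since $\div b = 0$ forces $u_t \equiv 1$ in the Gr\"onwall argument above. For the isometry property, the plan is to show that the pullback $f \mapsto f \circ \XX_t$ preserves the Cheeger energy. For $f \in \Test(X,\dist,\meas)$ one differentiates $t \mapsto \Ch(f \circ \XX_t)$ and, by integration by parts against test functions together with the identity defining $\nabla_{\sym} b$, expresses the derivative as a pairing of $\nabla_{\sym} b$ with $\nabla f \otimes \nabla f$, which vanishes by hypothesis. Combined with $(\XX_t)_\sharp \meas = \meas$, the Sobolev-to-Lipschitz property then produces a $1$-Lipschitz representative of $\XX_t$; applying the same argument to $-b$ gives a $1$-Lipschitz inverse, so the representative is an isometry. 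Continuity of this representative finally upgrades the $\meas$-a.e. semigroup identity to a pointwise one by density. The delicate point, which I expect to require the most care, is the rigorous differentiation of $\Ch(f \circ \XX_t)$ along an RLF whose generator $b$ possesses only $L^2$ symmetric derivative: the adjoint formulation in \autoref{def:symmetric covariant derivative} is indispensable here, and a careful regularization of $f$ and $b$ followed by a limiting argument will be needed to close the identification.
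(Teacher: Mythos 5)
Your overall strategy matches the paper's: items (i)--(iii) are handled by the DiPerna--Lions--Ambrosio machinery (the paper simply cites \cite{AmbrosioTrevisan14}, Theorems 4.3, 4.4, 8.3, for exactly this), and item (iv) is deduced from the invariance of the Cheeger energy under the flow map, followed by the standard Sobolev-to-Lipschitz argument to upgrade to an isometry.

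The one place you wave your hands and the paper does not is the ``delicate point'' you yourself flag in (iv): how to rigorously differentiate the Cheeger energy along the flow. Your plan is to differentiate $t\mapsto\Ch(f\circ\XX_t)$ for $f\in\Test$, which is a Lagrangian computation: a priori you do not know that $f\circ\XX_t\in H^{1,2}$, and the pointwise derivative $\frac{\di}{\di t}(f\circ\XX_t)=(b\cdot\nabla f)\circ\XX_t$ needs care. The paper sidesteps this entirely by working Eulerianly with the transported density $u_t$ of item (iii) (equivalently, $u_t=\bar u\circ\XX_{-t}$), regularizing it as $P_\alpha u_t$, and invoking \cite[Lemma 5.8]{AmbrosioTrevisan14} to get that $P_\alpha u_t\in\Test$ \emph{still solves the continuity equation}. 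One can then legitimately compute
\begin{equation*}
\frac{\di}{\di t}\frac{1}{2}\int_X|\nabla P_\alpha u_t|^2\di\meas
=-\int_X b\cdot\nabla\Delta P_\alpha u_t\, P_\alpha u_t\di\meas
=\int_X b\cdot\nabla P_\alpha u_t\, \Delta P_\alpha u_t\di\meas=0,
\end{equation*}
the vanishing coming from $\div b=0$ and $\nabla_{\sym}b=0$ via the adjoint identity in \autoref{def:symmetric covariant derivative} with $\phi=\psi=P_\alpha u_t$ (exactly the ``pairing of $\nabla_{\sym}b$ with $\nabla f\otimes\nabla f$'' you anticipated). Letting $\alpha\downarrow 0$ then gives $u_t\in H^{1,2}$ with $\Ch(u_t)=\Ch(\bar u)$, hence $\Ch(\bar u\circ\XX_t)=\Ch(\bar u)$, and the rest goes as you describe via \cite[Proposition 4.20]{Gigli13}. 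So your route is correct in spirit but vague precisely where precision is needed; the concrete device you are missing is to regularize the \emph{transported density} by $P_\alpha$ and use its continued validity as a continuity-equation solution, rather than regularizing $f$ and $b$ separately or attempting a direct Lagrangian differentiation.
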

\begin{proof} (i), (ii), (iii) immediately follow from the results in \cite{AmbrosioTrevisan14} (see Theorem~8.3 together with Theorem~4.3 and Theorem~4.4). Let us prove (iv). From \eqref{w} we conclude that $\left(X_t\right)_{\sharp}\meas=\meas$ for any $t\in \setR$. Let us now take $\bar u\in L^{\infty}(X,\meas)\cap W^{1,2}(X,\dist, \meas)$ and $u$ as in (ii). Thanks to \cite[Lemma~5.8]{AmbrosioTrevisan14} we get that $P_{\alpha}u_t\in \Test(X,\dist,\meas)$ is still a solution of the continuity equation for any $\alpha\in (0,1)$. Then we can compute
\begin{align*}
	\frac{\di}{\di t} \frac{1}{2}\int_X |\nabla P_{\alpha}u_t|^2 \di \meas &=-\frac{\di}{\di t} \frac{1}{2}\int_X P_{\alpha}u_t \Delta P_{\alpha} u_t \di \meas\\
	&=-\int_X b\cdot \nabla \Delta P_{\alpha} u_t\ P_{\alpha} u_t \di \meas.
\end{align*}
Since $\div b=0$ and $\nabla_{\sym} b=0$, we deduce 
\begin{equation*}
	-\int_X b\cdot \nabla \Delta P_{\alpha} u_t\ P_{\alpha} u_t \di \meas
	=\int_X b\cdot \nabla P_{\alpha} u_t\ P_{\alpha} \Delta u_t \di \meas=0,
\end{equation*}
therefore
\begin{equation}\label{w1}
	\int_X |\nabla P_{\alpha}u_t|^2 \di \meas=\int_X |\nabla P_{\alpha}\bar u|^2 \di \meas\qquad
	\forall t\in \setR,\quad \forall \alpha\in (0,1).
\end{equation}
Taking the limit in \eqref{w1} as $\alpha\to 0$ it easily follows that $u_t\in H^{1,2}(X,\dist,\meas)$ for any $t\in \setR$ and that $\int_X |\nabla u_t|^2 \di \meas$ does not depend on $t\in \setR$. Using the identity $u_t(x)=\bar u(\XX(-t,x))$ (which can be checked using the semigroup property \eqref{eq:semigroup property} and $\bigl(\XX_t\bigr)_{\sharp}\meas=\meas$) we deduce that, for any $t\in\setR$, 
\begin{equation*}
	\Ch(\bar u\circ \XX_t)=\Ch(\bar u)
	\qquad
	\forall \bar u\in L^{\infty}(X,\meas)\cap H^{1,2}(X,\dist,\meas),
\end{equation*}
and (iv) follows from arguments that have been used several times in the literature, as in \cite[Proposition 4.20]{Gigli13}.
\end{proof}

As we anticipated above, $\RCD(K,N)$ spaces were introduced in \cite{Gigli15} as a finite dimensional counterpart of $\RCD(K,\infty)$. Here we just recall that they can be characterized asking for the quadraticity of $\Ch$, the volume growth condition $\meas(B_r(x))\le c_1\exp(c_2r^2)$ for some (and thus for all) $x\in X$, the validity of the \textit{Sobolev to Lipschitz property} and of a weak form of Bochner's inequality
\begin{equation*}
\frac{1}{2}\Delta\abs{\nabla f}^2-\nabla f\cdot\nabla\Delta f\ge \frac{\left(\Delta f\right)^2}{N}+K\abs{\nabla f}^2 
\end{equation*}   
for any $f\in \Test(X,\dist,\meas)$. We refer to \cite{AmbrosioMondinoSavare15,ErbarKuwadaSturm15} for a more detailed discussion and equivalent characterizations of the $\RCD(K,N)$ condition.

Let us pass to a brief presentation of the main properties of $\RCD(K,N)$ spaces that will play a role in the sequel.

We recall that, as a consequence of the Bishop-Gromov inequality (see \cite[Theorem 30.11]{Villani09}), $\RCD(K,N)$ spaces are locally doubling, that is to say, for any $R>0$ there exists a constant $C_D$ depending only on $R,K$ and $N$ such that
\begin{equation*}
\meas(B_{2r}(x))\le C_D\meas(B_r(x)),
\end{equation*} 
for any $x\in X$ and for any $0<r\leq R$. Another consequence of the Bishop-Gromov inequality is that $\meas(\partial B_r(x))=0$ for any $x\in X$ and for any $r>0$. 

In \cite{Gigli13} Gigli proved that in $\RCD(0,N)$ spaces the \textit{splitting theorem} still holds, extending to this abstract framework the results obtained by Cheeger-Gromoll and Cheeger-Colding for smooth Riemannian manifolds and Ricci limit spaces, respectively.

\begin{theorem}\label{thm:splitting}
Let $(X,\dist,\meas)$ be an $\RCD(0,N)$ m.m.s. containing a line, that is to say a curve $\gamma:\setR\to X$ such that
\begin{equation*}
\dist(\gamma(s),\gamma(t))=\abs{t-s},\quad\forall s,\,t\in\setR.
\end{equation*}
Then there exists a m.m.s. $(X',\dist',\meas')$ such that $(X,\dist,\meas)$ is isomorphic as a m.m.s. to
\begin{equation*}
(X',\dist',\meas')\times (\setR,\dist_{eucl},\Leb^1).
\end{equation*}
Furthermore:
\begin{itemize}
	\item[(i)] If $n\ge $ then $(X',\dist',\meas')$ is an $\RCD(0,N-1)$;
	\item[(ii)] if $N\in [1,2)$ then $X'$ is a point.
\end{itemize}
Moreover, $\gamma(t)=(x',t)$ for any $t\in\setR$, for some $x'\in X'$.
\end{theorem}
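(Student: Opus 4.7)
The plan is to reproduce, in the synthetic calculus framework, the classical Cheeger--Gromoll strategy as implemented by Gigli, based on the Busemann functions associated with the two rays obtained by splitting the given line $\gamma$ at $t=0$. Setting $\gamma^{\pm}(t):=\gamma(\pm t)$ for $t\geq 0$, define
\begin{equation*}
b^{\pm}(x):=\lim_{t\to+\infty}\bigl(t-\dist(x,\gamma^{\pm}(t))\bigr).
\end{equation*}
Monotonicity via the triangle inequality shows the limits exist and that $b^{\pm}$ are $1$-Lipschitz. The first key input is the Laplacian comparison available on $\RCD(0,N)$ spaces applied to distance functions from the points $\gamma^{\pm}(t)$: in the limit this yields $\Delta b^{\pm}\le 0$ in the sense of distributions. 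Triangle inequality gives $b^{+}+b^{-}\le 0$ everywhere with equality along $\gamma$, so the strong maximum principle (valid in $\RCD(0,N)$, e.g.\ as a consequence of the Li--Yau/parabolic regularization via the heat flow) forces $b^{+}+b^{-}\equiv 0$. In particular $b^{+}$ is both sub- and superharmonic, hence $\Delta b^{+}=0$.

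The second step uses the weak Bochner inequality on $\RCD(0,N)$ applied to $b:=b^{+}$. With $K=0$ and $\Delta b=0$ it reads
\begin{equation*}
\tfrac{1}{2}\Delta\abs{\nabla b}^{2}\ge\frac{(\Delta b)^{2}}{N}+\nabla b\cdot\nabla\Delta b=0.
\end{equation*}
Thus $\abs{\nabla b}^{2}$ is subharmonic and bounded from above by $(\Lip b)^{2}\le 1$, while being equal to $1$ along the line; a maximum-principle argument (or integration against the heat kernel and use of mass preservation) gives $\abs{\nabla b}\equiv 1$ $\meas$-a.e. Upgrading Bochner to its Hessian-enhanced version from the second-order calculus of \cite{Gigli18}, the saturation case $\Delta b=0$ and $\abs{\nabla b}=1$ forces $\Hess b\equiv 0$, equivalently $\nabla_{\sym}(\nabla b)=0$ in the sense of \autoref{def:symmetric covariant derivative}.

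At this point the vector field $\bb:=\nabla b$ satisfies $\abs{\bb}=1$, $\div\bb=\Delta b=0$ and $\nabla_{\sym}\bb=0$, so part (iv) of \autoref{Th: Lagrangianflows} produces a measure-preserving isometric flow $\XX_{t}:X\to X$ with semigroup property defined pointwise. Declaring $X':=\{b=0\}$ with the induced distance $\dist'$ and the measure $\meas'$ obtained by disintegration of $\meas$ along the level sets of $b$, the map $(x',t)\mapsto\XX_{t}(x')$ is a bijective isometry $X'\times\setR\to X$ that pushes $\meas'\otimes\Leb^{1}$ to $\meas$. The statement about $\gamma$ passing through $(x',\plchldr)$ follows from the fact that $\XX_{t}$-orbits are unit-speed lines (since $\abs{\bb}=1$ along the trajectories and $\bb$ is the gradient of the $1$-Lipschitz function $b$). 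Finally, the preservation of the $\RCD$ condition under Cartesian factorization, together with the characterization of $\RCD(0,N)$ via Bochner with dimension parameter, yields $\RCD(0,N-1)$ on $X'$ when $N\ge 2$, and reduces $X'$ to a point when $N\in[1,2)$ since otherwise the product would violate the dimensional bound.

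The most delicate point is the rigidity step $\Hess b\equiv 0$: the Bochner inequality as stated in the preliminaries carries only the $(\Delta b)^{2}/N$ term, so to extract a Hessian one must invoke the self-improvement of Bochner and the measure-valued Laplacian of $\abs{\nabla b}^{2}$ from Gigli's second-order calculus. A second nontrivial point is the passage from the $\meas$-a.e.\ splitting provided by the flow to a genuine pointed-isomorphism of metric measure spaces, where one has to argue that the orbits realize distances (so that Pythagoras holds for $\dist$), typically by combining $\abs{\bb}=1$ pointwise on good representatives with the Sobolev-to-Lipschitz property.
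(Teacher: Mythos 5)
This theorem is not proved in the paper at all: it is imported from Gigli's work (\cite{Gigli13}, with the survey \cite{Gigli14}), so there is no internal proof to compare against. Your sketch reproduces the broad outline of Gigli's Cheeger--Gromoll-type strategy (Busemann functions, Laplacian comparison, maximum principle to force $b^{+}=-b^{-}$, Bochner self-improvement to force $\Hess b=0$, and then an isometric measure-preserving flow producing the splitting), which is indeed the route taken in the cited source. Your use of \autoref{Th: Lagrangianflows}(iv) is an anachronism rather than an error: Gigli's proof predates the Regular Lagrangian Flow theory of \cite{AmbrosioTrevisan14}, so he builds the ``translation flow'' by hand via EVI/heat-flow regularization rather than invoking existence-and-uniqueness of RLFs. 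This is a legitimate modernization but does not match the technical machinery of the original.

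There is, however, one step where the sketch as written does not actually work. To pass from $\Delta b^{+}=0$ to $|\nabla b^{+}|\equiv 1$, you invoke ``subharmonicity of $|\nabla b^{+}|^{2}$ plus the value $1$ along the line plus a maximum-principle argument.'' This is not a valid a.e.\ argument: the line $\gamma$ is $\meas$-negligible, so ``$|\nabla b^{+}|=1$ along $\gamma$'' is not an admissible input for the minimal relaxed slope, which is only defined up to $\meas$-null sets; the strong maximum principle requires the supremum to be attained for a pointwise representative, and even though in PI spaces the Cheeger identification $|\nabla b|=\lip b$ holds a.e., equality at the specific points $\gamma(t)$ of the subharmonic representative of $|\nabla b^{+}|^{2}$ is not automatic. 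Gigli's actual argument is substantially more delicate: he studies the flow maps $T_{s}$ generated by the gradient of $b^{+}$, proves $b^{+}\circ T_{s}=b^{+}+s$ and that $T_{s}$ is measure preserving (using $\Delta b^{+}=0$), and deduces unit metric speed of the trajectories, from which $|\nabla b^{+}|=1$ a.e.\ follows; only after this is the self-improved Bochner inequality applied to kill the Hessian. Likewise, the final passage from an $\meas$-a.e.\ flow to a genuine pointed isomorphism realizing $\dist=\sqrt{\dist'^{2}+|t-s|^{2}}$ is not a closing remark but the bulk of the technical work in Gigli's paper; you correctly flag this as nontrivial, but the sketch leaves it essentially untouched.
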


Since $\RCD(K,N)$ spaces are locally doubling and they satisfy a local Poincaré inequality (see \cite{VonRenesse08}), the general theory of Dirichlet forms as developed in \cite{Sturm96} grants that we can find a locally H\"older continuous representative of the heat kernel $p$ on $X\times X\times(0,+\infty)$. 
	
Moreover in \cite{JangLiZhang} the following finer properties of the heat kernel over $\RCD(K,N)$ spaces have been proved: there exist constants $C=C_{K,N}>1$ and $c=c_{K,N}\ge0$ such that
	\begin{equation}\label{eq:kernelestimate}
	\frac{1}{C\meas(B_{\sqrt{t}}(x))}\exp\left\lbrace -\frac{\dist^2(x,y)}{3t}-ct\right\rbrace\le p_t(x,y)\le \frac{C}{\meas(B_{\sqrt{t}}(x))}\exp\left\lbrace-\frac{\dist^2(x,y)}{5t}+ct \right\rbrace  
	\end{equation}
	for any $x,y\in X$ and for any $t>0$. Moreover it holds
	\begin{equation}\label{eq:gradientestimatekernel}
	\abs{\nabla p_t(x,\cdot)}(y)\le \frac{C}{\sqrt{t}\meas(B_{\sqrt{t}}(x))}\exp\left\lbrace -\frac{\dist^2(x,y)}{5t}+ct\right\rbrace \quad\text{for $\meas$-a.e. $y\in X$},
	\end{equation}
	for any $t>0$ and for any $x\in X$. We remark that, in the case $K=0$, it is possible to take $c=0$.

Next we recall the notions of tangent space at a given point of a m.m.s. and of regular $k$-dimensional set. Given a m.m.s. $(X,\dist,\meas)$, $x\in\supp\meas$ and $r\in (0,1)$ we shall consider the rescaled p.m.m.s. $(X,r^{-1}\dist,\meas_r^x,x)$, where
\begin{equation}\label{eq:defC(x,r)}
\meas^x_r:=C(x,r)^{-1}\meas,\qquad\quad C(x,r):=\int_{B_r(x)}\biggl(1-\frac{1}{r}\dist(\cdot,x)\biggr)\di\meas.
\end{equation}

\begin{definition}\label{def:tangent}
Let $(X,\dist,\meas)$ be a m.m.s. and let $x\in \supp\meas$. We say that a p.m.m.s. $(Y,\varrho,\mu,y)$ is tangent to $(X,\dist,\meas)$ if there exist $r_i\downarrow 0$ such that $(X,r_i^{-1}\dist,\meas^x_{r_i},x)$ converge to $(Y,\varrho,\mu,y)$ in the pointed measured Gromov-Hausdorff topology (we refer to the forthcoming \autoref{def: mpGH convergence} for the notion of pointed measured Gromov-Hausdorff convergence).\\
We shall denote by $\Tan_x(X,\dist,\meas)$ the collection of all the tangent spaces of $(X,\dist,\meas)$ at $x$. 
\end{definition}

\begin{definition}\label{def:regularset}
Given an $\RCD(K,N)$ m.m.s. $(X,\dist,\meas)$ we will say that $x\in X$ is a $k$-regular point for some integer $1\le k\le N$ if $\Tan_x(X,\dist,\meas)=\set{(\setR^k,\dist_{eucl},c_k\Leb^k,0^k)}$, where 
\begin{equation*}
c_k:=\left(\int_{B_1(0)}\left(1-\abs{x}\right)\di x\right)^{-1}.
\end{equation*}
 
We shall denote by $\mathcal{R}_k\subset X$ the set of $k$-regular points of $(X,\dist,\meas)$.
\end{definition}

The following theorem sharpens one of the conclusions of \cite{MondinoNaber14} and has been proved in \cite{BrueSemola18}.

\begin{theorem}\label{thm:structuretheory}
Let $(X,\dist,\meas)$ be an $\RCD(K,N)$ m.m.s.. Then there exists a unique integer $1\le k\le N$ such that 
\begin{equation*}
\meas(X\setminus\mathcal{R}_k)=0.
\end{equation*}
\end{theorem}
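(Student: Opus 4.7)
The statement reduces to two assertions: (i) $\meas$-a.e.\ $x\in X$ belongs to some $\mathcal{R}_k$ with $1\le k\le N$, and (ii) the integer $k$ in (i) is $\meas$-essentially constant. For (i) I would invoke the theorem of \cite{MondinoNaber14}, ensuring that $\Tan_x(X,\dist,\meas)$ contains at least one Euclidean element for $\meas$-a.e.\ $x$, and then upgrade ``contains'' to ``consists of a unique Euclidean element'' via the iterated-tangents principle revisited in \autoref{sec:Appendix}. Indeed, if a Euclidean tangent $(\setR^k,\dist_{eucl},c_k\Leb^k,0^k)$ is available at $x$ and its own tangents are the same Euclidean space at every point, the iteration forces $\Tan_x(X,\dist,\meas)=\{(\setR^k,\dist_{eucl},c_k\Leb^k,0^k)\}$, i.e.\ $x\in\mathcal{R}_k$, for $\meas$-a.e.\ $x$. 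This already gives $\meas(X\setminus\bigcup_{k=1}^N\mathcal{R}_k)=0$, and only the uniqueness of $k$ remains.

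For (ii) I would argue by contradiction: suppose $\meas(\mathcal{R}_k)>0$ and $\meas(\mathcal{R}_j)>0$ with $1\le k<j\le N$. The strategy is to encode membership in $\mathcal{R}_k$ by a heat-kernel invariant that is stable along suitable flows. Using the two-sided Gaussian bounds \eqref{eq:kernelestimate} combined with the convergence of rescaled measures at a regular point, one obtains the pointwise identification
$$
k(x):=\lim_{t\downarrow 0}\frac{-2\log p_t(x,x)}{\log(1/t)}=k\qquad\text{for }x\in\mathcal{R}_k,
$$
so that $k(\cdot)$ is a well-defined, integer-valued function on a set of full $\meas$-measure. Next, using the theory of regular Lagrangian flows from \autoref{Th: Lagrangianflows}, for a sufficiently rich family of Sobolev vector fields $b$ with $\div b\in L^\infty$ and $\nabla_{\sym}b\in L^2$, the flow $\XX_t$ satisfies the two-sided Jacobian bound \eqref{w}; I would then run the continuity equation of \autoref{Th: Lagrangianflows}(iii) on heat-kernel profiles $p_s(y,\cdot)\meas$ and pass to the limit $s\downarrow 0$, coupling with the gradient bound \eqref{eq:gradientestimatekernel}, to conclude $k(\XX_t(x))=k(x)$ for $\meas$-a.e.\ $x$. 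Since gradients of test functions in $\Test(X,\dist,\meas)$ together with their linear combinations and heat-flow regularizations form a class rich enough to transport positive $\meas$-mass from one subset of $\supp\meas$ to another (using connectedness of $\supp\meas$ and the local Poincar\'e inequality), one can engineer a single flow mapping a positive-measure subset of $\mathcal{R}_k$ into $\mathcal{R}_j$, contradicting the invariance $k(\XX_t(\cdot))=k(\cdot)$.

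The main obstacle is the quantitative preservation of the exponent $k(x)$ along the flow. The compressibility estimate \eqref{w} is only a global $L^\infty$-bound on $(\XX_t)_\sharp\meas$ relative to $\meas$, whereas $k(x)$ is determined by the fine $r\downarrow 0$ asymptotics of $\meas(B_r(x))$ and is a priori unstable under such perturbations. Overcoming this requires a careful coupling of the continuity equation with the sharp gradient heat-kernel estimate \eqref{eq:gradientestimatekernel}, using the isometric-rigid case $\div b=0$, $\nabla_{\sym}b=0$ of \autoref{Th: Lagrangianflows}(iv) as a local model and then handling the general case perturbatively in the regularity norms of $b$. By comparison, the $\bigcup_k \mathcal{R}_k$ decomposition via iterated tangents and the construction of the required family of vector fields are comparatively routine once this quantitative transfer of the dimensional exponent along the flow is available.
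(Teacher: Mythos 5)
The paper does not prove \autoref{thm:structuretheory}: it is quoted from \cite{BrueSemola18} (a companion paper of two of the present authors) and from \cite{MondinoNaber14}. So there is no internal proof to compare against; what can be assessed is whether your sketch constitutes a viable argument.

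Your two-step reduction is the right one, and it coincides at the macroscopic level with the strategy of \cite{BrueSemola18}. However, both steps as written contain genuine gaps.

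For step (i), the iterated-tangents principle by itself does not ``upgrade contains to consists of''. Preiss' principle gives $\Tan_{y'}(Y)\subset\Tan_x(X)$ for $\meas$-a.e.\ $x$ and all $(Y,\varrho,\mu,y)\in\Tan_x(X)$, $y'\in\supp\mu$. Applied with $Y=\setR^k$, this only returns $\setR^k\in\Tan_x(X)$, which is what you started from; it does not exclude a second, non-Euclidean element of $\Tan_x(X)$. The actual uniqueness of the Euclidean tangent is \cite[Thm.\ 1.1]{GigliMondinoRajala15}, whose proof uses the iterated-tangents device in a genuinely more delicate way (combined with a maximality argument coming from the splitting theorem and Gromov compactness). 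You should invoke \cite{GigliMondinoRajala15} rather than deduce the uniqueness in one line.

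For step (ii), you have correctly identified the central obstacle: the compressibility bound \eqref{w} controls $(\XX_t)_\sharp\meas$ only up to multiplicative $L^\infty$-constants, and this is far too weak to propagate the local dimensional exponent
$$
k(x)=\lim_{r\downarrow 0}\frac{\log\meas(B_r(x))}{\log r}
$$
(which is how the heat-kernel characterization should read, up to the sign slip $-2\log p_t(x,x)\mapsto 2\log p_t(x,x)$) along the flow. But the cure you propose, coupling the continuity equation of \autoref{Th: Lagrangianflows}(iii) with the gradient estimate \eqref{eq:gradientestimatekernel} and ``perturbing'' from the isometric case \autoref{Th: Lagrangianflows}(iv), is not a proof: no bound available in the framework of the present paper upgrades the $L^\infty$-Jacobian control to pointwise small-scale behaviour of $\meas(B_r(\XX_t(x)))$, and $\nabla P_s f/|\nabla P_s f|$-type vector fields with vanishing symmetric derivative are rigid, not generic. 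The missing ingredient, and the genuine new input of \cite{BrueSemola18}, is a \emph{Lusin--Lipschitz regularity} estimate for Regular Lagrangian Flows of $H^{1,2}$ vector fields in $\RCD(K,N)$ spaces, in the spirit of the a-priori estimates of Crippa and De Lellis; this quantitative regularity is what allows to transfer positive-measure subsets of $\mathcal R_{k_1}$ into $\mathcal R_{k_2}$ by maps that are Lipschitz on large sets and hence cannot increase Hausdorff dimension, yielding the contradiction. Without this regularity the connectedness and density arguments at the end of your sketch do not close.
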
 

Let us conclude this subsection recalling the notion of \textit{non collapsed} $\RCD(K,N)$ m.m.s., as introduced in \cite{DePhilippisGigli18}, and some useful property of this class.

\begin{definition}\label{def:ncRCD}
Let $K\in\setR$ and $N\ge 1$. We say that $(X,\dist,\meas)$ is a non collapsed $\RCD(K,N)$ space, $\ncRCD(K,N)$ space for short, if it is $\RCD(K,N)$ and $\meas=\mathcal{H}^N$.
\end{definition}

It is easy to check that, if $(X,\dist,\meas)$ is $\ncRCD(K,N)$, then $N$ has to be an integer.\\
Below we state a useful regularity property of $\ncRCD$ spaces. Its validity follows from the volume cone-metric cone property (see \cite{DePhilippisGigli16}) and the volume rigidity theorem (see \cite[Theorem 1.5]{DePhilippisGigli18}) with arguments analogous to the ones adopted in theory of non collapsed Ricci-limit spaces. 

\begin{theorem}\label{thm:improvedregularity}
Let $(X,\dist,\meas)$ be a $\ncRCD(K,N)$ m.m.s.. Assume that for some $x\in X$ it holds
\begin{equation*}
\left(\setR^N,\dist_{eucl},c_N\Leb^N,0^N\right)\in \Tan_x(X,\dist,\meas).
\end{equation*}
Then $x$ is a regular point, that is to say the tangent at $x$ is unique (and $N$-dimensional Euclidean).
\end{theorem}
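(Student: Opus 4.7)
The plan is to upgrade the existence of one Euclidean $N$-dimensional tangent at $x$ to uniqueness of the tangent, by combining Bishop-Gromov volume monotonicity with the quantitative volume rigidity theorem of \cite{DePhilippisGigli18}. The first observation is that in the noncollapsed setting, pmGH convergence to a limit with measure proportional to $\haus^N$ forces ``continuity of the Hausdorff measure''---a central result of \cite{DePhilippisGigli18}. Applied to the distinguished sequence of rescalings $r_i\downarrow 0$ producing the Euclidean tangent, and after unwinding the normalization constant $C(x,r_i)$ appearing in the definition of $\meas^x_r$, this yields
\begin{equation*}
\lim_{i\to\infty}\frac{\haus^N(B_{r_i}(x))}{\omega_N\, r_i^N}=1.
\end{equation*}

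Next I would invoke the Bishop-Gromov volume monotonicity valid in any $\ncRCD(K,N)$ space, namely that $r\mapsto\haus^N(B_r(x))/v_{K,N}(r)$ is non-increasing, where $v_{K,N}$ denotes the volume of a ball in the $N$-dimensional model space of constant sectional curvature $K/(N-1)$. Since $v_{K,N}(r)=\omega_N r^N(1+O(r^2))$ as $r\to 0^+$, the density
\begin{equation*}
\theta(x):=\lim_{r\to 0^+}\frac{\haus^N(B_r(x))}{\omega_N\, r^N}
\end{equation*}
exists and, by the previous display, it equals $1$. A further application of monotonicity upgrades this to the quantitative statement that for every $\eps>0$ there exists $r_\eps>0$ with $\haus^N(B_r(x))\ge (1-\eps)\omega_N r^N$ for all $0<r\le r_\eps$.

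At this point I would apply the volume rigidity theorem \cite[Theorem~1.5]{DePhilippisGigli18}: for every $\eta>0$ there exist $\delta>0$ and $\bar r>0$ such that whenever a point $y$ in an $\ncRCD(K,N)$ space satisfies $\haus^N(B_r(y))\ge (1-\delta)\omega_N r^N$ for some $r\le \bar r$, the rescaled pointed ball $(B_1(y),r^{-1}\dist,y)$ is $\eta$-close, in pointed measured Gromov-Hausdorff sense, to the Euclidean unit ball equipped with its canonical normalized measure. By the previous paragraph this hypothesis is met at $x$ for all sufficiently small $r$, and hence along every sequence $r_i'\downarrow 0$ the rescalings $(X,(r_i')^{-1}\dist,\meas^x_{r_i'},x)$ admit, up to further subsequences, only $(\setR^N,\dist_{eucl},c_N\Leb^N,0^N)$ as pmGH limit on a fixed unit ball. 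A standard diagonal argument, letting also the reference radius tend to infinity and relying on the Bishop-Gromov relative volume control to propagate closeness to larger balls, promotes this to pmGH convergence of the whole pointed structure, thereby forcing every element of $\Tan_x(X,\dist,\meas)$ to coincide with $(\setR^N,\dist_{eucl},c_N\Leb^N,0^N)$.

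The principal obstacle in this programme is the quantitative volume rigidity just invoked, which is precisely why the statement is imported from \cite{DePhilippisGigli18}. Its proof relies on the volume-cone-to-metric-cone principle of \cite{DePhilippisGigli16} applied at nearly-maximal-density points through a blow-up and contradiction argument modeled on the Cheeger-Colding theory of noncollapsed Ricci limits. By contrast, the extraction of volume convergence from pmGH convergence and its bootstrapping to uniqueness via Bishop-Gromov monotonicity are comparatively routine once this deep rigidity tool is available.
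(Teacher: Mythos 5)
Your proposal is correct and rests on the same two pillars as the paper's sketch (Bishop--Gromov monotonicity to pass from convergence along $r_i$ to the full volume density $\theta(x)=1$, and the De~Philippis--Gigli volume rigidity), but it routes the final step differently. The paper fixes an arbitrary $(Y,\dist_Y,\meas_Y,y)\in\Tan_x$, transfers the density identity to the tangent via weak convergence and \cite[Theorem~1.2]{DePhilippisGigli18} to get $\meas_Y=c_N\haus^N$ with $\haus^N(B_1^Y(y))=\omega_N$, applies the volume rigidity theorem in its exact form to conclude that $B^Y_{1/2}(y)$ is isometric to $B^{\setR^N}_{1/2}(0)$, and then invokes the metric cone structure of tangent cones to propagate this local isometry to $Y\cong\setR^N$. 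You instead use the $\eps$--$\delta$ form of the volume rigidity theorem at $x$ itself, at every sufficiently small scale $r$ (implicitly using that the rescaled ambient space is $\ncRCD(Kr^2,N)$, so the curvature lower bound improves), and then promote closeness on the unit ball to convergence on every bounded set via a diagonal argument across scales. Your version avoids the appeal to the cone structure of tangents, at the modest cost of tracking the quantitative rigidity estimate across the diagonal extraction; the paper's version is slightly shorter because the cone structure does the global extension in one stroke. Both are legitimate, and your identification of De~Philippis--Gigli's continuity of $\haus^N$ under noncollapsed pmGH convergence as the mechanism behind the initial volume convergence is if anything more transparent than the paper's terse justification.
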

 
\begin{proof}
The conclusion follows from \cite[Proposition 2.10]{DePhilippisGigli18}, we just provide here a sketch of the proof for sake of completeness. Let $r_i\downarrow 0$ be a sequence of scales such that the rescalings of the p.m.m.s. $(X,\dist,\meas,x)$ converge in the pmGH topology to $(\setR^N,\dist_{eucl}, c_N\Leb^N, 0^N)$ as $i\to\infty$. Since $\Leb^N(\partial B_1(0))=0$, we get
\begin{equation*}
\lim_{i\to\infty}\meas(B_{r_i}(x))/\bigl(c_N\omega_Nr_i^N\bigr)=1.
\end{equation*}
The Bishop-Gromov inequality allows then to improve this conclusion to 
\begin{equation*}
\lim_{r\to 0}\meas(B_r(x))/\bigl(c_N\omega_Nr^N\bigr)=1,
\end{equation*}
thus, for any $(Y,\dist_Y,\meas_Y,y)\in\Tan_x(X,\dist,\meas)$ we have that $\meas_Y=c_N\mathcal{H}^N$ by \cite[Theorem 1.2]{DePhilippisGigli18} and $\mathcal{H}^N(B^Y_1(y))=\omega_N$. Hence the volume rigidity theorem \cite[Theorem 1.5]{DePhilippisGigli18} applies, yielding that $B^Y_{{1}/{2}}(y)$ is isometric to $B^{\setR^N}_{{1}/{2}}(0)$. Eventually, thanks to the fact that $(Y,\dist_Y,\meas_Y,y)$ is a metric cone with tip $y$, we conclude that it is isomorphic to $\setR^N$. 
\end{proof}

\begin{remark}\label{rm:menguy}
Let us remark that there is no analogue of \autoref{thm:improvedregularity} without the non collapsing assumption. Indeed Menguy built in \cite{Menguy} an example of Ricci limit space with a strictly weakly regular point, that is to say a point with an Euclidean space in the tangent cone whose tangent cone is not unique.
\end{remark}

\subsection{Convergence and stability results for sequences of $\RCD(K,N)$ spaces}

We dedicate this subsection to an overview of the subject of convergence and stability for Sobolev functions defined on converging sequences of metric measure spaces. The main references for this part are \cite{GigliMondinoSavare15} and \cite{AmbrosioHonda}.

\begin{definition}\label{def: mpGH convergence}
	A sequence $\set{(X_i, \dist_i, \meas_i, x_i)}_{i\in \setN}$ of pointed m.m.s. is said to converge in the pmGH topology to $(Y,\varrho,\mu, y)$ if there exist a complete separable metric space $(Z, \dist_Z)$ and isometric embeddings 
	\begin{align*}
	 &\Psi_i : (\supp \meas_i, \dist_i)\to (Z,\dist_Z)
	 \qquad
	 \forall i\in \setN,\\
	&\Psi: (\supp\mu, \varrho)\to (Z,\dist_Z),
	\end{align*}
	such that for every $\eps>0$ and $R>0$ there exists $i_0$ such that for every $i>i_0$
	\begin{equation*}
		\Psi(B^Y_R(y))\subset [\Psi_i(B^{X_i}_R(x_i))]_{\eps},
		\qquad
		\Psi_i(B^{X_i}_R(x_i))\subset [\Psi(B^Y_R(y))]_{\eps},
	\end{equation*}
	where $[A]_{\eps}:=\set{z\in Z\ : \dist_Z(z,A)<\eps}$ for every $A\subset Z$. Moreover $(\Psi_i)_{\#} \meas_i\weakto \Psi_{\#} \mu$, where the convergence is understood in duality with $\Cbs(Z)$.
\end{definition}

In the case of a sequence of uniformly locally doubling metric measure spaces $(X_i,\dist_i,\meas_i,x_i)$ (as in the case of $\RCD(K,N)$ spaces), the pointed measured Gromov-Hausdorff convergence to $(Y,\varrho,\mu,y)$ can be equivalently characterized asking for the existence of a proper metric space $(Z,\dist_Z)$ such that all the metric spaces $(X_i,\dist_i)$ are isometrically embedded into $(Z,\dist_Z)$, $x_i\to y$ and $\meas_i\weakto\mu$ in duality with $\Cbs(Z)$. This is the so called extrinsic approach, that we shall adopt in the rest of the note.

\begin{definition}\label{def:convpuntunif}
Let $(X_i, \dist_i, \meas_i, x_i)$ be pointed m.m.s. converging in the pmGH topology to $(Y,\varrho,\mu, y)$ and let $f_i:X_i\to\setR$, $f:Y\to\setR$. Assume the convergence to be realized into a common metric space $(Z,\dist_Z)$ as above. Then we say that $f_i\to f$ pointwise if
$f_i(x_i)\to f(x)$ for every sequence of points $x_i\in X_i$ such that $x_i\to x$ in $Z$.
If moreover for every $\varepsilon>0$ there exists $\delta>0$ such that
$\abs{f_i(x_i)-f(x)}\le\varepsilon$
for every $i\ge\delta^{-1}$ and every $x_i\in X_i$, $x\in Y$ with $\dist_Z(x_i,x)\le\delta$,
then we say that $f_i\to f$ uniformly.
\end{definition}

\begin{remark}\label{remark:pointwisevsuniform}
	Let us point out that, if all the spaces coincide and are compact, then $f_i\to f$ pointwise according to \autoref{def:convpuntunif} if and only if $f$ is continuous and $f_i\to f$ uniformly. Therefore the terminology ``pointwise convergence'' might be a bit misleading. Nevertheless we prefer to keep using it since this terminology is used in several other works \cite{AmbrosioHonda,AmbrosioHondaTewodrose17,MondinoNaber14}.
\end{remark}

We recall below the notions of convergence in $L^p$ and Sobolev spaces for functions defined over converging sequences of metric measure spaces. We will be concerned only with the cases $p=2$ and $p=1$ in the rest of the note. We refer again to \cite{AmbrosioHonda,GigliMondinoSavare15} for a more general treatment and the proofs of the results we state below.

\begin{definition}\label{def:L2convergence}
We say that $f_i\in L^2(X_i,\meas_i)$ converge in $L^2$-weak to $f\in L^2(Y,\mu)$ if $f_i\meas_i\weakto f\mu$ in duality with $\Cbs(Z)$ and $\sup_i\norm{f_i}_{L^2(X_i,\meas_i)}<+\infty$.\\
We say that $f_i\in L^2(X_i,\meas_i)$ converge in $L^2$-strong to $f\in L^2(Y,\mu)$ if $f_i\meas_i\weakto f\mu$ in duality with $\Cbs(Z)$ and 
$\lim_i\norm{f_i}_{L^2(X_i,\meas_i)}=\norm{f}_{L^2(Y,\mu)}$.
\end{definition}

\begin{definition}\label{def:H12convergence}
We say that $f_i\in H^{1,2}(X_i,\dist_i,\meas_i)$ are weakly convergent to $f\in H^{1,2}(Y,\varrho,\mu)$ if they converge in $L^2$-weak and $\sup_i\Ch^i(f_i)<+\infty$. Strong $H^{1,2}$-converge is defined asking that $f_i$ converge to $f$ in $L^2$-strong and $\lim_i\Ch^i(f_i)=\Ch(f)$.  
\end{definition}

From now until the end of this subsection we always assume that $(X_i,\dist_i,\meas_i)$ are $\RCD(K,N)$ metric measure spaces for any $i\in\setN$.

The following localized lower semicontinuity result will play a role in the sequel of the note. It is taken from \cite[Lemma 5.8]{AmbrosioHonda}. 

\begin{proposition}\label{prop:locallsc}
Let $f_i\in H^{1,2}(X_i,\dist_i,\meas_i)$ be weakly converging in $H^{1,2}$ to $f\in H^{1,2}(Y,\varrho,\mu)$. Then
\begin{equation*}
\liminf_{i\to\infty}\int_Z g\abs{\nabla f_i}\di\meas_i\ge\int_Z g\abs{\nabla f}\di\mu,\qquad\text{for any nonnegative $g\in \Lipbs(Z)$.}
\end{equation*} 
\end{proposition}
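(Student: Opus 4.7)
The plan is to use convex duality to reduce the $L^1$-type statement to the stability of the $L^2$-bilinear form. In the infinitesimally Hilbertian framework, pointwise for $\mu$-a.e. $y\in Y$ one has
\[
g(y)\abs{\nabla f}(y)=\sup\left\{\nabla f(y)\cdot\bv(y)\;:\;\bv\in L^2(TY),\ \abs{\bv}\leq g\right\},
\]
with extremizer $\bv=g\,\nabla f/\abs{\nabla f}$ on $\{\abs{\nabla f}>0\}$, extended by $0$ elsewhere. Since in $\RCD$ spaces finite combinations $\sum_j h_j\nabla\phi_j$ with $h_j\in\Lipbs(Z)$ and $\phi_j\in\Test(Y,\varrho,\mu)$ form a dense subset of $L^2(TY)$, for every $\eps>0$ I would select $\bv^\eps=\sum_{j=1}^k h_j\nabla\phi_j$ close enough to the extremizer to satisfy
\[
\norm{\abs{\bv^\eps}-g}_{L^2(\mu)}\leq\eps\qquad\text{and}\qquad\int_Y\nabla f\cdot\bv^\eps\di\mu\geq\int_Y g\abs{\nabla f}\di\mu-\eps.
\]

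\textbf{Lifting step.} Using the Sobolev approximation machinery for varying $\RCD$ spaces (see \cite{GigliMondinoSavare15,AmbrosioHonda}, essentially via heat-kernel mollification combined with pmGH stability of $P_t$), I would construct $\phi_j^i\in\Test(X_i,\dist_i,\meas_i)$ converging strongly in $H^{1,2}$ to $\phi_j$, and set $\bv^{\eps,i}:=\sum_j h_j\nabla\phi_j^i$ (the $h_j$, being globally defined on the ambient $Z$, restrict canonically to each $X_i$). The stability of the bilinear form $\nabla\cdot\nabla$ under mixed weak/strong $H^{1,2}$-convergence then yields both
\[
\lim_{i\to\infty}\int_Z\nabla f_i\cdot\bv^{\eps,i}\di\meas_i=\int_Y\nabla f\cdot\bv^\eps\di\mu,
\]
and strong $L^2$-convergence of the pointwise norms $\abs{\bv^{\eps,i}}\to\abs{\bv^\eps}$ on the varying spaces; combining this with the $L^2$-estimate on $\abs{\bv^\eps}-g$ from the previous step gives $\norm{(\abs{\bv^{\eps,i}}-g)_+}_{L^2(\meas_i)}\leq\eps+o_i(1)$.

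\textbf{Conclusion.} Pointwise Cauchy-Schwarz combined with $\abs{\bv^{\eps,i}}\leq g+(\abs{\bv^{\eps,i}}-g)_+$ and the uniform $L^2$-bound $\sup_i\norm{\abs{\nabla f_i}}_{L^2(\meas_i)}<+\infty$ inherent in weak $H^{1,2}$-convergence give
\[
\int_Z\nabla f_i\cdot\bv^{\eps,i}\di\meas_i\leq\int_Z g\abs{\nabla f_i}\di\meas_i+\norm{(\abs{\bv^{\eps,i}}-g)_+}_{L^2(\meas_i)}\norm{\abs{\nabla f_i}}_{L^2(\meas_i)}.
\]
Taking $\liminf_{i\to\infty}$ and then letting $\eps\downarrow 0$ produces the desired inequality. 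The main technical difficulty lies in the lifting step: producing test functions whose gradient norms converge \emph{strongly} (not merely weakly) in $L^2$ to the prescribed target across the varying metric measure structures---a point where the pmGH-stability of the heat semigroup is essential. Once this is available, the rest is convex duality and Cauchy-Schwarz.
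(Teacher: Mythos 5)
Your proposal takes a route that cannot be compared line-by-line with the paper's own argument, because the paper does not prove this proposition: it is quoted verbatim from \cite[Lemma 5.8]{AmbrosioHonda}. The related argument the paper \emph{does} carry out (for the $\BV$ version, in the proof of \autoref{prop:semicontinuity BV}) goes via heat-flow regularization combined with the Bakry-\'Emery gradient contraction and the pmGH-stability of $P_t$, rather than convex duality. Your duality strategy --- writing $g|\nabla f|$ pointwise as $\sup\{\nabla f\cdot\bv:|\bv|\le g\}$, approximating the extremizer by finite sums $\sum_j h_j\nabla\phi_j$, lifting those to the approximating spaces, and closing with Cauchy--Schwarz --- is a reasonable alternative and, conceptually, does not rely on the curvature bound for the contraction step, only for the ambient compactness machinery.

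Two issues should be flagged. The minor one: your estimate $\|\,|\bv^\eps|-g\|_{L^2(\mu)}\le\eps$ cannot hold in general, because the extremizer satisfies $|\bv|=0$ on $\{|\nabla f|=0\}$ while $g$ need not vanish there, so $|\bv^\eps|\to|\bv|$ in $L^2$ does not make $|\bv^\eps|$ close to $g$. The correct and sufficient estimate is $\|(|\bv^\eps|-g)_+\|_{L^2(\mu)}\le\eps$, which follows from $|\bv|\le g$ and $(|\bv^\eps|-g)_+\le|\bv^\eps-\bv|$; your Cauchy--Schwarz step at the end only uses the positive part, so this is a consistency fix rather than a fatal error.

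The substantive gap is in the lifting step: you invoke ``stability of the bilinear form $\nabla\cdot\nabla$ under mixed weak/strong $H^{1,2}$-convergence'' as a black box, but \autoref{thm:localizedconv} (the only such result available in the paper) assumes \emph{both} sequences converge strongly. Upgrading to the weak/strong case is exactly where the work is. The natural route is integration by parts,
\[
\int_Z h_j\,\nabla f_i\cdot\nabla\phi_j^i\,\di\meas_i=-\int_Z f_i\bigl(h_j\,\Delta\phi_j^i+\nabla h_j\cdot\nabla\phi_j^i\bigr)\,\di\meas_i,
\]
which requires (a) $\Delta\phi_j^i\to\Delta\phi_j$ in $L^2$-\emph{strong}, whereas \autoref{prop:stabilityandcompH12}(a) only gives $L^2$-weak convergence of the Laplacians, and (b) $\nabla h_j\cdot\nabla\phi_j^i$ to converge in $L^2$-strong, whereas the restriction of a fixed $h_j\in\Lipbs(Z)$ to the spaces $X_i$ need \emph{not} converge strongly in $H^{1,2}$ (a Lipschitz function whose gradient points along a collapsing direction has Cheeger energies that fail to be continuous along the sequence, so $\Ch^i(h_j)\not\to\Ch(h_j)$ in general and \autoref{thm:localizedconv} does not apply to the pair $(h_j,\phi_j^i)$). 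Similarly, the density of $\{\sum_j h_j\nabla\phi_j: h_j\in\Lipbs(Z),\ \phi_j\in\Test\}$ in $L^2(TY)$ with $\Lipbs(Z)$ coefficients (rather than $L^\infty(Y)$ coefficients) needs a short justification. To fill the gap one would have to construct the lifted data so that both $\phi_j^i$ \emph{and} their Laplacians converge strongly, and replace the ambient $h_j$ by intrinsic test functions $h_j^i$ with the same properties (e.g. by iterated heat-flow mollifications); as written, the crucial convergence $\int_Z\nabla f_i\cdot\bv^{\eps,i}\di\meas_i\to\int_Y\nabla f\cdot\bv^\eps\di\mu$ is not justified by the tools at hand.
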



\begin{definition}\label{def:BV convergence}
We say that a sequence $(f_i)\subset L^1(X_i,\meas_i)$ converges $L^1$-strongly to $f\in L^1(Y,\mu)$ if 
\begin{equation*}
   \sigma\circ f_i\meas_i\weakto \sigma\circ f\mu
   \qquad
   \text{and} 
   \qquad
   \int_{X_i}|f_i|\di \meas_i\to \int_Y |f|\di \mu,
\end{equation*}
where $\sigma(z):=\sign(z)\sqrt{|z|}$ and the weak convergence is understood in duality with $\Cbs(Z)$. Equivalently, if $\sigma\circ f_i$ $L^2$-strongly converge to $\sigma\circ f$.

We say that $f_i\in \BV(X_i,\meas_i)$ converge in energy in $\BV$ to $f\in \BV(Y,\mu)$ if $f_i$ converge $L^1$-strongly to $f$ and
\begin{equation*}
	\lim_{i\to\infty} |Df_i|(X_i)= |Df|(Y).
\end{equation*}
\end{definition}

\begin{remark}
	The presence of the function $\sigma$ in the definition of $L^1$-strong convergence is necessary due to the lack of reflexivity of $L^1$.
	Indeed the counterpart of \autoref{def:L2convergence} in the case $p=1$ is easily seen to be not equivalent to convergence in $L^1$ norm when all the spaces coincide.
\end{remark}



The following useful stability result is part of \cite[Proposition 3.3]{AmbrosioHonda}.

\begin{proposition}\label{prop:stability}
Let $p\in \{1,2\}$. If $f_i\in L^p(X_i,\meas_i)$ converge in $L^p$-strong to $f\in L^p(Y,\mu)$ then $\phi\circ f_i$ converge to $\phi\circ f$ in $L^p$-strong for any $\phi\in\Lip(\setR)$ such that $\phi(0)=0$. In particular, if $g_i$ are uniformly bounded in $L^\infty$ and $L^1$-strongly convergent to $g$ then
\begin{equation*}
\lim_{i\to\infty}\norm{g_i}_{L^p(X_i,\meas_i)}=\norm{g}_{L^p(Y,\mu)}.
\end{equation*}\\
If $f_i,\,g_i\in L^p(X_i,\meas_i)$ converge in $L^p$-strong to $f$ and $g$ respectively, then $f_i+g_i$ converge to $f+g$ in $L^p$-strong.\\
\end{proposition}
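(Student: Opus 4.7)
My plan is to treat the $L^2$-strong Lipschitz stability as the key step, deduce the $L^1$ case by a parallel argument, and extract the remaining two claims from it. For $p=2$, let $\phi$ be $L$-Lipschitz with $\phi(0)=0$. The bound $|\phi(z)|\le L|z|$ gives $\|\phi\circ f_i\|_{L^2(\meas_i)}\le L\|f_i\|_{L^2(\meas_i)}$, hence uniform $L^2$-boundedness. Upgrading to $L^2$-strong convergence requires checking both (i) weak convergence $(\phi\circ f_i)\meas_i\weakto (\phi\circ f)\mu$ in duality with $\Cbs(Z)$, and (ii) $\|\phi\circ f_i\|_{L^2(\meas_i)}\to\|\phi\circ f\|_{L^2(\mu)}$. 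Both are special cases of the general principle that $L^2$-strong convergence of $f_i$ to $f$ implies
\begin{equation*}
\int_Z\chi\,g(f_i)\,\di\meas_i\longrightarrow\int_Z\chi\,g(f)\,\di\mu\qquad\forall \chi\in\Cbs(Z),\text{ and every continuous }g:\setR\to\setR\text{ of at most quadratic growth with }g(0)=0.
\end{equation*}

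The critical case of this principle is $g(z)=z^2$: for nonnegative $\chi\in\Cbs(Z)$ we have $\sqrt\chi\in\Cbs(Z)$ and $\sqrt\chi f_i$ converges $L^2$-weakly to $\sqrt\chi f$, so lower semicontinuity of the $L^2$-norm under $L^2$-weak convergence (a consequence of Cauchy--Schwarz, the weak convergence $\meas_i\weakto\mu$, and density of $\Cbs$ in $L^2(Y,\mu)$) gives $\int\chi f^2\,\di\mu\le\liminf_i\int\chi f_i^2\,\di\meas_i$. To close the argument, pick $\eta_R\in\Cbs^+(Z)$ with $\chi\le\eta_R\uparrow 1$, apply the same lower semicontinuity to $\eta_R-\chi\in\Cbs^+$, decompose $\int f_i^2\,\di\meas_i=\int\chi f_i^2\,\di\meas_i+\int(\eta_R-\chi)f_i^2\,\di\meas_i+\int(1-\eta_R)f_i^2\,\di\meas_i$, and combine with the norm hypothesis $\int f_i^2\,\di\meas_i\to\int f^2\,\di\mu$ and $\int(1-\eta_R)f^2\,\di\mu\to 0$ to deduce $\limsup_i\int\chi f_i^2\,\di\meas_i\le\int\chi f^2\,\di\mu$. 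Extension from $g(z)=z^2$ to general $g$ of quadratic growth follows by truncation $T_Rf_i=(f_i\wedge R)\vee(-R)$ (whose $L^2$-error is uniform in $i$ by the just-established convergence of $f_i^2\meas_i$) and Weierstrass approximation of $g|_{[-R,R]}$ by polynomials.

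The case $p=1$ does not reduce directly to $p=2$ via $\sigma$, since $\sigma\circ\phi\circ\sigma^{-1}$ fails to be Lipschitz. I would instead mirror the $p=2$ argument to prove, under $L^1$-strong convergence of $f_i$, the analogous principle $\int\chi\,g(f_i)\,\di\meas_i\to\int\chi\,g(f)\,\di\mu$ for $\chi\in\Cbs(Z)$ and $g$ continuous of at most linear growth with $g(0)=0$; the critical lower semicontinuity step now uses $L^2$-weak convergence of $\sqrt\chi\,\sigma(f_i)$ together with the identity $\|\sigma(f_i)\|_{L^2}^2=\int|f_i|\,\di\meas_i$. Applied to $g=\phi$ this yields weak convergence of $(\phi\circ f_i)\meas_i$; applied to $g=|\phi|$ it yields $L^1$-norm convergence. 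The second statement of the proposition is the special case of the first with $\phi(z)=\sign(z)|z|^p$ cut off as a Lipschitz function on $[-M,M]$, where $M$ is the uniform $L^\infty$ bound. Additivity is immediate for the weak-convergence part; in the $p=2$ case the norm convergence $\|f_i+g_i\|_{L^2}\to\|f+g\|_{L^2}$ follows by polarization $\|f_i+g_i\|_{L^2}^2=\|f_i\|_{L^2}^2+2\int f_ig_i\,\di\meas_i+\|g_i\|_{L^2}^2$ together with the weak-strong duality $\int f_ig_i\,\di\meas_i\to\int fg\,\di\mu$, itself a consequence of the above principle (e.g. by testing the strong-weak characterization of $L^2$-strong convergence against $g_i$); the $p=1$ case is handled by the analogous polarization through $\sigma$. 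The principal obstacle in the whole scheme is the identity $\int\chi f_i^2\,\di\meas_i\to\int\chi f^2\,\di\mu$: lower semicontinuity delivers only one inequality, and the reverse hinges on the global norm-convergence hypothesis to prevent escape of mass to infinity, enforced by the cutoff-exhaustion argument -- this is the unique place where the norm-convergence assumption is genuinely used.
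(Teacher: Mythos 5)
The paper itself does not prove this proposition; it simply cites it from \cite[Proposition 3.3]{AmbrosioHonda}, so there is no internal proof to compare against. Evaluating your argument on its own merits, the core idea for $p=2$ (obtaining $\int \chi f_i^2 \di\meas_i \to \int\chi f^2\di\mu$ by combining lower semicontinuity under $L^2$-weak convergence with an exhaustion by cut-offs to extract the $\limsup$ bound from the global norm hypothesis) is correct and well set up. However, two of the subsequent steps contain genuine gaps.

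First, the parenthetical ``whose $L^2$-error is uniform in $i$ by the just-established convergence of $f_i^2\meas_i$'' is not a proof. The quantity you need to control is $\sup_i \int_{\{|f_i|>R\}} f_i^2 \di\meas_i$, and the facts you have at that stage — weak convergence of $f_i^2\meas_i$ in duality with $\Cbs(Z)$ plus convergence of the total masses — give tightness of $f_i^2\meas_i$ \emph{in $Z$}, which is a statement about the spatial variable, not about the size of $|f_i|$. These are different directions, and passing from one to the other requires an actual argument (for instance: disintegrate the push-forwards $(\mathrm{Id}\times f_i)_\#(\chi\meas_i)$, identify the limit with $(\mathrm{Id}\times f)_\#(\chi\mu)$ via Jensen's inequality in the fibers, and then invoke the standard fact that weak convergence together with convergence of the integral of a nonnegative continuous $G$ yields uniform integrability of $G$). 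The conclusion is true, but it is not free, and asserting it as an immediate consequence is a gap. Second, the phrase ``the $p=1$ case is handled by the analogous polarization through $\sigma$'' does not correspond to a valid argument: $\sigma$ is not additive, $\sigma(f_i+g_i)$ is not expressible in terms of $\sigma(f_i)$, $\sigma(g_i)$, and there is no polarization identity for the $L^1$ norm, so the $L^1$ additivity claim is left unproved.

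Both issues vanish if you first prove (or quote from \cite{GigliMondinoSavare15}, which is where \cite{AmbrosioHonda} takes it) the approximation characterization of $L^p$-strong convergence: $f_i\to f$ in $L^p$-strong if and only if for every $\eps>0$ there exists $g\in\Cbs(Z)$ with $\norm{f-g}_{L^p(Y,\mu)} + \limsup_i\norm{f_i - g}_{L^p(X_i,\meas_i)} < \eps$. With this in hand everything is immediate: if $\phi$ is $L$-Lipschitz with $\phi(0)=0$ then $\phi\circ g\in\Cbs(Z)$ and $\norm{\phi\circ f_i - \phi\circ g}_{L^p}\le L\norm{f_i-g}_{L^p}$, which gives Lipschitz stability; additivity follows because if $g$ approximates $f_i$ and $h$ approximates $g_i$ then $g+h\in\Cbs(Z)$ approximates $f_i+g_i$; and the needed uniform integrability of $|f_i|^p$ is an immediate consequence, since for $R>2\norm{g}_\infty$ one has $|f_i|\le 2|f_i-g|$ on $\{|f_i|>R\}$. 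I would recommend restructuring the proof around this characterization rather than trying to patch the truncation and polarization steps directly.
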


Below we quote a useful compactness criterion borrowed from \cite[Theorem 6.3]{GigliMondinoSavare15} 
(see also \cite[Theorem 7.4]{AmbrosioHonda}).

\begin{theorem}\label{thm:compactnessL2}
Let $f_i\in H^{1,2}(X_i,\dist_i,\meas_i)$ be such that
\begin{equation*}
\sup_{i}\left\lbrace\int_Z\abs{f_i}^2\di\meas_i+\Ch^i(f_i) \right\rbrace <+\infty
\end{equation*}
and
\begin{equation*}
\lim_{R\to\infty}\limsup_{i\to\infty}\int_{Z\setminus B_R(\bar{z})}\abs{f_i}^2\di\meas_i=0,
\end{equation*}
for some (and thus for all) $\bar{z}\in Z$. Then $(f_i)$ has a $L^2$-strongly convergent subsequence to $f\in H^{1,2}(Y,\varrho,\mu)$.
\end{theorem}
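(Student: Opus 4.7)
The plan is to prove this via the standard Rellich-Kondrachov scheme adapted to varying metric measure spaces: extract a weak subsequential limit, then upgrade to strong convergence using heat flow smoothing combined with the tightness hypothesis at infinity.

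\textbf{Step 1 (weak compactness).} The uniform $L^2$-bound $\sup_i \norm{f_i}_{L^2(X_i,\meas_i)}<+\infty$ forces the measures $f_i^2\meas_i$ to be a bounded sequence in $\Meas^+(Z)$ with uniform local finiteness. A Banach--Alaoglu argument in duality with $\Cbs(Z)$ then produces a subsequence (not relabeled) and a function $f\in L^2(Y,\mu)$ with $f_i\to f$ in $L^2$-weak in the sense of \autoref{def:L2convergence}. The uniform Cheeger energy bound, combined with the Mosco-type lower semicontinuity of $\Ch^i$ along $L^2$-weakly convergent sequences on converging $\RCD(K,N)$ spaces (this is the weak direction of the $\Gamma$-/Mosco-convergence of Cheeger energies, established in \cite{GigliMondinoSavare15}), gives $f\in H^{1,2}(Y,\varrho,\mu)$.

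\textbf{Step 2 (strong convergence reduces to norm convergence, and further to bounded sets).} By definition, $L^2$-strong convergence amounts to $L^2$-weak convergence plus $\lim_i\norm{f_i}_{L^2}=\norm{f}_{L^2}$. The tightness hypothesis $\lim_{R\to\infty}\limsup_i\int_{Z\setminus B_R(\bar z)}\abs{f_i}^2\di\meas_i=0$, together with the analogous property for the limit (inherited from weak convergence of $f_i^2\meas_i$ to $f^2\mu$ on bounded continuous test functions), reduces the problem to showing $\lim_i\norm{f_i}_{L^2(B_R)}=\norm{f}_{L^2(B_R)}$ for each $R>0$.

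\textbf{Step 3 (heat flow smoothing).} For every $t>0$ the energy dissipation identity gives the uniform approximation
\begin{equation*}
\norm{f_i-P_t^i f_i}_{L^2(X_i,\meas_i)}^2\le 2t\,\Ch^i(f_i)\le C t,
\end{equation*}
with $C=\sup_i \Ch^i(f_i)<+\infty$, and the same bound holds for $f$ on $(Y,\varrho,\mu)$. Now for fixed $t>0$ the smoothed functions $P_t^i f_i$ are controlled pointwise and in Lipschitz norm on bounded sets uniformly in $i$: indeed, Cauchy--Schwarz in the heat-kernel representation $P_t^i f_i(x)=\int p_t^i(x,y)f_i(y)\di\meas_i(y)$ combined with \eqref{eq:kernelestimate} gives a uniform $L^\infty_{\loc}$-bound, while \eqref{eq:gradientestimatekernel} yields a uniform Lipschitz estimate on balls (both bounds being uniform in $i$ thanks to the uniform local doubling condition and the convergence of the measures on bounded sets). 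An Arzelà--Ascoli argument for equi-continuous functions on sets converging in Gromov--Hausdorff sense then extracts a further subsequence so that $P_t^i f_i$ converges uniformly on each bounded subset of $Z$ to a limit which, by stability of the heat semigroup under pmGH convergence (again from \cite{GigliMondinoSavare15}), must coincide with $P_t f$ on $\supp\mu$. In particular $P_t^i f_i\to P_t f$ strongly in $L^2(B_R)$.

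\textbf{Step 4 (conclusion and the main subtlety).} Combining Steps 2 and 3 via the triangle inequality
\begin{equation*}
\bigl|\norm{f_i}_{L^2(B_R,\meas_i)}-\norm{f}_{L^2(B_R,\mu)}\bigr|\le \norm{f_i-P_t^i f_i}_{L^2}+\bigl|\norm{P_t^i f_i}_{L^2(B_R)}-\norm{P_t f}_{L^2(B_R)}\bigr|+\norm{f-P_t f}_{L^2(B_R)},
\end{equation*}
letting $i\to\infty$ and then $t\downarrow 0$ yields local norm convergence, and the tightness at infinity upgrades it to global norm convergence. The main obstacle is the rigorous justification, in Step 3, of the strong $L^2$-convergence of $P_t^i f_i$ to $P_t f$ on bounded subsets of $Z$: this requires both the uniform heat kernel estimates \eqref{eq:kernelestimate}--\eqref{eq:gradientestimatekernel} and the stability of the heat flow under pmGH convergence of $\RCD(K,N)$ spaces, and relies crucially on the fact that weak $L^2$-convergence of $f_i$ pairs correctly with the \emph{uniform} convergence of the smooth kernels $p_t^i(x,\cdot)$ on compact sets.
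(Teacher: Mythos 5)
The paper does not prove \autoref{thm:compactnessL2}: it quotes it verbatim from \cite[Theorem 6.3]{GigliMondinoSavare15} (see also \cite[Theorem 7.4]{AmbrosioHonda}), so there is no paper proof to compare against. That said, your outline (weak compactness, reduction to local norm convergence via the tightness at infinity, heat-flow smoothing plus an Arzel\`a--Ascoli step) is the right architecture and is close to the original argument. Two of your steps, however, contain issues that need attention. In Step~1 the locally bounded objects from which you want to extract a weak limit are the \emph{signed} measures $f_i\meas_i$ (bounded on balls by Cauchy--Schwarz together with the convergence of $\meas_i$), not the measures $f_i^2\meas_i$; the latter are bounded too, but their weak subsequential limit is in general \emph{not} $f^2\mu$ unless one already has strong convergence. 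Exactly this confusion reappears in Step~2, where the parenthetical ``inherited from weak convergence of $f_i^2\meas_i$ to $f^2\mu$'' is circular: $f_i^2\meas_i\weakto f^2\mu$ on $\Cbs(Z)$ is essentially equivalent to the strong convergence you are trying to establish. The reduction in Step~2 should instead be carried out directly: lower semicontinuity of the $L^2$-norm under $L^2$-weak convergence always gives $\liminf_i\|f_i\|_{L^2}\ge\|f\|_{L^2}$, so only the upper bound $\limsup_i\|f_i\|_{L^2}\le\|f\|_{L^2}$ needs the tightness hypothesis and the local convergence of norms on balls.

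The more serious gap is the identification of the limit in Step~3. You invoke ``stability of the heat semigroup under pmGH convergence'' to say the Arzel\`a--Ascoli limit of $P_t^if_i$ is $P_tf$, but the stability result available (\autoref{prop:stabilityandcompH12}(b)) requires $L^2$-\emph{strong} convergence of the initial data $f_i$ — which is precisely the conclusion you are after, so this appeal is circular as stated. The fix, which you gesture at in Step~4, is to show first that $P_t^if_i\to P_tf$ in $L^2$-\emph{weak}: for a fixed test function realized by an $L^2$-strongly convergent sequence $\phi_i\to\phi$, self-adjointness gives $\int P_t^if_i\,\phi_i\di\meas_i=\int f_i\,P_t^i\phi_i\di\meas_i$, and here \autoref{prop:stabilityandcompH12}(b) is applied to $\phi_i$ (not to $f_i$), so $P_t^i\phi_i\to P_t\phi$ strongly and the weak--strong pairing yields the claim. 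Combined with the equicontinuity from the kernel bounds \eqref{eq:kernelestimate}--\eqref{eq:gradientestimatekernel}, this identifies the uniform limit on compacts as $P_tf$. Alternatively, one can argue via the heat-kernel representation and the $L^2$-strong convergence $p_t^i(x_i,\cdot)\to p_t(x,\cdot)$ from \cite{AmbrosioHondaTewodrose17} (weak convergence from pointwise convergence plus Gaussian tightness; $L^2$-norm convergence from $\|p_t^i(x_i,\cdot)\|_{L^2}^2=p_{2t}^i(x_i,x_i)\to p_{2t}(x,x)$), but this too must be spelled out — the uniform convergence on compact sets you mention is not enough by itself, since the tails $\int_{Z\setminus K}p_t^i(x_i,y)f_i(y)\di\meas_i$ must also be controlled (here the Gaussian decay and the uniform $L^2$-bound do the job). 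With these repairs the proof goes through.
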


Next we pass to a stability/compactness criterion in $H^{1,2}$. Its statement is taken from \cite[Corollary 5.5]{AmbrosioHonda}.

\begin{proposition}\label{prop:stabilityandcompH12}
\begin{itemize}
\item[(a)] If $f_i\in H^{1,2}(X_i,\dist_i,\meas_i)$, $f_i\in\mathcal{D}(\Delta_i)$ converge in $L^2$-strong to $f$ and $\Delta_if_i$ are uniformly bounded in $L^2$, then $f\in\mathcal{D}(\Delta)$, $\Delta_if_i$ converge in $L^2$-weak to $\Delta f$ and $f_i$ converge in $H^{1,2}$-strong to $f$;
\item[(b)] for all $t>0$, $P_t^if_i$ converge in $H^{1,2}$-strong to $P_tf$ whenever $f_i$ converge in $L^2$-strong to $f$.
\end{itemize}
\end{proposition}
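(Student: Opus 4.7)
The plan is to prove (a) first and to deduce (b) by applying (a) to the sequence $\{P_t^if_i\}$, once one knows that $P_t^if_i\to P_tf$ in $L^2$-strong. This last fact is a classical consequence of the Mosco convergence of the Cheeger energies $\Ch^i\to\Ch$ along pmGH-converging sequences of $\RCD$ spaces, established in \cite{GigliMondinoSavare15}: via the Brezis--Attouch theory of convergence of gradient flows, Mosco convergence of the quadratic forms $\tfrac12\Ch^i$ transfers to strong $L^2$-convergence of the associated semigroups for every $t\ge0$.

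For (a), the starting point is the integration by parts
$$
\Ch^i(f_i)=-\int_{X_i}f_i\,\Delta_if_i\,\di\meas_i\le\|f_i\|_{L^2}\|\Delta_if_i\|_{L^2},
$$
which combined with the uniform $L^2$-bounds on $f_i$ and $\Delta_if_i$ gives a uniform upper bound on $\Ch^i(f_i)$. Consequently $f_i\to f$ weakly in $H^{1,2}$ and, up to a (not relabelled) subsequence, $\Delta_if_i$ converges weakly in $L^2$ to some $h\in L^2(Y,\mu)$.

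The crucial step is to identify $h=\Delta f$. Given $g\in H^{1,2}(Y,\varrho,\mu)\cap L^\infty$ with bounded support, I would construct, using Mosco convergence, a recovery sequence $g_i\in H^{1,2}(X_i,\dist_i,\meas_i)$ converging strongly in $H^{1,2}$ to $g$, and then pass to the limit in
$$
\int_Z g_i\,\Delta_if_i\,\di\meas_i=-\int_Z\nabla g_i\cdot\nabla f_i\,\di\meas_i.
$$
The left-hand side tends to $\int_Y gh\,\di\mu$ by weak-strong $L^2$ duality, while the right-hand side tends to $-\int_Y\nabla g\cdot\nabla f\,\di\mu$ by the weak-strong duality for gradient pairings in varying spaces, itself a consequence of Mosco convergence together with the strong convergence $\Ch^i(g_i)\to\Ch(g)$. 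Since $g$ is arbitrary, this identifies $f\in\mathcal{D}(\Delta)$ with $\Delta f=h$, and uniqueness of the limit promotes the weak $L^2$-convergence $\Delta_if_i\to\Delta f$ from a subsequence to the full sequence. Strong $H^{1,2}$-convergence follows by feeding this back into the integration-by-parts formula: $L^2$-strong convergence of $f_i$ paired with $L^2$-weak convergence of $\Delta_if_i$ yields $\Ch^i(f_i)\to\Ch(f)$, which is the very definition of strong convergence in $H^{1,2}$.

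For (b), once $P_t^if_i\to P_tf$ in $L^2$-strong is granted, the regularization estimate $\|\Delta_iP_t^if_i\|_{L^2}\le\|f_i\|_{L^2}/t$ in \eqref{eq:regularizingheat} ensures that the hypotheses of (a) are satisfied by $\{P_t^if_i\}$ for every fixed $t>0$, so (a) directly upgrades the convergence to $H^{1,2}$-strong. The main obstacle I foresee is the passage to the limit in $\int_Z\nabla g_i\cdot\nabla f_i\,\di\meas_i$: the gradients live in noncomparable cotangent modules, and justifying this step rigorously requires the full variational framework for pmGH-convergence of $\RCD$ spaces developed in \cite{GigliMondinoSavare15}. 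Everything else in the plan is an essentially formal consequence of that background together with the basic integration-by-parts identity and the heat-flow regularization bounds.
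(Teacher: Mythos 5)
Your proposal is sound, and since the paper does not actually prove \autoref{prop:stabilityandcompH12} but merely quotes it from [AmbrosioHonda, Corollary 5.5], there is no in-paper argument to compare against; what you reconstruct is essentially the argument of that reference, built on Mosco convergence of Cheeger energies from [GigliMondinoSavare15]. The one step you flag as the main obstacle, the strong--weak duality $\int\nabla g_i\cdot\nabla f_i\,\di\meas_i\to\int\nabla g\cdot\nabla f\,\di\mu$ for $g_i\to g$ $H^{1,2}$-strong and $f_i\rightharpoonup f$ $H^{1,2}$-weak, is indeed the crux, and it is not available in the form quoted in the present paper (\autoref{thm:localizedconv} treats only the strong--strong case). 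It can however be dispatched self-containedly: expand $\Ch^i(\lambda f_i+g_i)=\lambda^2\Ch^i(f_i)+2\lambda\int\nabla f_i\cdot\nabla g_i+\Ch^i(g_i)$, use lower semicontinuity of the Cheeger energy under $L^2$-weak convergence together with $\Ch^i(g_i)\to\Ch(g)$, pass to a subsequence along which both $\Ch^i(f_i)$ and $\int\nabla f_i\cdot\nabla g_i$ converge, and observe that the resulting inequality in $\lambda$, valid for all $\lambda\in\setR$ with a nonpositive leading coefficient, forces the subsequential limit of $\int\nabla f_i\cdot\nabla g_i$ to equal $\int\nabla f\cdot\nabla g$. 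With this lemma in hand your chain of reasoning — uniform $\Ch^i$-bound from $-\int f_i\Delta_if_i$, extraction of $L^2$-weak limits, identification of $\Delta f$ against recovery sequences, and the final upgrade to $H^{1,2}$-strong via strong--weak $L^2$-duality in the integration-by-parts identity — closes correctly, and your deduction of (b) from (a) via the parabolic regularization bound in \eqref{eq:regularizingheat} and the $L^2$-stability of the semigroups is exactly right.
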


We conclude this subsection with a localized convergence result taken from \cite[Theorem 5.7]{AmbrosioHonda}.

\begin{theorem}\label{thm:localizedconv}
Let $v_i,w_i\in H^{1,2}(X_i,\dist_i,\meas_i)$ be strongly convergent in $H^{1,2}$ to $v,w\in H^{1,2}(Y,\varrho,\mu)$, respectively. Then $\nabla v_i\cdot\nabla w_i$ converge $L^1$-strongly to $\nabla v\cdot\nabla w$.
\end{theorem}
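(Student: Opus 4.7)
I would first reduce to the diagonal case $v_i=w_i$, where the claim becomes that $|\nabla v_i|^2$ converges $L^1$-strongly to $|\nabla v|^2$, and then bootstrap to the general statement by polarization using quadraticity of the Cheeger energies.

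\textit{Diagonal case.} By the equivalent characterization of $L^1$-strong convergence given in \autoref{def:BV convergence}, $L^1$-strong convergence of the nonnegative function $|\nabla v_i|^2$ to $|\nabla v|^2$ amounts to $L^2$-strong convergence of $|\nabla v_i|$ to $|\nabla v|$. The norm part $\||\nabla v_i|\|_{L^2(\meas_i)}\to\||\nabla v|\|_{L^2(\mu)}$ is precisely the hypothesis $\Ch^i(v_i)\to\Ch(v)$, so only the weak convergence $|\nabla v_i|\meas_i\weakto|\nabla v|\mu$ in duality with $\Cbs(Z)$ is in question. Given any subsequence, I would extract a further subsequence along which $|\nabla v_i|\meas_i$ converges vaguely to some locally finite Borel measure $\lambda$; Cauchy--Schwarz combined with $\meas_i\weakto\mu$ gives $\int\phi\di\lambda\le\|\phi\|_{L^2(\mu)}\||\nabla v|\|_{L^2(\mu)}$ for all nonnegative $\phi\in\Cbs(Z)$, so $\lambda=g\mu$ with $g\in L^2(\mu)$. \autoref{prop:locallsc} applied to nonnegative $\phi\in\Lipbs(Z)$ produces $\int\phi g\di\mu\ge\int\phi|\nabla v|\di\mu$, hence $g\ge|\nabla v|$ $\mu$-a.e. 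The norm bound $\|g\|_{L^2(\mu)}^2\le\liminf\Ch^i(v_i)=\||\nabla v|\|_{L^2(\mu)}^2$ then forces $g=|\nabla v|$ $\mu$-a.e., and the arbitrariness of the subsequence delivers the full weak convergence.

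\textit{Off-diagonal case.} The key upgrade is that $v_i+w_i\to v+w$ strongly in $H^{1,2}$. This follows from the parallelogram identity
\begin{equation*}
\Ch^i(v_i+w_i)+\Ch^i(v_i-w_i)=2\Ch^i(v_i)+2\Ch^i(w_i),
\end{equation*}
ensured by quadraticity of $\Ch^i$: the standard $L^2$-weak lower semicontinuity of the Cheeger energy applied to both sequences $v_i\pm w_i$ gives $\liminf_i\Ch^i(v_i\pm w_i)\ge\Ch(v\pm w)$, and since the right-hand side converges to $2\Ch(v)+2\Ch(w)=\Ch(v+w)+\Ch(v-w)$, both liminfs are forced to be equalities. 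Applying the diagonal case to $v_i+w_i$ yields $|\nabla(v_i+w_i)|^2\to|\nabla(v+w)|^2$ in $L^1$-strong, and the polarization
\begin{equation*}
\nabla v_i\cdot\nabla w_i=\tfrac{1}{2}\bigl(|\nabla(v_i+w_i)|^2-|\nabla v_i|^2-|\nabla w_i|^2\bigr),
\end{equation*}
combined with linearity of $L^1$-strong convergence (\autoref{prop:stability}), concludes the argument.

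\textit{Main obstacle.} The decisive step is the identification of the weak limit $g$ in the diagonal case: \autoref{prop:locallsc} only delivers the one-sided inequality $g\ge|\nabla v|$, and upgrading it to equality is exactly where the norm convergence of gradients furnished by strong $H^{1,2}$ convergence is invoked. In spirit, the statement is the $L^1$-strong counterpart of the Hilbert-space fact that weak convergence together with convergence of norms implies strong convergence.
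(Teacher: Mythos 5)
Your proof is correct. A caveat worth stating up front: the paper itself does not supply a proof of this statement but defers to \cite[Theorem~5.7]{AmbrosioHonda}, so there is no internal argument to compare against. What you have produced is a self-contained reconstruction that runs along the same lines as the cited result: reduce to the diagonal case, where strong $H^{1,2}$-convergence supplies norm convergence of $|\nabla v_i|$ in $L^2$ and \autoref{prop:locallsc} together with a Cauchy--Schwarz bound pins down the weak limit; then polarize using quadraticity. All the ingredients you invoke are available in the paper — the equivalence between $L^1$-strong convergence of $|\nabla v_i|^2$ and $L^2$-strong convergence of $|\nabla v_i|$ via the map $\sigma$ in \autoref{def:BV convergence}, the localized lower semicontinuity of \autoref{prop:locallsc}, and the algebraic closure of $L^p$-strong convergence under sums and Lipschitz post-composition in \autoref{prop:stability}.

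One small point to make explicit: the off-diagonal step quietly uses the Mosco-type $\Gamma$-liminf inequality $\liminf_i\Ch^i(f_i)\ge\Ch(f)$ whenever $f_i$ converge $L^2$-weakly to $f$ with $\sup_i\Ch^i(f_i)<\infty$. This is standard (it is proved in \cite{GigliMondinoSavare15} and \cite{AmbrosioHonda}, and it is tacitly built into \autoref{def:H12convergence}), but it is not stated as a numbered result in this paper and is \emph{not} the same as \autoref{prop:locallsc}, which only yields the lower semicontinuity of $\int g|\nabla f_i|\di\meas_i$ (i.e.\ an $L^1$-type statement). So you should cite it rather than call it a known consequence of what is in the paper. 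Also, the phrase ``both liminfs are forced to be equalities'' is slightly compressed: the clean way to say it is that $\limsup_i\Ch^i(v_i+w_i)\le\lim_i\bigl(\Ch^i(v_i+w_i)+\Ch^i(v_i-w_i)\bigr)-\liminf_i\Ch^i(v_i-w_i)\le\Ch(v+w)$, which combined with the liminf bound gives genuine convergence, and symmetrically for $v_i-w_i$. Neither remark is a gap — just places where the write-up should be tightened.
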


\section{Rigidity of the $1$-Bakry-\'Emery inequality and splitting theorem}\label{sec:rigidityBE}

Our aim in this section is to prove a rigidity result for $\RCD(0,N)$ spaces admitting a non constant function satisfying the equality in the Bakry-\'Emery inequality for exponent $p=1$ \eqref{eq:BE1}. Our investigation of the consequences of this rigidity property was motivated by the study of blow-ups of sets of finite perimeter (see \autoref{thm:rigidtangent} below).

\begin{theorem}\label{maintheorem}
Let $(X,\dist,\meas)$ be an $\RCD(0,N)$ m.m.s.. Assume that there exist a non constant function $f\in \Lipb(X)$ and $s>0$ satisfying
\begin{equation}\label{eq: rigidity}
	|\nabla P_s f|=P_s|\nabla f|
	\qquad
	\meas\text{-a.e. in $X$}.
\end{equation}
Then there exists a m.m.s. $(X',\dist',\meas')$ such that $X$ is isomorphic, as a metric measure space, 
to $X'\times\setR$.
Furthermore:
\begin{itemize}
	\item[(i)] if $N\ge 2$ then $(X',\dist',\meas')$ is an $\RCD(0,N-1)$ m.m.s.;
	\item[(ii)] if $N\in [1,2)$ then $X'$ is a point.
\end{itemize}
Moreover, the function $f$ written in coordinates $(x',t)\in X'\times \setR$ depends only on the variable $t$ and it is monotone.
\end{theorem}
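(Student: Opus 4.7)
The plan is to follow the three-step scheme outlined in the introduction and then read off the structure of $f$ from the resulting splitting. The first step is to extract from the rigidity hypothesis a divergence-free unit vector field with vanishing symmetric covariant derivative. Combining \eqref{eq: rigidity} with the $2$-Bakry-\'Emery inequality $|\nabla P_sf|^2\le P_s|\nabla f|^2$ and the pointwise Cauchy–Schwarz bound $(P_s|\nabla f|)^2\le P_s 1\cdot P_s|\nabla f|^2=P_s|\nabla f|^2$ (which uses stochastic completeness of the heat semigroup on $\RCD(0,N)$) forces equality in Cauchy–Schwarz, and the strict positivity of the heat kernel \eqref{eq:kernelestimate} then yields $|\nabla f|\equiv c$ $\meas$-a.e.\ for some constant $c>0$ (necessarily positive, since $f$ is non-constant). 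Iterating the $1$-Bakry-\'Emery inequality forwards and backwards from time $s$ propagates this to $|\nabla P_tf|\equiv c$ for every $t>0$. Writing $u_t:=P_tf$, the constancy of $|\nabla u_t|^2$ gives $\partial_t|\nabla u_t|^2=2\nabla u_t\cdot\nabla\Delta u_t=0$ and $\Delta|\nabla u_t|^2=0$, so Bochner's inequality forces $\Delta u_t=0$, and its refined Hessian version available in $\RCD(K,N)$ spaces yields $\Hess u_t=0$. Unwinding \autoref{def:symmetric covariant derivative}, the unit vector field $b:=\nabla P_sf/c$ lies in $L^\infty(TX)$ with $\div b=0$ and $\nabla_{\sym}b=0$.

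The second step applies part~(iv) of \autoref{Th: Lagrangianflows} to produce a one-parameter group $(\XX_t)_{t\in\setR}$ of measure-preserving isometries of $X$ whose orbits have metric speed $|b|=1$ and move in direction $b$. Since $b\cdot\nabla P_sf=c$, integration along orbits gives
$$
P_sf(\XX_t(x))=P_sf(x)+c\,t\qquad\forall t\in\setR,\ x\in X.
$$
Because $P_sf$ is $c$-Lipschitz (by the Sobolev-to-Lipschitz property) and the orbits are $1$-Lipschitz, the displayed identity together with the two Lipschitz bounds forces $\dist(\XX_t(x),\XX_{t'}(x))=|t-t'|$, i.e.\ every orbit is a full line. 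The very same estimate immediately yields the Hopf–Lax type representation $P_sf(\XX_{-t}(x))=P_sf(x)-ct=\min_{\overline{B}_t(x)}P_sf$ anticipated in \autoref{prop: HJ equation}, since $P_sf(y)\ge P_sf(x)-ct$ for every $y\in\overline{B}_t(x)$.

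In the third step, since $X$ contains lines, Gigli's splitting \autoref{thm:splitting} provides an isomorphism $X\simeq X'\times\setR$ with $X'$ an $\RCD(0,N-1)$ space when $N\ge 2$ and a point when $N<2$. Under this identification $\XX_t$ becomes translation in the $\setR$-factor and $b=\partial_t$ as a derivation, so the identity $\nabla P_sf=c\,\partial_t$ gives $\partial_tP_sf=c$ and $\nabla P_sf\cdot\nabla\phi=c\,\partial_t\phi=0$ for every Lipschitz $\phi=\phi(x')$; a Fubini argument against $\meas=\meas'\otimes\Leb^1$ forces $\nabla_{X'}P_sf(\cdot,t)\equiv 0$, whence $P_sf(x',t)=c\,t+C_s$ for some $C_s\in\setR$. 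Letting $s\downarrow 0$ and using continuity of $f$ yields $f(x',t)=c\,t+C$, so $f$ depends only on $t$ and is strictly monotone.

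The main obstacle I foresee lies in the first step: since $b=\nabla P_sf/c$ is merely bounded and does not \emph{a priori} belong to Gigli's stronger Sobolev class of vector fields, the rigidity $\Hess P_sf=0$ has to be converted into $\nabla_{\sym}b=0$ in the weak sense of \autoref{def:symmetric covariant derivative} through a careful integration by parts against arbitrary test functions $\phi,\psi\in\Test(X,\dist,\meas)$, exploiting the \emph{exact} (not merely essentially bounded) constancy of $|\nabla P_sf|$ and the improved Bochner inequality simultaneously.
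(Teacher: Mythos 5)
Your Step 1 contains a fatal error that the paper explicitly warns about. You claim that combining the rigidity $|\nabla P_sf|=P_s|\nabla f|$ with the $2$-Bakry-\'Emery bound $|\nabla P_sf|^2\le P_s|\nabla f|^2$ and Jensen's inequality $(P_s|\nabla f|)^2\le P_s|\nabla f|^2$ ``forces equality in Cauchy--Schwarz.'' It does not. Substituting the rigidity into the $2$-Bakry-\'Emery inequality gives $(P_s|\nabla f|)^2=|\nabla P_sf|^2\le P_s|\nabla f|^2$, which is precisely Jensen's inequality restated; you obtain the same one-sided estimate twice, not a sandwich. Equality in the $1$-Bakry-\'Emery inequality does \emph{not} imply equality in the $2$-Bakry-\'Emery inequality, and in particular does not imply $|\nabla f|$ constant. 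The paper's own discussion before the proof makes this explicit with the counterexample $f(z)=\varphi(z\cdot v)$ on $\setR^n$ with $\varphi$ strictly monotone but nonlinear: one checks directly that $|\nabla P_sf|=P_s|\nabla f|$ holds (since $(P_s^{(1)}\varphi)'=P_s^{(1)}\varphi'$ has constant sign), yet $|\nabla f|=|\varphi'(z\cdot v)|\,|v|$ is not constant and $\Hess f\ne 0$.

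Once the claim $|\nabla f|\equiv c$ fails, everything downstream collapses: $b\cdot\nabla P_sf$ is not constant, so $P_sf(\XX_t(x))\ne P_sf(x)+ct$, the orbits are not lines by the two-Lipschitz-bounds argument, the Hopf--Lax identity does not follow trivially, and the final formula $f(x',t)=ct+C$ (which would make $f$ affine in $t$, contradicting the counterexample) is false. The paper avoids this by normalizing with the \emph{function} $P_s|\nabla f|$ rather than a constant, defining $b_s=\nabla P_sf/P_s|\nabla f|$, and deriving $\div b_s=0$, $\nabla_{\sym}b_s=0$ via a first-variation argument (\autoref{prop:variational identity}) applied to the functional $h\mapsto\int(P_t|\nabla h|-|\nabla P_th|)\phi\,\di\meas$, which only uses that $P_sf$ is a minimizer of this functional. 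The value-function representation $P_sf(\XX_{-t}(x))=\min_{\overline{B}_t(x)}P_sf$ is then proved by showing $t\mapsto P_sf(\XX_{-t}(\gamma(t)))$ is nonincreasing along $1$-Lipschitz curves $\gamma$ (a differential inequality, \autoref{prop: HJ equation}), not by exact linear growth. Likewise, that $f$ depends only on $t$ and is monotone is deduced from tensorization of the Cheeger energy, not from an affine formula. Your last paragraph correctly anticipates that integrating $\Hess P_sf=0$ by parts would be the obstacle, but the deeper problem is that $\Hess P_sf=0$ is simply not a consequence of the $p=1$ rigidity hypothesis.
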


\begin{remark}\label{remark: heat flow in L infty}
	Let us point out that the action of the heat semigroup in $L^{\infty}(X,\meas)$ can be defined by mean of
	\begin{equation}
		P_tf(x):=\int_X f(y)p_t(x,y)\di \meas(y),
	\end{equation}
	where $p_t$ is the heat kernel (see \eqref{eq:heat kernel}). Using an approximation argument is it possible to see that, for any $f\in L^{\infty}(X,\meas)$ and every $\phi\in L^1(X,\meas)$ the map $t\to \int_X P_t f \phi \di \meas$ is absolutely continuous with derivative
	\begin{equation*}
		\frac{\di}{\di t} \int_X P_t f\phi \di \meas =\int_X \Delta P_t f\phi  \di\meas,
	\end{equation*}
	in other words $P_tf$ is still a solution of the heat equation.
\end{remark}

\begin{remark}
	The assumption $f\in \Lipb(X)$ in \autoref{thm:splitting} can be replaced with the more general $f\in \Lip(X)$, provided we extend the action of the heat semigroup to the class of Borel functions with at most linear growth at infinity, i.e.
	\begin{equation*}
	  |f(x)|\leq C(1+\dist(x,x_0))
	  \qquad
	  \text{for}\ \text{any }x\in X
	\end{equation*}
for some $x_0\in X$ and $C\geq 0$. Even though under the $\RCD(0,N)$ condition
the Gaussian estimates for the heat kernel provide this extension, we shall consider only the case 
$f\in \Lipb(X)$ that is enough for our purposes.
\end{remark}

In order to better motivate the statement of \autoref{maintheorem} let us spend a few words about the rigidity case in the Bakry \'Emery inequality for $p=2$. Assume that $(M^n, \dist_g, e^{-V}\text{Vol}_g)$ is a smooth weighted Riemannian manifold with nonnegative generalized $N$-Ricci tensor $\Ric_N$, where
\begin{equation*}
	\Ric_N:=\Ric + \Hess V-\frac{\nabla V \otimes \nabla V}{N-n},
\end{equation*}
and the last term is defined to be $0$ when $V$ is constant and $N=n$.
Let $f:M\to\setR$ be such that $\abs{\nabla P_tf}^2=P_t\abs{\nabla f}^2$ for some $t>0$. Then we can compute
\begin{align*}
0=&P_t\abs{\nabla f}^2-\abs{\nabla P_tf}^2 =\int_0^{t}\frac{\di}{\di s}P_s\abs{\nabla P_{t-s}f}^2\di s\\
=&2\int_0^tP_s\left(\abs{\Hess P_{t-s}f}^2+\Ric_N(\nabla P_{t-s}f,\nabla P_{t-s}f)+\frac{(\nabla V\cdot \nabla P_{t-s}f)^2}{N-n}\right)\di s,
\end{align*}
where the second equality follows from the generalized Bochner identity and $\Delta$ is the weighted Laplacian. Therefore $\Hess f\equiv 0$, $(\nabla V\cdot \nabla f)^2\equiv 0$. Thus $\Delta f\equiv 0$ since
\begin{equation*}
	\frac{(\Delta  f)^2}{N}\le |\Hess f|^2+\frac{(\nabla V\cdot \nabla f)^2}{N-n}= 0.
\end{equation*}
Using a standard argument we obtain that $M^n$ splits isometrically as $L\times \setR$ for some Riemannian manifold $L$. Taking into account the fact that $\Delta f=0$ we can prove that also the measure splits.

Furthermore, denoting by $z,t$ the coordinates on $L$ and $\setR$ respectively, it holds that $P_sf(z,t)=f(z,t)=\alpha t$ for any $s\ge 0$ and for any $t\in\setR$, for some constant $\alpha\neq 0$.

Passing to the study of the case $p=1$,  any function $f:\setR^n\to\setR$ such that $\abs{\nabla P_tf}\equiv P_t\abs{\nabla f}$ is of the form $f(z)=\varphi(z\cdot v)$ for some monotone function $\varphi:\setR\to\setR$ and some $v\in\setR^n$. This is due
to the commutation between gradient operator and heat flow on the Euclidean space and to the characterization of the equality case in Jensen's inequality. More in general, thanks to the tensorization property of the heat flow, it is possible to check that on any product m.m.s. $X=X'\times \setR$, any function $f$ depending only on the variable $t\in\setR$ in a monotone way satisfies $\abs{\nabla P_tf}=P_t\abs{\nabla f}$ almost everywhere. 
Basically \autoref{maintheorem} is telling us that, in the setting of $\RCD(0,N)$ spaces, this is the only possible case.

 About the strategy of the proof let us observe that, as the examples above illustrated show, in the rigidity case for $p=1$ it is not necessarily true that the rigid function has vanishing Hessian. Therefore we cannot directly use $P_sf$ as a splitting function. Still our strategy relies on the properties of the normalized gradient $\nabla P_sf/\abs{\nabla P_sf}$. First we will prove that it has vanishing symmetric covariant derivative and then that its flow lines are metric lines. The conclusion will be eventually achieved building upon the splitting theorem.

Let us start proving that if the rigidity condition \eqref{eq: rigidity} holds for some $s>0$ then it must hold for any $s\ge 0$.
\begin{lemma}
	Let $(X,\dist,\meas)$ be an $\RCD(0, N )$ metric measure space and $f\in\Lipb(X)$. If there exists $s>0$ such that
	\begin{equation}\label{eq: rigidity2}
	|\nabla P_s f|=P_s|\nabla f|
	\qquad
	\meas\text{-a.e. in $X$},
	\end{equation}
	Then $|\nabla P_r f|=P_r|\nabla f|$ for any $r\ge 0$.
\end{lemma}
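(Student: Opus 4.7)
My plan hinges on introducing the defect $F(r):=P_r|\nabla f|-|\nabla P_r f|$, which by the $1$-Bakry--\'Emery inequality \eqref{eq:BE1} specialized to $K=0$ is nonnegative $\meas$-a.e.\ for every $r\ge0$. The task thus reduces to upgrading the hypothesis $F(s)=0$ to $F(r)=0$ for every $r\ge0$.

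The backward direction $r\in[0,s]$ is the cleanest half. Applying Bakry--\'Emery to $P_rf$ at time $s-r$ together with the monotonicity of $P_{s-r}$ gives the semigroup sandwich
\[
|\nabla P_sf|=|\nabla P_{s-r}(P_rf)|\le P_{s-r}|\nabla P_rf|\le P_{s-r}(P_r|\nabla f|)=P_s|\nabla f|.
\]
The hypothesis collapses the two extremes, forcing every inequality to be an equality; in particular $P_{s-r}F(r)=0$ a.e. Since $F(r)\ge 0$ and the heat kernel is strictly positive by the Gaussian bound \eqref{eq:kernelestimate}, this immediately forces $F(r)=0$ a.e.

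For the forward direction $r>s$ the same sandwich with $r_1=s$, $r_2=r$ only reproduces the already known Bakry--\'Emery inequality, so this is the point where I expect the main obstacle. My intended strategy is a bootstrap: by the $L^\infty$-to-Lip regularization \eqref{eq:infinitolip} the function $g:=P_sf$ lies in $\Lipb(X)$, and the hypothesis yields $|\nabla g|=P_s|\nabla f|$; any rigidity instant $s'>0$ for $g$ would, via the backward argument applied to $g$, propagate to rigidity of $f$ on the whole interval $[0,s+s']$, and an open--closed argument would then cover all of $[0,\infty)$. The difficulty is that producing such an $s'$ for $g$ amounts to proving $F(s+s')=0$ for some $s'>0$, which is essentially the statement we want. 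I therefore expect the actual argument to rely on an extra structural input coming from rigidity on $[0,s]$: as sketched in step~(1) of the introduction, the normalized gradient $\nabla P_tf/|\nabla P_tf|$ should be $t$-independent, divergence-free, and have vanishing symmetric covariant derivative; once established, this stationary structure can be propagated beyond time $s$ through the continuity equation and the flow theory of \autoref{Th: Lagrangianflows}, delivering rigidity at every $r\ge 0$.
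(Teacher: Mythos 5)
Your backward argument for $r\in[0,s]$ is correct and is essentially the paper's: both use the $1$-Bakry--\'Emery sandwich to conclude $P_{s-r}F(r)=0$, then positivity of the heat kernel to deduce $F(r)=0$. (The paper states this step more briefly without invoking kernel positivity explicitly, but the reasoning is the same.)

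For the forward direction $r>s$ there is a genuine gap, and you have correctly diagnosed it yourself: the bootstrap you sketch — producing some $s'>0$ with $F(s+s')=0$ so an open--closed argument can run — is exactly the statement you are trying to prove, so it is circular. The speculation that the flow-theoretic machinery of the later sections could close the gap also does not work as a proof of \emph{this} lemma, because the construction of the divergence-free vector field $b_s$ and its flow in \autoref{prop:flow by isomorphisms} only requires rigidity at a \emph{single} time $s$, and conversely that construction never feeds back to give $F(r)=0$ for $r>s$; indeed the paper explicitly remarks that the present lemma, while proved, ``does not play a role in the second part of the paper.'' The missing ingredient is analyticity in $r$: the paper fixes a nonnegative $\phi\in\Test_c(X,\dist,\meas)$, rewrites
\[
\int_X \bigl((P_r|\nabla f|)^2-|\nabla P_r f|^2\bigr)\phi\,\di\meas
=\int_X (P_r|\nabla f|)^2\phi\,\di\meas
+\tfrac12\didi{r}\int_X (P_rf)^2\phi\,\di\meas
-\tfrac12\int_X (P_rf)^2\Delta\phi\,\di\meas,
\]
and shows (\autoref{lemma:analytic}) that $r\mapsto\int_X(P_rg)^2\phi\,\di\meas$ is real analytic on $(0,\infty)$ for bounded $g$, via iterated $L^\infty$ bounds $\norm{\Delta P_t g}_{L^\infty}\le C'\norm{g}_{L^\infty}$ derived from the Gaussian heat kernel and gradient estimates \eqref{eq:kernelestimate}--\eqref{eq:gradientestimatekernel}. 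Since the displayed quantity vanishes identically on $(0,s)$ by your backward step, analytic continuation forces it to vanish for all $r>0$, and arbitrariness of $\phi$ gives $F(r)=0$ a.e.\ for every $r\ge 0$. Without this (or some equivalent unique-continuation-in-time mechanism) your proposal cannot be completed.
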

\begin{proof}
	It is simple to check that $|\nabla P_r f|=P_r|\nabla f|$ for any $0\le r\le s$. Indeed, using \eqref{eq: rigidity2} and the Bakry-\'Emery inequality \eqref{eq:BE1}, we have
	\begin{align*}
	0 &\le P_{s-r}\left( P_r|\nabla f|-|\nabla P_r f| \right)
	   =P_s|\nabla f|-P_{s-r}|\nabla P_r f|
	  =|\nabla P_s f|-P_{s-r}|\nabla P_r f|
	   \le 0.	
	\end{align*}
	Let us now fix $\phi\in \Test_c(X,\dist,\meas)$ and set
	\begin{equation}\label{z3}
		F(r):=\int_X ((P_r|\nabla f|)^2-|\nabla P_r f|^2)\phi \di \meas. 
	\end{equation}
	We claim that $F(r)$ is a real analytic function in $(0,+\infty)$. Observe that the claim, together with the information $F\equiv 0$ in $(0,s)$, implies $F(r)=0$ for any $r\ge 0$ and thus our conclusion, due to the arbitrariness of the test function.\\
	Integrating by parts the right hand side in \eqref{z3} and using \eqref{heat equation}, we can write
	\begin{equation*}
		F(r)=\int_X (P_r|\nabla f|)^2 \phi \di \meas+\frac{1}{2}\frac{\di}{\di r} \int_X (P_r f)^2 \phi \di \meas-\frac{1}{2}\int_X (P_rf)^2 \Delta \phi\di \meas,
	\end{equation*}
	so the claim is a consequence of \autoref{lemma:analytic} below.	
\end{proof}

\begin{lemma}\label{lemma:analytic}
	Let $(X,\dist,\meas)$ be an $\RCD(K,N)$ m.m.s.. For any $g\in L^\infty(X,\meas)$ 
	and any $\phi\in L^1(X,\meas)$ the map $t\mapsto \int_X (P_tg)^2 \phi \di \meas$ is real analytic in $(0,+\infty)$.	
\end{lemma}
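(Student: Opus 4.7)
The plan is to control the Taylor coefficients of $F(t):=\int_X (P_tg)^2 \phi\,d\meas$ at an arbitrary $t_0>0$ and invoke the Cauchy--Hadamard criterion for real analyticity; since $t_0$ is arbitrary this yields real analyticity on the whole of $(0,\infty)$. First, iterating the heat equation $\partial_t P_t g=\Delta P_tg$ (see \autoref{remark: heat flow in L infty}) and justifying differentiation under the integral sign by means of the Gaussian heat kernel estimate \eqref{eq:kernelestimate} together with its time-derivative analogues, I would establish that $F\in C^\infty((0,\infty))$ with the Leibniz-type formula
\begin{equation*}
F^{(n)}(t) = \sum_{k=0}^n \binom{n}{k}\int_X \Delta^k P_tg \cdot \Delta^{n-k} P_tg \cdot \phi\, d\meas.
\end{equation*}

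The crucial quantitative input is a bound of the form
\begin{equation*}
\norm{\Delta^k P_t g}_{L^\infty(X,\meas)} \leq A\,B^k\,k!\,t^{-k}\,\norm{g}_{L^\infty(X,\meas)},\qquad k\in\setN,\ t\in(0,t_0],
\end{equation*}
with $A,B$ depending only on $t_0$, $K$, $N$. Granted this, H\"older's inequality in the form $L^\infty\!\cdot\! L^\infty\!\cdot\! L^1$ applied termwise to the Leibniz sum yields
\begin{equation*}
|F^{(n)}(t_0)| \leq A^2\norm{g}_\infty^2\norm{\phi}_{L^1}(B/t_0)^n \sum_{k=0}^n \binom{n}{k} k!(n-k)! = A^2\norm{g}_\infty^2\norm{\phi}_{L^1}(B/t_0)^n (n+1)!,
\end{equation*}
so that $\limsup_n (|F^{(n)}(t_0)|/n!)^{1/n}\leq B/t_0$ and the Taylor series of $F$ at $t_0$ has positive radius of convergence.

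To obtain the $L^\infty$ bound on $\Delta^k P_t g$ I would start from the spectral theorem for the non-positive self-adjoint operator $-\Delta$ on $L^2(X,\meas)$, which via $\sup_{\lambda\geq 0}\lambda^k e^{-s\lambda}\leq k!\,s^{-k}$ gives the $L^2$-analogue $\norm{\Delta^k P_s h}_{L^2}\leq k!\,s^{-k}\norm{h}_{L^2}$ for $h\in L^2$, and then transfer to $L^\infty$ via the Gaussian upper bound \eqref{eq:kernelestimate}. Concretely, one factors $\Delta^k P_t g = P_{t/3}\bigl(\Delta^k P_{t/3}(P_{t/3}g)\bigr)$: the outermost $P_{t/3}$ is ultracontractive from $L^2$ to $L^\infty$ on regions where $\meas(B_{\sqrt t}(\cdot))$ is controlled (which holds locally by Bishop--Gromov), the middle operator obeys the spectral bound, and the innermost $P_{t/3}$ can be localized against $g\in L^\infty$ by a cutoff argument whose far-field tail is suppressed by the Gaussian upper bound. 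An equivalent route is to use Cauchy's integral formula on a circle of radius $t/2$ around $t$ applied to the holomorphic extension of $t\mapsto p_t(x,y)$ furnished by the spectral calculus.

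\textbf{Main obstacle.} The statement concerns the endpoint spaces $L^\infty$ and $L^1$, on which the heat semigroup is not analytic in the classical sense, whereas the spectral theorem most naturally produces bounds in $L^2$. The substance of the proof is the transfer of holomorphy of $P_t$ from $L^2$ to $L^\infty$ \emph{with the correct factorial growth rate in $k$}; the power-series conclusion is then a standard Cauchy--Hadamard estimate. Once the $L^\infty$ bound on $\Delta^k P_t g$ is in place, no further difficulty arises.
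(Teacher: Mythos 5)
Your high-level strategy coincides with the paper's: differentiate under the integral to get the Leibniz expansion $F^{(n)}(t)=\sum_{k}\binom{n}{k}\int\Delta^kP_tg\cdot\Delta^{n-k}P_tg\,\phi\,\di\meas$, bound $\norm{\Delta^kP_tg}_{L^\infty}$ with factorial growth in $k$, and conclude by a Bernstein/Cauchy--Hadamard criterion. (As a side remark, the paper's display \eqref{z4} appears to be missing an $n!$ factor; what the iteration actually produces is $\norm{\Delta^kP_tg}_{L^\infty}\lesssim (Ck/t)^k\norm{g}_\infty\sim C_1^kk!\,t^{-k}\norm{g}_\infty$, which is precisely the form you propose and is what the analyticity criterion needs.)

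The genuine difference — and where your proposal has a gap — is in \emph{how} you obtain the $L^\infty$ bound on $\Delta^kP_tg$. The paper proves the single-step estimate $\norm{\Delta P_tg}_{L^\infty}\le C'\norm{g}_{L^\infty}$ by a one-line duality argument: testing $\Delta P_tg$ against $\psi\in L^1\cap L^2$, writing $\int\Delta P_tg\,\psi=\int\nabla P_{t/2}g\cdot\nabla P_{t/2}\psi$ and using that the pointwise gradient kernel bound \eqref{eq:gradientestimatekernel} yields $\norm{\nabla P_sh}_{L^p}\le Cs^{-1/2}\norm{h}_{L^p}$ for \emph{every} $p\in[1,\infty]$. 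This is uniform across the whole $\RCD(K,N)$ class, compact or not, finite or infinite measure. Your route instead transfers the spectral $L^2\to L^2$ bound to $L^\infty\to L^\infty$ via the factorization $\Delta^kP_t=P_{t/3}\circ\Delta^kP_{t/3}\circ P_{t/3}$ and ultracontractivity at both ends. Two things break here. First, the innermost $P_{t/3}$ does not map $L^\infty$ into $L^2$ when $\meas(X)=\infty$; the cutoff you invoke to fix this requires controlling the far-field contribution of $\Delta^kp_t(x,\cdot)$, i.e.\ a Gaussian-type bound on $\Delta^kp_t$ with the correct factorial constant in $k$ — but this is essentially what the lemma is trying to establish, so there is a circularity. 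Second, the $L^2\to L^\infty$ ultracontractive constant is $\sim\meas(B_{\sqrt t}(x))^{-1/2}$, and $\inf_x\meas(B_{\sqrt t}(x))$ need not be positive on a general noncompact $\RCD(K,N)$ space; the local Bishop--Gromov bound you cite controls volume ratios, not an absolute lower bound. The Cauchy-integral variant via a holomorphic extension of $p_z(x,y)$ faces the same difficulty of extending the Gaussian bounds off the real axis. The fix is precisely the paper's move: replace the $L^2\to L^2\to L^\infty$ detour by the self-dual $L^p$-gradient bound \eqref{z33}, which gives the one-step $L^\infty\to L^\infty$ estimate directly and is then iterated at scale $t/k$.
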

\begin{proof}
	Exploiting a well-known analyticity criterion for real functions, it is enough to show, for any 
	$[a,b]\subset (0,+\infty)$, the existence of a constant $C=C(K,N, a,b)$ such that
	\begin{equation}\label{z4}
	  \abs{ \frac{\di ^n}{\di t^n} \int_X (P_tg)^2 \phi \di \meas}
	  \le C^n \norm{g}_{L^{\infty}}^2\norm{\phi}_{L^1}
	  \qquad
	  \forall t\in (a,b), \quad 
	  \forall n\in\setN.
	\end{equation}
	Observe that \eqref{z4} can be checked commuting the operators $P_t$ and $\Delta$   
	and using iteratively the estimate
	\begin{equation}\label{z5}
		\norm{\Delta P_t g}_{L^{\infty}}\le C' \norm{g}_{L^{\infty}}
		\qquad
		\forall t\in (a,b),
	\end{equation}
	where $C'>0$ depends only on $N$, $K$, $a$ and $b$.
	
	Let us prove \eqref{z5} arguing by duality. For any $\psi\in L^1\cap L^2(X,\meas)$, we have
	\begin{align*}
	  \abs{\int_X \Delta P_t g\ \psi \di \meas} = &\abs{\int_X \nabla P_{t/2} g\cdot \nabla P_{t/2} \psi  \di \meas} \\
	  \le & \norm{\nabla P_{t/2}g}_{L^{\infty}}\norm{\nabla P_{t/2}\psi}_{L^1}\\
	  \le &C''\norm{g}_{L^{\infty}}\ C'' \norm{\psi}_{L^1},
	\end{align*}
	where the last inequality is a consequence of the following general fact: there exists $C''(N, K, a,b)>0$ such that
	\begin{equation}\label{z33}
		\norm{\nabla P_t h}_{L^p}\le C'' \norm{h}_{L^p}
		\qquad
		\forall t\in (a,b),\quad \forall h\in L^p(X,\meas)\ \text{with}\ 1\le p\le \infty.
	\end{equation}
	In order to check \eqref{z33} we use the Gaussian estimates for the heat kernel and its gradient \eqref{eq:kernelestimate}, \eqref{eq:gradientestimatekernel} obtaining
	that there exists a constant $\alpha >1$ such that
	\begin{equation*}
	|\nabla P_t h|(x)\le C'' P_{\alpha t}\abs{h}(x),\\
	\qquad
	\text{for}\ \meas \text{-a.e.}\ x\in X,
	\quad \forall t\in (a,b),
	\end{equation*}
	and we take the $L^p$ norm both sides.    
\end{proof}

Let us introduce the most important object of our investigation. For any $s>0$ we consider the vector field
\begin{equation}\label{eq: b}
b_s:=\frac{\nabla P_sf}{P_s|\nabla f|},
\end{equation}
that, since $P_s\abs{\nabla f}>0$ $\meas$-a.e., is well defined and satisfies 
\begin{equation}\label{eq: constant lenght}
	|b_s|=1\quad \meas\text{-a.e. in $X$},
	\quad
	\forall
	s>0,
\end{equation}
thanks to \eqref{eq: rigidity}.

The first important ingredient of the proof of \autoref{maintheorem} is the following proposition. Its proof is inspired by an analogous result in \cite{Gigli13}. 

\begin{proposition}[Variation formula, version 1]\label{prop:variational identity}
	For any $s>0$, $t\ge 0$ and any $g\in \Test(X,\dist,\meas)$ it holds
	\begin{equation}\label{eq:variational identity}
		b_{t+s}\cdot \nabla P_t g=P_t(b_s\cdot \nabla g),
		\qquad
		\meas\text{-a.e. in $X$}.
	\end{equation}
\end{proposition}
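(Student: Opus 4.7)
The approach is to derive \eqref{eq:variational identity} by first variation from the $1$-Bakry-\'Emery inequality \eqref{eq:BE1}, exploiting the fact that the rigidity hypothesis forces $P_s f$ to saturate it. First I would rewrite the rigidity, using the semigroup property $P_{t+s}=P_t\circ P_s$ and the previous lemma (valid for all times), as
\begin{equation*}
|\nabla P_t(P_s f)| = P_t|\nabla P_s f|\quad \meas\text{-a.e.\ on }X.
\end{equation*}
Then, for $\eps\in (-\delta,\delta)$ and $g\in\Test(X,\dist,\meas)$, set $h_\eps:=P_s f+\eps g$; its Lipschitz representative lies in $L^\infty\cap H^{1,2}_{\loc}$, and \eqref{eq:BE1} gives $|\nabla P_t h_\eps|\le P_t|\nabla h_\eps|$ $\meas$-a.e., with equality at $\eps=0$. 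For any nonnegative $\phi\in\Lipbs(X)$ I would define
\begin{equation*}
I(\eps):=\int_X\phi\bigl(P_t|\nabla h_\eps|-|\nabla P_t h_\eps|\bigr)\di\meas\ge 0,
\end{equation*}
which satisfies $I(0)=0$, so that $\eps=0$ is a minimum and, once differentiability is established, $I'(0)=0$.

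The core step is the computation of $I'(0)$. Writing $\nabla h_\eps=\nabla P_s f+\eps\nabla g$ and $\nabla P_t h_\eps=\nabla P_{t+s}f+\eps\nabla P_t g$, elementary differentiation of the Euclidean norm gives
\begin{equation*}
\frac{|\nabla h_\eps|-|\nabla P_s f|}{\eps}\xrightarrow{\eps\to 0} b_s\cdot\nabla g,\qquad \frac{|\nabla P_t h_\eps|-|\nabla P_{t+s}f|}{\eps}\xrightarrow{\eps\to 0} b_{t+s}\cdot\nabla P_t g,
\end{equation*}
$\meas$-a.e.\ on the set where $|\nabla P_s f|$ and $|\nabla P_{t+s}f|$ do not vanish. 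Strict positivity $\meas$-a.e.\ of $P_r|\nabla f|$ for every $r>0$ follows from the heat kernel lower bound \eqref{eq:kernelestimate} applied to $|\nabla f|$, which is nonzero on a set of positive measure because $f$ is non constant and $X$ is a connected length space (so that $\Ch(f)>0$); combined with rigidity, $|\nabla P_r f|=P_r|\nabla f|>0$ $\meas$-a.e. The two difference quotients are pointwise bounded by $|\nabla g|$ and $|\nabla P_t g|$ respectively, both of which lie in $L^\infty$ since $g$ and $P_t g$ are Lipschitz (the latter by the $L^\infty$--$\Lip$ regularization \eqref{eq:infinitolip}).

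Using that $\phi\in L^1\cap L^\infty$ has compact support, self-adjointness of $P_t$ allows me to rewrite $\int_X\phi\, P_t|\nabla h_\eps|\di\meas=\int_X (P_t\phi)|\nabla h_\eps|\di\meas$, whence dominated convergence yields
\begin{equation*}
I'(0)=\int_X (P_t\phi)(b_s\cdot\nabla g)\di\meas-\int_X\phi(b_{t+s}\cdot\nabla P_t g)\di\meas=\int_X\phi\bigl(P_t(b_s\cdot\nabla g)-b_{t+s}\cdot\nabla P_t g\bigr)\di\meas,
\end{equation*}
after a second application of self-adjointness ($P_t\phi\in L^1$, $b_s\cdot\nabla g\in L^\infty$). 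Since $I(\eps)\ge 0$ for both signs of $\eps$ and $I(0)=0$, we have $I'(0)=0$; by linearity in $\phi$ the identity extends from nonnegative to all $\phi\in\Lipbs(X)$, and the density of $\Lipbs(X)$ in $L^1(X,\meas)$ gives \eqref{eq:variational identity}.

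The main obstacle I anticipate is the rigorous handling of the differentiation under the integral sign, and, tightly linked to it, the $\meas$-a.e.\ non-vanishing of $|\nabla P_r f|$ that is needed for the pointwise differentiability of $\eps\mapsto|\nabla h_\eps|$ at $\eps=0$; both of these rest ultimately on the previous lemma (rigidity for all times $r\ge 0$) and on the strict positivity of the heat semigroup on nonnegative nontrivial initial data.
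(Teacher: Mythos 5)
Your argument is correct and follows essentially the same route as the paper: both define the nonnegative functional $\Psi(h)=\int_X(P_t|\nabla h|-|\nabla P_t h|)\phi\,\di\meas$ (your $I(\eps)$ is $\Psi(P_sf+\eps g)$), observe via \autoref{lemma: Ps rigidity} and \eqref{eq:BE1} that $P_sf$ is a minimizer, and read \eqref{eq:variational identity} off the vanishing first variation, with differentiability resting on $|\nabla P_rf|=P_r|\nabla f|>0$ $\meas$-a.e. The extra detail you supply --- using self-adjointness of $P_t$ to transfer it onto $\phi$ before invoking dominated convergence, and the heat-kernel positivity argument for $P_r|\nabla f|>0$ --- fills in steps the paper leaves implicit but does not change the method.
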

Before proving \autoref{prop:variational identity} we need to state a simple lemma.	
\begin{lemma}\label{lemma: Ps rigidity}
	For any $s\geq 0$ the function $P_sf$ satisfies
	\begin{equation}\label{eq rigidity s}
	|\nabla P_{t+s}f|=	P_{t}|\nabla P_sf|,
	\qquad
	\text{$\meas$-a.e. in $X$,\quad $\forall t\geq 0$.}
	\end{equation}	
\end{lemma}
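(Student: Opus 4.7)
The plan is very short: the lemma is essentially an immediate consequence of the previously established fact that the rigidity condition \eqref{eq: rigidity2} propagates from one time $s>0$ to all times $r\geq 0$, combined with the semigroup property of $P_t$.

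More precisely, I would argue as follows. By the preceding lemma we know that
\begin{equation*}
|\nabla P_r f| = P_r |\nabla f| \quad \meas\text{-a.e., for every } r \geq 0.
\end{equation*}
Apply this identity with $r = t+s$ to get
\begin{equation*}
|\nabla P_{t+s} f| = P_{t+s}|\nabla f| \quad \meas\text{-a.e.}
\end{equation*}
On the other hand, apply the same identity with $r = s$ and then take the heat semigroup at time $t$ of both sides; using the semigroup property $P_t \circ P_s = P_{t+s}$ on $L^\infty$ (which is legitimate in view of \autoref{remark: heat flow in L infty}, since $|\nabla f| \in L^\infty$ by the assumption $f \in \Lipb(X)$), we obtain
\begin{equation*}
P_t |\nabla P_s f| = P_t(P_s |\nabla f|) = P_{t+s}|\nabla f| \quad \meas\text{-a.e.}
\end{equation*}
Comparing the two displays gives the claim.

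There is no real obstacle here; the only minor care is to make sure that the equality $|\nabla P_r f| = P_r |\nabla f|$, which is a priori an equality of $L^\infty$ functions modulo $\meas$-null sets, can be acted on by $P_t$ without issues. This is fine because $P_t$ is well defined on $L^\infty(X,\meas)$ and respects $\meas$-a.e. equalities, and because the semigroup identity $P_t P_s = P_{t+s}$ holds in this setting. So the lemma reduces to a one-line chain of equalities once the preceding lemma is in hand.
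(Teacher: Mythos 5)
Your proof is correct and takes essentially the same route as the paper: both rely on the preceding lemma (rigidity propagates from the given $s>0$ to all $r\geq0$) together with the semigroup identity $P_tP_s=P_{t+s}$. The only cosmetic difference is that the paper phrases the argument as a closed chain $|\nabla P_{t+s}f|\le P_t|\nabla P_sf|=P_{t+s}|\nabla f|=|\nabla P_{t+s}f|$ (starting from the Bakry--\'Emery inequality), whereas you compare the two sides directly via equalities; the substance is identical.
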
	

\begin{proof}
	Using first the Bakry-\'Emery inequality \eqref{eq:BE1} and then twice \eqref{eq: rigidity} we get
	\begin{equation*}
		|\nabla P_{t+s} f|\leq P_t|\nabla P_sf|=P_{t+s}|\nabla f|=|\nabla P_{t+s} f|,
	\end{equation*}
	that proves our claim.
\end{proof}

\begin{proof}[Proof of \autoref{prop:variational identity}]
   Let $s>0$, $t\geq 0$ be fixed. The idea of the proof is to obtain \eqref{eq:variational identity} as the Euler equation associated to the functional
   \begin{equation*}
   	\Psi(h):=\int_X (P_t|\nabla h|-|\nabla P_t h|)\phi \di \meas
   	\qquad
   	h\in \Lip(X),
   \end{equation*}
   where $\phi\in \Lip_{\rm bs}$ is a fixed nonnegative cut-off function.
   Indeed, thanks to \autoref{lemma: Ps rigidity} and the Bakry-\'Emery contraction estimate \eqref{eq:BE1}, we know that $P_sf$ is a minimum of $\Psi$. Thus
   \begin{equation*}
   	\frac{\di}{\di \eps}|_{\eps=0} \Psi(P_sf+\eps g)=0
   	\qquad
   	\forall g\in \Test(X,\dist,\meas).
   \end{equation*}
   Notice that the differentiability of $\varepsilon\mapsto\Psi(P_sf+\eps g)$ at $\eps=0$ can be easily checked using $|\nabla P_s f|=P_s|\nabla f|>0$.
   Then we compute
   \begin{align*}
   	0= & \frac{\di}{\di \eps}|_{\eps=0} \Psi(P_sf+\eps g)\\
   	 = & \frac{\di}{\di \eps }|_{\eps=0} \int_X(P_t|\nabla P_s f+\eps\nabla g|-|\nabla (P_{t+s}f+\eps P_t g)|)\phi \di\meas\\
   	 = & \int_X \left(P_t\left( \frac{\nabla P_s f}{|\nabla P_s f|}\cdot \nabla g\right)-\frac{\nabla P_{t+s}f}{|\nabla P_{t+s} f|}\cdot \nabla P_tg\right)\phi\di\meas\\
   	 =& \int_X (P_t(b_s\cdot \nabla g)-b_{t+s}\cdot \nabla P_t g)\phi\di\meas.
   \end{align*}
   The conclusion follows from the arbitrariness of $\phi$.
\end{proof}

As a first consequence of \autoref{prop:variational identity} we get the following.
\begin{proposition}\label{prop:flow by isomorphisms}
	For any $s>0$ it holds $\div b_s=0$ and $\nabla_{\sym}b_s=0$ according to \autoref{def:symmetric covariant derivative}.
	
	In particular, there exists a regular Lagrangian flow $\XX^s:\setR\times X\rightarrow X$ of $b_s$ with 
	\begin{equation*}
		\left(\XX^s_t\right)_{\#}\meas=\meas, \qquad
		\dist(\XX^s_t(x),\XX^s_t(y))=\dist(x,y)
		\qquad \forall t\in \setR,\ \ \forall x,\,y\in X.
	\end{equation*}
\end{proposition}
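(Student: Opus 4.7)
The plan is to verify the two hypotheses $\div b_s = 0\in L^\infty(X,\meas)$ and $\nabla_{\sym} b_s = 0\in L^2(X,\meas)$ required by \autoref{Th: Lagrangianflows}(iv), starting from the intertwining identity \eqref{eq:variational identity} established in \autoref{prop:variational identity}, and then to apply that theorem. The guiding idea is that \eqref{eq:variational identity} asserts that the first-order operator $b_s\cdot\nabla$ is intertwined with the heat semigroup up to the shift $b_s\mapsto b_{t+s}$, and combined with the unit-length constraint $|b_s|=1$ $\meas$-a.e.\ this should be enough to force $b_s$ to generate a flow of measure-preserving isometries, i.e.\ to behave as a Killing-type vector field.

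For $\div b_s = 0$, I would fix $g\in\Test_c(X,\dist,\meas)$ and integrate \eqref{eq:variational identity} over $X$. Using the mass-preservation of $P_t$ on $L^1(X,\meas)$,
\begin{equation*}
\int_X b_s\cdot\nabla g\,\di\meas \;=\; \int_X P_t(b_s\cdot\nabla g)\,\di\meas \;=\; \int_X b_{t+s}\cdot\nabla P_t g\,\di\meas \qquad \forall\,t\ge 0,
\end{equation*}
so the common value is $t$-independent. To extract its vanishing, I would let $t\to +\infty$ and exploit $|b_{t+s}|=1$ together with the Gaussian gradient estimate \eqref{eq:gradientestimatekernel}: for $g\in\Test_c$ on a (non-compact) $\RCD(0,N)$ space, the bound $|\nabla P_t g|\le C t^{-1/2}P_{ct}|g|$ combined with local doubling and the compactness of $\supp g$ forces $\int_X b_{t+s}\cdot\nabla P_t g\,\di\meas\to 0$. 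Hence $\int_X b_s\cdot\nabla g\,\di\meas = 0$ for every $g\in\Test_c$, which by \autoref{def:divergence} and the $L^\infty$-boundedness of $b_s$ yields $\div b_s = 0\in L^\infty(X,\meas)$.

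For $\nabla_{\sym} b_s = 0$, with $\div b_s = 0$ already secured, the defining relation \eqref{eq:symmetric covariant derivative} reduces the claim to proving
\begin{equation*}
\int_X\bigl(b_s\cdot\nabla\phi\,\Delta\psi + b_s\cdot\nabla\psi\,\Delta\phi\bigr)\,\di\meas = 0 \qquad \forall\,\phi,\psi\in\Test(X,\dist,\meas).
\end{equation*}
I would derive this via a symmetric time-differentiation of \eqref{eq:variational identity}: writing $\Delta\psi=\tfrac{d}{dt}\bigr|_{t=0}P_t\psi$ and using the self-adjointness of $P_t$ together with \eqref{eq:variational identity}, each summand becomes the $t$-derivative at $0$ of $\int_X(b_{t+s}\cdot\nabla P_t\phi)\psi\,\di\meas$. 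Adding the symmetric $\phi\leftrightarrow\psi$ contribution and exploiting the Leibniz relation $\psi\nabla\phi+\phi\nabla\psi=\nabla(\phi\psi)$, the already-established $\div b_s=0$ applied to the test function $\phi\psi\in\Test_c$ kills the $t=0$ value of the combined pairing; a further application of \eqref{eq:variational identity} rewrites the $t$-derivative in terms of the target integral itself and forces it to vanish.

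Once both identities are verified, parts (i)--(ii) of \autoref{Th: Lagrangianflows} deliver the unique regular Lagrangian flow $\XX^s:\setR\times X\to X$ associated to $b_s$, while part (iv) upgrades it to a measure-preserving flow by isometries, satisfying the pointwise semigroup identity. I expect the main technical hurdle to be the decay step in the proof of $\div b_s = 0$: on a non-compact $\RCD(0,N)$ space the integrand $b_{t+s}\cdot\nabla P_t g$ is only uniformly bounded in $L^1$ in $t$ (via Bakry-\'Emery), so the vanishing of its integral as $t\to+\infty$ likely requires a preliminary argument that leverages the specific structure of $b_s$ as a normalized heat-flow gradient, combining the Gaussian bounds \eqref{eq:kernelestimate}--\eqref{eq:gradientestimatekernel} with the local doubling property in a delicate way.
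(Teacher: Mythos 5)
Your argument for $\div b_s = 0$ is the paper's and it is correct: integrate \eqref{eq:variational identity} against $\meas$, use mass-preservation of $P_t$ and $|b_{t+s}|=1$ to bound $\bigl|\int b_s\cdot\nabla g\,\di\meas\bigr|\le\int|\nabla P_t g|\,\di\meas$, then invoke the pointwise Gaussian bound $|\nabla P_tg|\le C t^{-1/2}P_{\alpha t}|g|$ and $L^1$-mass preservation of $P_{\alpha t}$ to get decay at rate $t^{-1/2}$. The hurdle you worry about at the end is not there: the pointwise kernel bound already gives $\|\nabla P_tg\|_{L^1}\le Ct^{-1/2}\|g\|_{L^1}\to 0$ directly, with no need for additional doubling gymnastics.

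The $\nabla_{\sym}b_s=0$ step has a genuine gap. Set
\begin{equation*}
A(t):=\int_X\bigl(b_{t+s}\cdot\nabla P_t\phi\bigr)\psi\,\di\meas+\int_X\bigl(b_{t+s}\cdot\nabla P_t\psi\bigr)\phi\,\di\meas.
\end{equation*}
You correctly identify the target integral $\int(b_s\cdot\nabla\phi)\Delta\psi+\int(b_s\cdot\nabla\psi)\Delta\phi$ with $A'(0)$, and you correctly prove $A(0)=0$ via the Leibniz rule and $\div b_s=0$. But $A(0)=0$ tells you nothing about $A'(0)$; the phrase ``a further application of \eqref{eq:variational identity} \ldots forces it to vanish'' does not contain an argument, and as written $A$ is \emph{not} identically zero because $\psi$ and $\phi$ are not hit by $P_t$. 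The fix -- and this is exactly what the paper does -- is to apply $P_t$ to \emph{both} slots. Since $\div b_{t+s}=0$ for every $t\ge 0$ (you proved this for every $s$, and $t+s$ is just another such parameter),
\begin{equation*}
B(t):=\int_X\bigl(b_{t+s}\cdot\nabla P_tg\bigr)P_tg\,\di\meas=\tfrac12\int_X b_{t+s}\cdot\nabla(P_tg)^2\,\di\meas=0\qquad\forall\,t\ge0,
\end{equation*}
so $B$ is identically zero and hence $B'(0)=0$. Rewriting $B(t)=\int P_t(b_s\cdot\nabla g)\,P_tg\,\di\meas$ by \eqref{eq:variational identity}, differentiating at $t=0$ and using self-adjointness yields $B'(0)=2\int(b_s\cdot\nabla g)\Delta g\,\di\meas$, which therefore vanishes; polarization in $g$ then gives exactly what \autoref{def:symmetric covariant derivative} requires once $\div b_s=0$ is known. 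The remainder of your proposal (invoking \autoref{Th: Lagrangianflows}(iv) to produce the measure-preserving isometric flow) is correct.
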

\begin{proof}
	Let $g\in\Test_c(X,\dist,\meas)$ be fixed. Using \eqref{eq:variational identity} we obtain
	\begin{align*}
		\abs{\int_X b_s\cdot \nabla g(x)\di \meas(x)}= & 
		\abs{\int_X P_t(b_s\cdot \nabla g)(x)\di \meas(x)}\\
		=&\abs{\int_X b_{t+s}\cdot \nabla P_tg(x) \di \meas(x)}\\
		\leq & \int_X |\nabla P_tg|(x)\di \meas(x).
	\end{align*}
	To get $\div b_s=0$ it suffices to show that
	\begin{equation}\label{eq:limitgrad}
		\lim_{t\to \infty}\int_X |\nabla P_tg|(x)\di \meas(x)=0,
	\end{equation}
	for any nonnegative $g\in \Test_c(X,\dist,\meas)$. 
	To this aim we use the Gaussian estimates for the heat kernel and 
	its gradient \eqref{eq:kernelestimate}, \eqref{eq:gradientestimatekernel} concluding 
	that there exist a constant $C=C(N)>0$ and $\alpha >1$ such that
		\begin{equation}\label{z12}
			|\nabla P_t g|(x)\le \frac{C}{\sqrt{t}} P_{\alpha t}g(x),\\
			\qquad
			\text{for}\ \meas \text{-a.e.}\ x\in X.
		\end{equation}
			
	Let us prove that $\nabla_{\sym}b_s=0$ for any $s>0$. First observe that, since $b_s$ is divergence-free we have
	\begin{equation}\label{z1}
		\int_X b_{t+s}\cdot \nabla P_tg\  P_tg\di \meas =
		\frac{1}{2}\int_X b_{t+s}\cdot \nabla (P_tg)^2 \di \meas
		=0,
	\end{equation}
	for any $g\in \Test(X,\dist,\meas)$, for any $s>0$ and $t\ge 0$. Using again \eqref{eq:variational identity} and \eqref{z1} we deduce
	\begin{align*}
	0=&	\frac{\di}{\di t}{\big|_{t=0}}\int_X b_{t+s}\cdot \nabla P_tg\ P_tg\di \meas
	=\frac{\di}{\di t}{\big|_{t=0}}\int_X P_t\left( b_s\cdot \nabla g   \right)P_tg\di \meas\\
	=&\int_X \Delta( b_s\cdot \nabla g)\ g\di \meas+\int_X b_s\cdot \nabla g \Delta
	g \di \meas\\
	=&2\int_X b_s\cdot \nabla g\  \Delta
	g \di \meas,
	\end{align*}
	that, by polarization, implies our claim. 
	
	The second part of the statement follows from (iv) in \autoref{Th: Lagrangianflows}.
\end{proof}
	
	We are now in position to show that $b_s$ does not depend on $s>0$. 
	
	\begin{lemma}[Variation formula, version 2]\label{prop:variational identity2}
		The vector field $b:=b_s$ does not depend on $s>0$.  In particular, it holds
		\begin{equation}\label{eq: variational2}
		b\cdot \nabla P_tg=P_t(b\cdot \nabla g)
		\qquad
		\meas\text{-a.e.},
		\end{equation}
		for every $g\in \Test(X,\dist,\meas)$ and every $t\geq 0$.
	\end{lemma}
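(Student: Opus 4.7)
The plan is to deduce $s$-independence of $b_s$ from Variation formula, version 1, by combining it with self-adjointness of $P_t$ and the divergence-free property $\div b_s = 0$ proved in \autoref{prop:flow by isomorphisms}. Once this is done, the commutation identity \eqref{eq: variational2} is just \eqref{eq:variational identity} with $b_s = b_{s+t} = b$.

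I fix $s>t>0$ and test functions $g,\phi\in\Test_c(X,\dist,\meas)$. Multiplying $b_{s+t}\cdot\nabla P_tg=P_t(b_s\cdot\nabla g)$ by $\phi$ and integrating, the left-hand side becomes, via $\div b_{s+t}=0$,
\begin{equation*}
\int_X(b_{s+t}\cdot\nabla P_tg)\phi\,\di\meas=-\int_X P_tg\,(b_{s+t}\cdot\nabla\phi)\,\di\meas,
\end{equation*}
while the right-hand side, using self-adjointness of $P_t$ and then $\div b_s=0$, equals
\begin{equation*}
\int_X(b_s\cdot\nabla g)\,P_t\phi\,\di\meas=-\int_X g\,(b_s\cdot\nabla P_t\phi)\,\di\meas.
\end{equation*}
Applying \eqref{eq:variational identity} in ``reverse'', i.e.\ with $(s,g)\rightsquigarrow(s-t,\phi)$ — which is legitimate since $s>t$ — gives $b_s\cdot\nabla P_t\phi=P_t(b_{s-t}\cdot\nabla\phi)$, so that a further use of self-adjointness transforms the above into $-\int_X P_tg\,(b_{s-t}\cdot\nabla\phi)\,\di\meas$. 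Comparing the two expressions yields
\begin{equation*}
\int_X P_tg\,(b_{s+t}-b_{s-t})\cdot\nabla\phi\,\di\meas=0
\qquad\forall g,\phi\in\Test_c(X,\dist,\meas),\ \forall s>t>0.
\end{equation*}

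Because $P_t$ is injective and self-adjoint on $L^2(X,\meas)$ and $\Test_c$ is dense in $L^2(X,\meas)$, the set $\{P_tg:g\in\Test_c\}$ is dense in $L^2(X,\meas)$. Hence $(b_{s+t}-b_{s-t})\cdot\nabla\phi=0$ $\meas$-a.e.\ for every $\phi\in\Test_c$, and since gradients of test functions generate the tangent module in $\RCD(K,\infty)$ spaces, the derivation $b_{s+t}-b_{s-t}$ is zero. As any pair $0<r_1<r_2$ can be represented as $(s-t,s+t)$ with $s=(r_1+r_2)/2$, $t=(r_2-r_1)/2$, this proves that $b_s$ is independent of $s>0$. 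Setting $b:=b_s$, the second statement is then exactly \eqref{eq:variational identity}. The main technical point I expect is the density step: it rests on the standard density of $\Test_c$ in $L^2$ in $\RCD$ spaces plus injectivity of $P_t$, and on the module-theoretic fact that $\{\nabla\phi:\phi\in\Test_c\}$ generates $L^2(TX)$.
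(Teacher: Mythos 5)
Your proof is correct, and it takes a genuinely different route from the paper's. The paper establishes $s$-independence of $b_s$ by working on the Lagrangian side: it invokes the full conclusion of \autoref{prop:flow by isomorphisms} (including $\nabla_{\sym}b_s=0$) to get a regular Lagrangian flow $\XX^s_t$ consisting of measure-preserving isometries, combines this with \autoref{lemma: measure preserving isometry} (isometries commute with the heat semigroup) and the defining ODE \eqref{eq: RLF condition 3} to derive
$$(b_s\cdot\nabla P_rg)\circ\XX^s_t=(b_{r+s}\cdot\nabla P_rg)\circ\XX^s_t$$
along trajectories, and then removes $\XX^s_t$ because it preserves $\meas$. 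Your argument instead stays entirely on the Eulerian/dual side: you use only the divergence-free property $\div b_s=0$, self-adjointness of $P_t$, and two applications of \autoref{prop:variational identity} — once forward and once with the shifted parameter $s-t$ — to obtain $\int P_tg\,(b_{s+t}-b_{s-t})\cdot\nabla\phi\,\di\meas=0$ and then conclude by density of $\mathrm{ran}(P_t|_{\Test_c})$ in $L^2$ (which holds since $P_t$ is self-adjoint and injective) and by the fact that $\{\nabla\phi:\phi\in\Test_c\}$ generates $L^2(TX)$. This is more elementary in the sense that it bypasses the existence theory for regular Lagrangian flows and does not even use the $\nabla_{\sym}b_s=0$ half of \autoref{prop:flow by isomorphisms}; the price is the slightly heavier density bookkeeping at the end (injectivity and dense range of $P_t$, and the module-generation fact), which your proposal correctly identifies as the technical point. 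Both routes respect the paper's logical order (version 1 $\Rightarrow$ $\div b_s=0$ $\Rightarrow$ version 2), so there is no circularity.
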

	
	The most important ingredient in the proof of \autoref{prop:variational identity2} is the following lemma.
	
	\begin{lemma}\label{lemma: measure preserving isometry}
		Let $(X,\dist,\meas)$ be an $\RCD(K,\infty)$ m.m.s. and let
		$T:X\to X$ be a measure preserving isometry.  Then, for any $f\in L^2(X,\meas)$, it holds
		\begin{equation}\label{z2}
			P_t (f\circ T)(x)=(P_t f)\circ T(x),
		\end{equation}
		for any $t>0$ and for $\meas$-a.e. $x\in X$.
	\end{lemma}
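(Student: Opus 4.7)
The plan is to prove \eqref{z2} by exploiting the fact that the heat semigroup $P_t$ is the $L^2$-gradient flow of $\tfrac12\Ch$, and that the transformation $f \mapsto f\circ T$ is an $L^2$-isometry that preserves the Cheeger energy. By the Komura--Brezis uniqueness theory of gradient flows of convex lower semicontinuous functionals on Hilbert spaces, any unitary operator commuting with the energy automatically commutes with its gradient flow.

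First, I would set up the change of variables. Since $T$ is a measure-preserving isometry with a measure-preserving inverse $T^{-1}$ (obtained from the flow of $-b$, as used in the proof of (iv) of \autoref{Th: Lagrangianflows}), the map $U f := f\circ T$ is a well-defined linear unitary operator on $L^2(X,\meas)$.

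The key step is to show that $U$ preserves the Cheeger energy, i.e. $\Ch(f\circ T)=\Ch(f)$ for every $f\in L^2(X,\meas)$. For $g\in\Lipb(X)\cap L^2(X,\meas)$, since $T$ is a bijective isometry, $g\circ T\in\Lipb(X)\cap L^2(X,\meas)$ and the slope satisfies $\lip(g\circ T)(x)=(\lip g)(T(x))$ pointwise. By measure-preservation,
\begin{equation*}
\int_X \lip^2(g\circ T)\di\meas
=\int_X (\lip g)^2\circ T\di\meas
=\int_X \lip^2 g\di\meas.
\end{equation*}
If $g_n\to f$ in $L^2$ with $g_n\in\Lipb\cap L^2$, then $g_n\circ T\to f\circ T$ in $L^2$ (again by measure-preservation), and the above identity together with the definition \eqref{eq:cheeger} of $\Ch$ as a relaxation yields $\Ch(f\circ T)\le\Ch(f)$. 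The opposite inequality follows by applying the same argument to $T^{-1}$ and to the function $f\circ T$.

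Finally, I would conclude by invoking uniqueness of the gradient flow. Since $U$ is unitary on $L^2(X,\meas)$ and $\Ch\circ U=\Ch$, the curve $t\mapsto U(P_tf)$ is locally absolutely continuous in $(0,+\infty)$, starts at $Uf$ as $t\downarrow 0$, and is a gradient flow of $\tfrac12\Ch$ (because the subdifferential of $\Ch$ is intertwined with $U$). By uniqueness of gradient flows of convex lower semicontinuous functionals in Hilbert spaces (Komura--Brezis), $U(P_tf)=P_t(Uf)$, which is exactly \eqref{z2}.

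The main obstacle is the verification of the Cheeger energy invariance when $f$ is merely in $L^2$, since it requires handling the relaxation procedure in \eqref{eq:cheeger} and making sure that Lipschitz approximating sequences for $f$ are transported by $U$ into Lipschitz approximating sequences for $f\circ T$ with the same limsup of slopes. The pointwise identity $\lip(g\circ T)=(\lip g)\circ T$ for Lipschitz $g$, which uses crucially that $T$ is a \emph{bijective} isometry (so the limsup defining the slope at $x$ exhausts a full neighborhood of $T(x)$ through the substitution $y\mapsto T(y)$), is the technical heart of the argument; once that is in place, the remaining steps are abstract Hilbert space functional analysis.
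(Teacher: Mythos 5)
Your proposal is correct and follows exactly the same strategy the paper sketches: observe that $T$ induces a unitary $U$ on $L^2$ with $\Ch\circ U=\Ch$ (proved through the relaxation definition of $\Ch$ and the slope identity for Lipschitz functions, using bijectivity of $T$), and then conclude by uniqueness of gradient flows of convex l.s.c. functionals, since $P_t$ is the gradient flow of $\tfrac12\Ch$. The paper merely states these two observations without elaborating, so your proof is a faithful and carefully filled-in version of the same argument.
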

	
	\begin{proof}
	We just provide a sketch of the proof since the result is quite standard in the field.\\
	First we observe that, since $T$ is a measure preserving isometry, it holds that $f\in H^{1,2}(X,\dist,\meas)$ if and only if $f\circ T\in H^{1,2}(X,\dist,\meas)$ and in that case $\Ch(f\circ T)=\Ch(f)$. From this observation we deduce \eqref{z2}, since the heat flow is the gradient flow of the Cheeger energy in $L^2(X,\meas)$.
	   
	\end{proof}

	\begin{proof}[Proof of \autoref{prop:variational identity2}] Let $s>0$ and let $\XX^s$, the regular Lagrangian flow associated to $b_s$, be fixed. 
	
	We know from \autoref{prop:flow by isomorphisms} that for any $t\in\setR$ the flow map $\XX^s_t$ is a measure preserving isometry of $X$. Therefore, for any $r\ge 0$ and any $g\in \Test(X,\dist,\meas)$, using \eqref{z2} with $T=\XX^s_t$ and \eqref{eq:variational identity}, we get
		\begin{align*}
		(b_s\cdot \nabla P_r g) \circ \XX^s_t=&
			\frac{\di}{\di t} P_r(g)\circ \XX^s_t = \frac{\di}{\di t} P_r(g\circ \XX^s_t)\\
			=&P_r((b_s\cdot \nabla g)\circ \XX^s_t)
			= P_r(b_s\cdot \nabla g)\circ \XX^s_t\\
			=&(b_{r+s}\cdot \nabla P_rg) \circ \XX^s_t.
		\end{align*}
	Since $g$ is arbitrary, the first conclusion in the statement follows. The second one is a direct consequence of \autoref{prop:variational identity}.	
	\end{proof}

Let us denote by $\XX$ the regular Lagrangian flow of $b$ from now on, choosing in particular the ``good representative''
of \autoref{Th: Lagrangianflows} (iv). Our next aim is to prove that for any $x\in X$ the curve $t\mapsto \XX_t(x)$ is a line. This will yield the sought conclusion about the product structure of $(X,\dist,\meas)$ by means of the splitting theorem \autoref{thm:splitting}.
\begin{proposition}\label{prop: HJ equation}
	For all $s>0$ the identity
   	\begin{equation}\label{eq: strange}
   	P_s f(\XX_{-t}(x))=\min_{\overline{B}_t(x)} P_s f
   	\end{equation}
   	holds true for any $t\ge 0$ and any $x\in X$.
\end{proposition}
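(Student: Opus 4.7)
The inequality $P_sf(\XX_{-t}(x))\ge\min_{\overline{B}_t(x)}P_sf$ is immediate. Indeed, $|b|\equiv 1$ by \eqref{eq: constant lenght} forces the trajectory $r\mapsto\XX_{-r}(x)$ to be $1$-Lipschitz, so $\XX_{-t}(x)\in\overline{B}_t(x)$, and the value of $P_sf$ at a point of the closed ball is bounded below by its minimum on the ball.

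For the reverse inequality, my plan is to exhibit both sides as solutions of the isotropic Hamilton--Jacobi initial value problem
\[
\partial_t v+|\nabla v|=0,\qquad v|_{t=0}=P_sf,
\]
and to conclude $w_t(x):=P_sf(\XX_{-t}(x))=\min_{\overline{B}_t(x)}P_sf=:m_t(x)$ via a uniqueness/comparison argument in the spirit of \cite{Nakayasu14}. For $w_t$, the fact that $\XX_{-t}$ is a measure-preserving isometry (\autoref{Th: Lagrangianflows}(iv)) yields $|\nabla w_t|(x)=|\nabla P_sf|(\XX_{-t}(x))$ $\meas$-a.e., while the RLF identity \eqref{eq: RLF condition 3} combined with $b\cdot\nabla P_sf=|\nabla P_sf|$ gives $\tfrac{d}{dt}w_t(x)=-|\nabla P_sf|(\XX_{-t}(x))$. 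Hence $\partial_t w_t+|\nabla w_t|=0$. On the other hand, $m_t$ is the canonical Hopf--Lax representation of the solution of the same equation with the same initial datum: it is Lipschitz in $x$, $1$-Lipschitz in $t$, satisfies the dynamic programming $m_{t+h}(x)=\inf_{y\in\overline{B}_h(x)}m_t(y)$, and agrees with $P_sf$ at $t=0$.

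The remaining step — the pointwise bound $w_t\le m_t$, i.e.\ $P_sf(y)\ge P_sf(\XX_{-t}(x))$ for every $y\in\overline{B}_t(x)$ — I plan to establish self-containedly. The idea is to compare the two backward trajectories $r\mapsto\XX_{-r}(x)$ and $r\mapsto\XX_{-r}(y)$, which remain at distance $\dist(x,y)\le t$ by the isometric action of $\XX$, and along each of which $P_sf$ decreases at the maximal rate $|\nabla P_sf|$. Combining the decrement $P_sf(y)-P_sf(\XX_{-(t-d)}(y))=\int_0^{t-d}|\nabla P_sf|(\XX_{-r}(y))\,dr$ (where $d:=\dist(x,y)$) with the Lipschitz bound $P_sf(\XX_{-(t-d)}(y))\ge P_sf(\XX_{-t}(x))-\int_\gamma|\nabla P_sf|$ along a near-geodesic $\gamma$ of length $d$ from $\XX_{-t}(x)$ to $\XX_{-(t-d)}(y)$, and finally using the variation formula $b\cdot\nabla P_rg=P_r(b\cdot\nabla g)$ together with the rigidity identity $|\nabla P_sf|=P_s|\nabla f|$ to balance the two integrated gradient contributions, should close the comparison.

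The main obstacle is precisely this balancing: in the absence of standard viscosity machinery on $\RCD$ spaces, the argument must exploit the special structure of $b$ (divergence-free, generator of an isometric semigroup, $|b|\equiv 1$) and the commutation formula of \autoref{prop:variational identity2} to transfer estimates between the trajectory through $x$ and the trajectory through $y$. A secondary difficulty is passing from the $\meas$-a.e.\ differential identities for $w_t$ to the pointwise-everywhere statement asserted in \eqref{eq: strange}; this should follow by a density/continuity argument, since both $w_t$ and $m_t$ are continuous in $(x,t)$.
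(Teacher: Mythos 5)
Your easy direction (that $\XX_{-t}(x)\in\overline{B}_t(x)$ since $|b|\equiv 1$, hence $\geq$) is correct and matches the paper. The hard direction, however, is left as a plan, and the specific two-trajectory comparison you sketch does not close. First, the curve you invoke from $\XX_{-t}(x)$ to $\XX_{-(t-d)}(y)$ is \emph{not} of length $\approx d$: passing through the intermediate point $\XX_{-(t-d)}(x)$ (or $\XX_{-t}(y)$), the triangle inequality only yields $\dist(\XX_{-t}(x),\XX_{-(t-d)}(y))\le 2d$, and nothing in the problem's structure improves this to $d$. Second, and more fundamentally, even the corrected version (comparing $P_sf(y)$ with $P_sf(\XX_{-t}(y))$ and then bridging from $\XX_{-t}(y)$ to $\XX_{-t}(x)$ along a geodesic of length exactly $d$) requires $\int_0^t P_s|\nabla f|(\XX_{-r}(y))\,dr\ge \int_\gamma |\nabla P_sf|$, and when $d$ is comparable to $t$ this is an inequality between integrals of the same integrand over two curves of comparable length with no a priori reason to hold; the rigidity identity $|\nabla P_sf|=P_s|\nabla f|$ does not by itself compare values of the gradient at different points. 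The "balancing" step you flag as the main obstacle is in fact a genuine gap, and the two-stage (first time, then space) route does not see the cancellation that makes the estimate work.

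The paper avoids the two-trajectory comparison entirely by moving in time and space \emph{simultaneously}. With $u(t,x):=P_sf(\XX_{-t}(x))$, one first checks that $t\mapsto u(t,x)$ is $C^1$ with derivative $-P_s|\nabla f|(\XX_{-t}(x))$ — valid a.e.\ from the RLF condition (3) and $b\cdot\nabla P_sf=P_s|\nabla f|$, then everywhere by continuity of both sides, using the everywhere-defined isometric representative of $\XX$ from \autoref{Th: Lagrangianflows}(iv). Then, for any $\gamma\in\Lip_1([0,\infty);X)$, the Leibniz rule \cite[Lemma 4.3.4]{AmbrosioGigliSavare05} reduces monotonicity of $t\mapsto u(t,\gamma(t))$ to showing
\[
\limsup_{h\to 0^+}\frac{|u(t,\gamma(t+h))-u(t,\gamma(t))|}{h}\le P_s|\nabla f|(\XX_{-t}(\gamma(t))),
\]
which follows from the metric chain rule \autoref{lemma: technical} applied to the $1$-Lipschitz curve $r\mapsto\XX_{-t}(\gamma(r))$, together with $|\nabla P_sf|\le P_s|\nabla f|$. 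Here the spatial increment (at most $|\nabla P_sf|$) is exactly offset by the temporal decrement ($-P_s|\nabla f|$), and this cancellation is precisely where the rigidity assumption is used. Finally, taking $\gamma$ a $1$-Lipschitz curve with $\gamma(0)=\bar x$ a minimizer of $P_sf$ on $\overline{B}_t(x)$ and $\gamma(t)=x$, monotonicity gives $u(t,x)\le u(0,\bar x)=\min_{\overline{B}_t(x)}P_sf$. Your a.e.\ identities $|\nabla w_t|(x)=|\nabla P_sf|(\XX_{-t}(x))$ and $\partial_t w_t=-P_s|\nabla f|(\XX_{-t}(x))$, as well as your concern about upgrading a.e.\ statements to pointwise ones, all appear in the paper's argument; what is missing is the mechanism that turns these into the pointwise comparison, and that mechanism is the curve-monotonicity lemma, not a separate comparison of two flow lines.
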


Before then passing to the proof we wish to explain the heuristic standing behind it with a formal computation:
\begin{equation*}
\frac{\di}{\di t}P_sf(\XX_{-t}(x))=-\nabla P_sf\cdot\frac{\nabla P_sf}{\abs{\nabla P_sf}}(\XX_{-t}(x))=-\abs{\nabla P_sf}(\XX_{-t}(x))=-\abs{\nabla \left(P_sf\circ\XX_{t}\right)}(x).
\end{equation*}
Therefore, setting $u(t,x):=P_sf(\XX_{-t}(x))$, it holds that 
\begin{equation}\label{v}
\partial_tu(t,x)+\abs{\nabla_xu(t,x)}=0
\end{equation}
and it is well known that the Hopf-Lax semigroup
\begin{equation}
	 \mathcal{Q}_tu_0(x):=\min_{\overline{B}_t(x)}u_0
\end{equation}
provides a solution of \eqref{v}, and the unique viscosity solution (see \cite{Nakayasu14}). 
\autoref{prop: HJ equation} is just telling us that $u(t,x)=P_sf(X_{-t}(x))$ is precisely the Hopf-Lax semigroup solution.

\begin{proof}[Proof of \autoref{prop: HJ equation}] Let us denote by $u(t,x)$ the left hand side in \eqref{eq: strange}. 
Since $\dist(\XX_{-t}(x),x)\leq t$, the inequality $\geq$ in \eqref{eq: strange} is obvious.

Now, we claim that for all $\gamma\in \Lip_1([0,\infty); X)$ the function $t\mapsto u(t,\gamma(t))$ is nonincreasing.
In order to prove the claim, first we observe that $t\mapsto u(t,x)=P_sf(\XX_{-t}(x))$ is of class $C^1$, since its derivative 
is $-P_s|\nabla f|(\XX_{-t}(x))$ that is a continuous function. Indeed, the validity of this condition for $\meas$-a.e. $x\in X$ follows from the defining conditions of RLF and we can extend it to all $x\in X$ by continuity of the maps $(t,x)\mapsto u(t,x)$ and $(t,x)\mapsto-P_s\abs{\nabla f}(\XX_{-t}(x))$. Then by the Leibniz rule in 
\cite[Lemma 4.3.4]{AmbrosioGigliSavare05}, it suffices to show that
$$
\limsup_{h\to 0^+}\frac{|u(t,\gamma(t+h))-u(t,\gamma(t))|}{h}\leq P_s|\nabla f|(\XX_{-t}(\gamma(t))).
$$
This inequality follows easily from \autoref{lemma: technical} and the inequality $|\nabla P_sf|\leq P_s|\nabla f|$, since
    \begin{align*}
    	\frac{|u(t,\gamma(t+h))-u(t,\gamma(t))|}{h}
    	\leq \dashint_t^{t+h} P_s|\nabla f|(\XX_{-t}(\gamma(r))) \di r,
    \end{align*}
    (here we also used that $r\mapsto \XX_{-t}(\gamma(r))$ is 1-Lipschitz), by taking the limit as $h\downarrow 0$.
     
From the claim, the converse inequality in \eqref{eq: strange} follows easily, because for all $x\in X$ and all minimizers
$\bar x$ of $P_sf$ in $\overline{B}_t(x)$ the geodesic property of $(X,\dist)$ grants the existence of  $\gamma\in \Lip_1([0,\infty); X)$
with $\gamma(t)=x$ and $\gamma(0)=\bar x$. It follows that
$$
u(t,x)=u(t,\gamma(t))\leq u(0,\gamma(0))=u(0,\bar x)=P_sf(\bar x)=\min_{\overline{B}_t(x)}P_sf.
$$ 
\end{proof}

\begin{lemma}\label{lemma: technical}
	Let $(X,\dist,\meas)$ be an $\RCD(K,\infty)$ m.m.s. and $u\in \Lip(X)$. Assume that $|\nabla u|$ has a continuous representative in $L^{\infty}(X,\meas)$. Then
	\begin{equation}\label{eq:genupper}
	  |u(\gamma(t))-u(\gamma(s))|\leq \int_s^t |\nabla u|(\gamma(r))\abs{\gamma'}(r) \di r,	
	\end{equation}
	for any $s<t$ and for any Lipschitz curve $\gamma:\setR\to X$ (where we denoted by $|\nabla u|$ the continuous representative of the minimal relaxed slope of $u$).
\end{lemma}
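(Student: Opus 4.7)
The strategy is to mollify $u$ via the heat semigroup, establish a chain-rule-type bound for the smoother function $u_\delta:=P_\delta u$, and then pass to the limit $\delta\downarrow 0$, exploiting the continuity of $|\nabla u|$. Let $g\in \Cb(X)\cap L^\infty(X,\meas)$ denote the continuous representative of $|\nabla u|$. Since $u\in\Lip(X)$, the function $u_\delta$ is Lipschitz with $u_\delta\to u$ locally uniformly as $\delta\downarrow 0$. The Bakry-\'Emery contraction estimate \eqref{eq:BE2} yields
$$
|\nabla u_\delta|\le e^{-K\delta}P_\delta g\qquad \text{$\meas$-a.e.,}
$$
and the Gaussian heat kernel estimates \eqref{eq:kernelestimate} together with the continuity and boundedness of $g$ imply that $P_\delta g$ is continuous, bounded by $\|g\|_\infty$, and converges pointwise to $g$ as $\delta\downarrow 0$.

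The central step is to prove the pointwise chain rule for $u_\delta$ along $\gamma$, that is
$$
|u_\delta(\gamma(t))-u_\delta(\gamma(s))|\le e^{-K\delta}\int_s^t (P_\delta g)(\gamma(r))|\gamma'|(r)\di r.
$$
Since $u_\delta\in\Lip(X)$, the composition $u_\delta\circ\gamma$ is absolutely continuous on $[s,t]$, with $|(u_\delta\circ\gamma)'(r)|\leq \lip u_\delta(\gamma(r))|\gamma'|(r)$ for $\Leb^1$-a.e.\ $r$. To close the argument we need to majorize $\lip u_\delta$ pointwise on $\supp\meas$ by $e^{-K\delta}P_\delta g$. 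This pointwise upgrade of the a.e.\ Bakry-\'Emery bound is obtained by combining the identity $\lip u_\delta=|\nabla u_\delta|$ $\meas$-a.e.\ (valid for Lipschitz functions in the infinitesimally Hilbertian $\RCD$ framework) with the continuity of $e^{-K\delta}P_\delta g$; equivalently, one may apply Bakry-\'Emery to the representation $u_\delta=P_{\delta/2}(P_{\delta/2}u)$ to secure that $|\nabla u_\delta|$ itself admits a continuous representative on $\supp\meas$.

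With the chain rule for $u_\delta$ established, letting $\delta\downarrow 0$ recovers \eqref{eq:genupper}: the left-hand side converges to $|u(\gamma(t))-u(\gamma(s))|$ by uniform convergence of $u_\delta$ to $u$ on the compact image $\gamma([s,t])$, while on the right $(P_\delta g)(\gamma(r))\to g(\gamma(r))$ pointwise and the integrand is dominated by $\|g\|_\infty|\gamma'|(r)\in L^1([s,t])$, so dominated convergence applies. The principal obstacle is the pointwise upgrade of the $\meas$-a.e.\ Bakry-\'Emery bound to the slope $\lip u_\delta$ in the second step: this bridges the gap between the calculus object $|\nabla\cdot|$ and the genuinely metric object $\lip$, and it is also the reason why the hypotheses demand continuity of $|\nabla u|$ rather than merely an $L^\infty$ bound.
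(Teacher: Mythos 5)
Your overall strategy (regularize $u$ by the heat flow, establish a chain rule for the smooth $u_\delta$, then pass to the limit using the continuity of $|\nabla u|$) is reasonable and shares the regularization idea with the paper, but there is a genuine gap at the exact step you flag as ``the principal obstacle'': the pointwise upgrade of the $\meas$-a.e.\ Bakry--\'Emery bound $|\nabla u_\delta|\le e^{-K\delta}P_\delta g$ to a bound on $\lip u_\delta$ valid at \emph{every} point (or, equivalently, along \emph{every} Lipschitz curve). Your two proposed fixes do not close it. Knowing $\lip u_\delta=|\nabla u_\delta|\le e^{-K\delta}P_\delta g$ $\meas$-a.e.\ and that $P_\delta g$ is continuous does not yield the bound at every point, because the pointwise slope $\lip u_\delta$ is a $\limsup$ and is in general neither continuous nor upper semicontinuous; it can exceed a continuous $\meas$-a.e.\ majorant on a $\meas$-null set, and an arbitrary Lipschitz curve $\gamma$ may spend positive $\Leb^1$-time in that null set. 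Your ``equivalently'' variant, writing $u_\delta=P_{\delta/2}(P_{\delta/2}u)$ and invoking Bakry--\'Emery again, only produces another continuous $\meas$-a.e.\ upper bound for $|\nabla u_\delta|$, not a continuous \emph{representative}; and even if one had a continuous representative, passing from it to a pointwise bound on $\lip u_\delta$ along all curves would require applying the very lemma you are trying to prove to $u_\delta$ — i.e.\ the argument becomes circular. (Moreover, the identity $\lip f=|\nabla f|$ $\meas$-a.e.\ for Lipschitz $f$, which you invoke, is not simply a consequence of infinitesimal Hilbertianity; in the general $\RCD(K,\infty)$ setting this already requires nontrivial input, and in any case it does not help with the a.e.-to-everywhere issue.)

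The paper circumvents this by regularizing the \emph{curve} rather than the function: it sets $\mu^\lambda_r:=(P_\lambda)^*\delta_{\gamma(r)}$, notes that the $K$-contractivity of the dual heat semigroup in $W_2$ forces the metric speed bound $|\dot\mu^\lambda_r|\le e^{-K\lambda}|\dot\gamma_r|$ for every $r$, and then applies the Kuwada-type duality inequality for Sobolev functions along $W_2$-absolutely continuous curves of measures,
\begin{equation*}
|P_\lambda u(\gamma(t))-P_\lambda u(\gamma(s))|\le \int_s^t\left(\int|\nabla u|^2\di\mu^\lambda_r\right)^{1/2}|\dot\mu^\lambda_r|\di r
\le e^{-K\lambda}\int_s^t\left(P_\lambda|\nabla u|^2(\gamma(r))\right)^{1/2}|\dot\gamma_r|\di r,
\end{equation*}
which is pointwise in $r$ by construction; the continuity of $u$ and $|\nabla u|$ is then used only in the final passage $\lambda\downarrow0$. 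This Wasserstein-contractivity mechanism is precisely the ingredient your argument is missing: it is the device that turns an $L^2$-type (hence a.e.-curve) bound into a bound that holds along every Lipschitz curve. Without it, or an equivalent localized Sobolev-to-Lipschitz statement, the step from $\meas$-a.e.\ information about $|\nabla u_\delta|$ to a chain rule along an arbitrary $\gamma$ does not go through.
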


\begin{proof}
To get the sought conclusion we argue by regularization via heat flow as in the proof of \cite[Theorem 6.2]{AmbrosioGigliSavare14}.\\
Let $\left(\mu^{\lambda}_r\right)_{r\in\setR}$ be defined by $\mu^{\lambda}_r:=\left(P_{\lambda}\right)^*\delta_{\gamma(r)}$. Contractivity yields now that
\begin{align}\label{eq:contrest}
\abs{P_{\lambda}u\left(\gamma(t)\right)-P_{\lambda}u(\gamma(s))}\le& \int_s^t\left(\int\abs{\nabla u}^2\di\mu^{\lambda}_r\right)^{\frac{1}{2}}\abs{\dot{\mu}^{\lambda}_r}\di r\nonumber \\
\le&e^{-K\lambda}\int_s^t\left(\int\abs{\nabla u}^2\di\mu^{\lambda}_r\right)^{\frac{1}{2}}\abs{\dot{\gamma}_r}\di r\\
=&e^{-K\lambda}\int_s^t  \left( P_{\lambda}\abs{\nabla u}^2(\gamma(r))\right)^{\frac{1}{2}}\abs{\dot{\gamma}_r}\di r,\nonumber
\end{align}
for any $\lambda>0$ and for any $s,t\in\setR$.
Passing to the limit as $\lambda\downarrow 0$ both the first and the last expression in \eqref{eq:contrest} and taking into account the continuity of $u$ and $\abs{\nabla u}$, we obtain \eqref{eq:genupper}.
\end{proof}

By means of \autoref{prop: HJ equation} we can easily prove the following.

\begin{corollary}\label{cor:flowlinesarelines}
	For any $x\in X$ the curve $t\mapsto \XX_t(x)$ is a line, that is to say
	\begin{equation*}
	\dist(\XX_t(x),\XX_s(x))=\abs{t-s}\qquad \forall s,t\in\setR.
	\end{equation*}
\end{corollary}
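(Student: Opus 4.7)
The plan is to combine two ingredients: the upper bound coming from the fact that the flow curves of $b$ are 1-Lipschitz (since $|b|=1$), and a matching lower bound extracted from the Hopf-Lax representation in \autoref{prop: HJ equation}. By the semigroup property together with the (pointwise) measure-preserving isometry property in \autoref{prop:flow by isomorphisms}, it is enough to establish, for every $x\in X$ and every $r\geq 0$, the identity $\dist(\XX_r(x),x)=r$: indeed $\dist(\XX_t(x),\XX_s(x))=\dist(\XX_{t-s}(\XX_s(x)),\XX_s(x))$.

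The upper bound $\dist(\XX_r(x),x)\le r$ is immediate, because by definition of RLF and $|b|=1$ the curve $r\mapsto \XX_r(x)$ is 1-Lipschitz (this also follows from the fact that $\XX_r$ is an isometry sending $x$ to a point within $r$ of itself, via the construction of $\XX$ out of the bounded vector field $b$). For the lower bound, I would monitor the evolution of $P_sf$ along the flow. Set
\begin{equation*}
g(r):=P_sf(\XX_r(x)),\qquad r\in\setR.
\end{equation*}
As already observed in the proof of \autoref{prop: HJ equation}, $g$ is of class $C^1$ with derivative $g'(r)=b\cdot\nabla P_sf(\XX_r(x))=|\nabla P_sf|(\XX_r(x))=P_s|\nabla f|(\XX_r(x))$, the last identity being the rigidity hypothesis \eqref{eq: rigidity}. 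Since $f$ is non-constant we have $|\nabla f|\not\equiv 0$ $\meas$-a.e., and strict positivity of the heat kernel (see \eqref{eq:kernelestimate}) gives $P_s|\nabla f|>0$ pointwise on $X$; hence $g'(r)>0$ for every $r\in\setR$, so $g$ is strictly increasing.

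Now suppose $\dist(x,\XX_r(x))\le t$ for some $t\geq 0$. Applying \autoref{prop: HJ equation} at the point $y=\XX_r(x)$ and using $\XX_{-t}\circ\XX_r=\XX_{r-t}$ from \eqref{eq:semigroup property}, we get
\begin{equation*}
g(r-t)=P_sf(\XX_{r-t}(x))=P_sf(\XX_{-t}(\XX_r(x)))=\min_{\overline{B}_t(\XX_r(x))}P_sf\le P_sf(x)=g(0),
\end{equation*}
because $x\in\overline{B}_t(\XX_r(x))$ by assumption. Strict monotonicity of $g$ then forces $r-t\le 0$, i.e.\ $t\ge r$. Taking the infimum over admissible $t$ yields $\dist(x,\XX_r(x))\ge r$, and this concludes the proof. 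The only mildly delicate point is ensuring the strict positivity of $g'$, for which the Gaussian lower bound on the heat kernel, combined with non-constancy of $f$, is exactly what is needed.
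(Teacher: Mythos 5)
Your proof is correct and takes essentially the same approach as the paper: both rely on the Hopf–Lax representation from \autoref{prop: HJ equation}, the strict monotonicity of $P_sf$ along flow lines (which follows from $P_s|\nabla f|>0$ pointwise via the Gaussian heat kernel lower bound), and $|b|=1$ for the upper bound, with the semigroup and isometry properties reducing to the case $\dist(\XX_r(x),x)=r$. The only cosmetic difference is that the paper argues by contradiction that minimizers of $P_sf$ on $\overline{B}_t(x)$ lie at distance exactly $t$, whereas you deduce the lower bound directly by applying the Hopf–Lax formula at the shifted point $\XX_r(x)$; these are the same idea.
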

\begin{proof}
   Let us start observing that any $x_t\in \overline{B}_t(x)$ such that
   \begin{equation*}
   	\min_{y\in \overline{B}_t(x)} P_sf(y)=P_sf(x_t)
   \end{equation*}
   has to satisfy $\dist(x,x_t)=t$. Otherwise we might replace $x_t$ with $\XX_{-\eps}(x_t)$ (that belongs to $B_t(x)$ for $\eps$ sufficiently small) and, since $P_sf$ is strictly increasing along the flow lines of $\XX$, we would get a contradiction.\\
   Furthermore $\XX_t(x)\in\overline{B}_t(x)$ since $|b|=1$. Thus it follows from \eqref{eq: strange} that $\dist(\XX_{-t}(x),x)=t$ for any $t\ge 0$. Using the semigroup property and the fact that $\XX_t$ is an isometry for any $t\in \setR$ (see \autoref{prop:flow by isomorphisms}) we get the sought conclusion. 
\end{proof}

\begin{proof}[Proof of \autoref{maintheorem}.]
As we anticipated the conclusion that $X$ is isomorphic to $X'\times\setR$ for some $\RCD(0,N-1)$ m.m.s. $(X',\dist',\meas')$ follows from \autoref{cor:flowlinesarelines} applying \autoref{thm:splitting}.

Let us deal with the second part of the statement.\\ 
First of all we claim that all the flow lines of $\XX$ are vertical lines in $X$, that is to say,
denoting by $(z,s)\in X'\times\setR$ the coordinates on $X$, $\XX_t(z,s)=(z,t+s)$ for any $z\in X'$ and for any $s,t\in \setR$. 
Indeed, since we proved that all integral curves of $b$ are lines in $(X,\dist)$, the construction provided by the splitting theorem
shows that this is certainly true for a fixed $\bar{z}\in X'$. Let us consider any other $z\in X'$ and call $\XX_t((z,0))=\left(\XX^1_t((z,0)),\XX_t^2(z,0)\right)$. Taking into account the semigroup property \eqref{eq:semigroup property}
and the fact that $\XX_t$ is an isometry for any $t\in\setR$, for any $\tau\in\setR$ we can compute
\begin{align*}
\tau^2+\dist_Z^2(\bar{z},z)=&\dist^2\left(\XX_{\tau}((\bar{z},0)),(z,0)\right)=\dist^2\left(\XX_{t+\tau}((\bar{z},0)),\XX_t((z,0))\right)\\=&\dist^2\left((\bar{z},t+\tau),(\XX^1_t((z,0)),\XX_t^2((z,0)))\right)\\
=&\bigl|(\XX_t^2((z,0))-t)-\tau\bigr|^2+\dist_Z^2\left(\bar{z},\XX_t^1((z,0))\right).
\end{align*}
Since $\tau$ is arbitrary,
it easily follows that $\XX_t^2((z,0))=t$ for any $t\in\setR$ and therefore $\XX_t^1((z,0))=z$ for any $t\in\setR$, as we claimed.

From what we just proved it follows that $\nabla P_s f$ is trivial in the $z$ variable and we can conclude that $P_sf$ depends only on the $t$-variable for any $s>0$ thanks to the tensorization of the Cheeger energy (see \cite[Theorem~6.19]{AmbrosioGigliSavare14}).
Passing to the limit as $s\downarrow 0$ we obtain that the same holds true also for $f$.

Knowing that $f$ depends only on the $t$-variable, the monotonicity in this variable can be immediately checked.   
\end{proof}

\section{Convergence and stability results for sets of finite perimeter}\label{sec:compstab}

In this section we establish some useful compactness and stability results for sequences of sets of finite perimeter defined on a pmGH converging sequence of $\RCD(K,N)$ m.m. spaces. Most of the results we present here adapt and extend to the case of our interest those of \cite{AmbrosioHonda}.

Until the end of this section we fix a sequence $\set{(X_i, \dist_i, \meas_i, x_i)}_{i\in \setN}$ of pointed $\RCD(K,N)$ m.m. spaces  converging in the pmGH topology to $(Y,\varrho,\mu, y)$ and a proper metric space $(Z, \dist_Z)$ that realizes this convergence according to \autoref{def: mpGH convergence}. 

Since in the rest of the note we will be mainly interested on the case of indicator functions, let us observe that, in that case, we can rephrase the notion of $L^1$-strong convergence introduced in \autoref{def:BV convergence} in the following way. 

\begin{definition}\label{def:L1 convergence of sets}
	We say that a sequence of Borel sets $E_i\subset X_i$ such that $\meas_i(E_i)<\infty$ for any $i\in\setN$ converges in $L^1$-strong to a Borel set $F\subset Y$ with $\mu(F)<\infty$ if $\chi_{E_i}\meas_i\weakto \chi_F\mu$ in duality with $\Cbs(Z)$ and $\meas_i(E_i)\to \mu(F)$.
	
	We also say that a sequence of Borel sets $E_i\subset X_i$ converges in $L^1_{\loc}$ to a Borel set $F\subset Y$ if $E_i\cap B_R(x_i)\to F\cap B_R(y)$ in $L^1$-strong for any $R>0$.
\end{definition}

\begin{remark}
Let us remark that $L^1$-strong convergence implies $L^1_{\loc}$-strong convergence as a consequence of \autoref{lemma:convergence intersection} and the following observation:
\begin{equation*}
\chi_{B_R(x_i)}\to\chi_{B_R(y)}\quad\text{in $L^1$-strong, for any $R>0$.}
\end{equation*}
This convergence property follows from the already remarked fact that spheres have vanishing measure on $\RCD(K,N)$ spaces.
\end{remark}

Let us begin with a compactness result which adapts \cite[Proposition~7.5]{AmbrosioHonda} to the case of our interest (basically, we add the uniform $L^{\infty}$ bound and this allows to remove the assumption on the existence of a common isoperimetric profile).

\begin{proposition}\label{prop:BV compactness}
Let $(X_i,\dist_i,\meas_i,x_i)$, $(Y,\varrho,\mu,y)$, and $(Z,\dist_Z)$ be as above and fix $r>0$.
For any sequence of functions $f_i\in \BV(X_i,\meas_i)$ such that $\supp f_i\subset \overline{B}_r(x_i)$ for any $i\in\setN$ and
\begin{equation}
\sup_{i\in \setN}\left\lbrace  |Df_i|(X_i)+\norm{f_i}_{L^{\infty}(X_i,\meas_i)}\right\rbrace <\infty,
\end{equation}
there exist a subsequence $i(k)$ and $f\in L^\infty(Y,\mu)\cap \BV(Y,\varrho,\mu)$ with
$\supp f\subset \overline{B}_r(y)$ such that $f_{i(k)}\to f$ in $L^1$-strong.
\end{proposition}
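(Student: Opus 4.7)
The plan is to regularize via the heat semigroup and reduce to the $L^2$-compactness criterion \autoref{thm:compactnessL2}, then recover $L^1$-strong convergence by a diagonal argument combined with a short-time uniform $L^1$ approximation estimate.

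Set $f_i^t := P_t^i f_i$ for $t > 0$. The uniform $L^\infty$ bound and the inclusion $\supp f_i \subset \overline{B}_r(x_i)$, together with the uniform local doubling property of $\RCD(K,N)$ spaces, give $\norm{f_i}_{L^2(X_i,\meas_i)} \leq C$ uniformly in $i$. By \eqref{eq:regularizingheat}, for each fixed $t > 0$ we have $\Ch^i(f_i^t) \leq C^2/(2t)$, so $(f_i^t)_i$ is uniformly bounded in $H^{1,2}$. The Gaussian upper bound \eqref{eq:kernelestimate} combined with the support constraint on $f_i$ yields the uniform tail decay required by \autoref{thm:compactnessL2}:
\begin{equation*}
\lim_{R \to \infty} \limsup_{i \to \infty} \int_{X_i \setminus B_R(x_i)} |f_i^t|^2 \di\meas_i = 0.
\end{equation*}
Thus, for each fixed $t>0$, along a subsequence $f_i^t \to g^t$ in $L^2$-strong for some $g^t \in H^{1,2}(Y,\varrho,\mu)$.

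The key quantitative input is a uniform short-time approximation of the form
\begin{equation*}
\norm{f_i - f_i^t}_{L^1(X_i,\meas_i)} \leq \eta(t) \cdot |Df_i|(X_i),
\end{equation*}
with $\eta(t) \to 0$ as $t \downarrow 0$, depending only on $K$. By the coarea formula \autoref{thm:coarea} this reduces to an estimate of the form $\norm{P_t^i \chi_E - \chi_E}_{L^1} \leq C\sqrt{t}\,\Per(E,X_i)$ for sets $E$ of finite perimeter, which in turn follows from the $\BV$ Bakry--\'Emery inequality \eqref{eq:BE1} combined with the semigroup identity $P_t^i \chi_E - \chi_E = \int_0^t \Delta P_s^i \chi_E \di s$ (interpreted in duality via a suitable integration by parts against cutoff functions).

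Given these two ingredients, a diagonal extraction along a sequence $t_n \downarrow 0$ produces a subsequence $f_{i(k)}$ such that $f_{i(k)}^{t_n} \to g^{t_n}$ in $L^2$-strong for every $n$; the uniform $L^\infty$ bound upgrades this to $L^1$-strong convergence via \autoref{prop:stability}. Combining with the approximation estimate of the previous paragraph yields that $(f_{i(k)})$ is Cauchy in the sense of $L^1$-strong convergence, with some limit $f$. Lower semicontinuity of the total variation under $L^1$-strong convergence (see \cite{AmbrosioHonda}) gives $f \in \BV(Y,\varrho,\mu)$ with $|Df|(Y) \leq \liminf_i |Df_i|(X_i) < \infty$; the uniform $L^\infty$ bound passes to the limit; and $\supp f \subset \overline{B}_r(y)$ follows by testing the weak convergence against continuous cutoffs supported outside $\overline{B}_r(y)$, using that $\meas$-boundaries of balls are negligible under the Bishop--Gromov inequality. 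The main obstacle will be the uniform short-time approximation estimate, where the interplay between $\BV$ theory and the heat flow on the varying $\RCD(K,N)$ spaces has to be handled carefully; the rest is a standard soft compactness argument once \autoref{thm:compactnessL2} is in hand.
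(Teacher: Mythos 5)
Your proposal is correct and takes essentially the same route as the paper: regularize via $P_t^i$, apply the $L^2$ compactness criterion (\autoref{thm:compactnessL2}) using the uniform $H^{1,2}$ bound from \eqref{eq:regularizingheat} and Gaussian tail decay from \eqref{eq:kernelestimate}, and then close with the uniform $\sqrt t$-rate $L^1$ estimate $\int|P_t^i f_i - f_i|\di\meas_i \le C(K,t)|Df_i|(X_i)$. The only difference is that the paper cites \cite[Proposition 6.3]{AmbrosioHonda} directly for that last estimate, whereas you sketch a derivation (via coarea and BE; a slightly cleaner route pairs $\int_0^t\Delta P_s^i f_i\,\di s$ against $P_{s/2}^i g$ with $\|g\|_\infty\le1$ and uses \eqref{eq:BE1} together with the $L^\infty\text{--}\Lip$ bound \eqref{eq:infinitolip}).
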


As a corollary, a truncation and a diagonal argument provide a compactness result for sequences of
sets with locally uniformly bounded perimeters.

\begin{corollary}\label{cor:perimeters compactness}
For any sequence of Borel sets $E_i\subset X_i$ such that
\begin{equation}\label{eq: bounded perimeters}
	\sup_{i\in \setN} \Per(E_i, B_R(x_i))<\infty
	\qquad
	\forall R>0
\end{equation}
there exist a subsequence $i(k)$ and a Borel set $F\subset Y$ such that $E_{i(k)}\to F$ in $L^1_{\loc}$.
\end{corollary}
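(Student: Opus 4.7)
The plan is to truncate $\chi_{E_i}$ by a Lipschitz cutoff, invoke \autoref{prop:BV compactness} at each scale, and extract a single subsequence by diagonalization; the main technical point is then to identify the $L^1$-strong limit as the characteristic function of a Borel set. For each integer $R\ge 1$, let $\psi_R^i:=(R+1-\dist_i(\cdot,x_i))_+\wedge 1\in\Lipbs(X_i)$; this is $1$-Lipschitz, equal to $1$ on $\overline{B}_R(x_i)$ and supported in $\overline{B}_{R+1}(x_i)$. Set $f_i^R:=\psi_R^i\chi_{E_i}$, so that $\norm{f_i^R}_{L^\infty}\le 1$, $\supp f_i^R\subset\overline{B}_{R+1}(x_i)$, and by the Leibniz rule for $\BV$,
\begin{equation*}
|Df_i^R|(X_i)\le \Per(E_i,B_{R+1}(x_i))+\meas_i(B_{R+1}(x_i)).
\end{equation*}
Both terms are uniformly bounded in $i$: the first by hypothesis \eqref{eq: bounded perimeters}, the second because $\meas_i\weakto\mu$ in duality with $\Cbs(Z)$ forces $\sup_i\meas_i(B_{R+1}(x_i))<\infty$. \autoref{prop:BV compactness} (with $r=R+1$) then produces, for each $R$, a subsequence along which $f_i^R\to f^R$ in $L^1$-strong to some $f^R\in L^\infty(Y,\mu)\cap\BV(Y,\varrho,\mu)$ with $\supp f^R\subset\overline{B}_{R+1}(y)$; a standard diagonal extraction yields a single subsequence, still denoted $i(k)$, along which $f_{i(k)}^R\to f^R$ in $L^1$-strong for every $R\in\setN$.

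The identification of the limit proceeds as follows. Since $f_i^R=\chi_{E_i}$ on $B_R(x_i)$, the pointwise identity $(f_i^R)^2-f_i^R\equiv 0$ holds there. By \autoref{prop:stability} (applied to a Lipschitz extension of $z\mapsto z^2-z$ vanishing at the origin, admissible thanks to the uniform $L^\infty$ bound), $(f_{i(k)}^R)^2-f_{i(k)}^R\to(f^R)^2-f^R$ in $L^1$-strong. Testing the ensuing weak convergence $\sigma((f_{i(k)}^R)^2-f_{i(k)}^R)\meas_{i(k)}\weakto\sigma((f^R)^2-f^R)\mu$ against any $\phi\in\Cbs(Z)$ with $\supp\phi\subset B_R(y)$ (so that eventually $\supp\phi\cap X_{i(k)}\subset B_R(x_{i(k)})$ and the integrals on the left vanish) forces $(f^R)^2=f^R$ $\mu$-a.e.\ on $B_R(y)$, hence $f^R=\chi_{F_R}$ $\mu$-a.e.\ on $B_R(y)$ for some Borel $F_R\subset B_R(y)$. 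The same argument applied to $f_i^{R+1}-f_i^R$, which vanishes on $B_R(x_i)$, shows $F_R=F_{R+1}\cap B_R(y)$ up to $\mu$-null sets, so the $F_R$ glue consistently into a single Borel set $F\subset Y$ with $F\cap B_R(y)=F_R$.

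Finally, given $r>0$, choose an integer $R>r$: since $f_{i(k)}^R=\chi_{E_{i(k)}}$ on $B_R(x_{i(k)})\supset B_r(x_{i(k)})$ and $f^R=\chi_F$ on $B_R(y)\supset B_r(y)$, combining $f_{i(k)}^R\to f^R$ in $L^1$-strong with $\chi_{B_r(x_{i(k)})}\to\chi_{B_r(y)}$ in $L^1$-strong (valid because $\mu(\partial B_r(y))=0$ on $\RCD(K,N)$ spaces) yields $\chi_{E_{i(k)}\cap B_r(x_{i(k)})}\to\chi_{F\cap B_r(y)}$ in $L^1$-strong, which is exactly $E_{i(k)}\to F$ in $L^1_\loc$. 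The main obstacle is the identification step: transferring the pointwise algebraic identity $(f_i^R)^2=f_i^R$ from $B_R(x_i)\subset X_i$ to the limiting ball $B_R(y)\subset Y$ in the variable-ambient setting has to be routed through the $\sigma$-based definition of $L^1$-strong convergence together with the Lipschitz-composition stability of \autoref{prop:stability}; the final product step at the $L^1_\loc$ level is likewise slightly delicate and is most cleanly handled by upgrading $L^1$-strong convergence to $L^2$-strong convergence via the uniform $L^\infty$ bound.
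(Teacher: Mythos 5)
Your approach is correct in outline but takes a genuinely different route from the paper's, and there is one small technical point you should tighten.

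The paper's proof first uses the coarea formula together with Fatou's lemma to extract, after passing to a subsequence, radii $R_\ell\uparrow\infty$ along which $\sup_i\Per(B_{R_\ell}(x_i),X_i)<\infty$. It then truncates $E_i$ by \emph{intersecting} with these balls, controlling $\Per(E_i\cap B_{R_\ell}(x_i),X_i)$ via the lattice inequality $\Per(E\cap B,\cdot)\le\Per(E,\cdot)+\Per(B,\cdot)$ for sets of locally finite perimeter. The truncations are characteristic functions, so after invoking \autoref{prop:BV compactness} the limit is automatically a characteristic function (this is a one-line consequence of \autoref{prop:stability}), and \autoref{lemma:convergence intersection} glues the pieces. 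You instead truncate by a Lipschitz cutoff, which spares you the coarea/Fatou selection of good radii, but costs you two things: (a) the truncation $\psi_R^i\chi_{E_i}$ is no longer a characteristic function, so you must re-derive that the limit equals a characteristic function on each ball, which you do via the $z\mapsto z^2-z$ composition and localization in $Z$ — this is correct but is extra work the paper avoids; (b) you need a measure-level Leibniz inequality $|D(\psi u)|\le\psi|Du|+|u||\nabla\psi|\meas$ for $\psi\in\Lipbs$ and $u$ \emph{locally} BV.

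The point to tighten is (b). The paper's \autoref{lem:lei}, Step~1, proves the inequality $|D(fg)|(X)\le\int|f|\di|Dg|+\int|g||\nabla f|\di\meas$ only under the hypothesis $g\in\BV(X,\dist,\meas)\cap L^\infty$, i.e.\ $g$ globally of bounded variation. Your $g=\chi_{E_i}$ is only a priori locally BV (indeed the hypothesis \eqref{eq: bounded perimeters} is local and $E_i$ may have infinite total perimeter). One has to localize: either first intersect with a ball of finite perimeter (which is exactly the paper's coarea/Fatou step, at which point there is no reason not to follow the paper's route), or else prove the Leibniz inequality at the level of measures for locally BV functions by approximating on open sets. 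This is standard but is not a literal citation of anything stated in the paper, so it deserves a sentence. Once this is in place, the remaining steps — the identification of $f^R$ as $\chi_{F_R}$ on $B_R(y)$, the consistency $F_{R+1}\cap B_R(y)=F_R$, and the final product argument upgraded through $L^2$-strong convergence — are all sound and correctly executed (in particular the observation that $L^1$-strong convergence of sums and the vanishing of $L^1$-norms yield $f^{R+1}\ge f^R$ a.e., which is needed to pass from the weak vanishing to pointwise equality on $B_R(y)$, is implicitly used and is fine). Also, a cosmetic remark: since $\supp\psi_R^i\subset\overline B_{R+1}(x_i)$, the first term in your Leibniz bound should read $\Per(E_i,B_{R+2}(x_i))$, harmless since the hypothesis holds for all radii.
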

We postpone the proof of \autoref{prop:BV compactness} and \autoref{cor:perimeters compactness} after a technical lemma that will play a role also in the sequel.

\begin{lemma}\label{lemma:convergence intersection}
	Let $(X_i,\dist_i,\meas_i,x_i)$, $(Y,\varrho,\mu,y)$, and $(Z,\dist_Z)$ be as above and $E_i,\,\tilde{E}_i\subset X_i$ satisfy $\meas_i(E_i)+\meas_i(\tilde{E}_i)<\infty$. If $E_i\to F$ and $\tilde{E}_i\to\tilde{F}$ in $L^1$-strong, for some Borel sets $F,\,\tilde{F}\subset Y$, then 
	$E_i\cap\tilde{E}_i\to F\cap\tilde{F}$ in $L^1$-strong.
\end{lemma}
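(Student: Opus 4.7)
The plan is to reduce the lemma to routine manipulations already contained in Proposition \ref{prop:stability}, via the pointwise identity
\[
\chi_{E \cap \tilde E} = \max(\chi_E + \chi_{\tilde E} - 1, 0).
\]
Since $E_i, \tilde E_i$ have finite $\meas_i$-measure, their indicator functions lie in $L^1(X_i, \meas_i)$, and the hypothesis $E_i \to F$, $\tilde E_i \to \tilde F$ in $L^1$-strong means precisely $\chi_{E_i} \to \chi_F$ and $\chi_{\tilde E_i} \to \chi_{\tilde F}$ in $L^1$-strong in the sense of \autoref{def:BV convergence} (note $\sigma \circ \chi_E = \chi_E$ since the characteristic function takes values in $\{0,1\}$).

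First I would invoke the linearity clause of \autoref{prop:stability} (with $p=1$) to conclude that
\[
\chi_{E_i} + \chi_{\tilde E_i} \longrightarrow \chi_F + \chi_{\tilde F} \qquad \text{in $L^1$-strong}.
\]
Then, taking the Lipschitz function $\phi(t) := \max(t-1, 0)$, which satisfies $\phi(0) = 0$, a second application of \autoref{prop:stability} gives
\[
\phi \circ (\chi_{E_i} + \chi_{\tilde E_i}) \longrightarrow \phi \circ (\chi_F + \chi_{\tilde F}) \qquad \text{in $L^1$-strong}.
\]
Since $\phi(\chi_E + \chi_{\tilde E})$ equals $1$ if and only if both $\chi_E$ and $\chi_{\tilde E}$ are $1$, and vanishes otherwise, we have the pointwise identification $\phi \circ (\chi_{E_i} + \chi_{\tilde E_i}) = \chi_{E_i \cap \tilde E_i}$ and similarly for $F \cap \tilde F$, which yields the conclusion.

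There is really no hard step here: the only point to verify is that the sequences $\chi_{E_i}, \chi_{\tilde E_i}$ are uniformly bounded in $L^\infty$ (which is trivial as they are $\{0,1\}$-valued) so that the composition with the Lipschitz function $\phi$ behaves well under $L^1$-strong convergence in the sense of the second clause of \autoref{prop:stability}. The algebraic identity for $\chi_{E \cap \tilde E}$ is the only ingredient needed to bypass the fact that $L^1$-strong convergence is not a priori stable under pointwise products.
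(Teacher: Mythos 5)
Your proof is correct, and it follows the same high-level strategy as the paper: rewrite $\chi_{E\cap \tilde E}$ as an algebraic expression in $\chi_E$ and $\chi_{\tilde E}$, then apply the sum and Lipschitz-composition clauses of \autoref{prop:stability}. The difference is in the choice of identity. The paper uses polarization,
\begin{equation*}
\chi_{E\cap\tilde E}=\chi_E\chi_{\tilde E}=\tfrac14\bigl[(\chi_E+\chi_{\tilde E})^2-(\chi_E-\chi_{\tilde E})^2\bigr],
\end{equation*}
and then applies \autoref{prop:stability} with $\phi(t)=t^2$. Strictly speaking $t\mapsto t^2$ is not in $\Lip(\setR)$, so one must tacitly replace it by a Lipschitz truncation (harmless, since the arguments take values in $[-1,2]$), a small point the paper glosses over. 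Your choice $\phi(t)=\max(t-1,0)$ applied to $\chi_{E_i}+\chi_{\tilde E_i}$ avoids this: $\phi$ is genuinely $1$-Lipschitz with $\phi(0)=0$, so \autoref{prop:stability} applies verbatim and no truncation is needed. It also requires only one composition rather than two, so your version is marginally cleaner; both proofs are equally valid.
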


\begin{proof}
	Observing that
	\begin{equation*}
		\chi_{E_i\cap\tilde{E}_i}=\chi_{E_i}\cdot\chi_{\tilde{E}_i}
		=\frac{1}{4}\left[(\chi_{E_i}+\chi_{\tilde{E}_i})^2- (\chi_{E_i}-\chi_{\tilde{E}_i})^2 \right],
	\end{equation*}
	the conclusion follows from \autoref{prop:stability}.
\end{proof}

\begin{proof}[Proof of \autoref{cor:perimeters compactness}]
	We claim that, possibly extracting a subsequence that we do not relabel, there exist radii $R_\ell\uparrow \infty$ as $\ell\to\infty$ with the following property
	\begin{equation}\label{z17}
		\sup_{i\in \setN} \Per(B_{R_\ell}(x_i),X_i)<\infty
		\qquad
		\forall l\in\setN.
	\end{equation}
	Indeed, applying the coarea formula in the localized version of \autoref{cor:loccoarea} to the functions $\dist(x_i,\cdot)$ and recalling that $\abs{\nabla \dist(x_i,\cdot)}_i=1$ $\meas_i$-a.e. for any $i$, we obtain
	\begin{equation*}
	\int_0^R\Per(B_r(x_i),X_i)\di r=\meas_i(B_R(x_i))\quad\text{for any $R>0$ and $i\in\setN$}.
	\end{equation*}
	Observing that for any $R>0$ it holds $\meas_i(B_R(x_i))\to\mu(B_R(y))$, an application of Fatou's lemma yields now
	\begin{equation}\label{eq:limtech}
	\int_0^R\liminf_{i\to\infty}\Per(B_r(x_i),X_i)\di r\le\liminf_{i\to\infty}\meas_i(B_R(x_i))=\mu(B_R(y))\quad\text{for any $R>0$.}
	\end{equation}
	The claimed conclusion \eqref{z17} can be obtained from \eqref{eq:limtech} via a diagonal argument.\\
	For any $\ell\in\setN$ we can now estimate
	\begin{equation*}
	\sup_{i\in \setN}	\Per(E_i\cap B_{R_\ell}(x_i),X)\le \sup_{i\in \setN}\Per(E_i,B_{R_\ell+1}(x_i))+\sup_{i\in \setN}\Per(B_{R_\ell}(x_i),X)<\infty,
	\end{equation*}
	thanks to the locality and subadditivity of perimeters (see \cite[pg. 8]{Am02}) for the first inequality and to \eqref{eq: bounded perimeters}, \eqref{z17} for the second one. Thus for any $\ell\in\setN$ we can apply \autoref{prop:BV compactness} to the functions $f_i:=\chi_{E_i\cap B_{R_\ell}(x_i)}$. Observing that $L^1$-strong limits of characteristic functions are characteristic functions (as a consequence of \autoref{prop:stability}), we can use a diagonal argument together with \autoref{lemma:convergence intersection} to recover the global limit set.
\end{proof}

\begin{proof}[Proof of \autoref{prop:BV compactness}]
Let us fix $t>0$. For any $i\in \setN$ we write $f_i=P^i_tf_i+(f_i-P^i_tf_i)$ where, for any $i\in\setN$, $P^i_t$ denotes the heat semigroup on $(X_i,\dist_i,\meas_i)$. Observe that, as a consequence of the regularizing estimates \eqref{eq:regularizingheat}, it holds that
\begin{equation}\label{z6}
	\sup_{i\in \setN} \left\lbrace \int_{Z} |P^i_t f_i|^2 \di \meas_i + \Ch^i(P^i_t f_i)\right\rbrace <+\infty,
\end{equation}
where $\Ch^i$ is the Cheeger energy on $(X_i,\dist_i, \meas_i)$. Moreover, we claim that
\begin{equation}\label{z7}
	\limsup_{R\to \infty}\sup_{i\in\setN} \int_{Z\setminus B_R(x_i)}|P^i_t f_i|^2\di \meas_i=0\qquad\forall t>0.
\end{equation}	
Indeed, using both the Gaussian estimates for the heat kernel in \eqref{eq:kernelestimate}, we get
\begin{align*}
	\int_{Z\setminus B_R(x_i)}&|P^i_t f_i|^2\di \meas_i \\ \le &
	\norm{f_i}_{L^{\infty}(X_i,\meas_i)} \int_{Z\setminus B_R(x_i)}P^i_t|f_i| \di \meas_i\\
	\le & C\norm{f_i}_{L^{\infty}(X_i,\meas_i)}\int_{Z\setminus B_R(x_i)} \int_{B_r(x_i)} \frac{e^{-\frac{\dist^2(x,y)}{5t}+ct}}{\meas_i{(B_{\sqrt{t}}(x))}}|f_i(y)|\di \meas_i(y)\di \meas_i(x)\\
	\le & Ce^{-\frac{(R-r)^2}{10t}}\norm{f_i}_{L^{\infty}(X_i,\meas_i)}\int_{Z\setminus B_R(x_i)} \int_{B_r(x_i)} \frac{e^{-\frac{\dist^2(x,y)}{10t}+ct}}{\meas_i{(B_{\sqrt{t}}(x))}}|f_i(y)|\di \meas_i(y)\di \meas_i(x)\\
	\le &C_{t}e^{-\frac{(R-r)^2}{10t}}\norm{f_i}_{L^{\infty}(X_i,\meas_i)}\int_{Z}P^i_{\alpha t}|f_i| \di \meas_i\\
	\le &C_{t}e^{-\frac{(R-r)^2}{10t}}\norm{f_i}_{L^{\infty}(X_i,\meas_i)}\norm{f_i}_{L^1(X_i,\meas_i)},
\end{align*}
where $\alpha>0$ is a constant depending only on $K$ and $N$.

Taking into account \eqref{z6} and \eqref{z7}, we can apply \autoref{thm:compactnessL2} to get that $P^i_tf_i$ admits a subsequence converging in $L^1$-strong. In order to conclude the proof it suffices to observe that
\begin{equation*}
	\lim_{t\to 0^+}\sup_{i\in \setN} \int_{X_i}|P^i_tf_i-f_i| \di\meas_i=0,
\end{equation*}
as it follows from the inequality
\begin{equation*}
	\int_{X_i}|P^i_tf_i-f_i| \di\meas_i\le C(K,t)|Df_i|(X_i),
\end{equation*}
with $C(K,t)\sim \sqrt{t}$ as $t\to 0$ (see for instance \cite[Proposition 6.3]{AmbrosioHonda}).
\end{proof}

Let us pass to a lower semicontinuity result for the total variations.

\begin{proposition}\label{prop:semicontinuity BV}
Let $(X_i,\dist_i,\meas_i,x_i)$ be $\RCD(K,N)$ m.m. spaces converging in the pmGH topology to $(Y,\varrho,\mu,y)$ and $(Z,\dist_Z)$ realizing the convergence as above. Let $f_i\in \BV(X_i,\meas_i)$ converge in $L^1$-strong to $f\in L^1(Y,\mu)$. If $\sup_{i}\abs{Df_i}(X_i)<\infty$ 
then $f\in\BV(Y,\varrho,\mu)$ and 
    \begin{equation}\label{eq:lscvariation}
    \liminf_{i\to \infty}|Df_i|(X_i)\ge |Df|(Y).
    \end{equation}
    Furthermore, if 
    	\begin{equation}\label{eq:unifinftybound}
    		\sup_{i\in \setN}\norm{f_i}_{L^{\infty}(X_i,\meas_i)}<\infty,	
    	\end{equation}
    then
	\begin{equation}\label{z8}
		\liminf_{i\to \infty} \int_{X_i} g \di |Df_i|\ge \int_{Y} g \di |Df|,
		\qquad
		\text{ for all $g\in \Lipbs(Z)$ nonnegative.}
	\end{equation}
\end{proposition}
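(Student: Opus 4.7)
The proof of both parts proceeds by heat-semigroup regularization, combining the $1$-Bakry-\'Emery inequality \eqref{eq:BE1}, the localized $H^{1,2}$-lower semicontinuity of \autoref{prop:locallsc}, and the Gaussian heat kernel bounds. I would establish the localized inequality \eqref{z8} first under its standing $L^\infty$-hypothesis, and then deduce the non-localized claim \eqref{eq:lscvariation} by truncation. Set $f_i^M := (-M)\vee f_i \wedge M$; the BV chain rule yields $|Df_i^M|(X_i)\leq|Df_i|(X_i)$, and \autoref{prop:stability} (applied to the $1$-Lipschitz truncation map vanishing at $0$) gives $f_i^M\to f^M$ in $L^1$-strong. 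Applying \eqref{z8} to $(f_i^M)$ tested against cut-offs $g_k(\cdot) := (k-\dist_Z(\cdot,y))^+\wedge 1 \in \Lipbs(Z)$, passing $k\to\infty$ by monotone convergence and then $M\to\infty$ using the $L^1$-lower semicontinuity of the total variation on the fixed space $(Y,\varrho,\mu)$, yields \eqref{eq:lscvariation}.

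\textit{Heart of the argument.} Assuming $\sup_i\|f_i\|_\infty<\infty$, each $f_i$ lies in $L^1\cap L^\infty\subset L^2(X_i,\meas_i)$, and the $L^1$-strong convergence upgrades to $L^2$-strong (standard consequence of the uniform $L^\infty$ bound and the norm statement in \autoref{prop:stability}). Set $f_i^t:=P^i_t f_i$ and $f^t:=P_tf$; by \autoref{prop:stabilityandcompH12}(b), $f_i^t\to f^t$ in $H^{1,2}$-strong. The $1$-Bakry-\'Emery inequality \eqref{eq:BE1} applied to $f_i\in\BV$ gives $|\nabla f_i^t|\meas_i=|Df_i^t|\leq e^{-Kt}\,P_t^{i,*}|Df_i|$, so by $P_t^i$/$P_t^{i,*}$ duality, for every nonnegative $g\in\Lipbs(Z)$,
\begin{equation*}
\int_Z g\,|\nabla f_i^t|\,\di\meas_i \;\leq\; e^{-Kt}\int_Z P_t^ig\,\di|Df_i|.
\end{equation*}
Combining this with \autoref{prop:locallsc} applied to the $H^{1,2}$-weakly convergent sequence $(f_i^t)$,
\begin{equation*}
\int_Z g\,|\nabla f^t|\,\di\mu \;\leq\; \liminf_{i\to\infty}\int_Z g\,|\nabla f_i^t|\,\di\meas_i \;\leq\; e^{-Kt}\liminf_{i\to\infty}\int_Z P_t^ig\,\di|Df_i|.
\end{equation*}

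\textit{Letting $t\downarrow 0$.} The Gaussian estimates \eqref{eq:kernelestimate}-\eqref{eq:gradientestimatekernel}, with constants depending only on $K$ and $N$, yield a modulus $\omega(t)\to 0$ with $\|P_t^ig-g\|_\infty\leq\omega(t)\Lip(g)$ uniformly in $i$; together with the uniform bound on $|Df_i|(X_i)$ (which may be assumed, otherwise \eqref{z8} is trivial) this gives $\limsup_{t\downarrow 0}\liminf_{i}\int_Z P_t^ig\,\di|Df_i|\leq\liminf_{i}\int_Z g\,\di|Df_i|$. On the limit side, $|Df^t|\xrightarrow{w^*}|Df|$ as $t\downarrow 0$: the lower bound on open sets follows from $f^t\to f$ in $L^1(Y,\mu)$ and the $L^1$-lower semicontinuity of perimeter, while the matching upper bound on total mass comes from $|Df^t|(Y)\leq e^{-Kt}|Df|(Y)$ via Bakry-\'Emery in the limit space. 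Hence $\int_Z g\,|\nabla f^t|\,\di\mu\to\int_Z g\,\di|Df|$, completing the proof of \eqref{z8}.

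\textit{Main obstacle.} The most delicate technical points are the uniform-in-$i$ smoothing estimate $\|P_t^i g - g\|_\infty\to 0$ as $t\downarrow 0$ for $g\in\Lipbs(Z)$ and the weak-$^*$ approximation $|Df^t|\to|Df|$ in the limit space; both follow from the Gaussian heat kernel bounds together with the $1$-Bakry-\'Emery inequality, but require care because the metric measure structures realizing the pmGH convergence vary with $i$ inside the common ambient space $Z$.
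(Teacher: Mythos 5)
Your proof of the localized inequality \eqref{z8} is essentially the same as the paper's: both regularize via $P_t^i$, invoke the $1$-Bakry-\'Emery contraction \eqref{eq:BE1} and the localized $H^{1,2}$-lower semicontinuity of \autoref{prop:locallsc} for the weakly convergent sequence $P_t^i f_i$, and remove the regularization by the uniform-in-$i$ estimate $\norm{P_t^ig - g}_{L^\infty}\le C(K,N,t)\Lip(g)$ combined with $|\nabla P_tf|\mu\weakto|Df|$ in the limit space; you merely arrange the chain of inequalities in the opposite direction before closing it up. The one genuine difference is in the non-localized claim \eqref{eq:lscvariation}: the paper simply cites \cite[Theorem 6.4]{AmbrosioHonda} for that part, whereas you derive it from \eqref{z8} by truncating $f_i$ at level $M$, testing against cut-offs $g_k\uparrow 1$, and then letting $M\to\infty$ via lower semicontinuity in the fixed limit space. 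This gives a self-contained alternative to the external citation. A small point worth making explicit in your write-up: since the proposition's statement makes $f\in\BV(Y)$ a conclusion of the first part, while you establish the second part first, you should record in that argument that the uniform bound $\sup_{t\in(0,1]}|D P_t f|(Y)<\infty$ (obtainable from your step $\int_Y g|\nabla P_tf|\di\mu\le e^{-Kt}\liminf_i\int P_t^ig\di|Df_i|$ with $g=g_k\uparrow 1$) forces $f\in\BV(Y)$ \emph{before} you invoke Bakry-\'Emery on $(Y,\varrho,\mu)$ to get the matching mass bound $|DP_tf|(Y)\le e^{-Kt}|Df|(Y)$; otherwise the argument looks circular.
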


Before than proving \autoref{prop:semicontinuity BV} we state and prove a simple corollary of it.

\begin{corollary}\label{cor:weak convergence total variations}
	Let $(X_i,\dist_i,\meas_i,x_i)$ be $\RCD(K,N)$ m.m. spaces converging in the pmGH topology to $(Y,\varrho,\mu,y)$ and $(Z,\dist_Z)$ realizing the convergence as above.
	For any $f_i\in \BV(X_i,\dist_i,\meas_i)$ convergent in energy in $\BV$ to $f\in \BV(Y,\varrho,\mu)$ such that
	$\sup_i\norm{f_i}_{L^{\infty}(X_i,\meas_i)}<+\infty$,	
	it holds that $|Df_i|\weakto |Df|$ in duality with $\Cbs(Z)$.
\end{corollary}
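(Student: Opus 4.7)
My strategy is a Portmanteau-type argument: I would extract the weak convergence from the localized lower semicontinuity \eqref{z8} of \autoref{prop:semicontinuity BV} combined with the convergence of total masses $|Df_i|(X_i)\to|Df|(Y)$, which is part of the hypothesis of energy convergence in $\BV$. The uniform $L^\infty$ bound on $(f_i)$ is precisely what licenses the use of \eqref{z8}. Thus from \autoref{prop:semicontinuity BV} I get, for every nonnegative $g\in\Lipbs(Z)$,
\begin{equation*}
\liminf_{i\to\infty}\int_Z g\di|Df_i|\ge\int_Z g\di|Df|.
\end{equation*}

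The first real step is \emph{tightness}. For $R>0$ choose $\eta_R\in\Lipbs(Z)$ with $0\le\eta_R\le 1$, $\eta_R\equiv 1$ on $B_R(y)$ and $\supp\eta_R\subset B_{R+1}(y)$. Applying the liminf inequality to $\eta_R$ and subtracting the total-mass identity $|Df_i|(Z)=\int\eta_R\di|Df_i|+\int(1-\eta_R)\di|Df_i|$, the assumption $|Df_i|(Z)\to|Df|(Y)$ yields
\begin{equation*}
\limsup_{i\to\infty}\int_Z(1-\eta_R)\di|Df_i|\le|Df|(Y)-\int_Z\eta_R\di|Df|=\int_Z(1-\eta_R)\di|Df|,
\end{equation*}
and the right-hand side tends to $0$ as $R\to\infty$ by dominated convergence, since $|Df|$ is finite on $Y$.

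The second step upgrades lsc to the matching limsup bound. For a nonnegative $g\in\Lipbs(Z)$ with $\|g\|_\infty\le M$ and $\supp g\subset B_R(y)$, the auxiliary function $M\eta_R-g\in\Lipbs(Z)$ is nonnegative, so \eqref{z8} gives $\liminf_i\int(M\eta_R-g)\di|Df_i|\ge\int(M\eta_R-g)\di|Df|$. I would pass to a subsequence realizing $\limsup_i\int g\di|Df_i|$ and further to one along which $\int\eta_R\di|Df_i|$ converges to some $L_R$; note that $\int\eta_R\di|Df|\le L_R\le|Df|(Y)$, the upper bound coming from $\int\eta_R\di|Df_i|\le|Df_i|(Z)\to|Df|(Y)$. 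Rearranging,
\begin{equation*}
\limsup_{i\to\infty}\int_Z g\di|Df_i|\le M\bigl(L_R-\textstyle\int\eta_R\di|Df|\bigr)+\int_Z g\di|Df|\le M\bigl(|Df|(Y)-\textstyle\int\eta_R\di|Df|\bigr)+\int_Z g\di|Df|,
\end{equation*}
and letting $R\to\infty$ the first term vanishes, giving the limsup inequality and hence convergence $\int g\di|Df_i|\to\int g\di|Df|$ for every nonnegative $g\in\Lipbs(Z)$. Splitting $g=g^+-g^-$ handles arbitrary signs, and a final uniform approximation of $g\in\Cbs(Z)$ by functions in $\Lipbs(Z)$ with supports contained in a fixed ball extends the conclusion to all of $\Cbs(Z)$; the uniform bound $\sup_i|Df_i|(Z)<\infty$ (which follows from the energy convergence) controls the approximation error.

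\textbf{Main obstacle.} The one delicate point is that \eqref{z8} is only a one-sided inequality and the natural test function $1-\eta_R$ or $M-g$ that would give the reverse direction is neither nonnegative nor of bounded support. The Portmanteau trick above, using $M\eta_R-g\in\Lipbs(Z)$ together with total-mass convergence, is precisely designed to circumvent this. The remaining arguments are routine density and tightness manipulations.
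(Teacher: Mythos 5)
Your proof is correct, and it reaches the conclusion by a genuinely different (though closely related) route from the paper's.

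The paper's own proof works at the level of measures: from \eqref{z8} it deduces $\liminf_i|Df_i|(A)\ge|Df|(A)$ for all open bounded $A$, then extracts a subsequential weak limit $\nu$ of $|Df_i|$ (possible since $\sup_i|Df_i|(X_i)<\infty$), shows $|Df|\le\nu$ by the open-set inequality, shows $\nu(Z)\le|Df|(Z)$ by lower semicontinuity of mass under weak convergence together with the hypothesis $|Df_i|(X_i)\to|Df|(Y)$, and concludes $\nu=|Df|$ so that every weak limit point agrees with $|Df|$. Your proof instead runs a direct Portmanteau argument at the level of test functions: the key device is that $M\eta_R-g$ is nonnegative and in $\Lipbs(Z)$, so \eqref{z8} can be applied to it, and then total-mass convergence lets you squeeze the limsup from above. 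The ingredients are the same (one-sided semicontinuity from \autoref{prop:semicontinuity BV} plus $|Df_i|(X_i)\to|Df|(Y)$), but you avoid extracting subsequential limit measures and identifying them afterwards, at the price of the more explicit bookkeeping with $\eta_R$ and the subsequence along which $\int\eta_R\di|Df_i|\to L_R$. Both approaches are standard and of comparable length.

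Two small remarks. First, your Step 1 (tightness) is logically superfluous: Step 2 already uses $\int(1-\eta_R)\di|Df|\to 0$ directly via monotone convergence, and the final density argument needs only $\sup_i|Df_i|(Z)<\infty$, not the tightness estimate. This is harmless but you could drop it. Second, when you let $R\to\infty$ at the end of Step 2, it is worth being explicit that the estimate $\limsup_i\int g\di|Df_i|\le M\bigl(|Df|(Y)-\int\eta_R\di|Df|\bigr)+\int g\di|Df|$ holds for every $R\ge R_0$ (where $\supp g\subset B_{R_0}(y)$) and not just for the particular $R$ that happens to equal the support radius of $g$; the left-hand side does not depend on $R$, so the conclusion follows. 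As you wrote it, the reader could momentarily worry that $R$ is frozen by the choice of $g$.
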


\begin{proof}[Proof of \autoref{cor:weak convergence total variations}]
	From \eqref{z8} we can deduce with a standard measure theoretic argument that
	\begin{equation}\label{z9}
		\liminf_{i\to \infty} |Df_i|(A)\ge |Df|(A)
		\qquad
		\forall A\subset Z\ \text{open and bounded.}
	\end{equation}
	Let $\nu$ be any weak limit point of $|Df_i|$, in the weak topology induced by $\Cbs(Z)$, along some subsequence $i(k)$ (the sequence $|Df_i|(X_i)$ is bounded and therefore the family $\left\lbrace \abs{Df_i}\right\rbrace_i$ is weakly compact). For any open and bounded set $A\subset Z$ such that $\nu(\partial A)=0$, it holds $\lim_k |Df_{i(k)}|(A)=\nu(A)$. Hence, taking into account also \eqref{z9}, we get $|Df|(A)\le \nu(A)$. Thus $|Df|\le \nu$, as measures in $Z$.
	On the other hand, since the evaluation on open sets is lower semicontinuous w.r.t. the weak convergence induced by $\Cbs(Z)$,
	by definition of convergence in energy in $\BV$, we have
	$\nu(Z)\le \liminf_k|Df_{i(k)}|(Z)=|Df|(Z)$ and therefore $\nu=\abs{Df}$.	
\end{proof}

\begin{proof}[Proof of \autoref{prop:semicontinuity BV}]
	The first part of the statement has been proved in \cite[Theorem 6.4]{AmbrosioHonda}.
	
	Let us deal with the second one. Fix any $t>0$ and observe that $P^i_t f_i\to P_tf$ in $H^{1,2}$ according to \autoref{def:H12convergence}. Indeed, the $L^1$-strong convergence of $f_i$ to $f$, combined with \eqref{eq:unifinftybound}, yields that $f_i$ converge in $L^2$-strong to $f$ by \autoref{prop:stability}. Therefore we can apply \autoref{prop:stabilityandcompH12} to obtain the claimed conclusion. Hence \autoref{prop:locallsc} applies, yielding that
	\begin{equation}\label{eq:regularized}
			\liminf_{i\to \infty} \int_{Z} g |\nabla P^i_tf_i|\di\meas_i\ge \int_{Z} g |\nabla P_tf|\di \mu,
		\qquad
		\text{for all $g\in \Lipbs(Z)$ nonnegative.}
	\end{equation}
	In order to prove \eqref{z8} starting from its regularized version \eqref{eq:regularized}, we argue as in the proof of \cite[Lemma 5.8]{AmbrosioHonda}.  Taking into account the Bakry-\'Emery contraction estimate $\abs{\nabla P_th}\le e^{-Kt}P_t^{*}\abs{Dh}$ (see \eqref{eq:BE1})
	and the estimate
	\begin{equation*}
		\norm{P_tg-g}_{L^{\infty}}\le C(K,N,t){\rm Lip}(g),\qquad
		\text{with}\ C(K,N,t)\sim \sqrt{t}\ \text{as}\ t\to 0
	\end{equation*}
	which is available over any $\RCD(K,N)$ m.m.s. (and can be proved using the Gaussian estimates for the heat kernel \eqref{eq:kernelestimate}), we obtain
 \begin{align}\label{eq:fromregtononreg}
   \liminf_{i\to\infty}\int_Zg\di\abs{D f_i}\ge&\liminf_{i\to\infty}\int_Z P^i_tg\di\abs{Df_i}-\limsup_{i\to\infty}\int_Z|P^i_tg-g|\di\abs{Df_i}\\
  \ge&e^{Kt}\liminf_{i\to\infty}\int_Zg\abs{\nabla P^i_tf_i}\di\meas_i-C(K,N,t){\rm Lip}(g)\limsup_{i\to\infty}\abs{Df_i}(X_i)\nonumber\\
  \ge& e^{Kt}\int_Zg\abs{\nabla P_tf}\di\mu-C(K,N,t){\rm Lip}(g)\limsup_{i\to\infty}\abs{Df_i}(X_i).\nonumber
\end{align}
The sought conclusion \eqref{z8} can be obtained passing to the $\liminf$ as $t\to 0$ in \eqref{eq:fromregtononreg}, recalling that $\abs{\nabla P_tf}\mu\weakto\abs{Df}$ in duality with $\Cbs(Z)$ as $t\downarrow 0$.
\end{proof}

The next result deals with the possibility of approximating in $\BV$ energy a set of finite perimeter in the limit space with a sequence of sets of finite perimeter defined on the approximating spaces.

\begin{proposition}\label{prop:Mosco convergence global}
		Let $(X_i,\dist_i,\meas_i,x_i)$ be $\RCD(K,N)$ m.m. spaces converging in the pmGH topology to $(Y,\varrho,\mu,y)$ and let $(Z,\dist_Z)$ be realizing the convergence as above. Let $F\subset Y$ be a bounded set of finite perimeter. Then there exists a subsequence $(i_k)$ and (uniformly bounded) sets of finite perimeter $E_{i_k}\subset X_{i_k}$ such that $\chi_{E_{i_k}}\to \chi_F$ in energy in $\BV$ as $k\to\infty$.	
\end{proposition}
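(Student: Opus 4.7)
My plan is to split the argument into two stages: first construct a recovery sequence of generic $\BV$ functions $u_i$ on $X_i$ converging in energy to $\chi_F$, and then use the coarea formula to replace each $u_i$ by the characteristic function of a superlevel set, which is automatically a set of finite perimeter.

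\textbf{Stage A (BV energy recovery by generic functions).} I would first produce uniformly bounded functions $u_i\in\BV(X_i,\dist_i,\meas_i)\cap L^\infty$ with $u_i\in[0,1]$ and $\supp u_i$ uniformly bounded in $Z$, such that $u_i\to\chi_F$ in $L^1$-strong and $|Du_i|(X_i)\to|D\chi_F|(Y)$. For this, I would first approximate $\chi_F$ in $Y$ by $[0,1]$-valued Lipschitz functions $g_n$ with uniformly bounded support satisfying $g_n\to\chi_F$ in $L^1(\mu)$ and $\int_Y|\nabla g_n|\,\di\mu\to|D\chi_F|(Y)$ (this is the definition of $\BV$ plus a $[0,1]$-truncation via the $\BV$ chain rule). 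Then, for each fixed $n$, the task is to build $g_n^i$ on $X_i$ with uniformly bounded support such that $g_n^i\to g_n$ in $L^1$-strong and $\int_{X_i}|\nabla g_n^i|\,\di\meas_i\to\int_Y|\nabla g_n|\,\di\mu$. A natural construction uses heat-flow mollification combined with the $1$-Bakry-\'Emery inequality \eqref{eq:BE1}: starting from an $L^2$-strong recovery sequence produced by \autoref{prop:stabilityandcompH12}, one applies $P^i_\eps$ and exploits the contraction $|DP^i_\eps h|\le e^{-K\eps}P^{i,*}_\eps|Dh|$ together with the weak convergence of reference measures to obtain the required control on the $L^1$-norm of the slopes, then lets $\eps\to 0$. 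A diagonal argument over $n$ and $i$, combined with the lower semicontinuity in \autoref{prop:semicontinuity BV}, yields the desired $u_i$.

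\textbf{Stage B (coarea extraction).} With $u_i$ at hand, the coarea formula \autoref{thm:coarea} gives
\[
|Du_i|(X_i)=\int_0^1\Per(\{u_i>t\},X_i)\,\di t,
\]
while the layer-cake identity turns $u_i\to\chi_F$ in $L^1$-strong into $\int_0^1\|\chi_{\{u_i>t\}}-\chi_{F_t}\|_{L^1}\,\di t\to 0$ for a suitable choice of candidates $F_t\subset Y$. By applying \autoref{cor:perimeters compactness} at countably many values of $t$ and a diagonal extraction, along a subsequence one has $\{u_i>t\}\to F$ in $L^1_{\loc}$ for $\Leb^1$-a.e.\ $t\in(0,1)$ (the fact that the limit equals $F$ comes from the layer-cake formula for $\chi_F$). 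For any $\eta>0$, Chebyshev's inequality applied to $t\mapsto\Per(\{u_i>t\},X_i)$ produces a subset $A_i\subset(0,1)$ of measure at least $1-\eta$ on which $\Per(\{u_i>t\},X_i)\le (1+\eta)|D\chi_F|(Y)$. Intersecting $A_i$ with the full-measure set of good levels, we select $t_i$ and set $E_i:=\{u_i>t_i\}$; these are uniformly bounded, $E_i\to F$ in $L^1$-strong, and $\limsup_i\Per(E_i,X_i)\le(1+\eta)|D\chi_F|(Y)$. Letting $\eta\to 0$ through a further diagonal, the matching $\liminf$ inequality from \autoref{prop:semicontinuity BV} upgrades this to $\lim_i\Per(E_i,X_i)=|D\chi_F|(Y)$, which is convergence in energy in $\BV$.

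\textbf{Main obstacle.} The technical heart is Stage A. The analogous $\Gamma$-lim\,sup statement for $H^{1,2}$ Cheeger energies along pmGH convergent $\RCD(K,N)$ spaces is the standard Mosco convergence underlying \autoref{prop:stabilityandcompH12}, but $L^1$-type energies are more delicate because $L^1$ is non-reflexive and the interaction between the heat flow and the total variation measure is subtler. The $1$-Bakry-\'Emery inequality \eqref{eq:BE1} is precisely the tool that allows to transfer $\BV$ recovery sequences from $Y$ to $X_i$ up to a mollification error which vanishes as the heat-flow time tends to $0$. Once Stage A is secured, Stage B is a fairly routine combination of coarea, \autoref{cor:perimeters compactness} and \autoref{prop:semicontinuity BV}.
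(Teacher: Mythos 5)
Your proposal follows the same two-stage template as the paper's proof (produce a scalar $\BV$ recovery sequence, then select a good superlevel via coarea), so the global strategy is aligned. There are however two points where the paper takes a shorter or cleaner route, and one spot where your proposal as written is slightly imprecise.

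\textbf{Stage A.} You sketch a from-scratch $\Gamma$-limsup construction of a recovery sequence for the $\BV$ energy along the pmGH-convergent spaces, via Lipschitz approximation on $Y$ plus heat-flow mollification controlled by the $1$-Bakry-\'Emery inequality. In outline this is plausible, but it amounts to re-proving a known result: the paper simply invokes \cite[Theorem 8.1]{AmbrosioHonda}, which directly provides functions $g_i\in\BV(X_i,\meas_i)$ converging strongly in $\BV$ to $\chi_F$. Both you and the paper then make the same two modifications (multiply by a Lipschitz cutoff $\eta$ equal to $1$ on a large ball and supported in a slightly larger ball, and compose with $\phi(z)=(z\wedge 1)\vee 0$) to obtain $[0,1]$-valued, uniformly compactly supported $f_i$; the paper then re-applies \autoref{prop:BV compactness} (and \autoref{prop:stability}) to recover $\BV$-energy convergence along a subsequence. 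So the essential content of your Stage A is already a black box in the paper, and the net effect is the same.

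\textbf{Stage B.} The strategy (coarea, then pick a level where the superlevel set still converges in $L^1$-strong and has perimeter converging to $\Per(F)$) matches the paper. Two technical caveats. First, the layer-cake identity $\int_0^1\|\chi_{\{u_i>t\}}-\chi_{F_t}\|_{L^1}\di t\to 0$ does not make direct sense when $u_i$ and $\chi_F$ live on different spaces; the paper's substitute is to extract so that the pushforwards $(f_i)_\#\bigl(\chi_{B_{2R}(y)}\meas_i\bigr)$ weakly converge to a measure $\sigma$ on $[0,1]$, and to take $\lambda$ which is not an atom of $\sigma$; $L^1$-strong convergence of $\chi_{\{f_i>\lambda\}}$ to $\chi_F$ then follows by composing with continuous truncation functions and applying \autoref{prop:stability}. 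You would need something equivalent here. Second, the Markov/Chebyshev bound does not yield a set of levels of measure $\ge 1-\eta$ as you claim: with $\int_0^1\Per(\{u_i>t\},X_i)\di t\to\Per(F,Y)$, Markov gives $\Leb^1\{t:\Per(\{u_i>t\},X_i)>(1+\eta)\Per(F,Y)\}\le 1/(1+\eta)+o(1)$, so the controlled set has measure only about $\eta/(1+\eta)$. This is still positive and hence meets the full-measure set of good levels, so the conclusion survives, but the cleaner route — taken by the paper — is Scheff\`e's lemma applied to $t\mapsto\Per(\{f_i>t\},X_i)$, which converts the pointwise $\liminf\ge\Per(F,Y)$ (a.e.\ $t$) plus the $\limsup$ of the integrals $\le\Per(F,Y)$ into $L^1(0,1)$ convergence to the constant $\Per(F,Y)$, from which a good level is immediately available. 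Your ``Chebyshev plus diagonal over $\eta$'' can be made to work, but it is a slightly tangled version of the same mechanism.
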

\begin{proof}
	 Let us begin observing that the first part of \cite[Theorem 8.1]{AmbrosioHonda} provides existence of a sequence 
	 $(g_i)\subset\BV(X_i,\meas_i)$ strongly converging in $\BV$ to $\chi_F$. 
	 Since by assumption $F\Subset B_R(y)$ for some $R>0$, we can find a Lipschitz function $\eta:Z\to[0,1]$ with support
	 contained in $B_{2R}(y)$ such that $\eta|_{B_R(y)}\equiv 1$ and it is easy to check that the sequence $f_i:=\eta g_i$ still converges in $L^1$-weak to $\chi_F$ and satisfies $\abs{Df_i}\to\Per(F)$ as $i\to\infty$.
	Furthermore, possibly composing with $\phi(z):=(z\wedge 1)\vee 0$, using \autoref{prop:stability} and observing that $\abs{D\phi\circ f_i}(X_i)\le\abs{Df_i}(X_i)$ for any $i\in\setN$ while $\abs{D\phi\circ\chi_F}(Y)=\abs{D\chi_F}(Y)$, we can assume that $0\le f_i\le 1$ for any $i\in \setN$. In particular $\sup_{i\in\setN}\norm{f_i}_{L^{\infty}(X_i,\meas_i)}<+\infty$. Therefore, \autoref{prop:BV compactness} applies and we obtain that, possibly extracting a subsequence that we do not relabel, $f_i$ converge in $\BV$ energy to $\chi_F$.
	
	Let us now assume, possibly extracting one more subsequence, that the measures $(f_i)_\#(\chi_{B_{2R}(y)}\meas_i)$
	weakly converge to some measure $\sigma$ in $[0,1]$. Under this assumption, 
	we claim that $\chi_{\{f_i>\lambda\}}$ still converge to $\chi_F$ in $L^1$-strong for $\Leb^1$-a.e. $\lambda\in (0,1)$.\\
	In order to prove this claim, we fix $\lambda\in (0,1)$ that is not an atom of $\sigma$, so that
	\begin{equation}\label{z10}
	  \lim_{\eps\to 0} \lim_{i\to\infty} \meas_i(\set{\lambda-\eps<f_i\le \lambda})=0.
	  \qquad
	\end{equation}
	From \eqref{z10}, using \autoref{prop:stability}, it is immediate to get the $L^1$-strong convergence of 
	$\chi_{\{f_i>\lambda\}}$ to $\chi_F$: indeed, it suffices to observe that for all $\eps\in (0,\lambda)$ the
	functions $\psi_\eps\circ f_i$ still $L^1$-strongly converge to $\psi_\eps\circ\chi_F=\chi_F$ for 
	any $\psi$ continuous, identically equal to 0 on $[0,\lambda-\eps]$
	and identically equal to 1 on $[\lambda,1]$.
    From the $L^1$-strong convergence we get, in particular, 
	\begin{equation}\label{z11}
		\liminf_{i\to\infty} \Per(\set{f_i>\lambda}, X_i)\geq \Per(F,Y)
		\qquad
		\qquad\text{for $\Leb^1$-a.e. $\lambda\in (0,1)$.}
	\end{equation}
	On the other hand, the coarea formula \autoref{thm:coarea} and the strong convergence of $f_i$ yield
	\begin{equation}\label{z111}
	    \limsup_{i\to \infty}\int_0^1\Per(\set{f_i>\lambda},X_i)\di \lambda
		=\limsup_{i\to \infty}|Df_i|(X_i)=\Per(F,Y).
	\end{equation}
Thanks to Scheffè's lemma,
the combination of \eqref{z11} and \eqref{z111} gives that $\Per(\set{f_i>\lambda}, X_i)$ converge in $L^1(0,1)$ to
the constant $\Per(F,Y)$. Extracting a subsequence $(i(k))$ pointwise convergent on $(0,1)\setminus I$ with $\Leb^1(I)=0$
and setting $E_k=\{f_{i(k)}>\lambda\}\subset B_{2R}(y)$ with $\lambda\in (0,1)\setminus I$ and $\sigma(\{\lambda\})=0$, the conclusion is achieved.	
\end{proof}

Let us conclude this section with a convergence result for quasi-minimal sets of finite perimeter. It will play a key role in the study of blow-ups of sets of finite perimeter we are going to perform in \autoref{sec:tangentsRCD}. The strategy of the proof is classical, see for instance \cite[Theorem 4.8]{Ambrosio97}. 

\begin{proposition}\label{prop: sequence of lambda minimizers}
		Let $(X_i,\dist_i,\meas_i,x_i)$ be $\RCD(K,N)$ m.m. spaces converging in the pmGH topology to $(Y,\varrho,\mu,y)$ and let $(Z,\dist_Z)$ be realizing the convergence as above. For any $i\in\setN$, let $\lambda_i\ge 1$ and let $E_i\subset X_i$ be a set of finite perimeter satisfying the following 
		$\lambda_i$-minimality condition: there exists $R_i>0$ such that
		\begin{equation*}
			\Per(E_i, X_i)\le \lambda_i \Per(E', X_i)
			\qquad
			\forall E'\subset X_i\ \text{such that}\ E_i\Delta E'\Subset B_{R_i}(x_i).
		\end{equation*}
		Assume that, as $i\to\infty$, $E_i\to F$ in $L^1_{\loc}$ for some set $F\subset Y$ of locally finite perimeter, $\lambda_i\to 1$ and $R_i\to \infty$. 
		Then
		\begin{itemize}
		   \item[(i)] $F$ is an entire minimizer of the perimeter (relative to $(Y,\varrho,\mu)$), namely
		   \begin{equation*}
		   	  \Per(F, B_r(y))\leq \Per(F',B_r(y))\quad\text{whenever}\ F\Delta F'\Subset B_r(y)\Subset Y \text{and $r>0$};
		   \end{equation*}
		   \item[(ii)] $|D\chi_{E_i}|\weakto|D\chi_F|$ in duality with $\Cbs(Z)$.
		\end{itemize}
\end{proposition}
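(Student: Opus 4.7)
My plan is to follow the classical stability argument for minimizers of the perimeter under $L^1_{\loc}$-convergence (in the spirit of \cite[Theorem~4.8]{Ambrosio97}), using the Mosco-type approximation \autoref{prop:Mosco convergence global} to transfer competitors from the limit space $Y$ to the approximating spaces $X_i$.

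\textbf{Proof of (i).} Let $F' \subset Y$ with $F \Delta F' \Subset B_r(y)$ and pick a generic radius $\rho > r$. Apply \autoref{prop:Mosco convergence global} to the bounded finite-perimeter set $F' \cap B_{\rho+1}(y)$ to obtain, along a subsequence, uniformly bounded sets $\tilde F_i' \subset X_i$ with $\chi_{\tilde F_i'} \to \chi_{F' \cap B_{\rho+1}(y)}$ in $\BV$ energy. Define the glued competitor
\begin{equation*}
E_i' := (\tilde F_i' \cap B_\rho(x_i)) \cup (E_i \setminus B_\rho(x_i)).
\end{equation*}
By a diagonal extraction I may further assume $\rho$ lies in the co-countable set of radii where $\partial B_\rho(x_i)$ and $\partial B_\rho(y)$ are negligible for all the perimeter measures $|D\chi_{E_i}|$, $|D\chi_{\tilde F_i'}|$, $|D\chi_F|$, $|D\chi_{F'}|$. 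For such $\rho$ and $i$ large: (a) $E_i \Delta E_i' \subset \overline{B_\rho(x_i)} \Subset B_{R_i}(x_i)$ (since $R_i \to \infty$), so the $\lambda_i$-minimality applies; (b) $E_i' \to F'$ in $L^1_{\loc}$ via \autoref{lemma:convergence intersection} and the convergences of $E_i$ and $\tilde F_i'$; (c) $E_i = E_i'$ on $X_i \setminus B_\rho(x_i)$, hence by locality $|D\chi_{E_i}|$ and $|D\chi_{E_i'}|$ agree on $X_i \setminus \overline{B_\rho(x_i)}$, and splitting $\Per(E_i,X_i) \leq \lambda_i \Per(E_i',X_i)$ yields
\begin{equation*}
\Per(E_i, \overline{B_\rho(x_i)}) \leq \lambda_i \Per(E_i', \overline{B_\rho(x_i)}) + (\lambda_i - 1) \Per(E_i, X_i \setminus \overline{B_\rho(x_i)}).
\end{equation*}

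\textbf{Passing to the limit.} Locality gives $\Per(E_i', B_\rho(x_i)) = \Per(\tilde F_i', B_\rho(x_i))$, and the weak convergence of total variations provided by \autoref{cor:weak convergence total variations} yields $\Per(\tilde F_i', B_\rho(x_i)) \to \Per(F' \cap B_{\rho+1}(y), B_\rho(y)) = \Per(F', B_\rho(y))$. The error term $(\lambda_i - 1)\Per(E_i, X_i \setminus \overline{B_\rho(x_i)})$ vanishes as $i\to\infty$ since $\lambda_i \to 1$ and the tail perimeter is uniformly bounded in $i$: this last fact is established separately by applying the $\lambda_i$-minimality to the trivial competitors $E_i \cup B_{\rho'}(x_i)$ for $\rho' > \rho$, together with the coarea argument of \autoref{cor:perimeters compactness} used to choose $\rho'$ so that $\Per(B_{\rho'}(x_i),X_i)$ is uniformly bounded. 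Taking $\liminf_i$ and using the lower semicontinuity in \autoref{prop:semicontinuity BV} on the LHS delivers $\Per(F, B_\rho(y)) \leq \Per(F', B_\rho(y))$, which reduces to $\Per(F, B_r(y)) \leq \Per(F', B_r(y))$ since $F = F'$ outside $B_r(y)$. This proves (i).

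\textbf{Proof of (ii).} Apply the same gluing construction with the trivial competitor $F' := F$: taking $\limsup_i$ in the minimality inequality gives $\limsup_i \Per(E_i, B_\rho(x_i)) \leq \Per(F, B_\rho(y))$ for $\mathcal L^1$-a.e. $\rho$. Combined with the lower semicontinuity of \autoref{prop:semicontinuity BV}, this yields $\lim_i |D\chi_{E_i}|(B_\rho(x_i)) = |D\chi_F|(B_\rho(y))$ on a dense set of radii $\rho$, from which a standard Portmanteau-type argument delivers the weak convergence $|D\chi_{E_i}| \weakto |D\chi_F|$ in duality with $\Cbs(Z)$. The main obstacle is precisely the uniform control of the tail perimeter $\Per(E_i, X_i \setminus \overline{B_\rho(x_i)})$: this is the only step that is not a routine combination of lower semicontinuity, the Mosco approximation, and the locality of perimeter, and it must be secured by a preliminary density-type estimate extracted from the $\lambda_i$-minimality.
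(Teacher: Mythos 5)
Your overall plan is the same as the paper's — approximate the limit competitor via \autoref{prop:Mosco convergence global}, glue it to $E_i$, invoke $\lambda_i$-minimality, and pass to the limit with \autoref{prop:semicontinuity BV} on one side and BV-energy convergence on the other — but there is a concrete gap at the central step, and it is exactly the part of the argument that is not routine.

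In your inequality $\Per(E_i, \overline{B_\rho(x_i)}) \leq \lambda_i \Per(E_i', \overline{B_\rho(x_i)}) + (\lambda_i - 1) \Per(E_i, X_i \setminus \overline{B_\rho(x_i)})$ you then pass to the limit by writing $\Per(E_i', B_\rho(x_i)) = \Per(\tilde F_i', B_\rho(x_i))$, tacitly replacing the closed ball by the open one. The difference is $\Per(E_i', \partial B_\rho(x_i))$, the perimeter that the gluing itself creates on the interface sphere, and this is \emph{not} forced to vanish by your choice of $\rho$. You arranged $\rho$ so that $|D\chi_{E_i}|$, $|D\chi_{\tilde F_i'}|$, $|D\chi_F|$, $|D\chi_{F'}|$ give zero mass to the sphere, but that says nothing about the glued set: $|D\chi_{E_i'}|(\partial B_\rho)$ records the mismatch $\chi_{\tilde F_i'}-\chi_{E_i}$ across the sphere, and it can be strictly positive even when each of the two pieces has negligible perimeter there. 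This is the only genuinely delicate point in the proof, and the paper handles it by first deriving $\Per(\tilde E_i^s, \partial B_s) \le \int |\chi_{E_i'}-\chi_{E_i}|\, d|D\chi_{B_s}|$ from the BV Leibniz rule (\autoref{lem:lei}), then integrating in $s$ via the local coarea formula and using Fatou's lemma together with the $L^1$-strong convergence of $\chi_{E_i'}-\chi_{E_i}$ to conclude $\liminf_i \Per(\tilde E_i^s,\partial B_s)=0$ for a.e.\ radius. Without such an argument your chain of inequalities does not close.

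A secondary point: your error term $(\lambda_i - 1)\Per(E_i, X_i\setminus\overline{B_\rho(x_i)})$ involves the \emph{global} tail perimeter, and your proposed fix — applying $\lambda_i$-minimality to the competitors $E_i\cup B_{\rho'}(x_i)$ — only yields a bound on the local perimeter $\Per(E_i,\overline{B_{\rho'}})$ up to an error that again involves the same uncontrolled tail, so it is circular. The paper instead localizes the minimality to a fixed ball $B_r(\bar x_i)$ before extracting the boundary term, so that the error is $(\lambda_i-1)\Per(E_i, B_r\setminus\overline{B}_s)$, a local quantity that stays bounded; it is worth keeping that localization in your argument too.
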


\begin{proof}
 Let us fix $\bar y\in Y$ and let $F'\subset Y$ be a set of locally finite perimeter satisfying $F\Delta F'\Subset B_r(\bar{y})$.  
 Let $\bar{x}_i\in X_i$ converging to $\bar y$ in $Z$ and $R>0$ be such that the following properties hold true:
  \begin{equation}\label{z20}
  	\sup_{i\in \setN} \Per(B_R(x_i),X_i)<+\infty
  	\qquad
  	\text{and}
  	\qquad
  	B_r(\bar{x}_i)\Subset B_R(x_i)\qquad \forall i\in \setN.
  \end{equation}
Using \autoref{prop:Mosco convergence global} we can find a sequence of sets of finite 
perimeter $E'_i\subset X_i$ converging to $F\cap B_R(y)$ in $\BV$ energy
(note that $F\cap B_R(y)$ is a set of finite perimeter thanks to \eqref{z20}).

Let $\nu$ be any weak limit of the sequence of measures with uniformly bounded mass $|D\chi_{E_i}|$. We claim that
\begin{equation}\label{z18}
	\nu(B_s(\bar y))\le \Per(F', B_s(\bar y))
	\ \ 
	\text{for $\Leb^1$-a.e. $s\in (0,r)$.}
\end{equation}
Before proving \eqref{z18} let us illustrate how to use it to conclude the proof. First of all, notice that \eqref{z9} gives $\nu\geq |D\chi_F|$; if we apply \eqref{z18} with $F'=F$ we conclude that $\nu=|D\chi_F|$ locally and then globally, achieving the conclusion (ii) in the statement. The validity of the local minimality condition (i) follows combining the identification $\nu=|D\chi_F|$ with \eqref{z18}, letting $s\uparrow r$.

Let us pass to the proof of \eqref{z18}.
We fix a parameter $s\in (0,r)$ with $\nu(\partial B_s(\bar y))=0$, $\Per(F',\partial B_s(\bar y))=0$ and set
\begin{equation}
	\tilde{E}_i^s:=\left(E_i'\cap B_s(\overline{x}_i)\right)\cup \left(E_i\setminus B_s(\overline{x}_i)\right).
\end{equation}
Using the locality of the perimeter (see \cite{Am01, Am02}) and the $\lambda_i$-minimality of $E_i$ (notice that $R_i\ge r$ for $i$ big enough), we get
\begin{align*}
	\Per(E_i, \overline{B}_s(\bar{x}_i)) = & \Per(E_i, B_r(\bar{x_i}))-\Per(E_i,B_r(\bar{x_i})\setminus\overline{B}_s(\bar{x}_i))\\
	\le &  \lambda_i \Per(\tilde{E}^s_i, B_r(\bar{x}_i))-\Per(E_i,B_r(\bar{x}_i)\setminus\overline{B}_s(\bar{x}_i))\\
	=&\lambda_i \Per(\tilde{E}^s_i, B_s(\bar{x}_i))+\lambda_i\Per(\tilde{E}^s_i, \partial B_s(\bar{x}_i))\\
	&+\lambda_i \Per(\tilde{E}^s_i,B_r(\bar{x}_i)\setminus\overline{B}_s(\bar{x}_i))-\Per(E_i,B_r(\bar{x}_i)\setminus\overline{B}_s(\bar{x}_i))\\
	=&\lambda_i \Per(E_i', B_s(\bar{x}_i))+\lambda_i\Per(\tilde{E}^s_i, \partial B_s(\bar{x}_i))
	+(\lambda_i-1)\Per(E_i,B_r(\bar{x}_i)\setminus\overline{B}_s(\bar{x}_i)).
\end{align*}
Observe that, taking the limit as $i\to \infty$, thanks to our choice of $s$, it holds that: $(\lambda_i-1)\Per(E_i,B_r(\bar{x}_i)\setminus\overline{B}_s(\bar{x}_i))\to 0$, 
$\Per(E_i, \overline{B}_s(\bar{x}_i))\to \nu(B_s(\bar y))$ and eventually $\lambda_i \Per(E_i', B_s(\bar{x}_i))\to \Per(F',B_s(\bar y))$, since $\chi_{E_i'}\to \chi_{F'\cap B_R(y)}$ in $\BV$ energy
and therefore \autoref{cor:weak convergence total variations} applies.
It remains only to prove that
\begin{equation}\label{z19}
	\liminf_{i\to \infty}\Per(\tilde{E}_i^s, \partial B_s(\bar{x}_i))=0,
	\qquad
	\text{for}\ \Leb^1\text{-a.e.}\ s\in (0,r).
\end{equation}
Applying \eqref{z21} of
\autoref{lem:lei} below with $f=\chi_{E_i'}-\chi_{E_i}$ we get
\begin{equation*}
	\Per(\tilde{E}^s_i,X\setminus \overline{B}_s(\bar x_i))\le \int_{X_i}|\chi_{E_i'}-\chi_{E_i}| \di |D\chi_{B_s(\bar x_i)}|+\Per(E_i, X\setminus \overline{B}_s(\bar x_i))\quad\text{for $\Leb^1$-a.e. $s\in(0,r)$},
\end{equation*}
that, together with the strong locality the perimeter, yields
\begin{equation}\label{eq:estimperl1}
    	P(\tilde{E}^s_i, \partial B_s(\bar x_i))\le \int_{X_i}|\chi_{E_i'}-\chi_{E_i}| \di |D\chi_{B_s(\bar x_i)}|,\quad\text{for $\Leb^1$-a.e. $s\in(0,r)$}.
\end{equation}
Using Fatou's lemma, \eqref{eq:estimperl1}, the local version of the coarea formula of \autoref{cor:loccoarea} and eventually \autoref{lemma:convergence intersection} to prove that $\chi_{E_i'}-\chi_{E_i}\to \chi_F-\chi_{F'}$ in $L^1$-strong, we conclude that
\begin{align*}
   \int_0^r \liminf_{i\to \infty}	\Per(\tilde{E}^s_i, \partial B_s(\bar x_i))\di s
   &\le \liminf_{i\to \infty}\int_0^r	\Per(\tilde{E}_i^s, \partial B_s(\bar x_i)) \di s\\
   &\le \liminf_{i\to \infty}\int_0^r\int_{X_i}|\chi_{E_i'}-\chi_{E_i}| \di |D\chi_{B_s(\bar x_i)}|\\
   &=\liminf_{i\to \infty} \int_{B_r(\bar x_i)} |\chi_{E_i'}-\chi_{E_i}| \di \meas_i=0,
\end{align*}
therefore yielding \eqref{z19}.
\end{proof}

\begin{lemma}[Leibniz rule in $\BV$]\label{lem:lei}
	Let $(X,\dist,\meas)$ be an $\RCD(K,\infty)$ m.m.s. and let $x\in X$. 
	For any $f\in \BV(X,\dist,\meas)\cap L^{\infty}(X,\meas)$ and $\Leb^1$-a.e. $r\in (0,+\infty)$ it holds
	\begin{equation}\label{eq:chain rule}
	\bigl|D \bigl(f\chi_{B_r(x)}\bigr)\bigr|(X)\le \int_X |f| \di |D\chi_{B_r(x)}|+|Df|( B_r(x))
	\end{equation}	
	and therefore locality gives
	\begin{equation}\label{z21}
		\bigl|D\bigl(f\chi_{B_r(x)}\bigr)\bigr|(X\setminus B_r(x))\le \int_X |f| \di |D\chi_{B_r(x)}|,\quad\text{for $\Leb^1$-a.e. $r\in(0,+\infty)$}.
	\end{equation}
\end{lemma}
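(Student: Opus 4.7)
The strategy is a double approximation of both $f$ and $\chi_{B_r(x)}$ by Lipschitz functions, combined with a coarea argument for the limiting annular term.

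Fix $r>0$ in the full-measure set of radii for which $B_r(x)$ has finite perimeter, $\meas(\partial B_r(x))=0$, $|Df|(\partial B_r(x))=0$, and $r$ is a Lebesgue point of the map $t\mapsto\int_X|f|\di\Per(B_t(x),\cdot)$; integrability of this map on bounded intervals follows from \autoref{cor:loccoarea} applied to $v(\cdot):=(r+1-\dist(\cdot,x))^+\in\BV(X,\dist,\meas)\cap L^\infty$, whose sublevel sets are balls centred at $x$. By definition of $|Df|$, choose locally Lipschitz $f_i\to f$ in $L^1(X,\meas)$ with $\int|\nabla f_i|\di\meas\to|Df|(X)$; truncation in $[-\|f\|_\infty,\|f\|_\infty]$ preserves all these properties. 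The measures $|\nabla f_i|\meas$ have uniformly bounded mass; lower semicontinuity on open sets combined with mass convergence shows that, along a subsequence, $|\nabla f_i|\meas\weakto|Df|$ in duality with $\Cbs(X)$. Introduce the Lipschitz cutoff
$$\eta_\eps(z):=\bigl(1-\eps^{-1}(\dist(z,x)-r)^+\bigr)^+,$$
so that $\eta_\eps\equiv 1$ on $\overline{B}_r(x)$, $\eta_\eps\equiv 0$ outside $B_{r+\eps}(x)$, and $\lip(\eta_\eps)\le\eps^{-1}\chi_{B_{r+\eps}(x)\setminus\overline{B}_r(x)}$.

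Since $f_i\eta_\eps$ is Lipschitz with bounded support, the pointwise Leibniz rule for slopes gives $\lip(f_i\eta_\eps)\le\eta_\eps\lip(f_i)+|f_i|\lip(\eta_\eps)$. Integrating and letting $i\to\infty$: the first right-hand term converges to $\int\eta_\eps\di|Df|$ by weak convergence against the test function $\eta_\eps\in\Cbs(X)$, while the second converges to $\int|f|\lip(\eta_\eps)\di\meas$ because $f_i\to f$ in $L^1$ on $B_{r+\eps}(x)$ and $\lip(\eta_\eps)$ is bounded with bounded support. By the definition of $|D(f\eta_\eps)|$ we obtain
$$|D(f\eta_\eps)|(X)\le\int\eta_\eps\di|Df|+\int|f|\lip(\eta_\eps)\di\meas.$$

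Now send $\eps\downarrow 0$. Dominated convergence yields $f\eta_\eps\to f\chi_{B_r(x)}$ in $L^1$, so lower semicontinuity of the total variation gives $|D(f\chi_{B_r(x)})|(X)\le\liminf_\eps|D(f\eta_\eps)|(X)$. Dominated convergence on the right, together with $|Df|(\partial B_r(x))=0$, gives $\int\eta_\eps\di|Df|\to|Df|(B_r(x))$. For the annular term, \autoref{cor:loccoarea} applied to $v=(r+1-\dist(\cdot,x))^+$ (whose sublevel sets are exactly the open balls $B_t(x)$ and which satisfies $|Dv|=\chi_{B_{r+1}(x)}\meas$) yields
$$\int|f|\lip(\eta_\eps)\di\meas\le\frac{1}{\eps}\int_{B_{r+\eps}(x)\setminus B_r(x)}|f|\di\meas=\frac{1}{\eps}\int_r^{r+\eps}\!\int_X|f|\di\Per(B_t(x),\cdot)\di t,$$
which converges to $\int|f|\di|D\chi_{B_r(x)}|$ at Lebesgue points $r$ of the $L^1_{\loc}$ map $t\mapsto\int|f|\di\Per(B_t(x),\cdot)$. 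Combining these estimates proves \eqref{eq:chain rule} for $\Leb^1$-a.e. $r$. The inequality \eqref{z21} follows by locality: since $f\chi_{B_r(x)}=f$ on the open set $B_r(x)$, locality gives $|D(f\chi_{B_r(x)})|(B_r(x))=|Df|(B_r(x))$, and subtracting this from \eqref{eq:chain rule} yields \eqref{z21}.

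The main technical obstacle is the sharp $\eps\downarrow 0$ control of the annular term $\int|f|\lip(\eta_\eps)\di\meas$: the coarea formula converts it into the average of $t\mapsto\int|f|\di\Per(B_t(x),\cdot)$ over $(r,r+\eps)$, whose limit equals $\int|f|\di|D\chi_{B_r(x)}|$ only at Lebesgue points of this map, hence only for $\Leb^1$-a.e.\ $r$.
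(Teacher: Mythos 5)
Your proof is correct and takes a genuinely different route from the paper's. The paper regularizes \emph{both} factors via the heat semigroup: it first establishes the Leibniz-type bound for $f$ Lipschitz against a general BV function $g$ by replacing $g$ with $P_t g$ (using the $L^\infty$--$\Lip$ regularization and the $1$-Bakry-\'Emery inequality to pass to the limit), then applies this with $g=\chi_{B_r(x)}$ and $f$ replaced by $P_tf$, again removing the regularization via lower semicontinuity, Bakry-\'Emery, and a coarea selection of a subsequence $t_i\downarrow 0$. You instead take the approximating Lipschitz sequence $f_i$ directly from the definition of $|Df|$, smooth $\chi_{B_r(x)}$ by the explicit cutoff $\eta_\eps$, apply the pointwise Leibniz rule for slopes, and pass to the limit in $i$ via the standard fact that an optimal $L^1$ approximating sequence yields $|\nabla f_i|\meas\weakto |Df|$ (lower semicontinuity on open sets plus mass convergence), and then in $\eps$ via coarea and Lebesgue differentiation — essentially the same mechanism the paper uses to handle the annular term. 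Your approach buys a more elementary argument that avoids the heat flow machinery and the Bakry-\'Emery estimate entirely; indeed it would go through in any m.m.s. where the BV calculus and coarea formula hold, without the $\RCD$ hypothesis, and it replaces a three-layer approximation by a two-layer one. The only minor imprecisions are that $\lip(\eta_\eps)$ is supported on $\overline{B}_{r+\eps}(x)\setminus B_r(x)$ rather than $B_{r+\eps}(x)\setminus\overline{B}_r(x)$ (harmless, since it only enters an upper bound, and the coarea identity naturally produces the closed annulus), and that the weak convergence $|\nabla f_i|\meas\weakto|Df|$ — while standard — deserves a brief justification (for a weak-$*$ limit $\nu$, lower semicontinuity gives $|Df|\le\nu$ on bounded open sets with $\nu$-null boundary, and mass convergence gives $\nu(X)\le|Df|(X)$, forcing $\nu=|Df|$).
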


\begin{proof}
Let us begin observing that the stated conclusion makes sense since, in view of the coarea formula \autoref{thm:coarea}, $\int\abs{f}\di |D\chi_{B_r(x)}|$ is well defined for $\Leb^1$-a.e. $r\in(0,+\infty)$.\\
We divide the proof into two intermediate steps. In the first one we are going to prove that \eqref{eq:chain rule} holds true under the
assumption $f\in\Lipb(X,\dist)$. In the second one we prove the sought inequality passing to the limit the inequalities for regularized functions that we obtained previously.

\textbf{Step 1.}
More generally in this step we are going to prove, arguing by regularization on $g$, that, for any $f\in \Lipb(X,\dist)$ and for any nonnegative function $g\in\BV(X,\dist,\meas)\cap L^{\infty}(X,\meas)$, it holds 
\begin{equation}\label{eq:leibdoubreg}
\abs{D\left(fg\right)}(X)\le \int_X\abs{f}\di\abs{Dg}+\int_X\abs{g}\abs{\nabla f}\di\meas.
\end{equation}
Observe that, if $g\in\Lipb(X,\dist)$ then \eqref{eq:leibdoubreg} follows from the Leibniz rule. Hence, by the $L^{\infty}-\Lip$ regularization of the heat semigroup it follows that, for any $t>0$,
\begin{equation}\label{eq:leibheat}
\bigl|D\bigl(fP_tg\bigr)\bigr|(X)\le\int_X \abs{f}\abs{\nabla P_tg}\di\meas+\int_XP_tg\abs{\nabla f}\di\meas.
\end{equation}
The convergence of $P_tg$ to $g$ in $L^1(X,\meas)$ as $t\to 0$, the lower semicontinuity of the total variation and the Bakry-\'Emery contraction estimate allow us to pass to the $\liminf$ at the left hand-side and to the limit at the right hand-side in \eqref{eq:leibheat} to get \eqref{eq:leibdoubreg} (see also the proof of the second step for further details on the limiting procedure).

\textbf{Step 2.}
It follows from what we just proved and from the $L^{\infty}-\Lip$ regularization property of the heat flow on $\RCD(K,\infty)$ m.m. spaces that, for any $t>0$,
	\begin{equation}\label{eq:regleibniz}
	\bigl|D\bigl(P_tf\chi_{B_r(x)}\bigr)\bigr|(X)\le \int_X |P_tf| \di |D\chi_{B_r(x)}|+|DP_tf|(\overline{B}_r(x))\quad\text{for $\Leb^1$-a.e. $r\in(0,+\infty)$}.
	\end{equation}	
Next we observe that $P_tf\chi_{B_r(x)}\to f\chi_{B_r(x)}$ in $L^1(X,\meas)$ as $t\to 0^+$ and therefore, by the lower semicontinuity of the total variation w.r.t. $L^1$ convergence it holds
\begin{equation}\label{eq:lscpt}
\bigl|D\bigl(f\chi_{B_r(x)}\bigr)\bigr|(X)\le\liminf_{t\to 0^+}\bigl|D(P_tf\chi_{B_r(x)})\bigr|(X).
\end{equation}
Furthermore, the $L^1(X,\meas)$ convergence of $P_tf$ to $f$ and the coarea formula \autoref{thm:coarea} grant that we can find a sequence $t_i\downarrow 0$ in such a way that $P_{t_i}f$ converges in $L^1(X,|D\chi_{B_r(x)}|)$ to $f$ for $\Leb^1$-a.e. $r\in(0,+\infty)$.
Eventually, let us observe that, due to the Bakry-\'Emery contraction estimate \eqref{eq:BE1},
\begin{equation*}
\limsup_{t\to 0^+}\abs{DP_tf}(\overline{B}_r(x))\le 
\limsup_{t\to 0^+}e^{-Kt}P_t^{*}\abs{Df}(\overline{B}_r(x))\leq \abs{Df}(\overline{B}_r(x)),\quad\text{$\forall r\in (0,+\infty)$.}
\end{equation*}
Passing to the $\liminf$ as $t_i\downarrow 0$ at the left hand-side of \eqref{eq:regleibniz} taking into account \eqref{eq:lscpt} and to the limit at the right hand-side taking into account what we observed above, we get the sought estimate \eqref{eq:chain rule}.
\end{proof}

\section{Tangents to sets of finite perimeter in $\RCD(K,N)$ spaces}\label{sec:tangentsRCD}

In this section we study the structure of blow-ups of sets of finite perimeter over $\RCD(K,N)$ metric measure spaces. Inspired by the Euclidean theory developed  by De Giorgi in the pioneering papers \cite{DeGiorgi54,DeGiorgi55}, this can be seen as a first step in a program aimed at understanding 
the fine structure of sets of finite perimeter. 

Before then stating the main results we introduce a definition of tangent for sets of finite perimeter in this abstract setting.

\begin{definition}[Tangents to a set of finite perimeter]\label{def:tan}
	Let $(X,\dist,\meas)$ be an $\RCD(K,N)$ m.m.s., $x\in X$ and let $E\subset X$ be a set of locally finite perimeter. 
	We denote by $\Tan_x(X,\dist,\meas,E)$ the collection of quintuples $(Y,\varrho,\mu,y, F)$ satisfying the following two properties:
	\begin{itemize}
		\item[(a)] $(Y,\varrho,\mu,y)\in\Tan_x(X,\dist,\meas)$ and $r_i\downarrow 0$ are such that the rescaled spaces $(X,r_i^{-1}\dist,\meas_x^{r_i},x)$ converge to $(Y,\varrho,\mu,y)$ in the pointed measured Gromov-Hausdorff topology;
		\item[(b)] $F$ is a set of locally finite perimeter in $Y$ with $\mu(F)>0$ and, if $r_i$ are as in (a), then the sequence $f_i=\chi_E$ converges in $L^1_\loc$ to $\chi_F$ according to \autoref{def:L1 convergence of sets}.
	\end{itemize}
\end{definition}

It is clear that the following locality property of tangents holds:
\begin{equation}\label{eq:localitytangents}
\meas\bigl(A\cap (E\Delta F)\bigr)=0\qquad\Longrightarrow\qquad\Tan_x(X,\dist,\meas,E)=\Tan_x(X,\dist,\meas,F)\qquad\forall x\in A. 
\end{equation}
whenever $E,\,F$ are sets of locally finite perimeter and $A\subset X$ is open.

We are ready to state the main results of this section.

\begin{theorem}\label{thm:rigidtangent}
	Let $(X,\dist,\meas)$ be an $\RCD(K,N)$ m.m.s. and let $E\subset X$ be a set 
	of locally finite perimeter. For $|D\chi_E|$-a.e. $x\in X$ the set $\Tan_x(X,\dist,\meas,E)$ is not empty and for all $(Y,\varrho,\mu,y,F)\in \Tan_x(X,\dist,\meas,E)$, one has
	\begin{equation}\label{eq:rigidblowup}
	|\nabla P_s \chi_F| \mu= P_s^*|D\chi_F|\qquad\forall s>0,
	\end{equation}
	where $P_s=P_s^Y$ is the heat semigroup relative to $(Y,\varrho,\mu)$. In particular, for all $t\geq 0$, 
	all functions $f=P_t\chi_F$ satisfy
	\begin{equation*}
     |\nabla P_s  f| = P_s|\nabla f|\qquad\text{$\mu$-a.e. in $Y$, for all $s>0$.}
	\end{equation*}
	Moreover, for each $x\in X$ as above there exists a pointed m.m.s. $(Z,\dist_Z,\meas_Z,\bar z)$ such that 
	\begin{equation}
	(Y,\varrho,\mu,y,F)=
	\left((Z\times\setR),\dist_Z\times\dist_{eucl},\meas_Z\times\Leb^1,(\bar{z},0),\set{t>0}\right),
	\end{equation} 
	where we denoted by $t$ the coordinate of the Euclidean factor in $Z\times\setR$.
	Furthermore:
	\begin{itemize}
		\item[(i)] if $N\ge 2$ then $Z$ is an $\RCD(0,N-1)$ m.m.s.;
		\item[(ii)] if $N\in [1,2)$ then $Z$ is a point.
	\end{itemize}
\end{theorem}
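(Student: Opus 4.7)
The plan is to carry out the argument in three stages. \emph{Stage 1: existence of tangents.} At $|D\chi_E|$-a.e.\ $x$, standard lower and upper perimeter density estimates in $\RCD(K,N)$ spaces (consequences of the isoperimetric inequality and Bishop--Gromov doubling) ensure that $|D\chi_E|(B_r(x))/r^{N-1}$ stays bounded away from $0$ and $\infty$ along some $r_i\downarrow 0$. Combining the pmGH precompactness of pointed $\RCD(K,N)$ spaces with \autoref{cor:perimeters compactness} applied to the rescalings $(X, r_i^{-1}\dist, \meas^x_{r_i}, x)$ and extracting a further subsequence yields a tangent $(Y,\varrho,\mu,y)$ and a limit set $F\subset Y$ of locally finite perimeter with $\mu(F)>0$.

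\emph{Stage 2: the Bakry--\'Emery rigidity \eqref{eq:rigidblowup}.} This is the heart of the proof. A short scaling computation (the Laplacian rescales by $r^{-2}$, so $P_s^{X,r} = P_{r^2 s}$; perimeter and a.c.\ measures rescale by $r/C(x,r)$) shows that the $1$-Bakry--\'Emery inequality \eqref{eq:BE1} on the rescaled space, written in terms of the original deficit $\mathcal{D}_t := P_t^*|D\chi_E|-|\nabla P_t\chi_E|\,\meas \ge 0$, satisfies
\[
\bar{\mathcal{D}}_s^{\,r}(B_R^Y(y)) \;=\; \frac{r}{C(x,r)}\,\mathcal{D}_{r^2 s}(B_{rR}(x))\qquad\forall\, R>0,\ s>0.
\]
The plan is to pass to the limit $r_i\downarrow 0$ by combining three ingredients: (i) lower semicontinuity of the perimeter (\autoref{prop:semicontinuity BV}) together with locality, which via the Fleming--White principle delivers asymptotic local minimality of $E$ at $|D\chi_E|$-a.e.\ $x$ and hence, through \autoref{prop: sequence of lambda minimizers}, the weak convergence $|D^{X,r_i}\chi_E|\weakto |D\chi_F|$ in duality with $\Cbs$ with convergence of total masses on balls of continuity (so that $F$ is in particular an entire minimizer of the perimeter in $Y$); (ii) $L^1$-strong convergence $\chi_E\to\chi_F$ in the rescalings upgraded through \autoref{prop:stabilityandcompH12} to $H^{1,2}$-strong convergence $P_s^{X,r_i}\chi_E\to P_s\chi_F$ and, via \autoref{thm:localizedconv}, to weak convergence $|\nabla P_s^{X,r_i}\chi_E|\,\meas^x_{r_i}\weakto|\nabla P_s\chi_F|\,\mu$; (iii) continuity of the dual heat semigroup applied to the strong convergence in (i), giving $P_s^{X,r_i,*}|D^{X,r_i}\chi_E|\weakto P_s^*|D\chi_F|$. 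Crossing (i)--(iii) with the pointwise bound $\bar{\mathcal{D}}_s^{\,r_i}\ge 0$ and matching total masses on balls of continuity forces the limiting deficit to vanish, i.e.\ $P_s^*|D\chi_F| = |\nabla P_s\chi_F|\,\mu$ for every $s>0$, which is \eqref{eq:rigidblowup}.

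\emph{Stage 3: from rigidity to splitting.} With \eqref{eq:rigidblowup} in hand, for any $t>0$ set $f := P_t\chi_F$; applying $P_s$ to \eqref{eq:rigidblowup} and combining with gradient contractivity yields $|\nabla P_s f| = P_s|\nabla f|$ $\mu$-a.e.\ in $Y$. Since $|D\chi_F|(Y)>0$, $f$ is a nonconstant bounded Lipschitz function on $Y$, so \autoref{maintheorem} applies and provides the splitting $Y\cong Z\times\setR$ with the stated dimensional dichotomy and with $f$ depending monotonically on the $\setR$-coordinate only. Letting $t\downarrow 0$ along a sequence for which $P_t\chi_F\to\chi_F$ in $L^1_{\loc}$, the monotone $\setR$-dependence transfers to $\chi_F$, so $F = \{t>t_0\}$ for some $t_0\in\setR$; a translation along the $\setR$-factor (a measure-preserving isometry of the tangent) then normalizes $y = (\bar z,0)$ and $t_0 = 0$. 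The main obstacle is Stage 2, and more specifically the delicate point that the \emph{inequality} $\bar{\mathcal{D}}_s^{\,r_i}\ge 0$ must collapse to \emph{equality} in the limit under the simultaneous scaling $r_i\to 0$, $r_i^2 s\to 0$: the lower semicontinuity/asymptotic minimality input supplies the required geometric rigidity on the perimeter side, while gradient contractivity supplies the regularization that makes the two weak convergences compatible.
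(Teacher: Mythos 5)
Stages 1 and 3 of your outline track the paper faithfully; the substantive gap is in Stage 2. Your three ingredients --- weak convergence of $|D^{r_i}\chi_E|$ (from asymptotic minimality), of $|\nabla P_s^{r_i}\chi_E|\,\meas_{r_i}$, and of $P_s^{r_i,*}|D^{r_i}\chi_E|$ --- together with $\bar{\mathcal{D}}_s^{r_i}\geq 0$ yield only the one-sided inequality $P_s^*|D\chi_F|\geq|\nabla P_s\chi_F|\,\mu$ in the tangent, which is nothing but the Bakry--\'Emery inequality on $(Y,\varrho,\mu)$, known beforehand. Nothing in your list forces the deficit to collapse to equality. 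There is no ``matching of total masses'' to invoke: $P_s^{r_i,*}$ diffuses mass, so on a fixed ball $P_s^{r_i,*}|D^{r_i}\chi_E|$ and $|\nabla P_s^{r_i}\chi_E|\,\meas_{r_i}$ have genuinely different masses, and your scheme gives no reason why $\bar{\mathcal{D}}_s^{r_i}(B_R)$ should tend to $0$. Nor can the Fleming--White minimality close this gap, because it is insensitive to the ``a.e.'' restriction that is essential here: at a singular point of an area-minimizing boundary (e.g.\ the vertex of a minimizing cone) the blow-up is again a minimizer, the density estimates of \autoref{prop:foundamental perimeter estimates} hold, and the perimeter measures converge weakly, yet \eqref{eq:rigidblowup} must fail there (otherwise \autoref{maintheorem} would force the cone to split as a half-space). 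Your proposal has no mechanism that produces the exceptional $|D\chi_E|$-negligible set on which rigidity is allowed to fail.

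What is missing, and what the paper supplies, is a Lebesgue-differentiation input on the Bakry--\'Emery ratio itself, prior to rescaling. Set $h_t$ equal to the density of $e^{Kt}|\nabla P_t\chi_E|\,\meas$ with respect to $P_t^*|D\chi_E|$ (so $0\leq h_t\leq 1$) and $g_t:=1-P_th_t$; then
\begin{equation*}
\int_X g_t\,d|D\chi_E|=|D\chi_E|(X)-e^{Kt}\int_X|\nabla P_t\chi_E|\,d\meas\longrightarrow 0\qquad(t\downarrow 0),
\end{equation*}
by $L^1$-lower semicontinuity of the total variation combined with the global Bakry--\'Emery bound --- a different and more elementary use of lower semicontinuity than the Fleming--White one. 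One then localizes this $L^1(|D\chi_E|)$-vanishing via the asymptotic doubling \eqref{eq:asydou} of $\nu:=|D\chi_E|$ and a Vitali/maximal-function argument, obtaining for $\nu$-a.e.\ $x$ and all $R,s>0$
\begin{equation*}
\lim_{t\downarrow 0}\frac{1}{\nu\bigl(B_{R\sqrt{t}}(x)\bigr)}\int_{B_{R\sqrt{t}}(x)}g_{ts}\,d\nu=0.
\end{equation*}
It is this averaged vanishing, read through the rescaling identity \eqref{eq:rescaling} (whose correct form carries an extra $P_s^t$ acting on the ratio and is integrated against $|D^t\chi_E|$, not the raw deficit you wrote) and the lower-semicontinuity lemmas \autoref{lemma:lowersc}, \autoref{lemma:Gammaliminfforheat}, that transports to $\int_{B_R(y)}P_s\bigl(1-\tfrac{|\nabla P_s\chi_F|}{P_s^*|D\chi_F|}\bigr)\,d|D\chi_F|\leq 0$; nonnegativity of the integrand then yields \eqref{eq:rigidblowup}. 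Without this two-step input, the rescaled deficit has no reason to vanish and Stage 2 does not go through.
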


A suitable version of the iterated tangent theorem by Preiss (see \autoref{th: tangent of tangent}) implies also the following.

\begin{theorem}\label{thm:tangenthalfspace}
	Let $(X,\dist,\meas)$ be an $\RCD(K,N)$ m.m.s. and let $E\subset X$ be a set of locally finite perimeter. 
	Then $E$ admits a Euclidean half-space as tangent at $x$ for $\abs{D\chi_E}$-a.e. $x\in X$, that is to say
	\begin{equation*}
	\left(\setR^k,\dist_{eucl},c_k\Leb^k,0^k,\set{x_k>0}\right)\in \Tan_x(X,\dist,\meas,E),\qquad\text{for some $k\in [1,N]$.}
	\end{equation*} 
\end{theorem}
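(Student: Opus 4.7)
The plan is to combine the rigidity result \autoref{thm:rigidtangent} with the structure theorem \autoref{thm:structuretheory} for $\RCD(K,N)$ spaces and the iterated tangent principle \autoref{th: tangent of tangent} proved in \autoref{sec:Appendix}. I fix a point $x$ in the $|D\chi_E|$-full measure set where both \autoref{thm:rigidtangent} and the iterated tangent principle are valid, and select any $(Y,\varrho,\mu,y,F)\in\Tan_x(X,\dist,\meas,E)$. By \autoref{thm:rigidtangent} I may write $Y=Z\times\setR$, with $y=(\bar z_0,0)$, $F=\{t>0\}$ modulo $\mu$-null sets, and $Z$ either a point or $\RCD(0,N-1)$. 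If $Z$ is a point, then $(Y,F)=(\setR,\{t>0\})$ is already a one-dimensional Euclidean half-space and the claim follows.

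In the non-trivial case I first identify the perimeter measure on the product. Using the tensorization of the Cheeger energy on a product of infinitesimally Hilbertian m.m.s.\ together with the coarea formula \autoref{thm:coarea} applied to the $1$-Lipschitz function $(z,s)\mapsto s$ (and the invariance of $\mu=\meas_Z\otimes\Leb^1$ under translations in the $\setR$-factor), one sees that $|D\chi_F|$ is concentrated on $Z\times\{0\}$ and, under the obvious identification $Z\times\{0\}\simeq Z$, has $Z$-marginal proportional to $\meas_Z$. I then invoke \autoref{thm:structuretheory} on $Z$: there is a unique integer $1\le k\le N-1$ such that $\meas_Z$-a.e.\ $\bar z\in Z$ is $k$-regular, i.e.\ $\Tan_{\bar z}(Z,\dist_Z,\meas_Z)=\{(\setR^k,\dist_{eucl},c_k\Leb^k,0^k)\}$.

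Pick any such $\bar z$ and set $\tilde y=(\bar z,0)\in Y$. Since rescaling a product m.m.s.\ around a product point factorizes across the two factors and the $\setR$-factor is scale-invariant, along any rescaling sequence $r_i\downarrow 0$ realizing the tangent of $Z$ at $\bar z$ one has
\begin{equation*}
(\setR^k\times\setR,\dist_{eucl},c_{k+1}\Leb^{k+1},0^{k+1})\in\Tan_{\tilde y}(Y,\varrho,\mu),
\end{equation*}
while the rescaled set $F=\{t>0\}\subset Z\times\setR$ remains $\{t>0\}$ along the sequence and thus converges in $L^1_{\loc}$ to the half-space $\{t>0\}\subset\setR^{k+1}$. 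By the iterated tangent principle \autoref{th: tangent of tangent}, tangents of $(Y,\varrho,\mu,F)$ at $|D\chi_F|$-a.e.\ $\tilde y\in Y$ belong to $\Tan_x(X,\dist,\meas,E)$. The previous identification of $|D\chi_F|$ shows that $|D\chi_F|$-a.e.\ $\tilde y$ corresponds to $\meas_Z$-a.e.\ $\bar z\in Z$, so the $k$-regular points of $Z$ provide an admissible choice. We thus conclude
\begin{equation*}
(\setR^{k+1},\dist_{eucl},c_{k+1}\Leb^{k+1},0^{k+1},\{t>0\})\in\Tan_x(X,\dist,\meas,E),
\end{equation*}
which is a Euclidean half-space tangent with $k+1\in[1,N]$.

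The main obstacle is the iterated tangent principle \autoref{th: tangent of tangent} itself: while the Preiss-style phenomenon ``tangents of tangents are tangents'' for metric measure spaces is classical (cf.\ \cite{Preiss87,AmbrosioKleinerLeDonne08,GigliMondinoRajala15}), here one needs the joint version that simultaneously tracks the tangent metric measure structure and the set of finite perimeter, with the generic-point condition formulated relative to the perimeter measure $|D\chi_E|$ rather than $\meas$; establishing this is precisely the content of \autoref{sec:Appendix}. A subsidiary but benign technical point is the identification of $|D\chi_F|$ on the product $Y=Z\times\setR$ for the horizontal half-space $F=\{t>0\}$, which uses the tensorization of Cheeger energies, the translational symmetry in the $\setR$-factor, and the coarea formula.
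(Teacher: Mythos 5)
Your proof follows essentially the same route as the paper's: apply the rigidity result to split any tangent as $Z\times\setR$ with $F=\{t>0\}$, identify $|D\chi_F|$ with $\meas_Z$, pick a regular point $\bar z\in Z$ so that the tangent of $(Y,F)$ at $(\bar z,0)$ is a Euclidean half-space, and then upgrade this to a tangent of $(X,E)$ at $x$ via the iterated tangent principle. The only cosmetic differences are that you handle the degenerate case $Z=\{\text{pt}\}$ separately and spell out the identification of $|D\chi_F|$ in more detail; the logical skeleton is identical to the paper's.
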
 

\begin{proof}[Proof of \autoref{thm:tangenthalfspace}]
	We claim that the stated conclusion holds true at all points $x\in X$ such that both the iterated tangent property of \autoref{th: tangent of tangent} and the rigidity property stated in \autoref{thm:rigidtangent} are satisfied (observe that $\abs{D\chi_E}$-a.e. point satisfies these two properties).\\ 
	Indeed, if $(Y,\varrho,\mu,y,F)\in \Tan_x(X,\dist,\meas,E)$, combining \autoref{thm:rigidtangent} with \autoref{maintheorem}, we can say that $(Y,\varrho,\mu)$ is isomorphic to $Z\times\setR$ for some $\RCD(0,N-1)$ m.m.s. $(Z,\dist_Z,\meas_Z)$. Furthermore, another consequence of \autoref{maintheorem} is that $F=\set{t>t_0}$ for some $t_0\in\setR$, where we denoted by $t$ the coordinate on the Euclidean factor of $Y$. Up to
	a translation we can also assume that $y=(\overline{z},0)$ for some $\overline{z}\in Z$.\\
	We go on observing that, if $i:Z\to Y$ denotes the canonical inclusion $i(z):=(z,0)$, it holds $\abs{D\chi_F}=i_{\sharp}\meas_Z$ and, for this reason, we shall identify in the sequel $\abs{D\chi_F}$ and $\meas_Z$. Moreover, it is easy to check that, if 
	$(W,\dist_W,\meas_W,\bar w)\in \Tan_z(Z,\dist_Z,\meas_Z)$, then 
	$$(W\times\setR,\dist_W\times\dist_{eucl},\meas_W\times\Leb^1,(\bar w,0),\set{t>0})\in \Tan_{(z,0)}(Y,\varrho,\mu,F).$$ 
	The sought conclusion can now be obtained choosing $z$ to be a regular point of $(Z,\dist_Z,\meas_Z)$ (recall that $\meas_Z$-a.e. point of $Z$ 
	is regular), so that $W$ is a Euclidean space of dimension $k\in [0, N-1]$ and applying \autoref{th: tangent of tangent} to conclude that 
	$$(W\times\setR,\dist_W\times\dist_{eucl},\meas_W\times\Leb^1,(\bar w,0),\set{t>0})\in \Tan_x(X,\dist,\meas,E).$$     
\end{proof}

In the case when the ambient space is non collapsed (see \autoref{def:ncRCD}) one can improve the conclusion of \autoref{thm:tangenthalfspace} above.

\begin{corollary}\label{cor:imprnoncollapsed}
	Let $(X,\dist,\meas)$ be a $\ncRCD(K,N)$ m.m.s. and let $E\subset X$ a set of locally finite perimeter. Then $|D\chi_E|$ is concentrated 
	on the $N$-dimensional regular set of $X$ and, in addition,
	\begin{equation*}
	\Tan_x(X,\dist,\meas,E)=\left\lbrace (\setR^N, \dist_{eucl}, c_N\Leb^N,0^N, \set{x_N>0}) \right\rbrace
	\qquad
	\text{for}\ \abs{D\chi_E}\text{-a.e.}\ x\in X.
	\end{equation*}
\end{corollary}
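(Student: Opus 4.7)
The plan is to upgrade \autoref{thm:tangenthalfspace} by exploiting the non-collapsing hypothesis $\meas=\haus^N$ together with the rigidity result \autoref{thm:improvedregularity}. \autoref{thm:tangenthalfspace} provides, at $\abs{D\chi_E}$-a.e. $x\in X$, an integer $k\in[1,N]$ and scales $r_i\downarrow 0$ realizing a tangent
\begin{equation*}
(\setR^k,\dist_{eucl},c_k\Leb^k,0^k,\{x_k>0\})\in\Tan_x(X,\dist,\meas,E);
\end{equation*}
in particular the ambient tangent $(\setR^k,\dist_{eucl},c_k\Leb^k,0^k)$ lies in $\Tan_x(X,\dist,\meas)$. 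The central task is to show that $k=N$.

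To this end, I would exploit that the rescaled measures $\meas^x_{r_i}=C(x,r_i)^{-1}\haus^N$ differ from $\haus^N_{r_i^{-1}\dist}=r_i^{-N}\haus^N_\dist$ only by the positive scalar $\beta_i:=r_i^N/C(x,r_i)$. By Bishop-Gromov, $\beta_i$ is bounded below; a density estimate $\liminf_{r\to 0}\meas(B_r(x))/r^N>0$ at $x$ keeps $\beta_i$ also bounded above, so that passing to a subsequence we may assume $\beta_i\to\alpha\in(0,\infty)$. The rescaled spaces $(X,r_i^{-1}\dist,\haus^N_{r_i^{-1}\dist},x)$ are $\ncRCD(Kr_i^2,N)$, and the stability of non-collapsedness under pmGH convergence (see \cite[Theorems~1.2 and 1.3]{DePhilippisGigli18}) grants that the pmGH limit carries the measure $\haus^N_{\setR^k}$. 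Matching this with the identified tangent measure $c_k\Leb^k$ yields $c_k\Leb^k=\alpha\,\haus^N_{\setR^k}$; but $\haus^N_{\setR^k}\equiv 0$ whenever $k<N$, so we are forced into $k=N$.

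Once $k=N$ is established, \autoref{thm:improvedregularity} immediately gives $x\in\mathcal{R}_N$ and that the ambient tangent at $x$ is unique and equal to $(\setR^N,\dist_{eucl},c_N\Leb^N,0^N)$; in particular $\abs{D\chi_E}$ is concentrated on $\mathcal{R}_N$. For the uniqueness of the tangent to $E$ I would pick any $(Y,\varrho,\mu,y,F)\in\Tan_x(X,\dist,\meas,E)$: the ambient part is forced to be $(\setR^N,\dist_{eucl},c_N\Leb^N,0^N)$, while \autoref{thm:rigidtangent} combined with \autoref{maintheorem} constrains $F$ to be a half-space through the origin. Since any two such half-spaces are isometric via a rotation of $\setR^N$ fixing the origin, they all represent the same element $(\setR^N,\dist_{eucl},c_N\Leb^N,0^N,\{x_N>0\})$ of the tangent collection, and uniqueness follows.

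The main obstacle is the density control $\liminf_{r\to 0}\meas(B_r(x))/r^N>0$ at $\abs{D\chi_E}$-a.e. $x$, needed to prevent the normalizations $\beta_i$ from blowing up. One expects this to hold because, morally, $\abs{D\chi_E}$ is an $(N-1)$-dimensional object and must be supported on points of positive upper $N$-density; making this precise requires combining Bishop-Gromov with standard differentiation-of-measures arguments in the doubling PI setting.
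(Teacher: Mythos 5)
Your overall strategy — forcing $k=N$ via the De Philippis--Gigli continuity of $\haus^N$ along non-collapsed sequences, then invoking \autoref{thm:improvedregularity} and \autoref{thm:rigidtangent} — is exactly the route the paper takes, with the $k=N$ step left to a citation of \cite{DePhilippisGigli18}. However, your handling of the normalizations $\beta_i = r_i^N/C(x,r_i)$ has the inequalities backwards, and the ``main obstacle'' you flag is in fact not an obstacle at all.

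Since $C(x,r)\asymp\meas(B_r(x))$, one has $\beta_i\asymp\bigl(\meas(B_{r_i}(x))/r_i^N\bigr)^{-1}$. Bishop--Gromov monotonicity says that $r\mapsto \meas(B_r(x))/v_{K,N}(r)$ is non-increasing, so $\meas(B_r(x))/r^N$ is bounded \emph{below} as $r\downarrow 0$ at \emph{every} point of $\supp\meas$; consequently Bishop--Gromov gives $\beta_i$ bounded \emph{above}, not below as you claim. The ``density estimate'' $\liminf_r\meas(B_r(x))/r^N>0$ that you single out as the crux and propose to derive via differentiation of measures is precisely this same Bishop--Gromov statement, automatic at every point, and it again yields $\beta_i$ bounded above (you have attributed the same fact to two different sources and to both directions of the bound). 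The genuinely non-trivial inequality, which you have not identified, is the \emph{upper} density bound $\limsup_r\meas(B_r(x))/r^N<\infty$ (i.e.\ $\beta_i$ bounded \emph{below}); in the $\ncRCD$ setting $\meas=\haus^N$ this holds at every $x$ by the existence of the Bishop density $\theta_N(x)\in(0,1]$ from \cite{DePhilippisGigli18}. In fact even this is dispensable for the conclusion $k=N$: writing $\haus^N_{r_i^{-1}\dist}(B_1(x))=r_i^{-N}\meas(B_{r_i}(x))$, the Bishop--Gromov lower bound keeps this quantity $\geq c>0$, so the pGH limit $\setR^k$ inherits, by \cite[Theorems 1.2--1.3]{DePhilippisGigli18}, a non-trivial $\haus^N$ measure, which is impossible when $k<N$. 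I recommend you restate the argument with these directions fixed and drop the differentiation-of-measures discussion, which sends the reader after a non-existent difficulty.
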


\begin{proof}[Proof \autoref{cor:imprnoncollapsed}]
	Observe that \autoref{thm:tangenthalfspace} gives that $|D\chi_E|$ is concentrated on the set of points where at least one metric measured tangent is Euclidean. In the case when the ambient space is a non collapsed $\RCD(K,N)$ m.m.s. this set coincides with the regular set $\mathcal{R}_N$ (see \autoref{thm:improvedregularity} and \cite{DePhilippisGigli18}).
	To conclude, it remains to invoke \autoref{thm:rigidtangent} and to observe that, in the case of a Euclidean ambient space \eqref{eq:rigidblowup} easily yields that $F$ is a half-space (even without appealing to \autoref{maintheorem}).
\end{proof}

\begin{remark}
Let us remark that, without the non collapsing assumption, it is not directly possible to improve \autoref{thm:tangenthalfspace} to \autoref{cor:imprnoncollapsed}, as the example mentioned in \autoref{rm:menguy} shows. We leave the problem of the concentration of the perimeter measure on the regular set in the general case to a forthcoming work.
\end{remark}

Given the statement of \autoref{cor:imprnoncollapsed}, it sounds natural to introduce the following.

\begin{definition}
	Let $(X,\dist,\meas)$ be $\ncRCD(K,N)$ metric measure space and let $E\subset X$ be of locally finite perimeter. Then we define the reduced boundary $\mathcal{F}E$ of $E$ by
	\begin{equation}\label{eq:bellafrontiera}
	\mathcal{F}E:=\left\lbrace x\in\supp\abs{D\chi_E}:\quad	\Tan_x(X,\dist,\meas,E)=\left\lbrace (\setR^N, \dist_{eucl}, c_N\Leb^N,0^N, \set{x_N>0}) \right\rbrace \right\rbrace. 
	\end{equation}
\end{definition}

It is easily seen that ${\mathcal F}E$ is contained in the essential boundary $\partial^*E$, namely the complement
of the sets of density and rarefaction. In the more general context of PI spaces it is known after \cite{Am02} that $|D\chi_E|$ is
representable as $\theta\mathcal{S}\res\partial^* E$ for some density $\theta$, where $\mathcal{S}$ denotes the
measure induced by the gauge function $\zeta(B_r(x))=\meas(B_r(x))/r$ with Carath\'eodory's construction. The following
result refines this representation formula in the setting of non collapsed $\RCD(K,N)$ spaces.

\begin{corollary}\label{cor:identificationdensity}
	Let $(X,\dist,\meas)$ be a $\ncRCD(K,N)$ m.m.s. and let $E\subset X$ be a set of locally finite perimeter. Then 
	\begin{equation*}
	\Per(E,\cdot)=\mathcal{S}^{N-1}\res\mathcal{F}E,
	\end{equation*}
	where we denote by $\mathcal{S}^{\alpha}$ the $\alpha$-dimensional spherical Hausdorff measure.
\end{corollary}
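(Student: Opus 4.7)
The plan is to identify the two measures via a pointwise density computation followed by a standard differentiation argument. By \autoref{cor:imprnoncollapsed}, the perimeter measure $\mu := |D\chi_E|$ is concentrated on the reduced boundary $\mathcal{F}E$. A Federer-type differentiation theorem for spherical Hausdorff measures in doubling metric measure spaces then reduces the problem to establishing that the $(N-1)$-dimensional density
\begin{equation*}
\Theta_{N-1}(\mu,x) := \lim_{r\to 0^+} \frac{\mu(B_r(x))}{\omega_{N-1} r^{N-1}}
\end{equation*}
exists and equals $1$ at $\mu$-almost every $x \in \mathcal{F}E$, from which $\mu = \mathcal{S}^{N-1}\res\mathcal{F}E$ follows.

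For the lower density bound, fix $x \in \mathcal{F}E$. Because $x$ is $N$-regular and $\meas = \mathcal{H}^N$, one has $\mathcal{H}^N(B_r(x))/(\omega_N r^N)\to 1$, whence a direct computation gives $C(x,r) \sim r^N/c_N$ as $r\downarrow 0$. Since slopes rescale by $r$ under $\dist \mapsto r^{-1}\dist$ while the reference measure rescales by $C(x,r)^{-1}$, the rescaled perimeter of $E$ on the unit ball equals $r\,C(x,r)^{-1}\mu(B_r(x))$. The rescaled characteristic functions converge in $L^1_{\loc}$ to $\chi_H$ with $H = \{x_N > 0\}$, and on the Euclidean tangent equipped with reference measure $c_N\Leb^N$ one has $|D\chi_H|(B_1(0)) = c_N \omega_{N-1}$. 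Applying the localized lower semicontinuity \eqref{z8} of \autoref{prop:semicontinuity BV}, and then substituting the asymptotic for $C(x,r)$, we obtain
\begin{equation*}
\liminf_{r\to 0^+} \frac{\mu(B_r(x))}{\omega_{N-1} r^{N-1}} \geq 1.
\end{equation*}

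The matching upper bound is the main obstacle, since bare lower semicontinuity allows perimeter mass to concentrate without appearing in the $L^1_{\loc}$-limit $\chi_H$. To overcome this we combine two ingredients. First, a universal asymptotic upper bound of the form $\limsup_{r\to 0^+} r\mu(B_r(x))/\meas(B_r(x)) \leq C_{K,N}$ at $\mu$-a.e. $x$, available in any doubling PI space (compare \cite{Am02}), guarantees tightness of the rescaled perimeter measures on bounded subsets of the tangent and the existence of weak subsequential limits $\sigma$. Second, any such $\sigma$ dominates $|D\chi_H|$ by lower semicontinuity, but cannot carry excess mass: since $H$ is an \emph{entire} perimeter minimizer in Euclidean space, any strict inequality $\sigma > |D\chi_H|$ would contradict the asymptotic local minimality principle obtained by adapting \autoref{prop: sequence of lambda minimizers} to localizations of $E$ via a cut-and-paste competitor whose boundary contribution along $\partial B_r(x)$ is estimated through the Leibniz-type inequality \autoref{lem:lei}. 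Hence $\sigma = |D\chi_H|$, yielding the matching $\limsup \leq 1$ and completing the identification $\Theta_{N-1}(\mu,x) = 1$ at $\mu$-a.e.\ $x \in \mathcal{F}E$.
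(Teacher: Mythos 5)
Your computation of the centered density is essentially sound, and you correctly identify the two density inequalities. For the lower bound you could alternatively invoke directly \autoref{cor:tangents}(b), which already gives weak convergence of the rescaled perimeter measures to $|D\chi_H|$ and hence the two-sided limit $\Per(E,B_r(x))/(\omega_{N-1}r^{N-1})\to 1$ for centered balls; the detour through tightness and the $\lambda$-minimality of \autoref{prop: sequence of lambda minimizers} re-derives what that corollary already supplies. That part of the argument is correct but more circuitous than necessary.

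The genuine gap is the step from the centered density to the identity $\mu=\mathcal{S}^{N-1}\res\mathcal{F}E$. Calling this ``a Federer-type differentiation theorem for spherical Hausdorff measures in doubling metric measure spaces'' papers over the technical heart of the matter. In a general metric space, (i) the classical comparison theorems \`a la Federer do \emph{not} give exact constants — they yield two-sided bounds with dimensional factors unless one uses the correct gauge and the correct notion of density; (ii) the relevant gauge for the spherical Hausdorff measure is diameter-based, namely $\frac{\omega_{N-1}}{2^{N-1}}\diam(B)^{N-1}$, not $\omega_{N-1}r^{N-1}$, and balls need not have $\diam = 2r$; (iii) what has to be shown is the \emph{Federer density}
\begin{equation*}
\lim_{r\downarrow 0}\ \sup_{y\in B_s(x),\ s\le r}\ \frac{\Per(E,B_s(y))}{\frac{\omega_{N-1}}{2^{N-1}}\diam^{N-1}(B_s(y))}\ =\ 1,
\end{equation*}
a supremum over all small nearby balls, not merely the limit along centered balls. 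This is what Magnani's measure-theoretic area formula \cite[Theorem 5]{Magnani15} requires, together with a verification that $(X,\dist)$ is \emph{diametrically regular}, which the paper checks by a geodesic argument. Your proof establishes the centered density with a radius-based gauge, which is strictly weaker, and you never address diametric regularity. So while the blow-up analysis is on the right track, the differentiation step would not compile into a valid proof without passing to the Federer density and invoking the specific area formula, as the paper does.
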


\begin{proof}
	We claim that
	\begin{equation}\label{eq:constdensity}
	\lim_{r\downarrow 0}\sup_{y\in B_s(x),\ s\le r}\frac{\Per(E,B_s(y))}{\frac{\omega_{N-1}}{2^{N-1
		}}\diam^{N-1}(B_s(y))}=1,
	\end{equation}
	for any $x\in \mathcal{F}E$. 
	Let us write
	\begin{align*}
		\frac{\Per(E,B_r(x))}{\frac{\omega_{N-1}}{2^{N-1
			}}\diam^{N-1}(B_r(x))}= &
		\frac{r\Per(E,B_r(x))}{\omega_{N-1}C(x,r)}\ \frac{C(x,r)}{\meas(B_r(x))}\ \frac{\meas(B_r(x))}{r^N}\ \left(\frac{2r}{\diam(B_r(x))}\right)^{N-1} \\
		=& \frac{\Per^{X_r}(E, B_1(x))}{\omega_{N-1}} \frac{1}{\meas_x^r(B_1(x))}  \frac{\meas(B_r(x))}{r^N}\ \left(\frac{2r}{\diam(B_r(x))}\right)^{N-1},
	\end{align*}
	where $C(x,r)$ is the constant in \eqref{eq:defC(x,r)}.
	Using the very definition of the reduced boundary $\mathcal{F}E$, the continuity of the diameter w.r.t. GH-convergence, the weak convergence of the perimeters proved in \autoref{cor:tangents} and the fact ${\meas(B_r(x))}/{r^N}\to\omega_N$ as $r\to 0^+$ when $x\in X$ is regular (see \cite[Corollary 1.7]{DePhilippisGigli18}) we get
	\begin{equation}\label{z0}
		\liminf_{r\downarrow 0}\sup_{y\in B_s(x),\ s\le r}\frac{\Per(E,B_s(y))}{\frac{\omega_{N-1}}{2^{N-1
			}}\diam^{N-1}(B_s(x))}\ge \liminf_{r\downarrow 0} \frac{\Per(E,B_r(x))}{\frac{\omega_{N-1}}{2^{N-1
		}}\diam^{N-1}(B_r(x))}=1.
	\end{equation}
	Let us now prove the converse inequality. In order to do so it suffices to show that, for any sequence of radii $r_i\to 0$ and points $x_i\in B_{r_i}(x)$, it holds
	\begin{equation*}
		\limsup_{i\to \infty}\frac{\Per(E,B_{r_i}(x_i))}{\frac{\omega_{N-1}}{2^{N-1
			}}\diam^{N-1}(B_{r_i}(x_i))}\le 1.
	\end{equation*}
	We consider the sequence $(X,r_i^{-1}, C(x,r_s)^{-1}\meas, x)$, that converges to $(\setR^N, \dist_{eucl}, c_N\Leb^N,0^N)$, and we assume $x_i\to z\in \overline B_1(0^N)$. Arguing as in the proof of \eqref{z0}, we write
	\begin{align*}
	\frac{\Per(E,B_{r_i}(x_i))}{\frac{\omega_{N-1}}{2^{N-1
		}}\diam^{N-1}(B_{r_i}(x_i))}= &
	\frac{r_i\Per(E,B_{r_i}(x_i))}{\omega_{N-1}C(x,r_i)}\ \frac{C(x,r_i)}{\meas(B_{r_i}(x))}\ \frac{\meas(B_{r_i}(x))}{r_i^N}\ \left(\frac{2r_i}{\diam(B_{r_i}(x))}\right)^{N-1} \\
	=& \frac{\Per^{X_{r_i}}(E, B_1(x_i))}{\omega_{N-1}} \frac{1}{\meas_x^r(B_1(x))}  \frac{\meas(B_{r_i}(x))}{r_i^N}\ \left(\frac{2r_i}{\diam(B_{r_i}(x))}\right)^{N-1},
	\end{align*}
	and deduce
	\begin{equation*}
		\lim_{i\to \infty}\frac{\Per(E,B_{r_i}(x_i))}{\frac{\omega_{N-1}}{2^{N-1
			}}\diam^{N-1}(B_{r_i}(x_i))}
		=\frac{1}{\omega_{N-1}} \haus^{N-1}(B_1(z)\cap\set{x_N=0})\le 1.
	\end{equation*}

	 The stated conclusion follows now from \eqref{eq:constdensity} applying \cite[Theorem 5]{Magnani15} and checking that $(X,\dist)$ is \textit{diametrically regular}, i.e.
	for every $x\in X$ and $R>0$ there exists $\delta_{x,R}>0$ such that $(0,\delta_{x,R})\ni t\mapsto \diam(B_t(y))$ is continuous for every $y\in \overline B_R(x)$ (see the discussion above Theorem 5 in \cite{Magnani15}). To this aim we prove a stronger claim: for any $x\in X$ it holds
	\begin{equation*}
		|\diam(B_t(x))-\diam(B_s(x))|\le 2|t-s|
		\qquad
		\forall t,s\ge 0.
	\end{equation*}
	This property is a consequence of the fact that $X$ is a geodesic metric measure space. Indeed, assuming $t>s$ without loss of generality, for any $y\in B_t(x)$ we can find $y'\in B_s(x)$ realizing $\dist(y,y')\le |t-s|$.
	In particular, for any $\eps>0$, we can consider $y_1, y_2\in B_t(x)$ such that $\dist(y_1,y_2)\ge \diam(B_t(x))-\eps$ and, taking $y_1', y_2'\in B_s(x)$ as above, we conclude
	\begin{equation*}
		\diam(B_t(x))\le \dist(y_1,y_2)+\eps \le 2|t-s|+\dist(y_1',y_2')+\eps\le 2|t-s|+\eps+\diam(B_s(x)),
	\end{equation*}
    that implies the sought conclusion letting $\eps\downarrow 0$.
\end{proof}

The rest of this section is devoted to the proof \autoref{thm:rigidtangent}. 
First, we are going to prove that tangents are non empty almost everywhere with respect to the perimeter measure, as a consequence of the compactness results developed in \autoref{sec:compstab} and \autoref{prop:foundamental perimeter estimates}.
Then, we will prove that they are rigid, in a suitable sense. This rigidity property will be achieved building mainly on two ingredients: lower semicontinuity and locality of the perimeter and the Bakry-\'Emery inequality, together with the characterization of its equality cases we obtained in \autoref{sec:rigidityBE}.

We start stating an asymptotic minimality result that stems from the lower semicontinuity of the perimeter.
It has been proved,  in a slightly weaker form (namely with a smaller class of competitors $E'$), first in \cite{Am01} under Ahlfors regularity assumption and then,
in \cite{Am02}, for the general case. We refer to the very recent \cite[Theorem 6.1]{Shanmugalingam18} for the present form. 
The basic idea originates, to the authors' knowledge, in the work of
Fleming \cite{Fleming} (see also \cite{White,Cheeger99} for variants of this idea in different contexts).

\begin{proposition}[Asymptotic minimality and doubling]\label{prop:Asymptotic minimality and doubling}
	Let $(X,\dist,\meas)$ be an $\RCD(K,N)$ m.m.s. and $E\subset X$ be a set of locally finite perimeter. For $|D\chi_E|$-a.e. $x\in X$ there exist $r_x>0$ and
	$\omega_x(r):(0,r_x)\to [0,+\infty)$ such that $\omega_x(r)\to 0$ as $r\to 0^+$ and
	\begin{equation}\label{eq: quasiminimo}
	\Per(E,B_r(x))\leq (1+\omega_x(r))\Per(E',B_r(x))
	\end{equation}
	whenever $E\Delta E'\Subset B_r(x)$. In addition,
	\begin{equation}\label{eq:asydou}
	\limsup_{r\to 0^+}\frac{|D\chi_E|(B_{2r}(x))}{|D\chi_E|(B_r(x))}<+\infty.
	\end{equation}
\end{proposition}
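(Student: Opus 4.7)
\emph{Setup and obstacle problem.} First I would define the local minimal perimeter
$\alpha(x,r):=\inf\{\Per(E',B_r(x)):E\Delta E'\Subset B_r(x)\}$
and the defect $\delta(x,r):=\Per(E,B_r(x))-\alpha(x,r)\ge 0$. By BV-compactness in a fixed m.m.s.\ (cf.\ \autoref{prop:BV compactness}) and lower semicontinuity of the perimeter, for every $r>0$ with $\Per(E,\partial B_r(x))=0$ (true for $\Leb^1$-a.e.\ $r$ via the coarea formula \autoref{thm:coarea}) the infimum is achieved by some $\tilde E_{x,r}$ coinciding with $E$ outside $\overline{B_r(x)}$, and locality of the perimeter gives $\Per(\tilde E_{x,r})=\Per(E)-\delta(x,r)$.

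\emph{Quasi-minimality via the Fleming principle.} For the first claim I would argue by contradiction. Suppose for some $\varepsilon>0$ the set
\begin{equation*}
A_\varepsilon:=\{x\in\supp|D\chi_E|:\delta(x,r)\ge\varepsilon\,\Per(E,B_r(x))\text{ for arbitrarily small }r\}
\end{equation*}
has $|D\chi_E|(A_\varepsilon)>0$. For each $k\in\setN$ I would extract, via a Vitali--Federer-type covering argument relying on the local doubling of $\meas$ together with outer regularity of $|D\chi_E|$, a countable disjoint family $\{B_{r_i}(x_i)\}_{i\in I_k}$ with $x_i\in A_\varepsilon$, $r_i<1/k$, $\Per(E,\partial B_{r_i}(x_i))=0$, the defect condition $\delta(x_i,r_i)\ge\varepsilon\Per(E,B_{r_i}(x_i))$, and such that the family covers $A_\varepsilon$ up to a $|D\chi_E|$-null set. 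Pasting the local minimizers yields $F_k:=(E\setminus\bigsqcup_i B_{r_i}(x_i))\cup\bigsqcup_i(\tilde E_{x_i,r_i}\cap B_{r_i}(x_i))$, which by locality satisfies
\begin{equation*}
\Per(F_k)=\Per(E,X\setminus\bigsqcup_i B_{r_i}(x_i))+\sum_i\alpha(x_i,r_i)\le\Per(E)-\tfrac{\varepsilon}{1+\varepsilon}|D\chi_E|(A_\varepsilon).
\end{equation*}
Since $F_k\Delta E\subset\bigsqcup_i B_{r_i}(x_i)$ has $\meas$-measure $\to 0$ as $k\to\infty$, we get $F_k\to E$ in $L^1$; lower semicontinuity of $\Per$ in the fixed space then yields the contradiction $\Per(E)\le\liminf_k\Per(F_k)\le\Per(E)-\tfrac{\varepsilon}{1+\varepsilon}|D\chi_E|(A_\varepsilon)$. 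Taking $\varepsilon=1/n$ and a countable union, quasi-minimality holds $|D\chi_E|$-a.e.

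\emph{Asymptotic doubling.} For the doubling bound I would combine the quasi-minimality with isoperimetric estimates. Using the coarea formula applied to $\dist(\cdot,x)$, for a.e.\ $r$ I can pick $\rho\in(r,2r)$ with $\Per(B_\rho(x))\le 2\meas(B_{2r}(x))/r$ and test the quasi-minimality at scale $2r$ against $E':=E\cup B_\rho(x)$ (slightly shrunk so that $E'\Delta E\Subset B_{2r}(x)$); this gives the upper bound $|D\chi_E|(B_r(x))\le C\meas(B_r(x))/r$ at $|D\chi_E|$-a.e.\ $x$. The matching lower bound $|D\chi_E|(B_r(x))\ge c\meas(B_r(x))/r$ comes from the relative isoperimetric inequality in $\RCD(K,N)$ spaces (a consequence of local doubling and the local Poincar\'e inequality of \cite{VonRenesse08}) applied in $B_{2r}(x)$, together with the fact that at $|D\chi_E|$-a.e.\ point the densities of both $E$ and $X\setminus E$ are bounded away from $0$ in small balls. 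The doubling then follows from
\begin{equation*}
\frac{|D\chi_E|(B_{2r}(x))}{|D\chi_E|(B_r(x))}\le\frac{C}{c}\cdot\frac{\meas(B_{2r}(x))}{\meas(B_r(x))}\le C',
\end{equation*}
using the local doubling of $\meas$.

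The main obstacle is the Vitali--Federer covering in the Fleming argument: since $|D\chi_E|$ is not a priori doubling, extracting a disjoint family that covers $A_\varepsilon$ up to a $|D\chi_E|$-null set is delicate and must rely on the local doubling of $\meas$ together with regularity properties of $|D\chi_E|$. A secondary technical point is justifying the non-degenerate lower density of both $E$ and $X\setminus E$ at $|D\chi_E|$-typical points, needed to unlock the lower perimeter bound used for the doubling.
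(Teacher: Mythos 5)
The paper does not prove this proposition; it cites \cite{Am01,Am02} and \cite[Theorem 6.1]{Shanmugalingam18} for it, and attributes the lower-semicontinuity idea you use to Fleming \cite{Fleming}.

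Your sketch has the right ingredients, but the dependency graph you set up has a cycle, which you flag but do not resolve. You derive the asymptotic doubling \eqref{eq:asydou} from the quasi-minimality \eqref{eq: quasiminimo}, while the Fleming pasting argument that is supposed to give \eqref{eq: quasiminimo} rests on a Vitali--Federer covering of $A_\varepsilon$ \emph{relative to $|D\chi_E|$}. That covering theorem for fine covers by balls is available once $|D\chi_E|$ is known to be asymptotically doubling, which is exactly what you are trying to prove; local doubling of $\meas$ together with outer regularity of $|D\chi_E|$ does not suffice, because $|D\chi_E|$ is singular with respect to $\meas$, so a $\meas$-null uncovered set can carry positive $|D\chi_E|$-mass. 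The cited proofs cut the cycle by first establishing the density estimates \eqref{v1} (and the nondegeneracy \eqref{v2} of the volume densities of $E$ and $X\setminus E$) of \autoref{prop:foundamental perimeter estimates}, directly from the relative isoperimetric inequality, the coarea formula \autoref{thm:coarea} and the $\BV$ Leibniz rule \autoref{lem:lei}, by a Gronwall-type iteration that uses no minimality at all. Once \eqref{v1} is in hand, \eqref{eq:asydou} follows immediately, since it makes $|D\chi_E|(B_{sr}(x))$ comparable to $\meas(B_{sr}(x))/(sr)$ for $s=1,2$ and $\meas$ is locally doubling; this unlocks the Vitali covering, and only at that point can the Fleming argument be run to yield \eqref{eq: quasiminimo}. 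In particular, the upper density bound you extract by testing quasi-minimality against $E\cup B_\rho$ is not incorrect, but it sits logically downstream of the very facts it would be used to justify.
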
 

Also the following density estimates are important to prove that tangents are almost everywhere non empty. We refer again to \cite{Am01,Am02} for its proof.

\begin{proposition}\label{prop:foundamental perimeter estimates}
	Let $(X,\dist,\meas)$ be an $\RCD(K,N)$ m.m.s. and $E\subset X$ be a set of locally finite perimeter. For $|D\chi_E|$-a.e. $x\in X$ it holds
	\begin{equation}\label{v1}
	  0<\liminf_{r\to 0^+}\frac{r|D\chi_E|(B_r(x))}{\meas(B_r(x))}
	  \le \limsup_{r\to 0^+}\frac{r|D\chi_E|(B_r(x))}{\meas(B_r(x))}
	  <+\infty,
	\end{equation}
	and
	\begin{equation}\label{v2}
		\liminf_{r\to 0^+}\min \left\lbrace \frac{\meas(E\cap B_r(x))}{\meas(B(x,r))};\frac{\meas(E^c\cap B_r(x))}{\meas(B(x,r))}
			\right\rbrace>0.
	\end{equation}
\end{proposition}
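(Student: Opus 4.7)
The plan is to deduce both estimates from a combination of the asymptotic minimality \eqref{eq: quasiminimo} in \autoref{prop:Asymptotic minimality and doubling} and the relative isoperimetric inequality, which holds on any $\RCD(K,N)$ m.m.s.\ thanks to its locally uniform doubling and the local Poincar\'e inequality (namely, $\RCD(K,N)$ spaces are PI). I would proceed in three steps: first the upper bound in \eqref{v1}, then the density estimate \eqref{v2}, and finally the lower bound in \eqref{v1} as a corollary of \eqref{v2}.

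For the upper bound in \eqref{v1} I would fix a point $x$ satisfying \eqref{eq: quasiminimo} and, for $0<\rho<r<r_x$ with $\rho$ outside a suitable $\Leb^1$-negligible set, plug into \eqref{eq: quasiminimo} the competitor $E'=E\setminus B_\rho(x)$, which satisfies $E\Delta E'\subset\overline{B_\rho(x)}\Subset B_r(x)$. Using the locality of the perimeter on $B_r(x)\setminus\overline{B_\rho(x)}$, where $E'=E$, together with the jump estimate $|D\chi_{E'}|(\overline{B_\rho(x)})\le \Per(B_\rho(x),X)$ coming from the identity $\chi_{E'}=\chi_E\chi_{X\setminus B_\rho(x)}$, one obtains
\begin{equation*}
\Per(E',B_r(x))\le \Per(E,B_r(x)\setminus\overline{B_\rho(x)})+\Per(B_\rho(x),X),
\end{equation*}
which, plugged into \eqref{eq: quasiminimo} and rearranged, gives
\begin{equation*}
\Per(E,\overline{B_\rho(x)})\le \omega_x(r)\Per(E,B_r(x))+(1+\omega_x(r))\Per(B_\rho(x),X).
\end{equation*}
The coarea formula \autoref{thm:coarea} applied to $\dist(x,\cdot)$, whose slope is $1$ $\meas$-a.e., yields $\int_0^r\Per(B_\rho(x),X)\di\rho=\meas(B_r(x))$, hence there exists $\rho\in(r/2,r)$ with $\Per(B_\rho(x),X)\le 2\meas(B_r(x))/r$. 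Iterating this inequality along dyadic scales and invoking the local doubling of $\meas$ gives $r\Per(E,B_r(x))\le C\meas(B_r(x))$ for all sufficiently small $r$.

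For \eqref{v2} I would invoke the relative isoperimetric inequality for $\RCD(K,N)$ spaces: there exist $C,\lambda>0$ such that
\begin{equation*}
\min\{\meas(E\cap B_r(x)),\meas(E^c\cap B_r(x))\}\le Cr\Per(E,B_{\lambda r}(x))
\end{equation*}
for every $x\in X$ and every sufficiently small $r$. The key point is that at $|D\chi_E|$-a.e.\ $x$ both volume ratios are bounded below by a positive constant. I would argue by contradiction: assuming that on some set $A$ of positive $|D\chi_E|$-measure $\liminf_r\meas(E\cap B_r(x))/\meas(B_r(x))=0$, restrict to those $x\in A$ where simultaneously the upper bound from Step~1, the asymptotic minimality \eqref{eq: quasiminimo} and the asymptotic doubling \eqref{eq:asydou} for $|D\chi_E|$ hold. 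Choosing $r_n\to 0$ with $\meas(E\cap B_{r_n}(x))\le\eps_n\meas(B_{r_n}(x))$, $\eps_n\to 0$, a careful iterative bootstrap combining the relative isoperimetric inequality with the quasi-minimality forces an exponential decay of the volume ratio along $r_n$ that is incompatible with $x\in\supp|D\chi_E|$ together with the asymptotic doubling of $|D\chi_E|$. Once \eqref{v2} is established with a positive constant $c$, the lower bound in \eqref{v1} follows at once from $c\meas(B_r(x))\le Cr\Per(E,B_{\lambda r}(x))$ and the doubling of $\meas$, after replacing $r$ by $r/\lambda$.

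The main obstacle is the positive density estimate \eqref{v2}: the contradiction argument requires a delicate iterative interplay between quasi-minimality, relative isoperimetric inequality, and asymptotic doubling of both $\meas$ and $|D\chi_E|$. I would closely follow the PI-space arguments of \cite{Am01,Am02}, which transfer to the $\RCD(K,N)$ setting since doubling, Poincar\'e inequality and coarea are all available.
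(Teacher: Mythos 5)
The paper does not give a self-contained proof of this statement: it simply defers to \cite{Am01,Am02}, where the density estimates are derived for sets of finite perimeter in Ahlfors-regular (resp. general doubling, PI) metric measure spaces. Your proposal essentially re-derives the result in the De Giorgi spirit (quasi-minimality, relative isoperimetric inequality, coarea, doubling), which is a sensible route, and Steps~1 and~3 are fine. Step~1 is sound once you also invoke the asymptotic doubling of $\abs{D\chi_E}$ from \eqref{eq:asydou} to absorb the term $\omega_x(r)\Per(E,B_r(x))$, since $\Per(E,B_r(x))$ is not a priori controlled by $\Per(E,B_{r/2}(x))$ without it. Step~3 does indeed follow from Step~2 via the weak isoperimetric inequality, as you wrote.

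The genuine gap is in Step~2. The relative isoperimetric inequality in the form you state,
\[
\min\{\meas(E\cap B_r(x)),\meas(E^c\cap B_r(x))\}\le C\,r\,\Per(E,B_{\lambda r}(x)),
\]
is the one obtained by crudely integrating the Poincar\'e inequality; it is genuinely weaker than the Sobolev--Poincar\'e (or De Giorgi--Faina-type) form, which carries an exponent $(s-1)/s<1$ on the left-hand side for a structural exponent $s>1$ coming from the doubling/PI constants. This distinction is decisive: combining your form with the upper perimeter-density bound of Step~1 only yields the tautology $\min\{\cdot,\cdot\}\lesssim\meas(B_r(x))$, and no contradiction with $\liminf_r\min\{\cdot,\cdot\}/\meas(B_r(x))=0$ can be extracted. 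The ``exponential decay forcing a contradiction'' you invoke is precisely the De Giorgi iteration, which needs the sub-unitary exponent to make the differential inequality
\[
u(\rho)^{(s-1)/s}\lesssim u'(\rho)\qquad\text{(where }u(\rho)=\meas(E\cap B_\rho(x))\text{)}
\]
self-improving; with exponent $1$ the iteration does not close. Moreover, the step from the quasi-minimality estimate to the differential inequality requires care: the competitor $E\setminus B_\rho(x)$ produces $\Per(E,\overline B_\rho(x))\le(1+\omega_x(r))\Per(B_\rho(x),E^{(1)})+\omega_x(r)\Per(E,B_r(x))$, and the error $\omega_x(r)\Per(E,B_r(x))$ must be shown to be subordinate to $u'(\rho)$, which is delicate precisely in the regime $u$ small that you are trying to exclude. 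So your sketch has the right ingredients but the key inequality is stated in an inadequate form, and the heart of the argument (the iteration that forces the density to be either $0$ or bounded below) is missing; this is exactly the technical content supplied by \cite{Am01,Am02}, not something that follows ``at once'' from the tools you listed.
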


\begin{corollary}\label{cor:tangents} Let $(X,\dist,\meas)$ be an $\RCD(K,N)$ m.m.s. and let $E\subset X$ be a set of locally finite perimeter. Then, 
for $|D\chi_E|$-a.e. $x\in X$ one has $\Tan_x(X,\dist,\meas,E)\neq\emptyset$ and, if $(Y,\varrho,\mu, y, F)$ is as in \autoref{def:tan}, the following properties
hold true: 
	\begin{itemize}
		\item[(a)] $F$ is an entire minimizer of the perimeter (relative to $(Y,\varrho,\tau)$), i.e.
		$$|D\chi_F|(B_r(y))\leq |D\chi_{F'}|(B_r(y))\qquad\text{ whenever $F\Delta F'\Subset B_r(y)\Subset Y$;}$$
		\item[(b)] realizing the convergence in a proper metric space $(Z,\dist_Z)$, the perimeters $|D^i\chi_E|$ relative 
		to the rescaled spaces in condition (a) of \autoref{def:tan}
		weakly converge, in duality with $\Cbs(Z)$, to $|D\chi_F|$.
	\end{itemize}
\end{corollary}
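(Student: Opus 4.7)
The plan is to work at every $x$ in the $|D\chi_E|$-full measure subset $A\subset X$ where both \autoref{prop:Asymptotic minimality and doubling} and \autoref{prop:foundamental perimeter estimates} apply, and to extract at each such $x$ a pmGH limit of the rescaled pointed spaces $(X,r_i^{-1}\dist,\meas_{r_i}^x,x)$ carrying a nontrivial $L^1_\loc$ limit $F$ of $E$ with the minimality and weak convergence properties (a) and (b). Fix such $x$ and an arbitrary sequence $r_i\downarrow 0$. Since each rescaled space is $\RCD(Kr_i^2,N)$ and the normalization $C(x,r)$ grants $\meas_{r_i}^x(B_1^{X_{r_i}}(x))\in[1,2C_D]$ via the elementary bound $C(x,r)\geq\tfrac12\meas(B_{r/2}(x))$ and local doubling, Gromov precompactness yields a subsequence (not relabelled) converging in the pmGH sense to some pointed $\RCD(0,N)$ space $(Y,\varrho,\mu,y)$, which we realize in a proper ambient metric space $(Z,\dist_Z)$.

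Next, a direct computation using $\lip^{r_i^{-1}\dist}u=r_i\lip^\dist u$ and $\meas_{r_i}^x=C(x,r_i)^{-1}\meas$ gives the scaling identity
\[
\bigl|D^{X_{r_i}}\chi_E\bigr|\bigl(B_R^{X_{r_i}}(x)\bigr)=\frac{r_i}{C(x,r_i)}\,|D\chi_E|(B_{r_iR}(x)).
\]
Combining the lower bound $C(x,r)\geq\tfrac12\meas(B_{r/2}(x))$ with local doubling and the upper density bound in \eqref{v1} produces $\sup_i\bigl|D^{X_{r_i}}\chi_E\bigr|\bigl(B_R^{X_{r_i}}(x)\bigr)<\infty$ for every $R>0$, so \autoref{cor:perimeters compactness} yields, up to a further subsequence, a Borel set $F\subset Y$ of locally finite perimeter with $E\to F$ in $L^1_\loc$ inside $(Z,\dist_Z)$. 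To verify $\mu(F)>0$, pmGH convergence gives $\meas_{r_i}^x(B_1^{X_{r_i}}(x))\to\mu(B_1(y))$ and $L^1_\loc$ convergence gives $\meas_{r_i}^x(E\cap B_1^{X_{r_i}}(x))\to\mu(F\cap B_1(y))$, whence
\[
\frac{\mu(F\cap B_1(y))}{\mu(B_1(y))}=\lim_i\frac{\meas(E\cap B_{r_i}(x))}{\meas(B_{r_i}(x))},
\]
and the right hand side (possibly along a further subsequence) is positive by \eqref{v2}. Thus $(Y,\varrho,\mu,y,F)\in\Tan_x(X,\dist,\meas,E)$ and the latter is nonempty.

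Finally, for (a) and (b), I apply \autoref{prop: sequence of lambda minimizers} to the sequence $E\subset X_{r_i}$ converging to $F\subset Y$. The $(1+\omega_x(r))$-minimality \eqref{eq: quasiminimo} is scale invariant: for any competitor $E'$ with $E\Delta E'\Subset B_R^{X_{r_i}}(x)=B_{r_iR}(x)$ and $r_iR<r_x$, multiplying \eqref{eq: quasiminimo} by $r_i/C(x,r_i)$ and using locality of the perimeter (since $E=E'$ in a neighbourhood of $X\setminus B_{r_iR}(x)$) promotes the local estimate to
\[
\Per^{X_{r_i}}(E,X)\leq\bigl(1+\omega_x(r_iR)\bigr)\Per^{X_{r_i}}(E',X).
\]
A standard diagonal argument, exploiting $\omega_x(r)\to 0$ as $r\to 0^+$, selects $R_i\uparrow\infty$ with $\lambda_i:=1+\omega_x(r_iR_i)\to 1$, so all hypotheses of \autoref{prop: sequence of lambda minimizers} are met and its conclusions are precisely (a) and (b). The main technical nuisance is the scaling bookkeeping together with the simultaneous choice of $\lambda_i\to 1$ and $R_i\to\infty$; conceptually, the heart of the argument is that asymptotic local minimality is scale invariant and passes to pmGH limits through the stability machinery developed in \autoref{sec:compstab}.
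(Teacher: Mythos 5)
Your proof follows essentially the same route as the paper's: you restrict to the $|D\chi_E|$-full measure set where \autoref{prop:Asymptotic minimality and doubling} and \autoref{prop:foundamental perimeter estimates} hold, use \autoref{cor:perimeters compactness} to extract a nontrivial $L^1_{\loc}$-limit $F$ (with $\mu(F)>0$ from \eqref{v2}), and invoke \autoref{prop: sequence of lambda minimizers} together with the asymptotic quasi-minimality \eqref{eq: quasiminimo} to get both (a) and (b). The paper's proof is terser, but the decomposition and the key ingredients coincide exactly; your version merely spells out the perimeter scaling identity, the bound on the normalized measure coming from $C(x,r)\geq\tfrac12\meas(B_{r/2}(x))$, the density computation showing $\mu(F)>0$, and the diagonal choice of $\lambda_i\to 1$ and $R_i\to\infty$, all of which the authors leave to the reader.
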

\begin{proof} 
	Let us consider $x\in X$ such that the statements of \autoref{prop:Asymptotic minimality and doubling} and \autoref{prop:foundamental perimeter estimates} hold true and a sequence of radii $r_i\to0$ such that $(X, r^{-1}\dist,\mu_x^{r},x )\to (Y,\varrho,\mu,y)$ in the pmGH topology. Thanks to \eqref{v1} and \autoref{cor:perimeters compactness} with 
$\chi_{E_i}=\chi_{E}$, possibly extracting a subsequence we can assume that there exists a set $F\subset Y$ with locally finite perimeter such that
$\chi_E\to \chi_F$ in $L^1_{\loc}$. Note that $\mu(F)>0$ thanks to \eqref{v2}. This implies that $(Y,\varrho,\mu,y,F)\in \Tan(E,x)$. To achieve (a) and (b) 
it is enough to apply \autoref{prop: sequence of lambda minimizers}, recalling \eqref{eq: quasiminimo}.
\end{proof}

The next key result to prove \autoref{thm:rigidtangent} is \autoref{prop:techincalheat}. Before stating and proving it we need a technical lemma.

\begin{lemma}\label{lemma:Gammaliminfforheat}
Let $(X_n,\dist_n,\meas_n)$ be $\RCD(K,N)$ m.m. spaces $mGH$ converging to $(Y,\varrho,\mu)$ and assume that the convergence is realized into a proper metric space $(Z,\dist_Z)$. Let $\eta_n$, $\eta$ be nonnegative Borel measures giving finite mass to bounded sets, such that $\supp\eta_n\subset\supp\meas_n$, $\supp\eta\subset\supp\mu$ and $\eta_n$ weakly converge to $\eta$ in duality with $\Cbs(Z)$. Then
\begin{equation}\label{eq:lscheat}
P^Y_t\eta(x)\le\liminf_{n\to\infty}P^n_t\eta_n(x_n),\quad
\text{for any $t>0$ and for any $\supp\meas_n\ni x_n\to x\in \supp\mu$.}
\end{equation}
\end{lemma}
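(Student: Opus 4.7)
The plan is to apply \autoref{lemma:lowersc} with $\mu_n:=\eta_n$, $\mu:=\eta$, $f_n(y):=p^n_t(x_n,y)$ and $f(y):=p^Y_t(x,y)$, extended by $0$ outside the respective supports. Since $P^Y_t\eta(x)=\int_Z p^Y_t(x,y)\di\eta(y)$ and analogously on the approximating spaces, the conclusion of that lemma is precisely \eqref{eq:lscheat}. The hypotheses that $f_n,f$ are nonnegative and that $\eta_n\weakto\eta$ in duality with $\Cbs(Z)$ are already given, so the whole content of the proof reduces to the verification of the pointwise liminf condition
\begin{equation*}
p^Y_t(x,z)\le\liminf_{n\to\infty}p^n_t(x_n,z_n)\qquad\text{whenever }\supp\eta_n\ni z_n\to z\in\supp\eta.
\end{equation*}

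For this I would invoke the locally uniform convergence of heat kernels along pmGH-convergent sequences of $\RCD(K,N)$ spaces: if $x_n\to x$ and $z_n\to z$ with $x_n,z_n\in\supp\meas_n$ and $x,z\in\supp\mu$, then $p^n_t(x_n,z_n)\to p^Y_t(x,z)$ for every $t>0$. This is by now a standard fact, whose proof combines the uniform Gaussian upper bound \eqref{eq:kernelestimate} and the gradient estimate \eqref{eq:gradientestimatekernel}, which together grant that the family $\{p^n_t(x_n,\cdot)\}_n$ is locally uniformly bounded and locally uniformly Lipschitz, with the $L^2$-strong convergence of $P^n_t$ provided by \autoref{prop:stabilityandcompH12}(b), which identifies the subsequential limit as $p^Y_t(x,\cdot)$ via uniqueness of the heat kernel as fundamental solution. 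Granted this fact, the displayed liminf condition holds in fact with equality, and \autoref{lemma:lowersc} closes the argument.

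The only nontrivial step is thus the heat-kernel convergence; it is folklore in the $\RCD$ literature but does not appear verbatim in the preliminary section. A route that entirely sidesteps it proceeds by truncation: fix $R>0$ with $\eta(\partial B_R(x))=0$, bound from below $P^n_t\eta_n(x_n)\ge\int_{B_R(x)} p^n_t(x_n,\cdot)\di\eta_n$ using nonnegativity, approximate $p^Y_t(x,\cdot)\chi_{B_R(x)}$ monotonically from below by functions in $\Cbs(Z)$, pass to the limit in $\eta_n\weakto\eta$ after replacing $p^n_t(x_n,\cdot)$ by the chosen test function (the difference being controlled by a uniform local modulus of continuity coming from \eqref{eq:gradientestimatekernel} together with pointwise convergence at one point), and finally let $R\uparrow\infty$ by monotone convergence to recover the full bound.
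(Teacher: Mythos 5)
Your proof is correct and follows essentially the same route as the paper: extend the kernels by zero, apply \autoref{lemma:lowersc} with $f_n=p_t^n(x_n,\cdot)$, $f=p_t^Y(x,\cdot)$, and reduce everything to the pointwise convergence of heat kernels along pmGH-convergent $\RCD(K,N)$ sequences. The one thing you flag as ``folklore not in the preliminaries'' is exactly the fact the paper cites: it is \cite[Theorem~3.3]{AmbrosioHondaTewodrose17} (building on \cite{GigliMondinoSavare15}), so no independent proof is needed and your sketch via the Gaussian bounds and \autoref{prop:stabilityandcompH12} can be replaced by that reference.
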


\begin{proof}
In \cite[Theorem 3.3]{AmbrosioHondaTewodrose17}, building on \cite{GigliMondinoSavare15}, 
it is proved that, denoting by $p^n$ and $p^Y$ the heat kernels of $(X_n,\dist_n,\meas_n)$ and $(Y,\varrho,\mu)$ respectively, it holds
\begin{equation}\label{eq:convheat}
\lim_{n\to\infty}p^n_t(x_n,y_n,t)=p^Y_t(x,y),\quad\text{for any $t>0$},
\end{equation} 
whenever $\supp\meas_n\times\supp\meas_n\ni(x_n,y_n)\to (x,y)\in\supp\mu\times\mu$.
Since 
\begin{equation*}
P^Y_t\eta(x)=\int p_t^Y(x,y)\di\eta(y)\quad\text{and }\quad P^n_t\eta_n(x_n)=\int p_t^n(x_n,y)\di\eta(y),
\end{equation*}
the validity of \eqref{eq:lscheat} follows from \autoref{lemma:lowersc} and Fatou's lemma
with the obvious choice for the weakly convergent sequence of measures and $f_n(\cdot):=p_t^n(x_n,\cdot)$, $f:=p_t(x,\cdot)$, which satisfy the lower semicontinuity condition \eqref{eq:Gammaliminf} in view of \eqref{eq:convheat}.
\end{proof}

\begin{proposition}\label{prop:techincalheat}
	Let $E\subset X$ be a set of finite perimeter and let $(Y,\varrho,\mu,y,F)\in \Tan_x(X,\dist,\meas,E)$ for some $x\in X$. Let $r_i\downarrow 0$ be a sequence of radii realizing the convergence in \autoref{def:tan}. Then
	\begin{equation*}
	|\nabla^iP_t^i\chi_E|\meas_i\weakto |\nabla^YP_t^Y\chi_F|\mu
	\qquad\text{in duality with $\Cbs(Z)$, for any $t>0$.}
	\end{equation*}  
\end{proposition}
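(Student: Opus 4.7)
The plan is to reduce to a truncated version of the heat flow, to which the stability machinery of Subsection~2.3 applies directly. Fix a test function $\phi\in\Cbs(Z)$ with $\supp\phi\Subset B_R(y)$, and for each $R'>R$ in a generic set of radii (so that $\mu(\partial B_{R'}(y))=0$) decompose $\chi_E=\chi_{E\cap B_{R'}(x)}+\chi_{E\setminus B_{R'}(x)}$, where balls are taken in the rescaled distance $r_i^{-1}\dist$. Linearity of $P_t^i$ and the triangle inequality yield the $\meas_i$-a.e. bound $\bigl||\nabla^i P_t^i\chi_E|-|\nabla^i P_t^i\chi_{E\cap B_{R'}(x)}|\bigr|\le|\nabla^i P_t^i\chi_{E\setminus B_{R'}(x)}|$, and it thus suffices to treat the two terms on the right-hand side separately.

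For the main term, the hypothesis $\chi_E\to\chi_F$ in $L^1_{\loc}$ (see \autoref{def:tan}(b) and \autoref{def:L1 convergence of sets}) gives directly that $\chi_{E\cap B_{R'}(x)}$ converges $L^1$-strongly to $\chi_{F\cap B_{R'}(y)}$; since both are characteristic functions of uniformly bounded measure, the convergence upgrades to $L^2$-strong. Then \autoref{prop:stabilityandcompH12}(b) produces $H^{1,2}$-strong convergence of the heat flows $P_t^i\chi_{E\cap B_{R'}(x)}\to P_t^Y\chi_{F\cap B_{R'}(y)}$, and applying \autoref{thm:localizedconv} with $v_i=w_i=P_t^i\chi_{E\cap B_{R'}(x)}$ yields $L^1$-strong convergence of $|\nabla^i P_t^i\chi_{E\cap B_{R'}(x)}|^2$. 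Unwinding the definition of $L^1$-strong convergence through the map $\sigma(z)=\sign(z)\sqrt{|z|}$, this amounts precisely to
\[
|\nabla^i P_t^i\chi_{E\cap B_{R'}(x)}|\meas_i\weakto|\nabla^Y P_t^Y\chi_{F\cap B_{R'}(y)}|\mu\qquad\text{in duality with }\Cbs(Z),
\]
for each admissible $R'$.

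The remaining piece is the uniform tail bound
\[
\limsup_{R'\to\infty}\sup_{i\in\setN}\int_{B_R(x)}|\nabla^i P_t^i\chi_{E\setminus B_{R'}(x)}|\di\meas_i=0,
\]
together with its analogue on $(Y,\varrho,\mu)$. The pointwise estimate $|\nabla^i P_t^i g|\le C_t\,P_{\alpha t}^i|g|$ appearing in the proof of \autoref{lemma:analytic} and self-adjointness of $P_{\alpha t}^i$ reduce this to controlling $\int_{X\setminus B_{R'}(x)}P_{\alpha t}^i\chi_{B_R(x)}\di\meas_i$; the Gaussian upper bound \eqref{eq:kernelestimate}, combined with the uniform local doubling of the rescaled measures (which satisfy the $\RCD(r_i^2K,N)$ condition with $r_i^2K\to 0$), then produces Gaussian decay in $R'-R$, uniformly in $i$.

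A standard three-$\eps$ argument then concludes. The main technical subtlety, and what makes the truncation necessary, is that $\chi_E$ need not converge in $L^2$-strong in the rescaled spaces (its global $L^2$ mass may well blow up along the scalings), so that \autoref{prop:stabilityandcompH12}(b) cannot be applied directly to the full $\chi_E$; additionally, one needs the uniformity in $i$ of the Gaussian heat-kernel bounds, which relies precisely on the degeneration $r_i^2K\to 0$ of the effective curvature parameter.
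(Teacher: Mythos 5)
Your proof is correct, and it takes a genuinely different route from the paper's. Where you truncate the \emph{initial datum} $\chi_E\mapsto\chi_{E\cap B_{R'}(x)}$ before applying the heat flow, the paper instead truncates \emph{after} the heat flow, multiplying $P_t^i\chi_E$ by good cut-off functions $\eta_R^i$ with uniformly bounded Laplacians (constructed via \cite[Lemma 3.1]{MondinoNaber14}); the paper then needs to verify uniform $L^2$-bounds on $\Delta(\eta_R^iP_t^i\chi_E)$ by the Leibniz rule in order to invoke part (a) of \autoref{prop:stabilityandcompH12}. Your decomposition is cleaner at this stage: because $\chi_{E\cap B_{R'}(x)}\to\chi_{F\cap B_{R'}(y)}$ in $L^2$-strong follows \emph{directly} from \autoref{def:tan}(b), you may invoke part (b) of \autoref{prop:stabilityandcompH12} with no Laplacian estimate needed, and then \autoref{thm:localizedconv} as in the paper. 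The price you pay is the uniform tail bound, which replaces the paper's cut-off machinery; your reduction via $|\nabla^iP_t^ig|\le C_t\,P_{\alpha t}^i|g|$, self-adjointness, and the Gaussian bound \eqref{eq:kernelestimate} (with constants uniform in $i$ because $r_i^2K\to0$, and after the usual splitting of the exponential to recover integrability) is sound. One also does not need the intermediate step of pointwise convergence $P_t^i\chi_E(x_i)\to P_t^Y\chi_F(x)$, which the paper establishes via heat-kernel convergence and \autoref{lemma:Gammaliminfforheat}; in your argument that plays no role. Net effect: both proofs lean on heat-kernel technology once, but in different places, and the overall difficulty is comparable.
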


\begin{proof}
	We wish to implement a strategy very similar to the one adopted in the proof of \autoref{prop:stabilityandcompH12} (see \cite[Theorem 5.4, Corollary 5.5]{AmbrosioHonda} and \cite{GigliMondinoSavare15}).
	
	Let us begin proving that, for any $\supp\meas_i\ni x_i\to x\in\supp\mu$ and for any $t>0$, it holds
	\begin{equation}\label{eq:proplim}
	\lim_{i\to\infty}P^i_t\chi_E(x_i)=P^Y_t\chi_F(x).
	\end{equation} 
	To this aim we first observe that, by the very definition of tangent, it holds that $\chi_E\meas_n\weakto\chi_F\mu$  in duality with $\Cbs(Z)$ and therefore \autoref{lemma:Gammaliminfforheat} yields
	\begin{equation}\label{eq:propliminf}
	P^Y_t\chi_F(x)\le\liminf_{i\to\infty}P^i_t\chi_E(x_i).
	\end{equation} 
	Moreover, since $(1-\chi_E)\meas_n\weakto(1-\chi_F)\mu$ in duality with $\Cbs(Z)$, applying \autoref{lemma:Gammaliminfforheat} once more and with a simple algebraic manipulation, we obtain
	\begin{equation}\label{eq:proplimsup}
	\limsup_{i\to\infty}P^i_t\chi_E(x_i)\le P^Y_t\chi_F(x).
	\end{equation}
	Combining \eqref{eq:propliminf} with \eqref{eq:proplimsup} we obtain \eqref{eq:proplim}.
	
	Let us proceed observing that, in view of the quantitative form of the $L^{\infty}$-$\Lip$ regularization on $\RCD(K,\infty)$ spaces 
	provided by \eqref{eq:infinitolip}, for any $t>0$ the functions $P^i_t\chi_E$ and $P^Y_t\chi_F$ are uniformly Lipschitz. 
	
	Fix now reference points $y\in Y$ and $X_i\ni x_i\to y$. Building upon \cite[Lemma 3.1]{MondinoNaber14}, for any $R>0$ it is possible to find Lipschitz cut-off functions $\eta_R:Y\to[0,1]$, $\eta^i_R:X_i\to[0,1]$ such that $\supp\eta_R\subset B^Y_{2R}(y)$, $\supp\eta^i_R\subset B^i_{2R}(x_i)$, $\eta_R|_{B^Y_R(y)}\equiv 1$, $\eta_R^i|_{B^i_R(x_i)}\equiv 1$, uniformly Lipschitz, with uniformly bounded laplacians and such that $\eta^i_R$ converge to $\eta_R$ both pointwise and $L^2$-strongly. We remark indeed that, in view of \cite[Proposition 3.2]{AmbrosioHondaTewodrose17}, pointwise and $L^2$-strong convergence are equivalent for uniformly bounded, uniformly continuous and uniformly boundedly supported functions.
	Let us observe that, if we are able to prove that
	\begin{equation*}
	f_i:=\eta_R^iP^i_t\chi_E\to \eta_RP^Y_t\chi_F=:f\qquad\text{strongly in $H^{1,2}$ for all $R>0$,}
	\end{equation*}
	the conclusion will follow from the locality of the minimal weak upper gradient and \autoref{thm:localizedconv}, which grants the $L^1$-strong convergence of $|\nabla^i\bigl(\eta_R^iP^i_t\chi_E\bigr)|^2$ to $|\nabla^Y\eta_RP^Y_t\chi_F|^2$ (that we can improve to $L^1$-strong convergence of 
	$|\nabla^i\bigl(\eta_R^iP^i_t\chi_E\bigr)|$ to $|\nabla^Y\eta_RP^Y_t\chi_F|$ in view of the uniform Lipschitz bounds and of \autoref{prop:stability}).

	In order to prove the above claimed convergence, we begin observing that $f_i$ converge pointwise to $f$ by \eqref{eq:proplim} and the very construction of the family of cut-off functions $\eta_R^i$. Therefore, taking into account the uniform Lipschitz bounds, the uniform boundedness and the uniform bounds on the supports, $f_i\to f$ strongly in $L^2$ by \cite[Proposition 3.2]{AmbrosioHondaTewodrose17}. To improve the convergence from $L^2$-strong to $H^{1,2}$-strong we wish to apply \autoref{prop:stabilityandcompH12}. In order to do so, it remains to prove that $\Delta f_i$ are uniformly bounded in $L^2$. To this aim we compute
	\begin{equation}\label{eq:lapappr}
	\Delta f_i=\Delta\eta^i_RP_t^i\chi_E+2\nabla\eta^i_R\cdot\nabla P_t^i\chi_E+\eta^i_R\Delta P_t^i\chi_E
	\end{equation} 
	and observe that all the terms at the right hand side in \eqref{eq:lapappr} are uniformly bounded in $L^2$ in view of the uniform $L^{\infty}$ bounds on values, minimal weak upper gradients and laplacians of the cut-off functions, the uniform $L^{\infty}$ and Lipschitz bounds on $P_t^{i}\chi_E$ and the regularizing estimate for the Laplacian under heat flow in \eqref{eq:regularizingheat}.
\end{proof}

We are now ready to prove \autoref{thm:rigidtangent}.

\begin{proof}[Proof of \autoref{thm:rigidtangent}]
Let us consider the case when $E$ has finite perimeter. The generalization to sets of locally finite perimeter can be obtained  building upon \autoref{lem:lei} and \eqref{eq:localitytangents}, arguing in a standard way.

Recall that the $BV$-version \eqref{eq:BE1} of the 1-Bakry-\'Emery contraction estimate  gives
$$
|\nabla P_t \chi_E| \meas\leq e^{-Kt} P_t^*|D\chi_E|\qquad\forall t>0.
$$
Let $h_t:X\to [0,1]$ be the density of $e^{Kt}|\nabla P_t\chi_E|\meas$ with respect to $P_t^*|D\chi_E|$. 
Then, one has 
$$
\int_X (1-P_th_t)\di |D\chi_E|=|D\chi_E|(X)-\int_X h_t dP_t^*|D\chi_E|=
|D\chi_E|(X)-e^{Kt}\int_X|\nabla P_t \chi_E| \di\meas.
$$
By lower semicontinuity, this proves that $g_t:=1-P_th_t$ converges to $0$ strongly in $L^1(X,|D\chi_E|)$.

Now, setting for simplicity of notation $\nu=|D\chi_E|$, we claim that
\begin{equation}\label{eq:goodlimit}
\lim_{t\downarrow 0}{1\over\nu(B_{R\sqrt{t}}(x))}\int_{B_{R\sqrt{t}}(x)} g_t\,d\nu=0\quad\qquad\forall R>0,
\text{ for $\nu$-a.e. $x\in X$.}
\end{equation}
Thanks to the asymptotic doubling property \eqref{eq:asydou}, it is sufficient to prove the
result $\nu$-a.e. on a Borel set $F$ with this property: for some $L>0$, for all $x\in F$ and $0<r<1/L$ one has 
$\nu(B_{5r}(x))\leq L\nu(B_r(x))$. By Vitali's theorem, it follows that the localized maximal function
$$
M|g|(x):=
\begin{cases}
\displaystyle{\sup\limits_{r\in (0,1/L)}\frac{\int_{B_r(x)}|g|\di\nu}{\nu(B_r(x))}}&\text{if $x\in F$;}\\ 0 &\text{if $x\in X\setminus F$;}
\end{cases}
$$
satisfies
$$
\nu(\{M|g|>\tau\})\leq {L \over\tau }\int |g| \di\nu\qquad\quad\forall\tau>0.
$$
Let us apply this estimate to the functions $g_t=1-P_th_t$: given $\eps>0$, for $t<t(\eps)$ one has
$\int g_t\di\nu<\eps^2$, and then $\nu(\{Mg_t>\eps\})\leq L\eps$. We obtain that
$$
\int_{B_r(x)} g_t\,\di\nu\leq\eps\nu(B_r(x))\qquad\quad\text{for $r<\frac 1{L}$, $t<t(\eps)$}
$$
for all $x\in F_\eps\subset F$, with $\mu(F\setminus F_\eps)$ smaller than $L\eps$.
In particular, on $F_\eps$ one has
$$
\limsup_{t\downarrow 0}{1\over\nu(B_{R\sqrt{t}}(x))}\int_{B_{R\sqrt{t}}(x)} g_t\,d\nu\leq\eps\quad\qquad\forall R>0.
$$
Since $\eps$ is arbitrary, we have proved that \eqref{eq:goodlimit} holds $\nu$-a.e. on $F$.

The claimed conclusion \eqref{eq:rigidblowup} will be achieved through two intermediate steps starting from \eqref{eq:goodlimit}.\\ 
First, let us observe that, for any $R,\,s,\,t>0$ and for any $x\in X$, it holds
\begin{equation}\label{eq:rescaling}
\frac{1}{\nu(B_{R\sqrt{t}}(x))}\int_{B_{R\sqrt{t}}(x)}g_{ts}\di\nu
=\frac{1}{|D^t\chi_E|\bigl(B^t_R(x)\bigr)}\int_{B^t_R(x)}P_s^t\left(1-e^{Kt}\frac{\abs{\nabla^ tP_s^t\chi_E}}
{\left(P^t_s\right)^*\abs{D^t\chi_E}}\right)\di|D^t\chi_E|,
\end{equation}
where we denoted by $P^t$, $\nabla^t$, $D^t$ and $B^t$ the heat semigroup, the minimal weak upper gradients, the total variation measure and the balls associated to the rescaled metric measure structure $(X,\sqrt{t}^{-1}\dist, \meas_x^{\sqrt{t}},x)$ and we are identifying measures absolutely continuous w.r.t. the reference one with their densities.\\

\noindent
{\bf Step 1.} We claim that, if $(Y,\varrho,\mu,y,F)\in \Tan_x(X,\dist,\meas,E)$ and $t_i\downarrow 0$ is a sequence realizing the convergence in \autoref{def:tan}, then
\begin{equation}\label{eq:liminf}
\int P_s\left(1-\frac{|\nabla P_s\chi_F|}{P_s^*|D\chi_F|}\right)\di\eta_R
\le\liminf_{i\to\infty}
\int P_s^{t_i}\left(1-e^{Kst_i}\frac{|\nabla^{t_i}P_s^{t_i}\chi_E|}{\bigl(P^{t_i}_s\bigr)^*|D^{t_i}\chi_E|}\right)\di\eta_R^i,
\end{equation} 
for $\Leb^1$-a.e. $R>0$, where 
\begin{equation*}
\begin{cases}
\displaystyle{\eta_R:=\frac{1}{\abs{D\chi_F}(B_R(y))}|D\chi_F|\res B_R(y),}&\\\\
\displaystyle{\eta^i_R:=\frac{1}{\abs{D^{t_i}\chi_E}(B^{t_i}_R(x))}|D^{t_i}\chi_E|\res B^{t_i}_R(x).}&
\end{cases}
\end{equation*}

In order to prove \eqref{eq:liminf}, we begin observing that $\eta_R^i$ weakly converges to $\eta_R$
for $\Leb^1$-a.e. $R>0$. Therefore, the validity of \eqref{eq:liminf} will follow from \autoref{lemma:lowersc} if we prove that  
\begin{equation}\label{eq:liminfsought}
P_s\left(1-\frac{|\nabla P_s\chi_F|}{\bigl(P_s\bigr)^*|D\chi_F|}\right)(w)
\le\liminf_{i\to\infty}
P_s^{t_i}\left(1-e^{Kst_i}\frac{|\nabla^{t_i}P_s^{t_i}\chi_E|}{\bigl(P^{t_i}_s\bigr)^*|D^{t_i}\chi_E|}\right)(w_i),
\end{equation}
whenever $w_i\in X_i\to w\in Y$.
Let us observe that, for any $\varphi\in\Cbs(Z)$, it holds 
\begin{equation}\label{eq:soughtlimsup}
\limsup_{i\to\infty}e^{Kst_i}\int\varphi\frac{\abs{\nabla^{t_i}P_s^{t_i}\chi_E}}{\bigl(P^{t_i}_s\bigr)^*|D^{t_i}\chi_E|}\di\meas_i\le
\int\varphi\frac{|\nabla P_s\chi_F|}{P_s^*|D\chi_F|}\di\mu.
\end{equation} 
Indeed, by \autoref{prop:techincalheat}, 
$|\nabla^{t_i}P_s^{t_i}\chi_E|\meas_i$ weakly converge to 
$|\nabla P_s\chi_F|\mu$ in duality with $\Cbs(Z)$, and the functions 
\begin{equation*}
f_i:=\frac{\varphi}{\bigl(P^{t_i}_s\bigr)^*|D^{t_i}\chi_E|}\qquad\text{and}\qquad f:=\frac{\varphi}{P_s^*|D\chi_F|}
\end{equation*}
are continuous, have uniformly bounded supports and satisfy the upper semicontinuity property
\eqref{eq:Gammalimsup} thanks to \autoref{lemma:Gammaliminfforheat} (recall that $|D^{t_i}\chi_E|$ weakly converge to $|D\chi_F|$ in duality with $\Cbs(Z)$). Hence \eqref{eq:liminfsought} and then \eqref{eq:liminf} follow from \autoref{lemma:uppersc}, taking into account also \autoref{rm:droppingbounds}.

\smallskip\noindent
{\bf Step 2.} We can now prove \eqref{eq:rigidblowup}.
If we choose $x\in X$ such that \eqref{eq:goodlimit} holds true (we proved above that $\abs{D\chi_E}$-a.e. $x\in X$ has this property), combining \eqref{eq:rescaling} with \eqref{eq:liminf}, we obtain
\begin{equation}\label{eq:0limit}
\int_{B_R(y)}P_s\left(1-\frac{\abs{\nabla P_s\chi_F}}{P_s^*|D\chi_F|}\right)\di |D\chi_F|\le 0.
\end{equation}
Observing that, by gradient contractivity on the $\RCD(0,N)$ space $(Y,\varrho,\mu)$, it holds 
\begin{equation}\label{eq:signproperty}
1-\frac{|\nabla P_s\chi_F|}{P_s^*|D\chi_F|}\ge 0\qquad \text{$\mu$-a.e. on $Y$,}
\end{equation} 
we can let $R\to\infty$ in \eqref{eq:0limit} to get
\begin{equation}\label{eq:0limitfull}
\int P_s\left(1-\frac{\abs{\nabla P_s\chi_F}}{P_s^*|D\chi_F|}\right)\di |D\chi_F|=0.
\end{equation} 
Then, using once more the sign property \eqref{eq:signproperty}, we obtain \eqref{eq:rigidblowup}.

Combining the just proved rigidity \eqref{eq:rigidblowup} with \autoref{maintheorem}, we can say that $(Y,\varrho,\mu)$ is isomorphic to $Z\times\setR$ for some $\RCD(0,N-1)$ m.m.s. $(Z,\dist_Z,\meas_Z)$. Furthermore, another consequence of \autoref{maintheorem} is that $F=\set{t>t_0}$ for some $t_0\in\setR$, where we denoted by $t$ the coordinate on the Euclidean factor of $Y$. Up to a translation we can also assume that $y=(\bar{z},0)$ for some $\bar{z}\in Z$.

%
\end{proof}

\appendix
\section{Appendix}\label{sec:Appendix}
In this appendix we prove a version of the iterated tangent theorem by Preiss (see \cite{Preiss87}). The proof is inspired by those of \cite[Theorem 3.2]{GigliMondinoRajala15} and \cite[Theorem 6.4]{AmbrosioKleinerLeDonne08}, dealing with pmGH tangents to $\RCD(K,N)$ spaces and tangents to sets of finite perimeters over Carnot groups, respectively (see also \cite{LeDonne11} for a previous result regarding pGH-tangents of metric spaces equipped with a doubling measure).

\begin{theorem}\label{th: tangent of tangent}
	Let $(X,\dist,\meas)$ be an $\RCD(K,N)$ m.m.s. and let $E\subset X$ be a set of finite perimeter. 
	Then for $|D\chi_E|$-a.e. $x\in X$ the following property holds true:
	for every $(Y,\varrho,\mu,y, F)\in \Tan_x(X,\dist,\meas,E)$ one has
	\begin{equation*}
		\Tan_{y'}(Y,\varrho,\mu,F)\subset \Tan_x(X,\dist,\meas,E)\qquad\text{for every $y'\in \supp |D\chi_F|$.}
	\end{equation*}
\end{theorem}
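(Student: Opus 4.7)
The argument follows the classical Preiss density-point scheme, adapted to pointed metric-measure structures equipped with a distinguished set of locally finite perimeter, in the spirit of \cite{Preiss87}, \cite{AmbrosioKleinerLeDonne08}, and \cite{GigliMondinoRajala15}. The plan is to regard quintuples $(Y,\varrho,\mu,y,F)$ as elements of a metric space $(\mathcal{Q},\mathbb{D})$ where $\mathbb{D}$ combines the standard pmGH distance on the underlying pointed m.m.s. with an $L^1_{\rm loc}$-type distance on the characteristic functions $\chi_F$, computed after realizing the structures isometrically into a common proper Polish space. Separability of $\mathcal{Q}$ is inherited from that of the underlying pmGH space of uniformly locally doubling structures as in \cite{GigliMondinoRajala15}, and the compactness and stability results of \autoref{sec:compstab}---in particular \autoref{cor:perimeters compactness} and \autoref{prop: sequence of lambda minimizers}---ensure that $\Tan_x(X,\dist,\meas,E)$ is nonempty and $\mathbb{D}$-closed at $|D\chi_E|$-a.e.\ $x$.

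Fix a countable $\mathbb{D}$-dense subset $\{U_n\}_{n\in\setN}$ of $\mathcal{Q}$. For each pair $k,n\in\setN$ I would define $A_{k,n}\subset X$ as the set of points $x$ satisfying simultaneously: (a) every $V\in \Tan_x(X,\dist,\meas,E)$ obeys $\mathbb{D}(V,U_n)\geq 2/k$; (b) there exist $(Y,\varrho,\mu,y,F)\in \Tan_x(X,\dist,\meas,E)$, a point $y'\in\supp|D\chi_F|$, and $V_0\in\Tan_{y'}(Y,\varrho,\mu,F)$ with $\mathbb{D}(V_0,U_n)<1/k$. A point $x$ violates the conclusion of the theorem if and only if $x\in A_{k,n}$ for some $k,n$, so it is enough to prove $|D\chi_E|(A_{k,n})=0$ for every $k,n$.

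Suppose by contradiction that $|D\chi_E|(A_{k,n})>0$. The asymptotic doubling property of $|D\chi_E|$ from \autoref{prop:Asymptotic minimality and doubling} combined with Lebesgue's differentiation for doubling measures supplies a point $x\in A_{k,n}$ of $|D\chi_E|$-density $1$ in $A_{k,n}$ at which \autoref{prop:foundamental perimeter estimates} also applies. Choose scales $t_i\downarrow 0$ realizing the convergence of the rescalings of $(X,\dist,\meas,E)$ at $x$ to $(Y,\varrho,\mu,y,F)$, fix the witness $y'\in\supp|D\chi_F|$ and further scales $s_j\downarrow 0$ realizing the iterated tangent $V_0=(W,\sigma,\nu,w,G)$ from condition (b). Realizing the pmGH convergences in a common proper Polish space and using the weak convergence of the rescaled perimeter measures from \autoref{cor:tangents}(b), one obtains points $x_i\in\supp|D\chi_E|$ with $\dist(x_i,x)=t_i(\varrho(y,y')+o_i(1))$ that approximate $y'$ in the rescaled picture. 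A diagonal extraction along $j(i)\uparrow\infty$ then produces rescalings of $(X,\dist,\meas,E)$ based at $x_i$ by scale $t_i s_{j(i)}$ that converge to $V_0$ in $\mathbb{D}$; hence $V_0$ is a tangent to $(X,\dist,\meas,E)$ at the \emph{nearby} points $x_i$, not at $x$ itself.

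The hard part is to bridge precisely this gap. To close it, I would use that $\dist(x_i,x)=O(t_i)$ and that $x$ is a $|D\chi_E|$-density point of $A_{k,n}$ to find, via a quantitative two-scale Lebesgue-differentiation argument relying on the asymptotic doubling of $|D\chi_E|$ and the lower density estimate $|D\chi_E|(B_r(x))\gtrsim \meas(B_r(x))/r$ of \autoref{prop:foundamental perimeter estimates}, a perturbation $\tilde x_i\in A_{k,n}$ with $\dist(\tilde x_i,x_i)=o(t_i s_{j(i)})$. Standard stability of pmGH convergence under such a small base-point shift forces the rescaling at $\tilde x_i$ by scale $t_i s_{j(i)}$ to still converge to $V_0$, which yields $V_0\in \Tan_{\tilde x_i}(X,\dist,\meas,E)$ with $\mathbb{D}(V_0,U_n)<1/k$ and contradicts property (a) at $\tilde x_i\in A_{k,n}$. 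Controlling the $|D\chi_E|$-density of $A_{k,n}$ simultaneously at the coarse scale $t_i$ and the fine scale $t_i s_{j(i)}\ll t_i$ is the heart of the Preiss method and constitutes the main technical obstacle in the proof.
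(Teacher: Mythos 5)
Your framework matches the paper's in outline (a metric $\mathcal{D}$ on quintuples, a countable dense family of model spaces, a Lebesgue-density contradiction), but you are missing the single reduction that makes the whole argument short, and as a result the ``hard part'' you isolate is both unfixable with the stated tools and unnecessary.

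The paper's key observation, stated in the first sentence of its proof, is that one never needs to take an iterated tangent. Since $\Tan_x(X,\dist,\meas,E)$ is invariant under rescaling and closed under $\mathcal{D}$-convergence, and since every member of $\Tan_{y'}(Y,\varrho,\mu,F)$ is a $\mathcal{D}$-limit of rescalings of the recentered quintuple $(Y,\varrho,\mu^1_{y'},y',F)$, it suffices to show that for $|D\chi_E|$-a.e.\ $x$, every $(Y,\varrho,\mu,y,F)\in\Tan_x$ and every $y'\in\supp|D\chi_F|$, the recentered quintuple $(Y,\varrho,\mu^1_{y'},y',F)$ is itself in $\Tan_x(X,\dist,\meas,E)$. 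This eliminates the second blow-up scale $s_j$ and the diagonal extraction entirely. Correspondingly, the paper's bad set $A_{k,m}$ is phrased not in terms of tangents at $x$ being far from $U_n$ (as your condition (a) is), but in terms of the recentered tangent being $\mathcal{D}$-far from \emph{all} individual rescalings $(X,r^{-1}\dist,\meas^r_x,x,E)$ with $r\in(0,1/m)$; this is crucial, because the contradiction at the end is obtained against a single rescaling, not a tangent.

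Three concrete gaps result from working with iterated tangents. First, the points $x_i$ you produce lie only in $\supp|D\chi_E|$, whereas the paper's \autoref{lemma: appendixA} produces $x_i$ \emph{inside the bad set} $A$ by combining the density-one hypothesis at $x$ with the lower perimeter-density at $y'\in\supp|D\chi_F|$; this extra information is exactly what one needs, and without it you are forced into the perturbation step. Second, that perturbation step does not follow from Lebesgue differentiation: density one of $A_{k,n}$ at $x$ controls $|D\chi_E|(B_r(x)\setminus A_{k,n})/|D\chi_E|(B_r(x))$ as $r\downarrow 0$, but says nothing about the existence of points of $A_{k,n}$ within distance $o(t_is_{j(i)})$ of a point $x_i$ at distance $O(t_i)$ from $x$ --- a set of full density in $B_{t_i}(x)$ can still avoid entire balls of radius comparable to $t_is_{j(i)}\ll t_i$. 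Third, even granting $\tilde x_i\in A_{k,n}$, a single rescaling of $(X,E)$ at $\tilde x_i$ by the \emph{fixed} scale $t_is_{j(i)}$ being $\mathcal{D}$-close to $V_0$ does not exhibit $V_0$ as an element of $\Tan_{\tilde x_i}(X,\dist,\meas,E)$, which requires a sequence of scales $\downarrow 0$ at the fixed base point $\tilde x_i$; so your condition (a), which quantifies over tangents, is not contradicted. The paper avoids all three issues at once by the recentering reduction and by formulating $A_{k,m}$ against rescalings rather than tangents.
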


Thanks to \autoref{cor:tangents} we need only to prove the result at $|D\chi_E|$-a.e. $x\in X$ for all $(Y,\varrho,\mu,y, F)\in \Tan^*_x(X,\dist,\meas,E)$, where 
$\Tan^*_x(X,\dist,\meas,E)$ is defined adding to the conditions in \autoref{def:tan} the condition (b) of \autoref{cor:tangents}, namely that the perimeter
measures of the rescaled spaces weakly converge, in the duality with $\Cbs(X)$, to the perimeter measure of $F$.

Let us briefly recall the notion of outer measure and its main properties.
Given a positive measure $\mu$ over a metric space $(X,\dist)$ we set
\begin{equation}\label{eq: outer measure}
	\mu^*(A):=\inf\set{\mu(B):\ B \ \text{Borel}, \,\,A\subset B },
	\qquad
	\forall A\subset X.
\end{equation}
It is immediate to see that $\mu^*$ is countably sub-additive.
Let us remark that if $\mu$ is asymptotically doubling then
\begin{equation}\label{eq:density outer measure}
	\lim_{r\downarrow 0} \frac{\mu^*(A\cap B_r(x))}{\mu(B_r(x))}=1
	\qquad \text{for}\ \mu^*\text{-a.e.}\ x\in A.
\end{equation}
Indeed, we can find a set $B\in\Borel(X)$ containing $A$ such that $\mu(B)=\mu^*(A)$, so that
$\mu^*(C\cap A)=\mu(C\cap B)$ for every $C\in\Borel(X)$. In particular, taking $C=B_r(x)$, we have
\begin{equation*}
	\lim_{r\downarrow 0} \frac{\mu^*(A\cap B_r(x))}{\mu(B_r(x))}=
	\lim_{r\downarrow 0} \frac{\mu(B\cap B_r(x))}{\mu(B_r(x))}=1,
\end{equation*}
for every $x\in B$ of density $1$ for the measure $\mu$. Since $\mu$ is asymptotically doubling, $\mu$-a.e $x\in B$ has this property and \eqref{eq:density outer measure} follows.

Let us start with a technical lemma.

\begin{lemma}\label{lemma: appendixA}
	Let $(X,\dist,\meas)$ and let $E\subset X$ be as in the assumptions of \autoref{th: tangent of tangent}. 
	Let $A\subset X$ and $x\in A$ be such that
	\begin{equation*}
		\lim_{r\downarrow 0} \frac{|D\chi_E|^*(A\cap B_r(x))}{|D\chi_E|(B_r(x))}=1,
	\end{equation*}
	where $|D\chi_E|^*$ is the outer measure associated to $|D\chi_E|$ according to \eqref{eq: outer measure}.
	Assume that $(Y,\varrho,\mu, F)\in \Tan^*_x(X,\dist,\meas,E)$ and consider
	\begin{align*}
	&\Psi_i : (X, r_i^{-1}\dist)\to (Z,\dist_Z)
	\qquad
	\forall i\in \setN,\\
	&\Psi: (Y, \dist_Y)\to (Z,\dist_Z),
	\end{align*}
	a family of isometries realizing the pmGH convergence as in \autoref{def: mpGH convergence}.
	Then, for any $y'\in \supp |D\chi_F|$, there exists a sequence $(x_i)\subset A$ such that 
	\begin{equation*}
	\lim_{i\to \infty}	\dist_Z(\Psi_i(x_i),\Psi(y'))=0.
	\end{equation*}
\end{lemma}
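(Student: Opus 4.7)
I would argue by contradiction. Suppose that for some $\rho>0$ (and along a subsequence that I relabel as $i\in\setN$) one has $\dist_Z(\Psi_i(a),\Psi(y'))\ge\rho$ for every $a\in A$ and every $i$. Set $U:=B^Z_\rho(\Psi(y'))$ and $V_i:=\Psi_i^{-1}(U)\subset X$, so that $V_i$ is open in $X$ and $A\cap V_i=\emptyset$ for every $i$. The plan is to show that $V_i$ carries a uniformly positive share of the perimeter of $E$ in a ball of radius $O(r_i)$ around $x$, and then to clash this with the density hypothesis on $A$ at $x$.

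The first step is to localize $V_i$ inside a ball of radius $O(r_i)$ centred at $x$. For $z\in V_i$ the triangle inequality in $Z$ gives
\[
r_i^{-1}\dist(z,x)=\dist_Z(\Psi_i(z),\Psi_i(x))\le\rho+\dist_Z(\Psi(y'),\Psi_i(x));
\]
since $\Psi_i(x)\to\Psi(y)$ by the pmGH convergence, the right-hand side is eventually at most $R:=\rho+\varrho(y,y')+1$, so $V_i\subset B^X_{Rr_i}(x)$ for $i$ large. The same computation shows $\Psi_i(B^X_{Rr_i}(x))\subset B^Z_{R+1}(\Psi(y))$ for $i$ large.

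The second step invokes the weak convergence $(\Psi_i)_\sharp|D^i\chi_E|\weakto\Psi_\sharp|D\chi_F|$ supplied by condition (b) of \autoref{cor:tangents}. Portmanteau applied to the open set $U$ and to the closed set $\overline{B}^Z_{R+1}(\Psi(y))$ gives
\[
\liminf_{i\to\infty}|D^i\chi_E|(V_i)\ge|D\chi_F|(B^Y_\rho(y'))=:\alpha,
\]
\[
\limsup_{i\to\infty}|D^i\chi_E|(B^X_{Rr_i}(x))\le|D\chi_F|(\overline{B}^Y_{R+1}(y))=:\beta,
\]
where $\alpha>0$ because $y'\in\supp|D\chi_F|$ and $\beta<\infty$ because $F$ has locally finite perimeter in $Y$.

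To conclude, note that the rescaling constants relating $|D\chi_E|$ and $|D^i\chi_E|$ cancel in ratios, so
\[
\frac{|D\chi_E|(V_i)}{|D\chi_E|(B^X_{Rr_i}(x))}=\frac{|D^i\chi_E|(V_i)}{|D^i\chi_E|(B^X_{Rr_i}(x))}\ge\frac{\alpha}{\beta}-o(1).
\]
On the other hand, combining $A\cap V_i=\emptyset$, $V_i\subset B^X_{Rr_i}(x)$ and the monotonicity of the outer measure,
\[
|D\chi_E|^*(A\cap B^X_{Rr_i}(x))\le|D\chi_E|(B^X_{Rr_i}(x))-|D\chi_E|(V_i),
\]
so the density quotient at the scale $Rr_i$ is at most $1-\alpha/\beta+o(1)<1$, contradicting the assumption that it tends to $1$ along the infinitesimal scales $Rr_i\downarrow0$. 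The main subtlety I expect is bridging the weak convergence of the rescaled perimeter measures with an outer-measure density condition on the (possibly non-Borel) set $A$; this is precisely what the cancellation of rescaling constants in the ratio and the simultaneous use of Portmanteau on an open and a closed set achieve.
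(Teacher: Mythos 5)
Your proof is correct and follows essentially the same contradiction strategy as the paper: assume the conclusion fails, show that a ball near $y'$ carries a uniformly positive fraction of the perimeter mass in a ball $B_{O(r_i)}(x)$ while being disjoint from $A$, and clash this with the density hypothesis. The two arguments differ in execution, however. The paper first builds an auxiliary sequence $(x_i)\subset X$ converging to $y'$ such that the rescaled perimeter mass of $B_{r r_i}(x_i)$ converges to $|D\chi_F|(B_r(y'))$ for $\Leb^1$-a.e.\ $r>0$, then chooses $\eps$ and $M$ among such radii and derives the contradiction from the quotient $|D\chi_E|(B_{\eps r_i}(x_i))/|D\chi_E|(B_{Mr_i}(x))$. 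You instead work directly with the open preimages $V_i=\Psi_i^{-1}(B^Z_\rho(\Psi(y')))$ and apply the Portmanteau theorem (lower semicontinuity on bounded open sets, upper semicontinuity on compact sets) to the weak convergence of perimeter measures. This sidesteps the $\Leb^1$-a.e.\ choice of radii entirely and is a touch cleaner; the paper's approach makes the relevant radii and base points explicit, which keeps the computation close to the density quotient as stated. One cosmetic remark: the ``$-o(1)$'' in your display $\ge\alpha/\beta-o(1)$ is superfluous and, if anything, in the wrong direction; the chain $\liminf(a_i/b_i)\ge(\liminf a_i)/(\limsup b_i)\ge\alpha/\beta$ gives the bound without any error term. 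This does not affect the argument.
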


Roughly speaking, \autoref{lemma: appendixA} tells us that it is possible to approximate every point in the support of any tangent 
by means of points in $A$, whenever $A$ is ``large'' in a measure-theoretic sense.

\begin{proof}[Proof of \autoref{lemma: appendixA}]
	As a first step we show the existence of an auxiliary sequence $(x_i)\subset X$, satisfying $\lim_i\dist_Z(\Psi_i(x_i), \Psi(y'))=0$ and 
	\begin{equation}\label{z13}
		\lim_{i\to \infty} \frac{r_i |D \chi_E|(B_{rr_i}(x_i))}{C(x,r_i)}= |D\chi_F|(B_r(y')),
		\qquad
		\text{for $\Leb^1$-a.e. $r>0$},
	\end{equation}
	where $C(x,r_i)$ was introduced in \eqref{eq:defC(x,r)}.

    Let us set $X_i:=\Psi_i(X)$, $E_i:=\Psi_i(E)$ and, with a slight abuse of notation, identity $F$ to $\Psi(F)$ and $y'$ to $\Psi(y')$.
	Since by assumption it holds that $|D\chi_{E_i}|\weakto |D\chi_F|$, we have
	\begin{equation*}
		\lim_{i\to\infty} |D\chi_{E_i}|(B^Z_r(y'))=|D\chi_F|(B^Z_r(y')),
		\qquad
		\text{for $\Leb^1$-a.e. $r>0$}.
	\end{equation*}
	This implies that the distance of $y'$ from $X_i$ is infinitesimal as $i\to\infty$, 
	hence we can find points $z_i\in X_i$ converging to $y'$ in $Z$ satisfying
	\begin{equation*}
	   \lim_{i\to\infty} |D \chi_{E_i}|(B^Z_r(z_i))= |D\chi_F|(B^Z_r(y')),
	   \qquad
	   \text{for $\Leb^1$-a.e. $r>0$}.
	\end{equation*}
	Let us set $x_i:=\Psi_i^{-1}(z_i)$. Observe that $|D\chi_F|(B^Z_r(y'))=|D\chi_F|(B^Y_r(y'))$ and
	\begin{equation*}
		|D\chi_{E_i}|(B^Z_r(z_i))=\frac{r_i |D \chi_E|(B_{rr_i}(x_i))}{C(x,r_i)},
	\end{equation*}
    so that we get \eqref{z13}.
	
	Let us now argue by contradiction.
	Assuming the conclusion of the lemma to be false we might find $\eps>0$ such that
	the limit in \eqref{z13} holds with $r=\eps$ and
	\begin{equation*}
		B_{\eps r_i}(x_i)\cap A=\emptyset
		\qquad \text{for}\ i\ \text{sufficiently large},
	\end{equation*}
	with $x_i$ and $r_i$ as in \eqref{z13}.
	Let $M>0$ be large enough to grant that 
	\begin{equation}\label{eq:prop1M}
	B_{\eps r_i}(x_i)\subset B_{Mr_i}(x)
	\end{equation} 
	(it is simple to see that such a constant exists, since the convergence in $Z$ of $z_i=\Psi(x_i)$ ensures
	$\dist(x,x_i)=O(r_i)$).
	Arguing as in the first part of the proof it is possible to see that
		\begin{equation}\label{eq:prop2M}
			\lim_{i\to \infty}\frac{r_i |D \chi_E|(B_{Mr_i}(x))}{C(x,r_i)}
			=|D\chi_F|(B_M(y'))
			\qquad
			\text{for}\ \Leb^1\text{-a.e.}\ M>0
		\end{equation}
	and from now on we assume, possibly increasing $M$, that both \eqref{eq:prop1M} and \eqref{eq:prop2M} hold true.
	Then, in view of \eqref{eq:prop1M}, we have
	\begin{equation*}
		\frac{|D\chi_E|^*(A\cap B_{Mr_i}(x))}{|D\chi_E|(B_{Mr_i}(x))} =
		\frac{|D\chi_E|^*(A\cap (B_{Mr_i}(x)\setminus B_{\eps r_i}(x_i)))}{|D\chi_E|(B_{Mr_i}(x))}
		\leq 1-\frac{|D\chi_E|(B_{\eps r_i}(x_i))}{|D\chi_E|(B_{Mr_i}(x))}.
	\end{equation*}
 Observe that the left hand side converges to $1$ as $i\to \infty$, since $x$ is of density $1$ for $A$. 
 Therefore, to get the sought contradiction, it suffices to show that
	\begin{equation*}
		\liminf_{i\to \infty}\frac{|D\chi_E|(B_{\eps r_i}(x_i))}{|D\chi_E|(B_{Mr_i}(x))}>0.
	\end{equation*}
	Using \eqref{z13} and \eqref{eq:prop2M}, we get
	\begin{equation*}
		\liminf_{i\to \infty}\frac{|D\chi_E|(B_{\eps r_i}(x_i))}{|D\chi_E|(B_{Mr_i}(x))}= 
		\frac{\lim_i\frac{r_i |D \chi_E|(B_{\eps r_i}(x_i))}{C(x,r_i)}}{\lim_i\frac{r_i |D \chi_E|(B_{Mr_i}(x))}{C(x,r_i)}}
		\geq \frac{|D\chi_F|(B_{\eps}(y'))}{\abs{D\chi_F}(B_M(y'))}>0,
	\end{equation*}
	where the last inequality holds true since we are assuming that $y'\in \supp |D\chi_F|$.
\end{proof}

Before passing to the proof of \autoref{th: tangent of tangent} we need to introduce a definition and a lemma.

\begin{definition}\label{remark: new notion of convergence}
	We shall denote by $\mathcal{F}(K,N)$ the set of equivalence classes of quintuples $\mathfrak{X}=(X,\dist,\meas,x, \nu)$ where $(X,\dist,\meas,x)$ is a pointed $\RCD(K,N)$ m.m.s and $\nu$ is a nonnegative and locally finite Borel measure with $\supp \nu\subset \supp \meas$,
	modulo the equivalence relation $\sim$ defined as follows. We say that $(X_1,\dist_1, \meas_1, x_1, \nu_1)\sim (X_2,\dist_2,\meas_2,x_2,\nu_2)$ if there exists an isometry $T:(\supp\meas_1,\dist_1)\to (\supp\meas_2,\dist_2)$ such that $T_{\sharp} \meas_1=\meas_2$, $T(x_1)=x_2$ and $T_{\sharp}\nu_1=\nu_2$.
	We shall denote by $\mathcal{F}$ the union of the sets $\mathcal{F}(K,N)$ for $K\in \setR$, $1\le N<+\infty$. Observe that $\mathcal{F}$ can be realized as a countable union of sets $\mathcal{F}(K,N)$.
	
	Let us introduce a distance in $\mathcal{F}$.
	Fix $\mathfrak{X}_1=(X_1,\dist_1,\meas_1,x_1,\nu_1)$, $\mathfrak{X}_2=(X_2,\dist_2,\meas_2,x_2,\nu_2)$ in $\mathcal F$,
	a proper metric measure space $(Z,\dist_Z)$ and isometric embeddings $\Psi_i:(X_i,\dist_i)\to (Z,\dist_Z)$, $i=1,\,2$. For any 
	integer $n\geq 1$ we define 
	\begin{align*}
	\mathcal{D}_{n,\Psi_1,\Psi_2}&(\mathfrak{X}_1,\mathfrak{X}_2):=\\ &
	\dist_{H}(\Psi_1(X_1\cap \overline{B}(x_1,n)),\Psi_2(X_2\cap \overline{B}(x_2,n)))\wedge 1\\
	&+\abs{\log\left(\frac{\meas_1(B(x_1,n))}{\meas_2( B(x_2,n))}\right)}\wedge 1 +W_1^Z\left( (\Psi_1)_{\sharp}\frac{\chi_{B(x_1,n)}}{\meas_1(B(x_1,n))}\meas_1,   (\Psi_2)_{\sharp}\frac{\chi_{B(x_2,n)}}{\meas_2(B(x_2,n))}\meas_2\right)\\
	&+\abs{\log\left(\frac{\nu_1(B(x_1,n))}{\nu_2(B(x_2,n))}\right)}\wedge 1
	+W_1^Z\left( (\Psi_1)_{\sharp} \frac{\chi_{B(x_1,n)}}{\nu_1(B(x_1,n))}\nu_1, (\Psi_2)_{\sharp} \frac{\chi_{B(x_2,n)}}{\nu_2(B(x_2,n))}\nu_2 \right),
	\end{align*}
	where $\dist_{H}$ is the Hausdorff distance between compact subsets of $Z$ and $W_1^Z$ is the $1$-Wasserstein distance in 
	$(Z,\dist_Z\wedge 1)$, namely
	\begin{equation}
	W_1^Z(\mu,\nu):=\inf\left\lbrace \int_Z \dist_Z(x,y)\wedge 1 \di \pi(x,y):\ \pi\in\Gamma(\mu,\nu)
	\right\rbrace,
	\end{equation}
	with $\Gamma(\mu,\nu)\subset\Prob(X\times X)$ the set of probability measures having $\mu$ and $\nu$ as
	marginals.
	We finally define
	\begin{equation}\label{eq:defDDD}
	\mathcal{D}(\mathfrak{X}_1,\mathfrak{X}_2):=\inf_{\Psi_1,\Psi_2}\left\lbrace \dist_Z(\Psi_1(x_1),\Psi_2(x_2))+\sum_{n=1}^{\infty}\frac{1}{2^n}
	\mathcal{D}_{n,\Psi_1,\Psi_2}(\mathfrak{X}_1,\mathfrak{X}_2)\right\rbrace ,
	\end{equation}
	the infimum being taken among all possible proper metric spaces $(Z,\dist_Z)$ and all 
	isometric embeddings $\Psi_i:(X_i,\dist_i)\to (Z,\dist_Z)$ for $i=1,\,2$.
\end{definition}

\begin{lemma}\label{lemma:new convergence of mms}
	$\mathcal{D}$ is a distance over $\mathcal{F}$ and a sequence $(X_i,\dist_i, \meas_i, x_i, \nu_i)\subset \mathcal{F}$ converges to $(Y,\varrho,\mu,y,\nu)$ in the topology induced by $\mathcal{D}$ if and only if $(X_i,\dist_i, \meas_i, x_i)\to (Y,\varrho,\mu,y)$ in the pmGH topology and $\nu_i\weakto \nu$ in duality with $\Cbs(Z)$, where $(Z,\dist_Z)$ is a metric space where the pmGH convergence is realized.
	Moreover the subspace 
	\begin{equation}\label{eq:barF}
	\mathcal{\overline{F}}:=\set{(X,\dist,\meas,x,\nu)\in \mathcal{F}:\ \nu=h\meas,\  \text{with}\ h\in L^{\infty}(X,\meas)}	
	\end{equation}
	is separable.	
\end{lemma}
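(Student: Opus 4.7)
The plan is to verify the three claims---distance property, characterization of convergence, and separability---in sequence.

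\textbf{Distance property.} Symmetry and non-negativity are immediate from the definition. For the triangle inequality, I would fix $\mathfrak{X}_1,\mathfrak{X}_2,\mathfrak{X}_3\in\mathcal{F}$ and $\varepsilon>0$, pick near-optimal realizations for $(\mathfrak{X}_1,\mathfrak{X}_2)$ in a proper space $Z_{12}$ and for $(\mathfrak{X}_2,\mathfrak{X}_3)$ in $Z_{23}$, and glue them along the shared embedded copy of $\supp\meas_2$, passing to a metric completion to obtain a proper ambient space into which all three supports embed isometrically. Each of the five ingredients of $\mathcal{D}_{n,\Psi_1,\Psi_3}$ satisfies its own triangle inequality---Hausdorff distance, truncated logarithm of mass ratios (via $|\log(a/c)| \le |\log(a/b)| + |\log(b/c)|$, which survives capping at $1$), and $W_1^Z$ with the truncated ground distance---and summation against the weights $2^{-n}$ preserves this, giving $\mathcal{D}(\mathfrak{X}_1,\mathfrak{X}_3) \le \mathcal{D}(\mathfrak{X}_1,\mathfrak{X}_2)+\mathcal{D}(\mathfrak{X}_2,\mathfrak{X}_3)+2\varepsilon$. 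For identity of indiscernibles, if $\mathcal{D}(\mathfrak{X}_1,\mathfrak{X}_2)=0$, take realizations in spaces $Z^{(k)}$ along which the expression in braces in \eqref{eq:defDDD} tends to zero; by Gromov's compactness theorem applied to the uniformly doubling closed balls around $x_1,x_2$, subsequential limits of the isometric embeddings produce an isometry $T\colon(\supp\meas_1,\dist_1)\to(\supp\meas_2,\dist_2)$ with $T(x_1)=x_2$, and vanishing of the $W_1$-terms at every scale $n$ forces $T_\sharp\meas_1=\meas_2$ and $T_\sharp\nu_1=\nu_2$.

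\textbf{Characterization of convergence.} For the reverse implication, realize pmGH convergence of $(X_i,\dist_i,\meas_i,x_i)$ into a proper space $(Z,\dist_Z)$ via embeddings $\Psi_i,\Psi$. Since $\meas(\partial B_r(x))=0$ for every $r>0$ on any $\RCD(K,N)$ space (Bishop--Gromov), $\meas_i(B(x_i,n))\to\mu(B(y,n))$ for every $n\in\setN$, and only countably many $n$ can be \emph{bad} for $\nu$ in the sense that $\nu(\partial B(y,n))>0$. Splitting the series in \eqref{eq:defDDD} into a tail dominated by $\sum_{n>N}5\cdot 2^{-n}$ and a finite head, on the head the Hausdorff terms vanish by pmGH convergence, the log terms vanish at good $n$ and on bad $n$ can be forced to vanish by perturbing the embedding (or by taking an infimum-realizing realization for each $i$ separately) so as to avoid the atomic sphere of $\nu$, and the $W_1^Z$ terms vanish by the standard equivalence between weak convergence of uniformly compactly supported probability measures and $W_1$-convergence in the truncated metric. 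Dominated convergence in $n$ yields $\mathcal{D}(\mathfrak{X}_i,\mathfrak{X})\to 0$. Conversely, realizations achieving $\mathcal{D}(\mathfrak{X}_i,\mathfrak{X})$ up to $1/i$ force pmGH convergence from the Hausdorff-distance clause (combined with the uniform local doubling) and $\meas_i\weakto\mu$, $\nu_i\weakto\nu$ in duality with $\Cbs$ from the mass-ratio and $W_1^Z$ clauses.

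\textbf{Separability.} Decompose $\overline{\mathcal{F}}=\bigcup_{K\in\setQ,\,N\in\setN}\overline{\mathcal{F}}(K,N)$, so that it suffices to prove separability slicewise. By Gromov's precompactness theorem for pointed $\RCD(K,N)$ spaces, there is a countable pmGH-dense family of underlying pointed spaces; within each representative, approximate the bounded density $h\meas$ in the truncated $W_1$-sense by finite linear combinations with rational coefficients of indicator measures of balls centered at a fixed countable dense subset. A diagonal argument combining these two countable families produces a countable $\mathcal{D}$-dense subset of $\overline{\mathcal{F}}(K,N)$.

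The main obstacle is the simultaneous handling of two measured structures in the proofs of the triangle inequality and of the identity of indiscernibles: the gluing of ambient metric spaces must be compatible not just with the underlying $\RCD$ metric-measure structure but also with the auxiliary (possibly singular) measure $\nu$, and the Gromov-type diagonal extraction must preserve both pushforwards at every scale.
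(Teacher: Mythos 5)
Your treatment of the distance axioms and of the characterization of convergence is in the right spirit and matches what the paper does: the authors simply refer to \cite{GigliMondinoSavare15} for both of these and note that the only novelty is carrying the extra measure $\nu$, which is exactly the bookkeeping you describe (truncated log-ratios and truncated $W_1$ each satisfy their own triangle inequalities; weighted summation preserves them; a Gromov-type diagonal extraction at vanishing $\mathcal{D}$-distance yields the isometry realizing the equivalence class).

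For separability, however, you take a genuinely different route from the paper, and your route has a gap. You propose to fix a countable pmGH-dense family of pointed $\RCD(K,N)$ spaces and then, ``within each representative,'' approximate the density $h$ by rational linear combinations of indicators of balls. The problem is that a generic element $(X,\dist,\meas,x,h\meas)\in\overline{\mathcal{F}}(K,N)$ does not have its underlying space in the countable family; it is only pmGH-close to some representative $(X_j,\dist_j,\meas_j,x_j)$. The density $h$ lives on $X$, not on $X_j$, and to produce a $\mathcal{D}$-close quintuple you must first manufacture a measure $\nu_j$ on $X_j$ whose $W_1^Z$-distance to $h\meas$ (after embedding both in a common $Z$) is small at every scale $n$ --- a transport of $h$ across the pmGH gap. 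You gloss over this step entirely, and it is not automatic: it requires controlling how the normalized ball-restrictions of $h\meas$ on $X$ compare in $W_1$ to measures supported on the Hausdorff-nearby set $\Psi_j(X_j)$, uniformly in $n$, before the rational-coefficient approximation on $X_j$ can even begin.

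The paper sidesteps this issue completely by reducing separability to (sequential) compactness: writing $\overline{\mathcal{F}}$ as a countable union over $K,N$ and over integers $k$ of the sets $\overline{\mathcal{F}}_k(K,N)$ on which the density is bounded by $k$, it shows each such slice is precompact in $\mathcal{D}$. Given a sequence $(X_i,\dist_i,\meas_i,x_i,\nu_i)$ with $\nu_i\le k\meas_i$, one extracts a pmGH-convergent subsequence for the underlying $\RCD(K,N)$ spaces; then, in the realizing space $Z$, the constraint $\nu_i\le k\meas_i$ together with $\meas_i\weakto\mu$ gives uniform local boundedness of $(\nu_i)$, hence a further weakly convergent subsequence $\nu_i\weakto\nu$, and passing to the limit in $\nu_i\le k\meas_i$ shows $\nu\ll\mu$ with density $\le k$, so the limit stays in the slice. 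This is the key idea your proposal is missing: compactness of the bounded-density slices makes the construction of an explicit dense family unnecessary. If you want to keep your constructive approach, you would need to supply the cross-space transport lemma explicitly; otherwise it is cleaner to argue via compactness as the paper does.
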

\begin{proof}
	The verification that $\mathcal{D}$ is a distance is quite standard, see for instance \cite{GigliMondinoSavare15} .
	The equivalence between the two notions of convergence can be proved following the same strategy in the proof of \cite[Theorem 3.15]{GigliMondinoSavare15}, the only difference here being the addition to the quadruple of the measure $\nu$.
	Let us prove that $\mathcal{\overline{F}}$ is separable.	It is enough to prove that, given $K$ and $N$, for any $k>0$ the set
	\begin{equation}
	\mathcal{\overline{F}}_k(K,N):=\set{(X,\dist,\meas,x,\nu)\in \mathcal{F}(K,N):\ \nu=h\meas,\  \text{with}\ \norm{h}_{L^{\infty}(X,\meas)}\le k}	
	\end{equation}
	is compact. Let us fix a sequence $(X_i,\dist_i,\meas_i,x_i,\nu_i)\subset\mathcal{\overline{F}}_k(K,N)$. We can assume, up to extract a subsequence, that $(X_i,\dist_i, \meas_i, x_i)\to (Y,\varrho,\mu,y)$ in the pmGH topology. Let us fix a proper metric space $(Z,\dist_Z)$ realizing this convergence.
	Since $\nu_i\le k\meas_i$ and $\meas_i\weakto \mu$ in duality with $\Cbs(Z)$ we deduce that the measures $\nu_i$ are locally bounded in $Z$, uniformly in $i\in \setN$. Therefore, possibly extracting a subsequence, there exists a positive measure $\nu$ in $Z$ such that $\nu_i\weakto \nu$ in duality with $\Cbs(Z)$. It is immediate to check that $\nu\ll\mu$, with density uniformly bounded by $k$. This concludes the proof.
\end{proof}

We are ready to prove \autoref{th: tangent of tangent}.

\begin{proof}[Proof of \autoref{th: tangent of tangent}]
Since tangents are invariant w.r.t. rescaling and closed w.r.t. $\mathcal{D}$-convergence, 
it is enough to prove that the set of points $x\in X$ such that there exist 
$(Y,\varrho,\mu,y, F)\in \Tan^*_x(X,\dist,\meas,E)$ and $y'\in \supp |D\chi_F|$ such that 
	\begin{equation*}
	(Y,\varrho, \mu_{y'}^1, y', F )\notin \Tan_x(X,\dist,\meas,E)
	\end{equation*}
is $|D\chi_E|^*$-negligible, where $\mu_1^{y'}:=C(y',1)^{-1}\mu$ (see \autoref{def:tan}).
	
Let us fix positive integers $k,\,m$ and a closed subset $\mathcal{U}\subset \mathcal{\overline{F}}$ with diameter, measured 
w.r.t. the distance $\mathcal{D}$ in \eqref{eq:defDDD}, smaller than $(2k)^{-1}$. Since, according to \autoref{lemma:new convergence of mms}, 
$\mathcal{\overline{F}}$ is separable, it is enough to prove that
	\begin{align*}
	A_{k,m}:= \bigl\{ x\in X\ &:\ \exists\ (Y, \varrho, \mu, y, F)\in \Tan^*_x(X,\dist,\meas,E)\cap \mathcal{U}\ \text{and}\ y'\in \supp |D\chi_F|
	\text{  such that}\\
	&\mathcal{D}((Y,\varrho, \mu_{y'}^1, y', F ),(X, r^{-1}\dist,\meas^{r}_x, x, E))\geq 2k^{-1}\ \ \  \forall r\in (0, 1/m)
	\bigr\}
	\end{align*}
	is $|D\chi_E|^*$-negligible, where we identified the set $F$ with the measure $\chi_F \mu$.
	
	If, by contradiction, $|D\chi_E|^*(A_{k,m})>0$, then, since $|D\chi_E|$ is asymptotically doubling by \autoref{prop:Asymptotic minimality and doubling}, we can find $x\in A_{k,m}$ such that
	\begin{equation*}
	\lim_{r\downarrow 0} \frac{|D\chi_E|^*(A_{k,m}\cap B_r(x))}{|D\chi_E|(B_r(x))}=1,
	\end{equation*}
	see \eqref{eq:density outer measure}.
	Since $x\in A_{k,m}$ there exist $(Y, \varrho, \mu, y, F)\in \Tan_x^*(X,\dist,\meas,E)\cap\mathcal{U}$ and $y'\in \supp|D\chi_F|$ such that 
	$\mathcal{D}((Y,\varrho, \mu_{y'}^1, y', F ),(X, r^{-1}\dist,\meas^{r}_x, x, E))\geq 2k^{-1}$ for any $r\in (0, 1/m)$
	and \autoref{lemma: appendixA} grants the existence of a sequence $(x_i)\subset A_{k,m}$ such that 
	\begin{equation*}
	\lim_{i\to \infty} \dist_Z(\Psi_i(x_i),\Psi(y'))=0,
	\end{equation*} 
	where $\Psi_i$, $\Psi$ are the embedding maps of \autoref{def: mpGH convergence}.
	Then, by definition of pmGH convergence, using the space $(Z,\dist_Z)$ we deduce
	\begin{equation*}
	(X, r_i^{-1}\dist,\meas^{r_i}_x, x_i)\to (Y, \varrho, \mu, y').
	\end{equation*}
	Since $\chi_{B^Z(\bar{z},1)} (1-\dist_Z(\cdot,\bar{z}))$ belongs to $\Cb(Z)$ for every $\bar{z}\in Z$, it is immediate to check that
	\begin{equation*}
	(X, r_i^{-1}\dist,\meas^{r_i}_{x_i}, x_i)\to (Y, \varrho, \mu_{y'}^1, y'),
	\qquad
	\text{in the pmGH topology,}
	\end{equation*}
	and	$(\Psi)_{\#} \chi_E\meas_{x_i}^{r_i}\weakto \Psi_{\#} \chi_F \mu_{y'}^1$ in duality with $\Cbs(Z)$,
	that, thanks to \eqref{lemma:new convergence of mms}, is equivalent to
	\begin{equation}\label{z14}
	\mathcal{D}((X, r_i^{-1}\dist,\meas^{r_i}_{x_i}, x_i, E), (Y,\varrho, \mu_{y'}^1,y', F ))\to 0,
	\end{equation}
	see \autoref{remark: new notion of convergence}.
	Since $x_i\in A_{k,m}$ we can find $(Y_i, \varrho_i, \mu_i, y_i, F_i)\in \Tan^*_{x_i}(X,\dist,\meas,E)\cap \mathcal{U}$ and $y_i'\in \supp|D\chi_{F_i}|$ 
	such that $\mathcal{D}((Y_i,\varrho_i, (\mu_i)_{y_i'}^1, y_i', F_i ),(X, r^{-1}\dist,\meas^{r}_{x_i}, x_i, E))\geq 2k^{-1}$ for any $r\in (0,1/m)$.
	
	Using \eqref{z14} and taking into account that by construction $\text{diam}\ \mathcal{U}<(2k)^{-1}$, we find the sought contradiction
	\begin{align*}
	2 k^{-1} & \leq \mathcal{D}((Y_i,\varrho_i, (\mu_i)_{y_i'}^1, y_i', F_i ),(X, r_i^{-1}\dist,\meas^{r_i}_{x_i}, x_i, E))\\
	& \leq \mathcal{D}( (Y,\varrho, \mu_{y'}^1, y', F),(X, r_i^{-1}\dist,\meas^{r_i}_{x_i}, x_i, E))
	+\mathcal{D}((Y_i,\varrho_i, (\mu_i)_{y_i'}^1, y_i', F_i ),(Y,\varrho, \mu_{y'}^1, y', F))\\
	& \leq\mathcal{D}((Y,\varrho, \mu_{y'}^1, y', F),(X, r_i^{-1}\dist,\meas^{r_i}_{x_i}, x_i, E))+(2k)^{-1}\\
	& \leq k^{-1},
	\end{align*}
	for $i$ large enough. 
\end{proof}

\end{document}